\newtheorem*{rep@theorem}{\rep@title}
\newcommand{\newreptheorem}[2]{%
\newenvironment{rep#1}[1]{%
 \def\rep@title{#2~\ref{##1}}%
 \begin{rep@theorem}}%
 {\end{rep@theorem}}}
\theoremstyle{plain}
\newtheorem*{thm*}{Theorem}
\newtheorem*{UBTP}{Upper Bound Theorem for polytopes}
\newtheorem*{UBTS}{Upper Bound Theorem for spheres}
\newtheorem*{UBTM}{Upper Bound Theorem for Minkowski sums}
\newtheorem*{UBTMF}{Upper Bound Theorem for mixed facets}
\newtheorem{thm}{Theorem}[section]
\newtheorem{cor}[thm]{Corollary}
\newtheorem{lem}[thm]{Lemma}
\newtheorem*{lem*}{Lemma}
\newtheorem{prp}[thm]{Proposition}
\newtheorem{quest}[thm]{Question}
\theoremstyle{definition}
\newtheorem{dfn}[thm]{Definition}
\newtheorem{ex}[thm]{Example}
\newtheorem{rem}[thm]{Remark}
\newtheorem{obs}[thm]{Observation}
\newcommand*\longhookrightarrow{\ensuremath{\lhook\joinrel\relbar\joinrel\rightarrow}}
\newcommand*\longtwoheadrightarrow{\ensuremath{\relbar\joinrel\twoheadrightarrow}}
\definecolor{light-gray}{gray}{0.55}
\newcommand\hlightb[1]{\tikz[overlay, remember picture,baseline=-\the\dimexpr\fontdimen22\textfont2\relax]\node[rectangle,fill=blue!50,rounded corners,fill opacity = 0.2,draw,thick,text opacity =1] {$#1$};} 
\newcommand\hlightr[1]{\tikz[overlay, remember picture,baseline=-\the\dimexpr\fontdimen22\textfont2\relax]\node[rectangle,fill=red!50,rounded corners,fill opacity = 0.2,draw,thick,text opacity =1] {$#1$};}
\newcommand{\xqedhere}[2]{%
  \rlap{\hbox to#1{\hfil\llap{\ensuremath{#2}}}}}
\renewcommand{\k}{\mathbbm{k}}
\newcommand{\m}{\mathfrak{m}}
\renewcommand{\a}{\alpha}
\renewcommand\emptyset{\varnothing}
\newcommand\K{\Delta}
\newcommand\emp{\K_{0}}
\newcommand\Hom{H}
\newcommand\rHom{\widetilde{\Hom}}
\newcommand\loCo{\Hom_\m}
\newcommand\rBetti{\widetilde{\beta}}
\newcommand\RC{\Psi}
\newcommand\FM{\mathrm{M}}
\newcommand\Cay{\mathrm{Cay}}
\newcommand\V{\mathrm{V}}
\newcommand\Pm{P_{[m]}}
\newcommand\Defn[1]{\textbf{#1}}
\newcommand{\cm}[1]{}
\newcommand\mc[1]{\mathcal{#1}}
\newcommand\frk[1]{\mathfrak{#1}}
\newcommand\mr[1]{\mathrm{#1}}
\newcommand\ol[1]{\overline{#1}}
\newcommand\wt[1]{\widetilde{#1}}
\newcommand\scr[1]{\mathscr{#1}}
\newcommand{\bigast}{\mathop{\scalebox{1.7}{\raisebox{-0.2ex}{$\ast$}}}}%
\newcommand{\R}{\mathbb{R}}
\newcommand{\Z}{\mathbb{Z}}
\newcommand\MUBT{\omega}
\newcommand\Hilb{\mathrm{F}}
\newcommand\hsop{h.s.o.p.}
\newcommand\lsop{l.s.o.p.}
\newcommand{\sm}{\hspace{0.08 em}}
\newcommand\cfs[1]{\ol{\scr{#1}}}
 \newcommand\x{\mathbf{x}}
 \renewcommand\t{\mathbf{t}}
 \newcommand\I{\mathrm{I}}
\newcommand\ch{\mr{I}_{\frk{N}}}
\newcommand{\LT}{h^\mathsf{alg}}
\newcommand{\GT}{h^\mathsf{top}}
\DeclareMathOperator{\Lk}{lk}
\DeclareMathOperator{\St}{st}
\DeclareMathOperator{\supp}{supp}
\DeclareMathOperator*{\relint}{relint}
\DeclareMathOperator*{\conv}{conv}
\DeclareMathOperator*{\codim}{codim}
\DeclareMathOperator*{\depth}{depth}
\title{Relative Stanley--Reisner theory and Upper Bound Theorems for Minkowski
sums}
\author{Karim A.~Adiprasito}
\author{Raman Sanyal}
\address{Einstein Institute for Mathematics, Hebrew University of Jerusalem, Jerusalem, Israel}
\email{adiprasito@math.huji.ac.il}
\address{Fachbereich Mathematik und Informatik, %
Freie Universit\"at Berlin, Berlin, %
Germany}
\email{sanyal@math.fu-berlin.de}
\keywords{Minkowski sums, Upper Bound Theorems, relative simplicial complexes,
Stanley--Reisner modules, Cayley complexes}
\subjclass[2010]{%
52B05, 
13F55, 
13H10, 
05E45} 
\date{\today}
\thanks{K.~Adiprasito was supported by an EPDI/IPDE postdoctoral fellowship, 
a Minerva fellowship of the Max Planck Society, the DFG within the research training group ``Methods for Discrete Structures''
(GRK1408) and by the Romanian NASR, CNCS --- UEFISCDI, project
PN-II-ID-PCE-2011-3-0533.}\thanks{R.~Sanyal was supported by European Research Council under the
European Union's Seventh Framework Programme (FP7/2007-2013) / ERC grant
agreement n$^\mathrm{o}$ 247029 and by the DFG-Collaborative Research Center,
TRR 109 ``Discretization in Geometry and Dynamics''.}
\begin{document}

\begin{abstract}
In this paper we settle two long-standing questions regarding the combinatorial
complexity of Minkowski sums of polytopes: We give a tight upper bound for the
number of faces of a Minkowski sum, including a characterization of the case
of equality. We similarly give a (tight) upper bound theorem for mixed facets
of Minkowski sums. This has a wide range of applications and generalizes the
classical Upper Bound Theorems of McMullen and Stanley.

Our main observation is that within (relative) Stanley--Reisner theory, it is possible to 
encode topological as well as combinatorial/geometric restrictions in an algebraic setup.
We illustrate the technology by providing several simplicial isoperimetric and reverse isoperimetric 
inequalities in addition to our treatment of Minkowski~sums.
\end{abstract}


\maketitle

\newcommand\Neighborly{\mr{NP}}
\newcommand\Cyclic{\mr{Cyc}}

The Upper Bound Theorem (UBT) for polytopes is one of the cornerstones of
discrete geometry. The UBT gives precise bounds on the
`combinatorial complexity' of a convex polytope $P$ as measured by the number of $k$-dimensional faces
$f_k(P)$ in terms of its dimension and the number of
vertices.
\begin{UBTP}
    For a $d$-dimensional polytope $P$ on $n$ vertices and $0 \le k < d$
    \[
        f_k(P) \ \le \ f_k(\Cyclic_d(n))
    \]
    where $\Cyclic_d(n)$ is a $d$-dimensional cyclic polytope on $n$ vertices. 
    Moreover, equality holds for all $k$ whenever it holds for some $k_0,\ k_0+1 \ge 
    \lfloor\frac{d}{2}\rfloor$.
\end{UBTP}

Polytopes attaining the upper bound are called (simplicial) \Defn{neighborly}
polytopes and are characterized by the fact that all non-faces are of
dimension at least $\frac{d}{2}$. Cyclic polytopes are a particularly
interesting class of neighborly polytopes whose combinatorial structure allows
for an elementary and explicit calculation of $f_k(\Cyclic_d(n))$ in terms of
$d$ and $n$; cf.~\cite[Section~0]{Z}.  The UBT was conjectured by
Motzkin~\cite{Motzkin57} and proved by McMullen~\cite{mcmullen1970}. One of
the salient features to note is that for given $d$ and $n$ there is a polytope
that maximizes $f_k$ for \emph{all} $k$ simultaneously --- a priori, this is
not to be expected. 

In this paper we will address more general upper bound problems for polytopes
and polytopal complexes. To state the main applications of the theory to be
developed, recall that the \Defn{Minkowski sum} of polytopes $P,Q \subseteq
\R^d$ is the polytope $P + Q = \{p+q : p \in P, q \in Q\}$.  There is no
understating the importance of Minkowski sums for modern mathematics. It is
named after Hermann Minkowski~\cite{minkowski}, who inaugurated the rich
theory of mixed volumes and geometric inequalities; see~\cite{Schneider93}.
Applications reach into algebraic geometry~\cite{KV,CoxLittleSchenck},
geometry of numbers and packings, computational commutative
algebra~\cite{GS,Sturmfels02}, robot motion planning~\cite{latombe}, and game
theory~\cite{games}. An important and practically relevant question is
regarding the combinatorial complexity of $P+Q$ is in terms of $P$ and $Q$.
More precisely the \emph{Upper Bound Problem for Minkowski sums} (UBPM),
raised (in print) by Gritzmann and Sturmfels~\cite{GS}, asks:

{\it
For given $k < d$ and $n_1,n_2,\dots,n_m$, what is the maximal number of 
$k$-dimensional faces of the Minkowski sum $P_1 + P_2+\cdots+P_m$  for
polytopes $P_1,\dots,P_m \subseteq \R^d$ with vertex numbers $f_0(P_i) =
n_i$ for $i=1,\dots,m$?
}

A solution to the UBPM subsumes the UBT for $m=1$. For $m > 1$, it is
nontrivial even for $k=0$: In~\cite{Sanyal09}, a comparatively involved topological argument is
employed to show that for $m \ge d$ the trivial upper bound of $n_1n_2\cdots
n_m$ vertices can not be attained.  On the constructive side, Fukuda and
Weibel~\cite{FukudaWeibel07,FukudaWeibel10,Weibel12} and
Matschke--Pfeifle--Pilaud~\cite{MPP} gave several constructions for Minkowski
sums that potentially maximize the number of faces.  In particular, the
constructions maximize the number of low-dimensional faces and, in analogy to
the classical situation, they will be called \emph{Minkowski neighborly
families} (see Sections~\ref{sec:UBTMS} and~\ref{sec:nonpure}).  Weibel
\cite{Weibel12} proved that the number of \emph{vertices} of a Minkowski sum
is maximized by Minkowski neighborly families. A recent breakthrough was
achieved by Karavelas and Tzanaki~\cite{Karavelas11} who resolved the UBPM for
two summands and subsequently for three summands in collaboration with
Konaxis~\cite{Karavelas12}. Both papers adapt McMullen's geometric approach
via shellings but with a dramatic increase in the complexity of the arguments.
In this paper we give a complete resolution of the UBPM including a
characterization of the equality case using a simple algebraic setup.

\begin{UBTM}[UBTM]
    For polytopes $P_1,\dots,P_m \subseteq \R^d$ with $n_1,\dots,n_m$ vertices
    and $0 \le k < d = \dim P_1 + \cdots + P_m$ 
    \[f_k(P_1 + \cdots + P_m) \ \le \ f_k(N_1 + \cdots +N_m)\]
    where the family $(N_1,\dots,N_m)$ is Minkowski neighborly with
    $f_0(N_i) = n_i$ for all $i=1,\dots,m$. Equality holds for all $k$ if it
    holds for some $k_0,\ k_0+1\ge \frac{d+2m-2}{2}$.
\end{UBTM}

A face of a Minkowski sum is \emph{mixed} if it is the sum of
positive-dimensional faces of the summands. Mixed faces play an important role
in mixed volume computations and they prominently appear in toric/tropical
intersection theory~\cite{FS,katz,TS}, sparse resultants~\cite{PS, EC} as well
as colorful geometric combinatorics~\cite{ABPS} and game theory. Our methods
also apply to the study of mixed faces and we establish strong upper bounds
and in particular characterize the case of equality in the most important
case.

\begin{UBTMF}
The number of mixed facets of a Minkowski sum is maximized by Minkowski neighborly families.
\end{UBTMF}

\noindent\emph{From discrete geometry to combinatorial topology to
commutative algebra.} An intriguing feature of the UBT is that its 
validity extends beyond the realm of convex polytopes and into combinatorial topology. Let $\Delta$ be a
triangulation of the $(d-1)$-sphere and, as before, let us write $f_k(\Delta)$
for the number of $k$-dimensional faces. For example, boundaries of simplicial
$d$-dimensional polytopes yield simplicial spheres, but these are by far not
all.

\begin{UBTS}
    For a simplicial $(d-1)$-dimensional sphere $\Delta$ on $n$ vertices
    \[
        f_{k}(\Delta) \ \le \ f_k(\Cyclic_d(n))
    \]
    for all $k = 0, 1,\dots,d-1$. Equality holds for some $k \ge
    \lfloor\frac{d}{2}\rfloor$ if and only if $\Delta$ is neighborly.
\end{UBTS}

The UBT for spheres was proved by Stanley~\cite{Stanley75} in answer to a conjecture of Klee \cite{Klee64} and relied on a
ground-breaking connection between combinatorial topology and commutative algebra
that was first described by Hochster and Reisner~\cite{Hochster77,Reisner}.
To a simplicial complex $\Delta$ one associates a finitely generated graded
$\k$-algebra $\k[\Delta]$ --- the \emph{Stanley--Reisner ring} of $\Delta$ ---
that algebraically encodes the simplicial complex.
Hochster and Reisner showed that, in turn, algebraic properties such as
Cohen--Macaulayness of $\k[\Delta]$ are determined by topological properties
of $\Delta$. The key observation of Stanley was that enumerative properties
and especially upper bounds on face numbers can be extracted from $\k[\Delta]$
using algebraic implications of Cohen--Macaulayness. This was the starting
point of Stanley--Reisner theory. Stanley's work spawned extensions of the UBT
to (pseudo-)manifolds with (mild) singularities; see for
example~\cite{Novik03, Novik05, MillerNovikSwartz11, NovikSwartz12}. A pivotal
result was a formula of Schenzel~\cite{Schenzel81} that relates algebraic
properties of $\k[\Delta]$ to the face numbers as well as topological
properties of $\Delta$, provided $\k[\Delta]$ is a \emph{Buchsbaum} ring
(which is in particular true for all manifolds).

The UBTM too will be the consequence of a statement in the topological domain
that we derive using algebra, though we will also briefly comment on a geometric approach to the problem.  The appropriate combinatorial/topological setup
for the UBPM is that of \Defn{relative simplicial complexes}: A relative
simplicial complex is a pair of simplicial complexes $\RC = (\Delta,\Gamma)$
where $\Gamma \subseteq \Delta$ is a subcomplex. The faces of $\RC$ are
precisely the faces of $\Delta$ not contained in $\Gamma$. The number of
$k$-dimensional faces of $\RC$ is therefore $f_k(\RC) = f_k(\Delta,\Gamma) :=
f_k(\Delta) - f_k(\Gamma)$.  The algebraic object naturally associated to a
relative complex $\RC = (\Delta,\Gamma)$ is the \Defn{Stanley--Reisner module}
or \Defn{face module} $\FM[\RC]$.  Upper Bounds Problems for relative
complexes have been considered in different guises for instance in the study
of comparison theorems for $f$-vectors~\cite{Bj07}, Upper Bound Theorems of
manifolds \cite{NovikSwartz09} and polyhedra~\cite{bl81,BKL86}, triangulations
of polytopes~\cite{mcmullen04}, and the study of sequentially Cohen--Macaulay
complexes and rings \cite{Duval, ABG}.
For the type of relative upper bound problems we will consider, however, it is
crucial to study complexes not only under topological restrictions (such as
the Buchsbaum or Cohen--Macaulay property) but to also take the combinatorics
and geometry of $\Gamma$ in $\Delta$ into account.  We show that relative
Stanley--Reisner theory has the capacity to encode such restrictions, and
exploit this fact heavily in the present paper.

\enlargethispage{5mm} 
\subsubsection*{Outline of the paper} 
We provide a gentle introduction to (relative) Stanley--Reisner theory that
starts (in \textbf{Section~\ref{sec:RelativeSR}}) with a review of the
classical setup, collecting also results pertaining to relative simplicial
complexes that are implicit in works of Stanley, Schenzel and others. The same
applies to \textbf{Section~\ref{sec:SchenzelFormula}}, where we extend the
Schenzel formula to the relative setting.  In \textbf{Section~\ref{sec:int}}
we recall Stanley's proof of the UBT for spheres which sets the stage for
general relative upper bound theorems. In particular, we discuss combinatorial
isoperimetric problems and the combinatorial restrictions we can impose on
relative complexes.

We illustrate our methods on a variety of simplicial isoperimetric and reverse
isoperimetric inequalities in parallel to the developments of the main methods.
A combinatorial isoperimetric inequality bounds (from above) the size of the
interior of a combinatorial object in terms of its boundary; a reverse
isoperimetric problem bounds the boundary in terms of its interior.

The Schenzel formula states that the entries of the $h$-vector of $\RC$ are
given by an \emph{algebraic} component $\LT(\RC)$ and a \emph{topological}
component $\GT(\RC)$ that we study individually. The latter is typically
an invariant of the problem we wish to consider, and hence
the former will be of main interest to us. In \textbf{Section~\ref{sec:loct}}
we develop several powerful tools for studying the algebraic component.

\begin{compactenum}[\rm (1)]
    \item \textbf{Section~\ref{ssec:change_presentation}} provides bounds by
        comparing a given relative complex to a simpler one. This technique
        recovers Stanley's approach to the UBT as a special case. The most
        challenging part is to characterize the case of equality which has an
        interesting connection to the Nerve Lemma. This approach is
        demonstrated in \textbf{Section~\ref{ssec:arrCM}} for arrangements of
        Cohen--Macaulay subcomplexes.  
    \item In \textbf{Section~\ref{ssec:primes}}, we integrate local
        information on the $h$-vector to obtain global bounds.  Combined with
        the fact that the algebraic component $\LT(\RC)$ is monotone under
        passing to subcomplexes, this can be used to derive effective upper
        bounds in many settings. This is an algebraic generalization of a
        geometric idea due to McMullen.
    \item The latter technique is refined in
        \textbf{Section~\ref{ssec:rel_diff}}, to give even stronger upper
        bounds on the algebraic $h$-numbers; in particular, we obtain a
        reverse isoperimetric inequality of a kind that seems new to the
        subject. 
    \item We close in \textbf{Section~\ref{ssec:rel_shell}} with a brief discussion
        of relative shellability. This technique can be used to give a
        combinatorial-geometric proof of the UBTM, although a proper proof
        is more intricate.
\end{compactenum}

In \textbf{Section~\ref{sec:UBTMS}} we cast the Upper Bound Problem for
Minkowski sums into a relative upper bound problem. The connection to
relative complexes is via \emph{Cayley polytopes}. \textbf{Section~\ref{ssec:two}}
illustrates the general approach for two summands and gives a simple proof
for the results of Karavelas--Tzanaki~\cite{Karavelas11}. The remainder of
the section gives a complete proof of the UBTM for \emph{pure} collections,
that is, polytopes $P_1,\dots,P_m \subseteq \R^d$ with $f_0(P_i) \ge d+1$ for
all~$i$.  \textbf{Section~\ref{sec:nonpure}} treats the general case without
restrictions on the number of vertices.
In \textbf{Section~\ref{sec:t_mxf}} we combine our results with the
combinatorics of Cayley polytopes to give an upper bound on the number of
mixed faces of a general sum $P_1+P_2+\cdots+P_m$.  For mixed facets, this
bound is tight, and maximized by Minkowski neighborly families. 

\textbf{Acknowledgements.} We wish to thank the anonymous referee for a very thorough report. The first author also wants to express his gratitude to Eran Nevo for inspiring conversations and helpful comments. We are also grateful for the hospitality of the IH\'ES in Bures-sur-Yvette, where most of the research leading to this paper was conducted.

\section{Relative Stanley--Reisner theory}
\label{sec:RelativeSR}

In this section we lay out the foundations for \emph{relative Stanley--Reisner}
theory, an algebraic-combinatorial theory for relative simplicial
complexes. For further background on Stanley--Reisner rings and combinatorial
commutative algebra, we refer to~\cite{Stanley96}
and~\cite{MillerSturmfels05}.

A \Defn{simplicial complex} $\Delta$ is a collection of subsets $\Delta
\subseteq 2^{[n]}$ for some $[n]:=\{1,2,\dots,n\}$ that is closed under
taking subsets. We explicitly allow $\Delta$ to be empty and we
call $\emptyset$ the \Defn{void complex}. Thus any simplicial complex
$\Delta \neq \emptyset$ contains the empty face $\emptyset$. For $S
\subseteq [n]$ the simplex with vertex set $S$ is denoted by $\K_S := 2^S$.
We also write $\K_n = \K_{[n]}$, and set $\emp := \{ \emptyset \}\neq
\emptyset$.  A \Defn{relative simplicial complex} is a pair $\RC =
(\Delta,\Gamma)$ of simplicial complexes for which $\Gamma \subseteq \Delta$
is a proper subcomplex.  The \Defn{faces} of $\RC=(\Delta,\Gamma)$ are
the elements
\[
    \Delta {\setminus} \Gamma \ = \ \{ \sigma \in \Delta : \sigma \not\in \Gamma\}.
\] 
An ordinary simplicial complex is thus a relative simplicial complex with
$\Gamma = \emptyset$.  The \Defn{dimension} of a relative simplicial complex
is 
\[
    \dim \RC := \max \{ \dim \sigma : \sigma \in (\Delta,\Gamma)\} 
\] 
where $\dim \sigma = |\sigma|-1$.
We say $\RC$ is \Defn{pure} if all inclusion maximal faces in
$\Delta {\setminus}\Gamma$ are of the same dimension.  The \Defn{vertices} of a
relative complex are denoted by $\V(\RC) := \{ i \in [n] : \{i\}\in
\RC\}$.  We write $\RC^{(i)}$ to denote the \Defn{$(i-1)$-skeleton} of
$\RC$, i.e.\ the subcomplex of all faces of $\RC$ of dimension $< i$.

We denote by $\rHom_\bullet(\Delta,\k)$ \Defn{reduced homology} with
coefficients in $\k$; unless reference to the coefficient field is necessary, 
we omit it.  If $\RC =
(\Delta,\Gamma)$ is a relative simplicial complex, then $\rHom_\bullet(\RC) =
\rHom_\bullet(\Delta,\Gamma)$ is the usual relative homology. Observe that 
$\rHom_\bullet(\Delta,\K_0)$ is the \emph{unreduced} homology of $\Delta$.
The \Defn{reduced Betti numbers} are denoted by $\rBetti_i(\RC;\k) = \dim_\k \rHom_i(\RC;\k)$.

Let $\k[\x] = \k[x_1,\dots,x_n]$ be the polynomial ring over $\k$ in $n$
variables.  For a monomial $\x^\alpha$ the support is defined as
$\supp(\x^\alpha) = \supp(\alpha) = \{ i
\in [n] : \alpha_i > 0 \}$.  For a simplicial complex $\Delta$ on 
$[n]$, the \Defn{Stanley--Reisner ideal}, or \Defn{face ideal}, is the ideal 
\[
    \I_\Delta 
    \ := \ \langle \x^\tau : \tau \subseteq [n], \tau \not\in \Delta
    \rangle 
    \ = \ \k\text{-span}\{ \x^\alpha : \supp(\x^\alpha) \not\in
    \Delta\} 
    \ \subseteq \ \k[\x].
\]
The \Defn{Stanley--Reisner ring} or \Defn{face ring} is $\k[\Delta] := \k[\x] 
/ \I_\Delta$.  
The appropriate algebraic object associated to a relative complex $\RC =
(\Delta,\Gamma)$ is the \Defn{Stanley--Reisner module} or
\Defn{face module}
\[
    \FM[\RC] \ = \ \FM[\Delta,\Gamma] \ := \ \ker( \k[\Delta]
    \longtwoheadrightarrow \k[\Gamma] ) \ \cong \ \I_\Gamma / \I_\Delta \
    \subseteq \ \k[\Delta].
\]
We regard $\FM[\RC]$ as a module over $\k[\x]$.
If $\Gamma = \emp$ is the empty complex, then $\FM[\RC] \cong \k[\Delta]$.

\subsection{Face numbers and Hilbert functions}
For a (relative) simplicial complex $\RC$ of dimension $\dim \RC = d-1$, the
\Defn{$f$-vector} of $\RC$ is defined as
$
    f(\RC)  \ := \ (f_{-1},f_0,\dots,f_{d-1})
$
where $f_i=f_i(\RC)$ is the number of $i$-dimensional faces of $\RC$.  If $\RC = (\Delta,
\Gamma)$, then $f(\RC) = f(\Delta) - f(\Gamma)$. 

A $\k[\x]$-module $M$ is \Defn{$\Z^n$-graded} or \Defn{finely graded} if $M =
\bigoplus_{\alpha \in \Z^n} M_\alpha$ and $\x^{\beta}M_\alpha \subseteq
M_{\alpha+\beta}$ for all $\beta \in \Z^n_{\ge 0}$. 
For $\RC = (\Delta,\Gamma)$,
the \Defn{fine Hilbert series} is given by 
\[
    \Hilb(\FM[\RC],\t) \ = \ \Hilb(\FM[\RC],t_1,\dots,t_n) \ := \
    \sum_{\alpha \in \Z^n} \dim_\k \FM[\RC]_\alpha \, \t^\alpha \ = \
    \sum_{\sigma \in \Delta{\setminus}\Gamma} \prod_{i \in \sigma}
    \frac{t_i}{1-t_i}.
\]
The fine grading specializes to a \Defn{$\Z$-grading} or \Defn{coarse grading} and we obtain
\[
    \Hilb(\FM[\RC],t) 
    \ = \ \sum_{\sigma \in \Delta{\setminus}\Gamma}
        \frac{t^{|\sigma|}}{(1-t)^{|\sigma|}} 
    \ = \ \frac{ \sum_{k=0}^d f_{k-1}\,t^k(1-t)^{d-k}}{(1-t)^{d}} 
    \ = \ \frac{h_0 + h_1t + \dots + h_d t^d}{(1-t)^{d}}.
\]
We use the last equality as the definition of the \Defn{$h$-vector} $h(\RC) =
(h_0,\dots,h_d)$ of the (relative) simplicial complex $\RC$. If $\RC$ is a
simplicial complex, that is, if $\Gamma = \emptyset$, then $h_0 = 1$ and $h_1
= f_0(\Delta) - d$. If $\dim \Gamma = \dim \Delta$, then
$h(\RC) = h(\Delta) - h(\Gamma)$ and hence $h_0(\RC) = 0$ and $h_1(\RC) =
f_0(\Delta) - f_0(\Gamma)$. 
The conversion between $f$-vector and $h$-vector can be made explicit as
\begin{equation}\label{eqn:f-h-polynomial}
    \sum_{i=0}^d f_{i-1}(\RC) t^{d-i} \ = \ \sum_{k=0}^d h_k(\RC) (t+1)^{d-k}.
\end{equation}
Individual entries are thus given by 
\[
    h_k(\RC) \ = \ \sum^k_{i=0} (-1)^{k-i} \binom{d-i}{k-i}f_{i-1}(\RC)
    \quad \text{and} \quad 
    f_{i-1}(\RC) \ = \ \sum_{k=0}^{i}\binom{d-k}{i-k} h_k(\RC).
\]
The second formula is crucial for upper bounds on face numbers:
\begin{obs}\label{obs:f_via_h}
    The number of $(i-1)$-faces $f_{i-1}(\RC)$ is a positive linear
    combination of the $h$-numbers $h_0(\RC),\dots,h_i(\RC)$. In particular, upper bounds on
    entries of the $h$-vector imply upper bounds on the $f$-vector.
\end{obs}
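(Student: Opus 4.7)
The plan is to read the claim directly off the explicit conversion formula displayed immediately above the observation, namely
\[
    f_{i-1}(\RC) \ = \ \sum_{k=0}^{i}\binom{d-k}{i-k} h_k(\RC),
\]
which itself follows from the polynomial identity~\eqref{eqn:f-h-polynomial} defining the $h$-vector. So the only substantive thing to check is that every coefficient $\binom{d-k}{i-k}$ appearing in this sum is strictly positive. I would verify this by noting that the relevant range of indices is $0 \le i \le d$ (since $\dim \RC = d-1$) and $0 \le k \le i$, so $d-k \ge i-k \ge 0$; in particular $\binom{d-k}{i-k} \ge 1$ for each such $k$.

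Given positivity of the coefficients, the first assertion is immediate. For the second assertion I would argue by monotonicity of positive linear combinations: if $\RC'$ is any other $(d-1)$-dimensional relative complex with $h_k(\RC) \le h_k(\RC')$ for all $k = 0,1,\ldots,i$, then applying the same formula to $\RC'$ and subtracting gives
\[
    f_{i-1}(\RC') - f_{i-1}(\RC) \ = \ \sum_{k=0}^{i}\binom{d-k}{i-k}\bigl(h_k(\RC') - h_k(\RC)\bigr) \ \ge \ 0,
\]
yielding the desired inequality $f_{i-1}(\RC) \le f_{i-1}(\RC')$. There is no real obstacle; the observation is a bookkeeping consequence of the explicit $h$-to-$f$ conversion, and the only point worth emphasizing is the strict positivity of the binomial coefficients (as opposed to the $f$-to-$h$ direction, where the alternating signs $(-1)^{k-i}$ prevent any such monotonicity statement).
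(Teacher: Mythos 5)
Your proof is correct and is exactly what the paper intends: the observation is a direct read-off from the displayed conversion formula $f_{i-1}(\RC) = \sum_{k=0}^{i}\binom{d-k}{i-k} h_k(\RC)$, together with the remark that each coefficient $\binom{d-k}{i-k}$ is positive (indeed $\ge 1$) since $0 \le i-k \le d-k$. The paper gives no separate argument beyond this, so your approach coincides with it.
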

Finally, the \Defn{$g$-vector} of the
(relative) simplicial complex $\RC$ is
\[
    g(\RC) \ = \ (g_1,\dots,g_d) \ := \ (h_1-h_0,h_2 - h_1,\dots,h_d-h_{d-1}).
\]
The \Defn{link} of $\sigma \subseteq [n]$ in a simplicial complex $\Delta$
is $\Lk(\sigma,\Delta) := \{ \tau \in \Delta : \sigma \cap \tau = \emptyset,
\sigma \cup \tau \in \Delta\}$.  In particular, $\Lk(\emptyset,\Delta) =
\Delta$  and $\Lk(\sigma,\Delta) = \emptyset$ is the void complex whenever
$\sigma \not\in \Delta$.  The (closed) \Defn{star} of a face $\sigma$ in a
simplicial complex $\Delta$ is defined as $\St(\sigma,\Delta) := \{ \tau \in
\Delta : \sigma \cup \tau \in \Delta\}$ and we define the \Defn{deletion} of a
face $\sigma$ of $\Delta$ as $\Delta-\sigma:=\{\tau\in \Delta: \sigma
\not\subseteq \tau\}$. In particular, link and star of a face $\sigma$ are
related by $\Lk(\sigma,\Delta)=\St(\sigma,\Delta) -\sigma$.  For a relative
simplicial complex $\RC=(\Delta,\Gamma)$, the notions of link and star are
defined to respect the relative structure: For $\sigma \subseteq [n]$, we set
$\Lk(\sigma,\RC) = (\Lk(\sigma,\Delta),\Lk(\sigma,\Gamma))
$ and $\St(\sigma,\RC) = (\St(\sigma,\Delta),\St(\sigma,\Gamma)).$
\begin{lem}\label{lem:lkst}
Let $\RC=(\Delta,\Gamma)$ be a relative simplicial complex. Let $v \in \V(\Delta)$ be any
vertex and let $e-1 = \dim \St(v,\RC) = \dim \Lk(v,\RC)+1$. Then
\[
   h_k(\Lk(v,\RC)) \ = \ h_k(\St(v,\RC))
\]
for all $0 \le k < e$ and $h_e(\St(v,\RC)) = 0$.
\end{lem}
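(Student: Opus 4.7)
My plan is to reduce the lemma to the fact that $\St(v,\RC)$ is a combinatorial cone over $\Lk(v,\RC)$ with apex $v$. Concretely, for any simplicial complex $\Delta$ with $v \in \V(\Delta)$, every face of $\St(v,\Delta)$ either misses $v$, in which case it belongs to $\Lk(v,\Delta)$, or contains $v$, in which case it is of the form $\tau \cup \{v\}$ with $\tau \in \Lk(v,\Delta)$; this yields a partition of the faces of $\St(v,\Delta)$. The same partition holds for $\Gamma$ (with the convention that $\St(v,\Gamma) = \Lk(v,\Gamma) = \emptyset$ if $v \notin \V(\Gamma)$), and therefore for the faces of the relative complex $\St(v,\RC)$.

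With this partition in hand, I would plug into the fine Hilbert series formula for the face module; the contribution of the apex $v$ factors out as $1 + t_v/(1-t_v) = 1/(1-t_v)$, producing
\begin{equation*}
\Hilb(\FM[\St(v,\RC)], \t) \ = \ \frac{1}{1-t_v}\,\Hilb(\FM[\Lk(v,\RC)], \t).
\end{equation*}
Specializing to the coarse grading gives the identity $(1-t)\,\Hilb(\FM[\St(v,\RC)], t) = \Hilb(\FM[\Lk(v,\RC)], t)$.

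To finish, I would read off the $h$-vectors. Since $\dim \St(v,\RC) = e-1$ and $\dim \Lk(v,\RC) = e-2$, the Hilbert series identity unpacks into the polynomial identity $h(\St(v,\RC), t) = h(\Lk(v,\RC), t)$, where the right-hand side has degree at most $e-1$. Matching coefficients yields $h_k(\St(v,\RC)) = h_k(\Lk(v,\RC))$ for $0 \le k < e$, and $h_e(\St(v,\RC)) = 0$ because the left-hand polynomial must have degree at most $e-1$ as well. There is no genuine obstacle: the argument is pure bookkeeping with Hilbert series, and the only point requiring a moment of care is the edge case $v \in \V(\Delta) \setminus \V(\Gamma)$, which is handled transparently by the void-complex convention and reduces everything to the absolute case on each component.
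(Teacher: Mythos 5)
Your proof is correct and arrives at the same intermediate identity $(1-t)\,\Hilb(\FM[\St(v,\RC)],t) = \Hilb(\FM[\Lk(v,\RC)],t)$ as the paper, but by a slightly different mechanism. The paper phrases the cone structure algebraically: multiplication by $x_v$ on $\FM[\St(v,\RC)]$ is injective and $\FM[\St(v,\RC)]/x_v\FM[\St(v,\RC)] \cong \FM[\Lk(v,\RC)]$, so $x_v$ is a regular element of degree one and the Hilbert series identity follows from the short exact sequence $0 \to \FM[\St(v,\RC)](-1) \xrightarrow{x_v} \FM[\St(v,\RC)] \to \FM[\Lk(v,\RC)] \to 0$ (as in Proposition~\ref{prp:regular}). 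You instead decompose the face set of $\St(v,\RC)$ directly into faces containing $v$ and faces missing $v$, and read off the factor $1/(1-t_v)$ in the fine Hilbert series. Both proofs exploit exactly the same geometric fact — that the star is a cone over the link with apex $v$ — and the final coefficient-matching step is identical; your version is marginally more elementary in that it avoids regular sequences entirely, while the paper's is marginally slicker once one already has the regular-element machinery in hand. Your treatment of the edge case $v \notin \V(\Gamma)$ via the void-complex convention is consistent with the paper's conventions, so there is no gap.
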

\begin{proof}
    Observe that $\FM[\St(v,\RC)] / x_v \FM[\St(v,\RC)] \cong
    \FM[\Lk(v,\RC)]$ and that $\FM[\St(v,\RC)] \xrightarrow{\, \times x_v\, }\FM[\St(v,\RC)]$
    is injective. Passing to the coarse Hilbert series proves the claim.
\end{proof}

The \Defn{Euler characteristic} of a (relative) simplicial $(d-1)$-complex $\RC$ is
\[
    \chi(\RC) \ := \  \sum_{i=0}^{d-1} (-1)^i f_{i}(\RC) \ = \ (-1)^{d-1} h_d(\RC).
\]
Notice that $\chi(\Delta) = \chi(\Delta,\emptyset)$ is the \emph{reduced}
Euler characteristic and $\chi(\RC) = \chi(\Delta) - \chi(\Gamma)$.

It turns out that for various classes of simplicial complexes, the entries
of the $h$-vector are not independent from each other. If $\Delta$ is a
simplicial sphere, the classical Dehn--Sommerville equations state $h_k =
h_{d-k}$ for all $k=0,\dots, d$, a relation closely related to
Poincar\'e duality.

The following two results are
generalizations of the classical Dehn--Sommerville relations to the relative 
setting, to manifolds and to balls.
Recall that a (relative) simplicial complex $\RC=(\Delta,\Gamma)$ is
\Defn{Eulerian} if $\RC$ is pure and $\chi(\Lk(\sigma,\RC)) =
(-1)^{\dim\Lk(\sigma,\RC)}$ for all $\sigma \in \RC$.  For example, all
(homology) spheres are Eulerian. For general (homology) manifolds, a weaker
notion is in order.  The relative complex $\RC$ is \Defn{weakly Eulerian} if
$\chi(\Lk(\sigma,\RC)) = (-1)^{\dim\Lk(\sigma,\RC)}$ for all nonempty
faces~$\sigma$. The following lemma is a version of the 
Dehn--Sommerville relations (cf.\ \cite{Klee64, HGG, NovikSwartz09}) for relative complexes.

\begin{lem}[Dehn--Sommerville relations]\label{lem:DS}
    Let $\RC = (\Delta,\Gamma)$ be a relative simplicial complex of dimension
    $d-1$. If $\RC$ is weakly Eulerian, then
    \[
        h_{d-i}(\RC) \ = \
        h_{i}(\Delta)+(-1)^i\binom{d}{i}\Bigl((-1)^{d-1}\chi(\Delta,\Gamma) -
        1 \Bigr).
    \]
\end{lem}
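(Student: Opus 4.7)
The plan is to prove the identity at the level of $h$-polynomials, i.e., show that
\[
  t^d h(\RC, 1/t) - h(\Delta, t) \;=\; \bigl((-1)^{d-1}\chi(\Delta,\Gamma)-1\bigr)(1-t)^d.
\]
The left-hand side encodes all the coefficient identities simultaneously, since $\sum_i h_{d-i}(\RC)\,t^i = t^d h(\RC,1/t)$ and $\sum_i(-1)^i\binom{d}{i}t^i = (1-t)^d$.

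The starting point is the face-sum expansion of the coarse Hilbert series already used in the excerpt, namely $h(\RC,t) = \sum_{\sigma \in \RC} t^{|\sigma|}(1-t)^{d-|\sigma|}$, which yields $t^d h(\RC,1/t) = \sum_{\sigma\in\RC}(t-1)^{d-|\sigma|}$. For the right-hand $h(\Delta,t)$ I would use the binomial identity $t^{|\rho|} = \sum_{\tau\subseteq\rho}(t-1)^{|\rho|-|\tau|}$, interchange the order of summation over pairs $\tau\subseteq\rho\in\Delta$, and convert the inner sum using the standard identity $\sum_{\rho\supseteq\tau,\rho\in\Delta}(-1)^{|\rho|-|\tau|} = -\chi(\Lk(\tau,\Delta))$. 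This gives the link expansion
\[
  h(\Delta,t) \;=\; -\sum_{\tau\in\Delta}\chi(\Lk(\tau,\Delta))(1-t)^{d-|\tau|}.
\]
Subtracting these two formulas reduces the problem to an identity about Euler characteristics of links.

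In that difference, the weak Eulerian hypothesis is applied in two ways. First, for every nonempty $\sigma\in\RC$ the link $\Lk(\sigma,\Gamma)$ is void, so $\chi(\Lk(\sigma,\Delta))=\chi(\Lk(\sigma,\RC))=(-1)^{d-|\sigma|-1}$; this forces the contribution of $\sigma\in\RC$ to $\sum_{\sigma\in\RC}(t-1)^{d-|\sigma|}$ to cancel exactly against the corresponding terms in the $\Delta$-side link expansion. What remains, after isolating the $\tau=\emptyset$ contribution $\chi(\Delta)(1-t)^d$, is a sum over nonempty $\tau\in\Gamma$ of $\chi(\Lk(\tau,\Delta))(1-t)^{d-|\tau|}$. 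Second, for such $\tau$ the weak Eulerian hypothesis gives $\chi(\Lk(\tau,\Delta))=(-1)^{d-|\tau|-1}+\chi(\Lk(\tau,\Gamma))$, which splits the remaining sum into a \textquotedblleft sign\textquotedblright\ piece (recognisable as $\pm t^d h^{(d)}(\Gamma,1/t)\pm(1-t)^d$ after a second application of the reciprocity $\sum_\tau(t-1)^{d-|\tau|}=t^d h^{(d)}(\Gamma,1/t)$) and a $\chi(\Lk(\tau,\Gamma))$-piece (which collapses, by the link expansion applied to $\Gamma$ in ambient dimension $d$, to $-h^{(d)}(\Gamma,t)-\chi(\Gamma)(1-t)^d$).

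Putting these pieces together gives
\[
  t^d h(\RC,1/t) - h(\Delta,t) \;=\; \bigl(\chi(\Delta,\Gamma)+(-1)^d\bigr)(1-t)^d - \bigl(t^d h^{(d)}(\Gamma,1/t) + h^{(d)}(\Gamma,t)\bigr),
\]
so the claim reduces to the reciprocity
\[
  t^d h^{(d)}(\Gamma,1/t) + h^{(d)}(\Gamma,t) \;=\; \bigl(1+(-1)^d\bigr)\bigl(\chi(\Delta,\Gamma)+1\bigr)(1-t)^d
\]
for $\Gamma$ itself. The main obstacle is establishing this last identity, which amounts to showing that under the weak Eulerian hypothesis $\Gamma$ inherits a Dehn--Sommerville-type symmetry; I would handle it by induction on dimension, using the fact (verifiable directly from the definitions) that for any nonempty $\tau$ the relative link $\Lk(\tau,\RC)$ is itself weakly Eulerian in ambient dimension $d-|\tau|$, and that $h^{(d)}$ and $h^{(d-1)}$ of $\Gamma$ are related by multiplication by $(1-t)$. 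Classical DS for the induced Eulerian structure on $\Gamma$ then gives the reciprocity, completing the proof.
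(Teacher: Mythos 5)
Your manipulations up to the displayed intermediate identity are correct: the face-sum expansion $t^d h(\RC,1/t)=\sum_{\sigma\in\RC}(t-1)^{d-|\sigma|}$, the link expansion $h(\Delta,t)=-\sum_{\tau\in\Delta}\chi(\Lk(\tau,\Delta))(1-t)^{d-|\tau|}$, the cancellation for nonempty $\sigma\in\RC$, and the splitting of the $\Gamma$-terms all check out, and the algebra of the reduction is sound. The problem is that the ``reciprocity for $\Gamma$'' you reduce to,
\[
  t^d h^{(d)}(\Gamma,1/t) + h^{(d)}(\Gamma,t) \;=\; \bigl(1+(-1)^d\bigr)\bigl(\chi(\Delta,\Gamma)+1\bigr)(1-t)^d,
\]
is not a smaller statement --- your own computation shows it is algebraically \emph{equivalent} to the lemma, so you have merely reformulated the problem. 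Worse, the route you propose for it does not exist as stated: the weakly Eulerian hypothesis constrains $\chi(\Lk(\sigma,\RC))$, not $\chi(\Lk(\sigma,\Gamma))$, so $\Gamma$ does not inherit an Eulerian structure in any direct sense (indeed $\chi(\Lk(\tau,\Gamma))=\chi(\Lk(\tau,\Delta))-(-1)^{d-1-|\tau|}$, with the first term unconstrained). And the constant $(1+(-1)^d)(\chi(\Delta,\Gamma)+1)$ involves $\Delta$, not just $\Gamma$; matching the constant term is exactly the $i=d$ instance of the lemma ($\chi(\Gamma)=-1$ for $d$ odd, $\chi(\Gamma)=2\chi(\Delta)+1$ for $d$ even), so ``classical DS for $\Gamma$'' cannot supply it and you would still have to prove the very identity you are chasing.

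The fix is to run your link-expansion computation on $\RC$ rather than on $\Delta$. The identical summation-interchange gives the relative link expansion $h(\RC,t)=-\sum_{\tau\in\Delta}\chi(\Lk(\tau,\RC))(1-t)^{d-|\tau|}$, whence $t^d h(\RC,1/t)=-\sum_{\tau\in\Delta}(-1)^{d-|\tau|}\chi(\Lk(\tau,\RC))\,t^{|\tau|}(1-t)^{d-|\tau|}$. Now the weakly Eulerian hypothesis applies directly and term by term: for $\tau\neq\emptyset$ each summand becomes $t^{|\tau|}(1-t)^{d-|\tau|}$, and these sum to $h(\Delta,t)-(1-t)^d$, while the $\tau=\emptyset$ term contributes $(-1)^{d-1}\chi(\Delta,\Gamma)(1-t)^d$. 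This gives the lemma in one step and needs no auxiliary statement about $\Gamma$. It is precisely the coarse shadow of the paper's own proof, which states the fine Hilbert-series reciprocity (Proposition~\ref{prp:Hilbert_reciproc}), applies the weakly Eulerian hypothesis to each $\chi(\Lk(\sigma_a,\RC))$, and then specializes to the coarse grading --- so the paper's and the (corrected) combinatorial argument are the same proof in two languages, while your detour through a separate reciprocity for $\Gamma$ loops back to where you started.
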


Lemma~\ref{lem:DS} is a direct consequence of the following
reciprocity law of $\FM[\RC]$. 

\begin{prp}\label{prp:Hilbert_reciproc}
    Let $\RC = (\Delta,\Gamma)$ be a relative complex of dimension $d-1$.
    Then the fine Hilbert series of $\FM[\RC]$ satisfies
    \[
        (-1)^d\,\Hilb(\FM[\RC],\tfrac{1}{t_1},\dots,\tfrac{1}{t_n}) \ = \
        \sum_{a \in \Z_{\ge 0}^n} (-1)^{\codim \sigma_a}
        \chi(\Lk(\sigma_a,\RC)) \, \t^a
    \]
    where $\sigma_a = \supp(a)$ and $\codim \sigma_a := \dim \RC - \dim
    \sigma_a$.
\end{prp}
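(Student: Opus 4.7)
The plan is to prove the identity by a direct manipulation of the fine Hilbert series, in parallel with the classical proof of Stanley reciprocity for ordinary face rings.

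Starting from the combinatorial product expansion of $\Hilb(\FM[\RC],\t)$ displayed above, I would substitute $t_i \mapsto 1/t_i$ in every summand and simplify each factor via the elementary identity $\frac{1/t_i}{1-1/t_i} = -\frac{1}{1-t_i}$ to arrive at
\[
    \Hilb(\FM[\RC], 1/t_1,\ldots,1/t_n) \ = \ \sum_{\sigma \in \Delta \setminus \Gamma} (-1)^{|\sigma|} \prod_{i \in \sigma} \frac{1}{1-t_i}.
\]
Next, I would expand each $\frac{1}{1-t_i}$ as a geometric series to rewrite $\prod_{i \in \sigma}\frac{1}{1-t_i}$ as $\sum_{a \in \Z_{\ge 0}^n,\ \supp a \subseteq \sigma} \t^a$, swap the order of summation, and multiply through by $(-1)^d$ (using $d-|\sigma| = \codim \sigma$) to identify the coefficient of $\t^a$ in $(-1)^d\,\Hilb(\FM[\RC], 1/\t)$ as the signed face count
\[
    \sum_{\substack{\sigma \in \Delta \setminus \Gamma \\ \sigma \supseteq \supp a}} (-1)^{\codim \sigma}.
\]

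It then remains to recognise this sum as the link invariant on the right. Setting $\tau := \supp a$, the faces $\sigma \in \Delta \setminus \Gamma$ containing $\tau$ are in natural bijection with the faces $\tau' \in \Lk(\tau,\RC)$ via $\sigma = \tau \sqcup \tau'$. Factoring the sign as $(-1)^{\codim \sigma} = (-1)^{\codim \tau}(-1)^{|\tau'|}$ rewrites the coefficient as
\[
    (-1)^{\codim \tau} \sum_{\tau' \in \Lk(\tau,\RC)} (-1)^{|\tau'|},
\]
and the remaining alternating sum over $\Lk(\tau,\RC)$ is, by definition of the Euler characteristic of a (relative) complex, equal to $\chi(\Lk(\tau,\RC))$.

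The step I expect to require the most care is the sign bookkeeping at the empty face of the link, since whether $\emptyset \in \Lk(\tau,\RC)$ depends on which of the three cases $\tau \in \Delta \setminus \Gamma$, $\tau \in \Gamma$, or $\tau \not\in \Delta$ occurs, and these three situations must be unified in order to collapse onto the single compact expression in the statement. A clean way to sidestep an explicit case analysis is to use the splitting $\Hilb(\FM[\RC],\t) = \Hilb(\k[\Delta],\t) - \Hilb(\k[\Gamma],\t)$ to reduce to the absolute case, where the identity is the classical Stanley reciprocity for the face ring $\k[\Delta]$, and then reassemble the relative statement via the additivity $\chi(\Lk(\tau,\RC)) = \chi(\Lk(\tau,\Delta)) - \chi(\Lk(\tau,\Gamma))$ of the reduced Euler characteristic on pairs.
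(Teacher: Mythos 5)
The paper gives no proof of this proposition --- it only points to an analogue in Bruns--Herzog --- so a direct computation with the fine Hilbert series, as you propose, is exactly the right strategy, and most of your argument is sound: the substitution $t_i\mapsto 1/t_i$, the geometric-series expansion, the bijection $\sigma\leftrightarrow\tau'=\sigma\setminus\tau$ between faces $\sigma\in\Delta\setminus\Gamma$ containing $\tau=\supp(a)$ and faces $\tau'$ of $\Lk(\tau,\RC)$, and the sign factorization $(-1)^{\codim\sigma}=(-1)^{\codim\tau}(-1)^{|\tau'|}$ are all correct. Your worry about the empty face also resolves itself with no case analysis: $\emptyset$ is a face of $\Lk(\tau,\RC)$ exactly when $\tau\in\Delta\setminus\Gamma$, and under your bijection this is precisely the event that $\sigma=\tau$ is a face of $\RC$, so all three of your cases are handled uniformly.

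The one genuine gap is the final identification, and it is a sign. In the paper's convention a face $\tau'$ contributes $(-1)^{\dim\tau'}=(-1)^{|\tau'|-1}$ to $\chi(\Lk(\tau,\RC))$ (this is the version of $\chi$ that satisfies $\chi(\RC)=(-1)^{d-1}h_d(\RC)$ and equals the \emph{reduced} Euler characteristic when $\Gamma=\emptyset$), so in fact $\sum_{\tau'\in\Lk(\tau,\RC)}(-1)^{|\tau'|}=-\chi(\Lk(\tau,\RC))$, not $+\chi$. Carried through, your computation yields
\[
(-1)^d\,\Hilb(\FM[\RC],\tfrac{1}{t_1},\dots,\tfrac{1}{t_n})\ =\ -\sum_{a\in\Z_{\ge 0}^n}(-1)^{\codim\sigma_a}\chi(\Lk(\sigma_a,\RC))\,\t^a,
\]
with a global minus sign that the printed proposition does not have. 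The statement is in fact off by exactly this sign: on $\Delta=2^{\{1\}}$, $\Gamma=\{\emptyset\}$, $d=1$, the left side is $1/(1-t_1)$ while the printed right side evaluates to $-1/(1-t_1)$, and the Dehn--Sommerville consequence the paper draws immediately afterward, namely $(-1)^d\Hilb(\FM[\RC],1/\t)=(-1)^{d-1}\chi(\Delta,\Gamma)-1+\Hilb(\FM[\Delta],\t)$, is consistent with the extra minus sign and not with the formula as printed. So your sign slip accidentally reproduces a sign slip in the statement; keeping track of the $(-1)$ in the Euler-characteristic step gives a correct proof of the sign-corrected proposition, and your alternative route via the absolute identities for $\k[\Delta]$ and $\k[\Gamma]$ would surface the missing sign at once.
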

This is a statement analogous to Lemma~5.4.3 in \cite{Bruns-Herzog}.
If $\RC = (\Delta,\Gamma)$
is weakly Eulerian, then Proposition~\ref{prp:Hilbert_reciproc}
yields
\[
    (-1)^d \Hilb(\FM[\RC],\tfrac{1}{\t}) \ = \
    (-1)^{d-1}\chi(\Delta,\Gamma)-1+\Hilb(\FM[\Delta],\t).
\] 
Passing to the coarse Hilbert series proves Lemma~\ref{lem:DS}.

\subsection{Cohen--Macaulay and Buchsbaum modules and complexes}
Let $M$ be a finitely generated graded module over $\k[\x]$.  For a sequence
$\Theta = (\theta_1,\dots,\theta_\ell)$ of elements of $\k[\x]$, let us write
$\Theta_s = (\theta_1,\dots,\theta_s)$ for the subsequence of the first $s$
elements and $\Theta_s M = \sum_{i=1}^s \theta_i M$.  A \Defn{partial
homogeneous system of parameters} (partial \hsop) is a sequence $\Theta =
(\theta_1,\dots,\theta_\ell)$ of homogeneous elements of $\k[\x]$ such that
\[
    \dim M / \Theta M \ = \ \dim M-\ell.
\]
If $\ell = \dim M$, then $\Theta$ is a homogeneous system of parameters.  If
all $\theta_i$ are of degree~one, then $\Theta$ is a \Defn{(partial) linear system of
parameters} (\lsop). Throughout the paper we assume that the field $\k$ is
infinite, which guarantees the existence of a linear system of parameters 
(cf.~\cite[Theorem.~13.3]{eis}). 

A sequence $\Theta = (\theta_1,\dots,\theta_\ell)$ of homogeneous elements
is called \Defn{$M$-regular}, if $\Theta M \neq M$ and  
\[
    \Theta_{i-1}M : \theta_i \ := \ \{ m \in M : m\theta_i \in \Theta_{i-1}M\}
    \ = \ \Theta_{i-1}M
\]
for all $i = 1,\dots,\ell$. Every regular sequence is a partial \hsop\ but the
converse is false. An immediate consequence of the definition is the
following.
\begin{prp}\label{prp:regular}
    Let $M$ be a finitely generated graded module over $\k[\x]$ and $\Theta =
    (\theta_1,\dots,\theta_r)$ be an $M$-regular sequence of linear forms.
    Then
    $(1-t)^r \Hilb(M,t)=\Hilb(M/\Theta M,t)$.
\end{prp}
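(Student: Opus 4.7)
The plan is to prove this by induction on $r$, with the base case $r=1$ coming from a short exact sequence and the inductive step from the observation that $(\theta_2,\dots,\theta_r)$ is a regular sequence on $M/\theta_1 M$.

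For the base case, suppose $\theta = \theta_1$ is a linear form with the property that multiplication $M \xrightarrow{\times \theta} M$ is injective (this is exactly the regularity condition for $r=1$, since $\Theta_0 M : \theta = 0 M : \theta$ equals $0 = \Theta_0 M$ iff multiplication by $\theta$ is injective). Because $\theta$ has degree $1$, multiplication by $\theta$ sends $M_k$ into $M_{k+1}$, so it gives a degree-preserving injection $M(-1) \hookrightarrow M$, where $M(-1)$ denotes $M$ with grading shifted so that $M(-1)_k = M_{k-1}$. The cokernel is $M/\theta M$, yielding the short exact sequence of graded modules
\[
0 \longrightarrow M(-1) \xrightarrow{\ \times \theta\ } M \longrightarrow M/\theta M \longrightarrow 0.
\]
Hilbert series are additive on short exact sequences, and $\Hilb(M(-1),t) = t\,\Hilb(M,t)$, hence $\Hilb(M/\theta M,t) = (1-t)\Hilb(M,t)$.

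For the inductive step, assume the result holds for regular sequences of length $r-1$. Given the $M$-regular sequence $(\theta_1,\dots,\theta_r)$, the definition of regularity implies immediately that $(\theta_2,\dots,\theta_r)$ is a regular sequence on the quotient module $M/\theta_1 M$: the condition $\Theta_{i-1} M : \theta_i = \Theta_{i-1} M$ translates, after quotienting by $\theta_1 M$, to the corresponding condition for $M/\theta_1 M$. Applying the base case to $\theta_1$ acting on $M$ and then the inductive hypothesis to $(\theta_2,\dots,\theta_r)$ acting on $M/\theta_1 M$, and observing that $M/\Theta M = (M/\theta_1 M)/(\theta_2,\dots,\theta_r)(M/\theta_1 M)$, yields
\[
\Hilb(M/\Theta M, t) \ = \ (1-t)^{r-1}\Hilb(M/\theta_1 M,t) \ = \ (1-t)^r \Hilb(M,t).
\]

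The main (mild) obstacle is verifying the short exact sequence is correctly graded: one must be careful to use the shifted module $M(-1)$ so that multiplication by the degree-one element $\theta_1$ is a morphism of graded modules of degree $0$. Once this bookkeeping is set up, additivity of Hilbert series on short exact sequences of graded modules finishes the argument with no computation.
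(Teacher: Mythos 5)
Your proof is correct and is the standard argument the paper has in mind when it calls the proposition ``an immediate consequence of the definition.'' The paper uses essentially the same short-exact-sequence idea later, inside the proof of the relative Schenzel formula (Theorem~\ref{thm:RS}), where for a general linear form the sequence $0 \to (0_N:\theta) \to N \to N(1) \to (N/\theta N)(1) \to 0$ gives $(1-t)\Hilb(N,t) = \Hilb(N/\theta N,t) - t\,\Hilb((0_N:\theta),t)$; your base case is exactly this identity specialized to the case where regularity forces $(0_N:\theta)=0$, and your induction replaces the iteration there. The only cosmetic difference is the grading convention: the paper shifts the target ($N(1)$) while you shift the source ($M(-1)$); both make multiplication by a linear form a degree-zero morphism and yield the same Hilbert series identity.
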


The length of the longest regular sequence of a module $M$ is called the
\Defn{depth} $\depth(M)$ of $M$. Clearly, $\depth(M) \le \dim(M)$, and
Cohen--Macaulayness characterizes the case of equality.

\begin{dfn}
    A $\k[\x]$-module $M$ is called \Defn{Cohen--Macaulay} if $\depth(M) =
    \dim(M)$.  A relative simplicial complex $\RC = (\Delta,\Gamma)$ is
    \Defn{Cohen--Macaulay} (CM, for short) if $\FM[\RC]$ is a Cohen--Macaulay
    module.
\end{dfn}

To treat Upper Bound Problems on manifolds, we will need to consider 
a more general class of complexes.
Let us denote by $\m:=\langle x_1, \dots, x_n
\rangle \subseteq \k[\x]$ the irrelevant ideal.  An \hsop\ $\Theta =
(\theta_1,\dots,\theta_\ell)$ is a \Defn{weak $M$-sequence} if $\Theta M \neq M$ and
\begin{equation}\label{eqn:weakMseq}
    \Theta_{i-1} M : \theta_i \ = \ 
    \Theta_{i-1} M : \m
\end{equation}
for all $i = 1,\dots,\ell$.

\begin{dfn}
    A finitely generated graded module $M$ over $\k[\x]$ is \Defn{Buchsbaum}
    if every \hsop\  is a weak $M$-sequence. A relative simplicial complex
    $\RC$ is \Defn{Buchsbaum} if the face module $\FM[\RC]$ is Buchsbaum.
\end{dfn}

Every Cohen--Macaulay module is also a Buchsbaum module.  The converse is
clearly false, as will become clear once we provide the topological criterion
for Cohen--Macaulay and Buchsbaum complexes.

The depth of a module is detected by local cohomology.  We denote by
$\loCo^i(M)$ the $i$-th \Defn{local cohomology} of $M$ with support in $\m$,
cf.~\cite{24loCo}.
In the case of a face module, the $\Z^n$-graded
Hilbert-Poincar\'e series can be computed in terms of local topological
information of the relative simplicial complex. The following is a relative
version of a formula due to Hochster; see~\cite[Theorem~II.4.1]{Stanley96}
or~\cite[Theorem~13.13]{MillerSturmfels05}.

\begin{thm}[Hochster's formula for relative complexes]\label{thm:HF}
Let $\RC=(\Delta,\Gamma)$ be a relative simplicial complex and let
$\FM=\FM[\RC]$ denote its face module. The Hilbert series of the local
cohomology modules in the fine grading is
\[
    \Hilb(\loCo^i(\FM),\t) \ = \ \sum_{\sigma\in \Delta} \dim_\k \rHom_{i-\dim
    \sigma-2} (\Lk(\sigma,\RC))\prod_{i\in \sigma}
    \frac{t_i^{-1}}{1-t_i^{-1}}.
\]
In other words, for $\alpha\in \Z^n$ and $\sigma_\alpha:=\supp(\alpha)$, we
have 
\[
    \loCo^i(\FM)_\alpha \ \cong \ 
    \begin{cases}
        \rHom_{i-\dim \sigma-2}(\Lk(\sigma_\alpha,\RC)) & \text{if } \alpha
        \le 0 \text{ and } \sigma_\alpha \in \Delta, \text{ and}\\
        0 & \text{otherwise}.
    \end{cases}
\]
\end{thm}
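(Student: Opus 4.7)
The plan is to reduce Theorem~\ref{thm:HF} to the classical (absolute) Hochster formula by means of the defining short exact sequence of $\Z^n$-graded $\k[\x]$-modules
\[
    0 \ \longrightarrow \ \FM[\RC] \ \longrightarrow \ \k[\Delta] \ \longrightarrow \ \k[\Gamma] \ \longrightarrow \ 0.
\]
Applying $\loCo^{\bullet}(\,\cdot\,)$ and restricting to the $\Z^n$-graded component at $\alpha \in \Z^n$ yields a long exact sequence relating the groups $\loCo^i(\FM[\RC])_\alpha$ to $\loCo^i(\k[\Delta])_\alpha$ and $\loCo^i(\k[\Gamma])_\alpha$. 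The classical Hochster formula (see \cite[Thm.~II.4.1]{Stanley96} or \cite[Thm.~13.13]{MillerSturmfels05}) identifies the two outer terms with $\rHom_{i-\dim\sigma_\alpha-2}(\Lk(\sigma_\alpha,\Delta))$ and $\rHom_{i-\dim\sigma_\alpha-2}(\Lk(\sigma_\alpha,\Gamma))$ respectively whenever $\alpha\le 0$ and $\sigma_\alpha\in \Delta$ (resp.\ $\sigma_\alpha\in \Gamma$), and with $0$ otherwise. This immediately settles the vanishing claims: when $\alpha \not\le 0$, or when $\alpha \le 0$ but $\sigma_\alpha \notin \Delta$, the two outer terms vanish and hence so does the middle.

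For $\alpha\le 0$ with $\sigma_\alpha\in\Delta$, I would compare this long exact sequence with the long exact sequence in reduced homology of the pair of complexes $(\Lk(\sigma_\alpha,\Delta),\Lk(\sigma_\alpha,\Gamma))=\Lk(\sigma_\alpha,\RC)$. Setting $j = i - \dim\sigma_\alpha - 2$, the latter reads
\[
    \cdots \to \rHom_{j}(\Lk(\sigma_\alpha,\Gamma)) \to \rHom_{j}(\Lk(\sigma_\alpha,\Delta)) \to \rHom_{j}(\Lk(\sigma_\alpha,\RC)) \to \rHom_{j-1}(\Lk(\sigma_\alpha,\Gamma)) \to \cdots
\]
and absolute Hochster identifies it with the local cohomology long exact sequence in every slot except the middle one. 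Once naturality of the comparison is in place, the five lemma identifies $\loCo^i(\FM[\RC])_\alpha$ with $\rHom_{j}(\Lk(\sigma_\alpha,\RC))$, which is precisely the asserted formula. The case $\sigma_\alpha\in \Delta\setminus \Gamma$ is subsumed, since then $\Lk(\sigma_\alpha,\Gamma)$ is the void complex, the $\Gamma$-slot collapses on both sides, and the relative homology reduces to $\rHom_j(\Lk(\sigma_\alpha,\Delta))$.

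The main technical step, and where the real work lies, is to verify that the two long exact sequences are genuinely isomorphic as long exact sequences, i.e.\ that the connecting homomorphisms also agree. The cleanest route I would take is to compute $\loCo^{\bullet}(\FM[\RC])$ directly from the \v{C}ech complex on $x_1,\dots,x_n$: at fine degree $\alpha\le 0$, the $\k$-basis of $(\check{C}^s \otimes_{\k[\x]}\FM[\RC])_\alpha$ is indexed by the $s$-subsets $F\subseteq[n]$ with $\sigma_\alpha\subseteq F$ and $F \in \Delta \setminus \Gamma$ (equivalently, by the $(s{-}|\sigma_\alpha|{-}1)$-faces of $\Lk(\sigma_\alpha,\RC)$, via $F \leftrightarrow F \setminus \sigma_\alpha$), and with the correct sign conventions the \v{C}ech differential coincides with the simplicial coboundary on $\Lk(\sigma_\alpha,\RC)$. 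Since the links are finite, dualising cochains into chains and re-indexing by $j=i-\dim\sigma_\alpha-2$ delivers the stated identification with reduced homology, and the comparison with the pair $(\k[\Delta],\k[\Gamma])$ is then manifestly functorial in $(\Delta,\Gamma)$, which supplies the naturality needed above.
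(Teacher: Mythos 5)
Your proof is correct and follows essentially the same route as the paper: both reduce to the short exact sequence $0\to\FM[\RC]\to\k[\Delta]\to\k[\Gamma]\to0$, both invoke the graded \v{C}ech computation underlying the absolute Hochster formula, and both identify $(\check C\otimes\FM[\RC])_\alpha$ with the (co)chain complex of $\Lk(\sigma_\alpha,\RC)$ shifted by $|\sigma_\alpha|+1$ when $\alpha\le 0$, with everything vanishing otherwise. You correctly flag the only delicate point in the ``long-exact-sequence plus five lemma'' framing (naturality of the connecting maps) and then resolve it exactly as the paper does, by doing the explicit \v{C}ech computation; the shift $j=i-\dim\sigma_\alpha-2$ and the identification of the basis with $(s-|\sigma_\alpha|-1)$-faces of $\Lk(\sigma_\alpha,\RC)$ both match the paper's argument.
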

\begin{proof}
    Following the proof of \cite[Theorem~13.13]{MillerSturmfels05}, let
    $\check{C} = \bigoplus_{\sigma \subseteq [n]} \k[\x]_{\x^\sigma}$ be the
    $\Z^n$-graded \v{C}ech complex with respect to $x_1,\dots,x_n$ where
    $\k[\x]_{\x^\sigma}$ is the localization at $\x^\sigma$. By
    definition of $\FM = \FM[\RC]$, we have the short exact sequence
    of $\Z^n$-graded modules
    \[
        0 \longrightarrow \FM \longrightarrow \k[\Delta] \longrightarrow \k[\Gamma]
        \longrightarrow 0.
    \]
    Let $\alpha \in \Z^n$ be arbitrary but fixed and
    let $\alpha^+,\alpha^- \in \Z^n_{\ge0}$ such that $\alpha = \alpha^+ -
    \alpha^-$ and $\supp(\alpha^+)\cap\supp(\alpha^-) = \emptyset$. Moreover
    let $\sigma = \supp(\alpha)$.
    From the proof of Theorem 13.13 in \cite{MillerSturmfels05}, we know that 
    the complex of $\k$-vector spaces $(\k[\Delta] \otimes \check{C})_\alpha$ is
    isomorphic to the chain complex of $\Lk(\sigma,\Delta)$ shifted by
    $|\supp(\alpha^-)|+1$ if $\alpha^+ = 0$ and acyclic otherwise. The
    argument applies to $(\k[\Gamma] \otimes \check{C})_\alpha$ as well and
    thus $(\FM \otimes \check{C})_\alpha$ is isomorphic to the chain complex
    of the pair
    $\Lk(\sigma, \RC) = (\Lk(\sigma,\Delta,\Lk(\sigma,\Gamma))$, again shifted
    by $|\supp(\alpha^-)|+1$ if $\alpha^+ = 0$. It
    follows that as $\k$-vector spaces $\loCo^i(\FM)_\alpha \cong 
    \rHom_{i-\dim \sigma-2}(\Lk(\sigma_\alpha,\RC))$ whenever $\alpha \le
    0$ and identically zero otherwise.
\end{proof} 

From the relative Hochster formula and the fact that $H^i_\m(M) = 0$ for $i
< e = \depth(M)$ and $H^e_\m(M) \neq 0$ we deduce a relative version of a
criterion of Reisner~\cite{Reisner} for  simplicial complexes to be
Cohen--Macaulay.

\begin{thm}[{\cite[Theorem III.7.2]{Stanley96}}]\label{thm:SR}
    A relative simplicial complex $\RC=(\Delta,\Gamma)$ is Cohen--Macaulay if
    and only if
    $\rHom_i(\Lk(\sigma,\RC))  =  0$ for all faces $\sigma \in \Delta$  and all $i<\dim \Lk(\sigma,\RC)$.
\end{thm}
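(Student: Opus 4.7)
The plan is to reduce the criterion to the relative Hochster formula (Theorem~\ref{thm:HF}) via standard local-cohomology facts. Recall that for a finitely generated graded $\k[\x]$-module $M$ one has $\depth M = \min\{i : \loCo^i(M) \neq 0\}$ and $\dim M = \max\{i : \loCo^i(M) \neq 0\}$. Consequently, $M$ is Cohen--Macaulay if and only if $\loCo^i(M) = 0$ for all $i < \dim M$. It is this vanishing, applied to $M = \FM[\RC]$, that I would translate into a topological condition on links.

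Let $d = \dim\RC + 1$. First I would establish $\dim \FM[\RC] = d$ using the coarse Hilbert series computed in the excerpt, namely $\Hilb(\FM[\RC],t) = P(t)/(1-t)^d$ with numerator $P(t) = \sum_k f_{k-1}(\RC)\,t^k(1-t)^{d-k}$. Evaluating $P(1) = f_{d-1}(\RC) > 0$ shows that the pole at $t=1$ has order exactly $d$, so $\dim \FM[\RC] = d$.

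Now I would plug into Theorem~\ref{thm:HF}: for every $\alpha\le 0$ with $\sigma_\alpha := \supp(\alpha) \in \Delta$,
\[
    \loCo^i(\FM[\RC])_\alpha \ \cong \ \rHom_{i-\dim\sigma_\alpha-2}(\Lk(\sigma_\alpha,\RC)),
\]
and it vanishes in every other multidegree. Hence $\loCo^i(\FM[\RC]) = 0$ if and only if $\rHom_{i-\dim\sigma-2}(\Lk(\sigma,\RC)) = 0$ for every $\sigma \in \Delta$. Since $\dim\Lk(\sigma,\RC) = d - \dim\sigma - 2$, the substitution $j := i - \dim\sigma - 2$ turns the range $i < d$ into $j < \dim\Lk(\sigma,\RC)$, which is exactly the condition asserted in the theorem. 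Combined with the first paragraph this establishes both directions.

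No step is particularly hard; the argument is essentially a dictionary lookup through Hochster's formula, with the genuine content already packed into Theorem~\ref{thm:HF} and the local-cohomology characterization of depth and Krull dimension. The only point requiring care is the index bookkeeping, in particular the extremal case $\sigma = \emptyset$, where $\dim\sigma = -1$ and $\Lk(\emptyset,\RC) = \RC$, and which one needs to include as it recovers precisely the vanishing of $\rHom_j(\RC)$ for $j < \dim\RC$.
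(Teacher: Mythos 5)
Your approach — Hochster's formula combined with the characterization of depth and Krull dimension by local cohomology — is exactly the route the paper gestures at (it cites Stanley and mentions precisely these two ingredients without spelling out the details). Your computation that $\dim \FM[\RC] = d$ via the order of the pole of the Hilbert series at $t=1$ is correct, as is the reduction to the vanishing of $\loCo^i(\FM[\RC])$ for $i<d$.

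However, there is a genuine gap in your last paragraph. The substitution $j = i - \dim\sigma - 2$ turns the range $i<d$ into $j < d - \dim\sigma - 2$, and this is what Hochster's formula actually delivers as the criterion for Cohen--Macaulayness. You then assert $\dim \Lk(\sigma,\RC) = d - \dim\sigma - 2$ and conclude the range is $j < \dim\Lk(\sigma,\RC)$. But this equality is not automatic: in general one only has $\dim\Lk(\sigma,\RC) \le d - \dim\sigma - 2$, with equality precisely when $\sigma$ is contained in a $(d-1)$-dimensional face of $\Delta{\setminus}\Gamma$. For a non-pure $\RC$ and a face $\sigma$ that is maximal of dimension $< d-1$, one has $\Lk(\sigma,\RC) = (\{\emptyset\},\varnothing)$, so $\dim\Lk(\sigma,\RC)=-1$ while $d - \dim\sigma - 2 > -1$, and $\rHom_{-1}(\Lk(\sigma,\RC)) = \k \neq 0$. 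The theorem's condition $i<\dim\Lk(\sigma,\RC)$ is then vacuous at $\sigma$, yet Hochster reports nonzero local cohomology in degree $\dim\sigma+1 < d$. To close the loop one must argue that the theorem's hypothesis, applied to all faces, already rules out such $\sigma$ — i.e., the topological vanishing forces $\dim\Lk(\sigma,\RC)=d-\dim\sigma-2$ whenever the relative link is nonvoid. That is the purity argument lurking in Reisner's criterion, and it is a separate step, not an index substitution. You should either prove this implication or state it explicitly as the one nontrivial input beyond Theorem~\ref{thm:HF} and the local-cohomology dictionary.
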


\begin{cor}[{\cite[Theorem III.7.3]{Stanley96}}]\label{cor:cmincm}
    Let $\Delta$ be a simplicial complex and $\Gamma \subseteq \Delta$ a
    subcomplex. If $\Delta$ and $\Gamma$ are Cohen--Macaulay and $\dim \Delta
    - \dim \Gamma \le 1$, then $(\Delta,\Gamma)$ is Cohen--Macaulay.
    Conversely, if $\Delta$ and $(\Delta,\Gamma)$  are Cohen--Macaulay then
    $\dim \Delta - \dim \Gamma \le 1$ and $\rHom_i(\Lk(\sigma,\Gamma)) = 0$
    for all $\sigma \in \Gamma$ and $i < \dim \Delta -\dim \sigma - 2$.
\end{cor}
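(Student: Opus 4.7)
The plan is to apply Theorem \ref{thm:SR} in both directions and translate the Cohen--Macaulay condition via the long exact sequence in reduced homology for the pair $(\Lk(\sigma,\Delta),\Lk(\sigma,\Gamma))$:
\[
\cdots \to \rHom_i(\Lk(\sigma,\Gamma)) \to \rHom_i(\Lk(\sigma,\Delta)) \to \rHom_i(\Lk(\sigma,\RC)) \to \rHom_{i-1}(\Lk(\sigma,\Gamma)) \to \cdots.
\]
Throughout I will rely on the absolute case of Theorem \ref{thm:SR}: links of Cohen--Macaulay complexes are Cohen--Macaulay and in particular pure, so $\dim\Lk(\sigma,\Delta) = \dim\Delta-\dim\sigma-1$ for every $\sigma\in\Delta$, and similarly for $\Gamma$.

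For the forward direction, assume $\Delta$ and $\Gamma$ are CM with $\dim\Delta-\dim\Gamma\le 1$, and fix $\sigma\in\Delta$. If $\sigma\notin\Gamma$, then $\Lk(\sigma,\Gamma)$ is void, so $\rHom_i(\Lk(\sigma,\RC))=\rHom_i(\Lk(\sigma,\Delta))$ and the required vanishing is immediate from CM of $\Delta$. If $\sigma\in\Gamma$, purity together with the dimension hypothesis gives $\dim\Lk(\sigma,\Gamma)\ge\dim\Lk(\sigma,\Delta)-1$. For every $i<\dim\Lk(\sigma,\Delta)$ both $\rHom_i(\Lk(\sigma,\Delta))$ and $\rHom_{i-1}(\Lk(\sigma,\Gamma))$ then vanish by Reisner's criterion, so the long exact sequence forces $\rHom_i(\Lk(\sigma,\RC))=0$. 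Since $\dim\Lk(\sigma,\RC)\le\dim\Lk(\sigma,\Delta)$, this verifies Theorem \ref{thm:SR} for $\RC$.

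For the converse, assume $\Delta$ and $\RC$ are CM. To show $\dim\Delta-\dim\Gamma\le 1$, I argue by contradiction: if $\dim\Delta-\dim\Gamma\ge 2$, pick a facet $\sigma$ of $\Gamma$. Then $\Lk(\sigma,\Gamma)=\emp$, while purity of $\Delta$ gives $\dim\Lk(\sigma,\Delta)=\dim\Delta-\dim\Gamma-1\ge 1$, so $\dim\Lk(\sigma,\RC)=\dim\Lk(\sigma,\Delta)\ge 1$. However $\rHom_0(\Lk(\sigma,\RC))$ coincides with the unreduced $H_0$ of $\Lk(\sigma,\Delta)$, which is nonzero, contradicting Theorem \ref{thm:SR} for $\RC$. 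For the homological conclusion, fix $\sigma\in\Gamma$ and $i<\dim\Delta-\dim\sigma-2=\dim\Lk(\sigma,\Delta)-1$. The term $\rHom_i(\Lk(\sigma,\Delta))$ vanishes by CM of $\Delta$, so it suffices to kill $\rHom_{i+1}(\Lk(\sigma,\RC))$. I split into two cases. If some facet of $\Delta$ containing $\sigma$ is not in $\Gamma$, then $\dim\Lk(\sigma,\RC)=\dim\Lk(\sigma,\Delta)$ and CM of $\RC$ gives the vanishing in the range $i+1<\dim\Lk(\sigma,\Delta)$. Otherwise $\St(\sigma,\Delta)\subseteq\Gamma$ and hence $\Lk(\sigma,\Gamma)=\Lk(\sigma,\Delta)$, so the desired vanishing already follows directly from CM of $\Delta$.

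The main obstacle is precisely this last case split. Relative CM for $\RC$ only controls $\rHom_j(\Lk(\sigma,\RC))$ up to $\dim\Lk(\sigma,\RC)$, and the latter can strictly drop below $\dim\Lk(\sigma,\Delta)$ when $\St(\sigma,\Delta)\subseteq\Gamma$; in that degenerate situation one must appeal to CM of $\Delta$ directly rather than to CM of $\RC$. Once this is handled, the corollary follows cleanly from Theorem \ref{thm:SR} and the standard long exact sequence of a pair.
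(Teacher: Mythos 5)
Your proof is correct and follows the same route as the paper: apply Reisner's criterion (Theorem~\ref{thm:SR}) in both directions via the long exact sequence of the pair $(\Lk(\sigma,\Delta),\Lk(\sigma,\Gamma))$. The paper dispatches the converse with ``the second claim follows analogously''; your case split according to whether $\St(\sigma,\Delta)\subseteq\Gamma$ correctly addresses the one subtlety hidden in that ``analogously'' --- namely that $\dim\Lk(\sigma,\RC)$ may drop below $\dim\Lk(\sigma,\Delta)$, in which case CM of $\RC$ alone does not kill $\rHom_{i+1}(\Lk(\sigma,\RC))$ and one must fall back on CM of $\Delta$.
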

\begin{proof}
    Let us write $\Lk(\sigma,\RC) = (\Delta_\sigma, \Gamma_\sigma)$ for $\sigma \in \Delta$.  For the pair
    $(\Delta_\sigma,\Gamma_\sigma)$ consider the long exact sequence 
    \[ 
        \cdots\; \longrightarrow 
        \rHom_{i}(\Delta_\sigma) \, \longrightarrow \, 
        \rHom_{i}(\Delta_\sigma,\Gamma_\sigma) \, \longrightarrow 
        \rHom_{i-1}(\Gamma_\sigma) \, \longrightarrow \, 
        \rHom_{i-1}(\Delta_\sigma) \, \longrightarrow \, 
        \rHom_{i-1}(\Delta_\sigma,\Gamma_\sigma) \, \longrightarrow \, 
        \cdots
    \]
    The vanishing of homologies splits the sequence and the first claim
    follows from 
    \[
        0 = \rHom_i(\Delta_\sigma) \rightarrow
        \rHom_i(\Delta_\sigma,\Gamma_\sigma) \rightarrow
        \rHom_{i-1}(\Gamma_\sigma) = 0\ \text{ for }\ i < \dim
        (\Delta_\sigma,\Gamma_\sigma) \le \dim \Delta_\sigma.
    \] 
    The second claim follows analogously.
\end{proof}

A similar criterion can be derived to characterize (relative) Buchsbaum
complexes; cf.\ \cite{miyazaki, Schenzel81}.

\begin{thm}\label{thm:topb}
    For a pure relative simplicial complex $\RC = (\Delta,\Gamma)$ of
    dimension $d-1$ the following are equivalent:
    \begin{compactenum}[\rm (i)]
        \item $\RC$ is a Buchsbaum complex.
        \item $\FM[\RC]$ is a Buchsbaum module.

        \item The link of every vertex is Cohen--Macaulay. {\rm ($\RC$ is
            \Defn{locally Cohen--Macaulay}.)}

        \item For every nonempty face $\sigma$ of $\Delta$ and all $i <
            d-\dim \sigma-1$, we have
            $\rHom_i(\Lk(\sigma,\RC))=0$.
\end{compactenum}
\end{thm}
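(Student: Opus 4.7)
The equivalence (i) $\Leftrightarrow$ (ii) is immediate from the definition: $\RC$ was declared to be Buchsbaum exactly when its face module $\FM[\RC]$ is a Buchsbaum module, so there is nothing to prove here.

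For (iii) $\Leftrightarrow$ (iv), the plan is to apply the relative Reisner criterion (Theorem~\ref{thm:SR}) to each vertex link. The key identification is that for any vertex $v \in \V(\Delta)$ and any face $\tau \in \Lk(v,\Delta)$ one has
\[
  \Lk(\tau, \Lk(v,\RC)) \ = \ \Lk(\tau \cup \{v\}, \RC).
\]
Consequently, Cohen--Macaulayness of the relative complex $\Lk(v,\RC)$ is equivalent, via Theorem~\ref{thm:SR}, to the vanishing of $\rHom_i(\Lk(\tau \cup \{v\}, \RC))$ for all such $\tau$ in the appropriate range of $i$. Letting $v$ range over the vertices of any given nonempty $\sigma \in \Delta$ converts this family of link-wise conditions into the single uniform statement of~(iv), and vice versa: (iv) applied to faces of the form $\tau \cup \{v\}$ verifies the Reisner hypothesis needed to conclude that $\Lk(v,\RC)$ is Cohen--Macaulay.

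For (ii) $\Leftrightarrow$ (iv), the plan is to invoke the standard algebraic characterization of Buchsbaum modules: a finitely generated graded $\k[\x]$-module $M$ of Krull dimension $d$ is Buchsbaum if and only if $\m \cdot \loCo^i(M) = 0$ for every $i < d$. Applied to $M = \FM[\RC]$ (whose Krull dimension equals $d$ because $\RC$ is pure of dimension $d-1$), this translates Buchsbaumness into the statement that each $x_j$ acts trivially on $\loCo^i(\FM[\RC])$ for $i<d$. The relative Hochster formula (Theorem~\ref{thm:HF}) then expresses the fine graded pieces $\loCo^i(\FM[\RC])_\alpha$ as reduced relative homologies of links $\Lk(\sigma_\alpha, \RC)$, and passing the $x_j$-action through the \v{C}ech model identifies $\m \cdot \loCo^i = 0$ with exactly the homological vanishing claimed in~(iv).

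The main obstacle, as always in this circle of results, lies in the last step. One must carefully analyze how multiplication by $x_j$ relates the pieces $\loCo^i(\FM[\RC])_\alpha$ and $\loCo^i(\FM[\RC])_{\alpha+e_j}$ under Hochster's isomorphisms: when $\alpha_j \le -2$ the supports agree and the map can be identified with an isomorphism onto an isomorphic copy, while when $\alpha_j=-1$ the support shrinks by $\{j\}$ and the map corresponds to a connecting homomorphism between links. Tracking both regimes through the \v{C}ech model shows that requiring $\m \cdot \loCo^i(\FM[\RC])=0$ for $i<d$ is equivalent to the vanishing of $\rHom_i(\Lk(\sigma,\RC))$ in the stated range. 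This is the relative analogue of the arguments of Schenzel~\cite{Schenzel81} and Miyazaki~\cite{miyazaki}, and the only novelty is bookkeeping the relative pair structure throughout.
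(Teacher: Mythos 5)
Your handling of (i) $\Leftrightarrow$ (ii) and (iii) $\Leftrightarrow$ (iv) is correct and matches the paper: the first is definitional, and the second follows by feeding the identity $\Lk(\tau,\Lk(v,\RC)) = \Lk(\tau\cup\{v\},\RC)$ into the relative Reisner criterion (Theorem~\ref{thm:SR}), exactly as you say.

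The bridge (ii) $\Leftrightarrow$ (iv) has a genuine gap, however. The statement you invoke as the ``standard algebraic characterization''---that a finitely generated graded module $M$ is Buchsbaum \emph{if and only if} $\m\cdot\loCo^i(M)=0$ for all $i<\dim M$---is false for general modules. The condition $\m\cdot\loCo^i(M)=0$ defines \emph{quasi-Buchsbaum} modules, which form a strictly larger class than Buchsbaum modules (Goto gave explicit counterexamples); only the implication Buchsbaum $\Rightarrow$ quasi-Buchsbaum holds in general. Building both directions of (ii) $\Leftrightarrow$ (iv) on this equivalence is therefore not a valid plan as stated. Concretely, even if your \v{C}ech-model bookkeeping succeeds in showing that $\m\cdot\loCo^i(\FM[\RC])=0$ is equivalent to the degree-concentration $\loCo^i(\FM[\RC])_j=0$ for $j\neq 0$ (which does follow for face modules, because the $x_j$-multiplication maps are isomorphisms where supports agree and the pieces depend only on the support $\sigma_\alpha$), you have then only established quasi-Buchsbaumness, not Buchsbaumness---and no criterion you cite takes you the rest of the way.

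The paper closes this gap differently: for (iv) $\Rightarrow$ (ii) it reads off degree-concentration \emph{directly} from Hochster's formula (no $x_j$-action analysis needed) and then appeals to Schenzel's sufficient criterion \cite[Satz~4.3.1]{Schenzel82}, which asserts that concentration of $\loCo^i$ in a single degree for all $i<\dim M$ \emph{implies} Buchsbaumness; and for (ii) $\Rightarrow$ (iv) it passes through (iii) via a localization argument in the spirit of Reisner's Lemma~5, rather than through local cohomology at all. Your sketch could be salvaged by replacing the false iff with these two correct one-directional tools---in particular, citing Schenzel's sufficient criterion is essential for the direction (iv) $\Rightarrow$ (ii)---but as written the argument would not go through.
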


\begin{proof}
    (i) $\Leftrightarrow$ (ii) is true by definition. The equivalence (iii)
    $\Leftrightarrow$ (iv) is Theorem~\ref{thm:SR}.
    Assuming (ii), we obtain from equation~\eqref{eqn:weakMseq} that every localization
    $\FM[\RC]_\mathfrak{p}$ at primes $\mathfrak{p} \neq \m$ yields a
    Cohen--Macaulay module. The implication (ii) $\Rightarrow$ (iii) now
    follows from the same argument as in~\cite[Lemma~5]{Reisner} applied to
    face modules.
    Finally, assuming~(iv), it follows from Theorem~\ref{thm:HF}
    that $\loCo^i(\FM[\RC])_j = 0$ whenever $j \neq 0$ and  $0 \le i \le \dim
    \RC$ and~\cite[Satz~4.3.1]{Schenzel82} assures us that $\FM[\RC]$ is
    Buchsbaum.
\end{proof}

An immediate corollary from the topological characterizations is that the Cohen--Macaulay and Buchsbaum properties are
inherited to skeleta.
\begin{cor}\label{cor:bher}
    The $k$-skeleton of a relative Cohen--Macaulay or Buchsbaum complex is
    Cohen--Macaulay or  Buchsbaum, respectively.
\end{cor}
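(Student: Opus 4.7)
I would verify the topological characterizations in Theorems~\ref{thm:SR} and~\ref{thm:topb}, deducing the Buchsbaum case from the Cohen--Macaulay one. Throughout I use two elementary identities: for any face $\sigma \in \Delta$,
\begin{equation*}
    \Lk(\sigma, \RC^{(k)}) \ = \ \Lk(\sigma, \RC)^{(k - |\sigma|)}, \qquad \Lk(\tau, \Lk(\sigma, \RC)) \ = \ \Lk(\sigma \cup \tau, \RC).
\end{equation*}
The second identity combined with Theorem~\ref{thm:SR} immediately shows that links of relative Cohen--Macaulay complexes are themselves Cohen--Macaulay.

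The heart of the argument is a chain-level observation: if $X$ is a relative simplicial complex of dimension $n$ with $\rHom_i(X) = 0$ for $i < n$, and if $m \le n$, then $\rHom_i(X^{(m+1)}) = 0$ for $i < m$. Indeed, taking a skeleton truncates both constituents of the pair in the same way, so the reduced relative chain complex of $X^{(m+1)}$ coincides with that of $X$ in degrees $\le m$; hence their homologies agree in degrees strictly below~$m$.

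For the Cohen--Macaulay case, if $\RC$ has dimension $d-1$, then for any $\sigma \in \Delta^{(k)}$ the link $\Lk(\sigma, \RC)$ is Cohen--Macaulay of dimension $d - |\sigma| - 1$. Applying the chain-level observation with $m = k - |\sigma| - 1$ yields $\rHom_i(\Lk(\sigma, \RC^{(k)})) = 0$ for $i < \dim \Lk(\sigma, \RC^{(k)})$, and Theorem~\ref{thm:SR} concludes that $\RC^{(k)}$ is Cohen--Macaulay. For the Buchsbaum case, I would first verify that $\RC^{(k)}$ remains pure: by pureness of $\RC$, any small-dimensional face $\sigma \in \Delta \setminus \Gamma$ extends to a facet $\tau$ of $\RC$, and every intermediate $\sigma \subseteq \sigma' \subseteq \tau$ lies in $\Delta \setminus \Gamma$, since $\sigma' \in \Gamma$ would force $\sigma \in \Gamma$ by downward closure. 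Theorem~\ref{thm:topb}(iii) then applies, because the vertex links of $\RC^{(k)}$ are skeleta of the (Cohen--Macaulay, by hypothesis) vertex links of $\RC$, hence Cohen--Macaulay by the first half of the argument.

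The one delicate point is the chain-level truncation: one must confirm that forming the quotient $C_\bullet(\Delta)/C_\bullet(\Gamma)$ commutes with skeletal restriction in the relevant degrees, paying attention to the distinction between reduced and unreduced conventions when $\Gamma = \emptyset$ versus $\Gamma \supseteq \K_0$. This is essentially bookkeeping, but is the one place where the relative setting requires more than a direct translation of the classical argument.
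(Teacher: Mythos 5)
Your proof is correct and fleshes out exactly what the paper means by calling this an ``immediate corollary'' of Theorems~\ref{thm:SR} and~\ref{thm:topb}: the link identities $\Lk(\sigma,\RC^{(k)})=\Lk(\sigma,\RC)^{(k-|\sigma|)}$ and $\Lk(\tau,\Lk(\sigma,\RC))=\Lk(\sigma\cup\tau,\RC)$, the chain-level truncation lemma, and the purity check in the Buchsbaum case are precisely the ingredients required. One small imprecision: you assert $\dim\Lk(\sigma,\RC)=d-|\sigma|-1$, which is not guaranteed for an arbitrary relative Cohen--Macaulay complex (the paper never claims relative CM implies pure); however this does not affect the argument, since whenever $\dim\Lk(\sigma,\RC)<k-|\sigma|$ the truncation $\Lk(\sigma,\RC)^{(k-|\sigma|)}=\Lk(\sigma,\RC)$ is trivial and one appeals directly to the Cohen--Macaulayness of the link rather than to the truncation lemma.
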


\section{Local cohomology of Buchsbaum face modules}
\label{sec:SchenzelFormula}

A key observation in Stanley's proof of the UBT for spheres $\Delta$ is that 
\[
    \Hilb(\k[\Delta]/ \Theta \k[\Delta],t) \ = \ 
    h_0(\Delta) + h_1(\Delta)t + \dots + h_d(\Delta) t^d
\]
if $\Theta$ is a \lsop\ of length $d = \dim \Delta +1$. However,
this line of reasoning fails for general manifolds or Buchsbaum complexes. For these cases, an important tool was
developed by Schenzel~\cite{Schenzel81} that takes into account the
topological/homological properties of $\Delta$. In this section, we prove a
generalization of Schenzel's formula to relative complexes. Our
treatment is tailor-made for Stanley--Reisner modules and
slightly simpler than Schenzel's original approach.

A criterion of Schenzel~\cite[Satz~4.3.1]{Schenzel82} states that a graded
$\k[\x]$-module $M$ is Buchsbaum whenever the $\Z$-graded local cohomology
$\loCo^i(M)$ is concentrated in a fixed degree for all $0 \le i < \dim M$.
Schenzel~\cite{Schenzel81} showed that the converse holds for special classes
of Buchsbaum modules. For this, he used the purity of the Frobenius based on earlier work of Hochster and Roberts
\cite{H-R-Purity}. For Stanley--Reisner
modules, the concentration of local cohomology is a consequence of
Theorem~\ref{thm:HF}:

\begin{cor}\label{cor:deg0}
    Let $\RC=(\Delta,\Gamma)$ be a pure relative simplicial complex of
    dimension $d-1$. Then the following are equivalent:
    \begin{enumerate}[\rm (i)]
        \item $\rHom_i(\Lk(\sigma,\RC))=0$ for all non-empty faces $\sigma
            \in \Delta$ and all $i< \dim \Lk(\sigma,\Delta)=d-\dim \sigma-1$,
        \item The coarse-graded local cohomology of $\RC$ is concentrated in
            degree zero, that is, $\loCo^i(\FM[\RC])_j = 0$ for all $i, j$, $j \neq 0$. 
        \item $\FM[\RC]$ is a Buchsbaum module.
\end{enumerate}
\end{cor}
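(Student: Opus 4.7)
The plan is to deduce the three-way equivalence by assembling three pieces that are already available: Hochster's formula (Theorem~\ref{thm:HF}), the Reisner-type characterisation of Buchsbaum complexes (Theorem~\ref{thm:topb}), and Schenzel's algebraic criterion~\cite[Satz~4.3.1]{Schenzel82} which detects the Buchsbaum property from concentration of local cohomology in a single degree. The equivalence (i)~$\Leftrightarrow$~(iii) is \emph{already} available, being exactly the equivalence (iv)~$\Leftrightarrow$~(ii) of Theorem~\ref{thm:topb}. So the only new content is the role of (ii), which I would fit into the circle via (i)~$\Rightarrow$~(ii)~$\Rightarrow$~(iii).

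For (ii)~$\Rightarrow$~(iii), I would simply quote Schenzel's criterion: if the coarse-graded local cohomology $\loCo^i(\FM[\RC])$ is concentrated in degree zero for all $0 \le i < \dim \FM[\RC] = d$, then $\FM[\RC]$ is Buchsbaum. No further work is required.

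For (i)~$\Rightarrow$~(ii) I would work with the fine grading and translate back to the coarse grading at the end. By Hochster's formula, $\loCo^i(\FM[\RC])_\alpha = 0$ unless $\alpha \le 0$ and $\sigma_\alpha := \supp(\alpha) \in \Delta$, in which case
\[
\loCo^i(\FM[\RC])_\alpha \ \cong \ \rHom_{i-\dim\sigma_\alpha-2}(\Lk(\sigma_\alpha,\RC)).
\]
If the coarse degree $j = |\alpha|$ is strictly positive, the vanishing is automatic since $\alpha \not\le 0$. If $j < 0$, every contributing $\alpha$ has $\alpha \le 0$ and $\alpha \neq 0$, hence $\sigma_\alpha$ is a nonempty face of $\Delta$; condition (i) then kills the corresponding reduced homology. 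Summing these fine pieces over all $\alpha$ with $|\alpha| = j$ delivers $\loCo^i(\FM[\RC])_j = 0$ for $j \neq 0$.

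The only step requiring care --- and hence the main obstacle --- is the dimension bookkeeping in the last paragraph: one must check that, for each nonempty $\sigma_\alpha \in \Delta$ and each $i$ in the range where $\loCo^i$ can be nonzero, the homology index $i - \dim\sigma_\alpha - 2$ indeed lies below $\dim\Lk(\sigma_\alpha,\Delta)$, so that (i) applies. Once this indexing is verified, the three steps knit the corollary together from previously-established pieces.
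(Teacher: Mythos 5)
Your proposal matches the paper's argument exactly: the equivalence (i)$\Leftrightarrow$(iii) is precisely Theorem~\ref{thm:topb}(iv)$\Leftrightarrow$(ii), and the circle through (ii) is closed via (i)$\Rightarrow$(ii) from Hochster's formula and (ii)$\Rightarrow$(iii) from Schenzel's criterion, which is exactly the chain of references given both before the corollary and inside the proof of Theorem~\ref{thm:topb}. The dimension bookkeeping you flag as the main obstacle does close without incident: for coarse degree $j=|\alpha|<0$ the only contributing fine degrees have $\alpha\le 0$ and $\sigma:=\sigma_\alpha$ a nonempty face of $\Delta$, Hochster's formula then gives $\loCo^i(\FM[\RC])_\alpha\cong\rHom_{i-\dim\sigma-2}(\Lk(\sigma,\RC))$, and condition~(i) annihilates this whenever $i-\dim\sigma-2<d-\dim\sigma-1$, i.e.\ for all $i\le d$, which is the full range in which $\loCo^i$ can be nonzero since $\dim\FM[\RC]=d$.
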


For the following discussion, let us write $R = \k[\x]$ and define $R(t)$ to
be the free rank-one $R$-module generated in degree $-t$. Recall that for an
homogeneous element $\theta_1 \in R_t$ the \Defn{Koszul complex} $K(\theta_1)$
is the complex
\[
    0\longrightarrow R \xrightarrow{\times \theta_1} R(t) \longrightarrow 0.
\]
For family of homogeneous elements $\Theta = (\theta_1, \theta_2,
\dots,\theta_n)$ the Koszul complex is defined as
\[
    K(\Theta) \ = \ K(\theta_1)\otimes_R
    K(\theta_2)\otimes_R \cdots \otimes_R K(\theta_n).
\]
For a graded $R$-module $M$ we denote by $\Hom^\bullet(\Theta;M) :=
\Hom^\bullet(K(\Theta)\otimes_R M)$ the \Defn{Koszul cohomology} of $M$ with
respect to $\Theta$.  The Koszul complex is a basic tool in the study of local
cohomology and we refer the reader to~\cite{Bruns-Herzog,24loCo} for further
information.

\begin{lem}\label{lem:koszulRanks}
    Let $M$ be a Buchsbaum module of dimension $d$ and $\Theta =
    (\theta_1,\dots,\theta_d)$ a \hsop\ for $M$. Then for all $0 \le i < d$,
    $\m\Hom^i(\Theta;M) = 0$. In particular, the Koszul cohomology modules are
    $\k$-vector spaces of dimension
    \[
        \dim_\k \Hom^i(\Theta_s;M) \ = \ \sum_{j=0}^i \binom{s}{i-j}\, 
            \dim_\k \loCo^j(M).
    \]
\end{lem}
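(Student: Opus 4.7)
The plan is to establish both assertions simultaneously by induction on $s$, driven by the long exact Koszul cohomology sequence from the factorization $K(\Theta_s) = K(\Theta_{s-1})\otimes_R K(\theta_s)$. The Buchsbaum hypothesis enters through Corollary~\ref{cor:deg0}, which says $\loCo^j(M)$ is a finite-dimensional $\k$-vector space annihilated by $\m$ for $0 \le j < d$, and through the defining weak-regularity property $\Theta_{i-1}M:\theta_i = \Theta_{i-1}M:\m$ of any \hsop\ on a Buchsbaum module.

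The base case $s = 1$ is the identification $\Hom^0(\Theta_1;M) = (0:_M \theta_1) = (0:_M \m) = \loCo^0(M)$, which matches the formula value $\binom{1}{0}\dim_\k \loCo^0(M)$ and is manifestly $\m$-annihilated. For the inductive step, the LES
\[
\cdots \to \Hom^{i-1}(\Theta_{s-1};M) \xrightarrow{\theta_s} \Hom^{i-1}(\Theta_{s-1};M) \to \Hom^i(\Theta_s;M) \to \Hom^i(\Theta_{s-1};M) \xrightarrow{\theta_s} \Hom^i(\Theta_{s-1};M) \to \cdots
\]
is the main tool. When $i < s-1$, both flanking terms are $\m$-annihilated by induction, so $\theta_s\in\m$ acts as zero, yielding a short exact sequence
\[
0 \to \Hom^{i-1}(\Theta_{s-1};M) \to \Hom^i(\Theta_s;M) \to \Hom^i(\Theta_{s-1};M) \to 0,
\]
from which Pascal's identity $\binom{s-1}{i-j-1}+\binom{s-1}{i-j} = \binom{s}{i-j}$ gives the claimed dimension formula and the $\m$-annihilation of $\Hom^i(\Theta_s;M)$ propagates (using that Koszul cohomology is a graded $R/(\Theta_s)$-module and the structural fact, built into the Buchsbaum setting, that such extensions of $\m$-annihilated pieces remain $\m$-annihilated).

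The delicate case is the boundary $i = s-1$, where $\Hom^{s-1}(\Theta_{s-1};M) = M/\Theta_{s-1}M$ is not $\m$-annihilated. The Buchsbaum identity $\Theta_{s-1}M:\theta_s = \Theta_{s-1}M:\m$ identifies the kernel of multiplication by $\theta_s$ on $M/\Theta_{s-1}M$ with the socle $\loCo^0(M/\Theta_{s-1}M)$, which is $\m$-annihilated. Its dimension is computed by an inner induction: passing from $M$ to $M/\theta M$ for a single parameter $\theta$, the local cohomology LES collapses (because $\theta \in \m$ annihilates $\loCo^j(M)$ for $j < d$) to yield the Buchsbaum persistence formula $\dim_\k \loCo^j(M/\theta M) = \dim_\k \loCo^j(M) + \dim_\k \loCo^{j+1}(M)$ for $j < d-1$; iterating $s-1$ times produces $\dim_\k \loCo^0(M/\Theta_{s-1}M) = \sum_{j=0}^{s-1}\binom{s-1}{j}\dim_\k \loCo^j(M)$. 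Combined with the LES-derived decomposition $\dim_\k \Hom^{s-1}(\Theta_s;M) = \dim_\k \Hom^{s-2}(\Theta_{s-1};M) + \dim_\k \loCo^0(M/\Theta_{s-1}M)$ and Pascal, this matches the stated formula at $i = s-1$. The main obstacle is precisely this boundary case: the LES mechanism that drives the generic step breaks down because the top Koszul cohomology $M/\Theta_{s-1}M$ is not $\m$-annihilated, so the full strength of the Buchsbaum property together with an inner local-cohomology calculation is required to carry the induction across.
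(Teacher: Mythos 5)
Your proposal takes a genuinely different route from the paper. The paper does not give a self-contained argument at all: it cites Lemma~4.2.1 of Schenzel~\cite{Schenzel82}, whose proof goes through a derived-category quasi-isomorphism $\tau^\ell(K^\bullet(\Theta;M)\otimes C^\bullet(M))\xrightarrow{\sim}\tau^\ell K^\bullet(\Theta;M)$ (the paper itself alludes to this machinery in the proof of Theorem~\ref{thm:reduction_to_topology}), and the only thing the paper proves directly is the finite-length hypothesis needed to invoke Schenzel. You instead attempt a naive long-exact-sequence induction on $s$. That is a reasonable instinct, but as written it has a genuine gap.

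The critical problem is the step where you assert that $\m$-annihilation of $\Hom^i(\Theta_s;M)$ ``propagates'' because it sits in a short exact sequence between two $\m$-annihilated modules. There is no such structural fact: an extension of $\m$-annihilated modules need not be $\m$-annihilated (already $0\to\k\to\k[x]/(x^2)\to\k\to 0$, with $\m=(x)$, fails). What you do obtain from the SES is that the outer two terms are killed by $\m$, hence the dimension formula, and that $\theta_j$ acts as zero on $\Hom^i(\Theta_s;M)$ for $j\le s$ because multiplication by elements of the Koszul sequence is null-homotopic; but $\m$ is strictly larger than $(\theta_1,\dots,\theta_s)$, so this does not give $\m$-annihilation. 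Since your induction on $s$ requires $\m$-annihilation at level $s-1$ to collapse the LES at level $s$, the argument is circular: the assertion you are trying to prove is the assertion you need in order to prove it. This is precisely the point where the Buchsbaum hypothesis has to be used in a nontrivial way (via the surjectivity of the canonical Koszul-to-local-cohomology comparison maps, or the derived-category splitting Schenzel establishes), and a hand-wave does not close it.

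There are two further, subsidiary issues in the boundary case $i=s-1$. First, identifying $(\Theta_{s-1}M:\m)/\Theta_{s-1}M$ with $\loCo^0(M/\Theta_{s-1}M)$ uses that $M/\Theta_{s-1}M$ is again Buchsbaum; that is true but is itself one of the central theorems of the theory (Stückrad--Vogel), and it is proved by essentially the same Koszul-cohomology considerations you are in the middle of, so invoking it here is again circular. Second, the ``persistence formula'' $\dim_\k\loCo^j(M/\theta M)=\dim_\k\loCo^j(M)+\dim_\k\loCo^{j+1}(M)$ via the local cohomology LES of $M\xrightarrow{\theta}M$ implicitly assumes $\theta$ is a nonzerodivisor on $M$, which fails whenever $\loCo^0(M)\ne 0$; the correct statement requires first passing to $M/\loCo^0(M)$ and then tracking the torsion, which you do not do. None of these steps is hopeless, but each requires a real argument, and at present the proposal proves only the dimension formula for the outer terms while leaving the $\m$-annihilation claim --- which is what the rest of Section~\ref{sec:SchenzelFormula} actually uses --- unestablished.
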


The lemma is a direct consequence of Lemma~4.2.1 of~\cite{Schenzel82}
except that we need to verify that $\Hom^i(\Theta_s;M)$ is of finite length
for all $i$.

\begin{prp}
    If $M$ is a Buchsbaum module and $\Theta$ is a~\hsop, then
    $\Hom^i(\Theta_s;M)$ is of finite length for all $0 \le i \le s$.
\end{prp}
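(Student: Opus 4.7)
The plan is to proceed by induction on $s$ via the standard Koszul recurrence. From the tensor product decomposition $K(\Theta_s) = K(\Theta_{s-1}) \otimes_R K(\theta_s)$ one obtains, after tensoring with $M$ and passing to cohomology, the short exact sequence
\[
0 \longrightarrow \Hom^{i-1}(\Theta_{s-1}; M)\big/\theta_s \Hom^{i-1}(\Theta_{s-1}; M) \longrightarrow \Hom^{i}(\Theta_s; M) \longrightarrow \bigl(0 :_{\Hom^{i}(\Theta_{s-1}; M)} \theta_s\bigr) \longrightarrow 0
\]
for each $i$. Thus it suffices to show that each of the two outer modules is of finite length, and the argument reduces to invoking the Buchsbaum hypothesis at precisely the steps where the inductive hypothesis alone is not enough.

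For the base case $s = 1$, we have $\Hom^0(\theta_1; M) = (0 :_M \theta_1)$, and applying the Buchsbaum axiom with empty prefix $\Theta_0$ gives $(0 :_M \theta_1) = (0 :_M \m)$. As a submodule of the finitely generated $R$-module $M$ that is annihilated by $\m$, this is a finite-dimensional $\k$-vector space and hence of finite length.

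For the inductive step, when both outer terms of the short exact sequence involve Koszul cohomology of $\Theta_{s-1}$ strictly below its top degree, finite length transfers immediately from the inductive hypothesis (using $\Hom^{-1} := 0$); also $\Hom^0(\Theta_s; M) \subseteq (0 :_M \theta_1) = (0 :_M \m)$ is of finite length directly. The delicate case occurs when an outer term encounters the top module $\Hom^{s-1}(\Theta_{s-1}; M) = M/\Theta_{s-1} M$, which is not itself of finite length when $s-1 < \dim M$. Here I would apply the Buchsbaum identity $\Theta_{s-1} M : \theta_s = \Theta_{s-1} M : \m$ directly: the $\theta_s$-annihilator in $M/\Theta_{s-1} M$ coincides with its socle $(0 :_{M/\Theta_{s-1} M} \m)$, which as a submodule of a finitely generated module killed by $\m$ is finite dimensional over $\k$ and hence of finite length; the cokernel side reduces, via the same identity, to an analogous socle statement in a further quotient of $M$ by a partial system of parameters.

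The main obstacle is the bookkeeping: organizing the induction so that the Buchsbaum axiom is invoked at exactly the boundary indices where the outer terms are not covered by the inductive hypothesis, namely whenever $M/\Theta_j M$ appears for some partial system of parameters. At every such boundary, one converts the Buchsbaum identity into a finite-length statement about the socle of $M/\Theta_j M$, which as a finitely generated $\m$-annihilated module is a finite-dimensional $\k$-vector space and therefore provides the finite-length input needed to close the passage from stage $s-1$ to stage $s$.
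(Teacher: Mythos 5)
Your argument is correct and takes a genuinely different route from the paper. The paper runs a \emph{decreasing} induction starting at $s=d$, where a full hsop forces every Koszul cohomology module to be supported at $\m$ and hence of finite length; for $s<d$ it then augments $\Theta_s$ by a power $\theta_{s+1}^{\,j}$, uses the mapping-cone sequence to embed $\Hom^{i-1}(\Theta_s;M)/\theta_{s+1}^{\,j}\Hom^{i-1}(\Theta_s;M)$ into $\Hom^i(\Theta';M)$, and then lets $j\to\infty$ (Krull intersection, billed there as Nakayama) to force $\m\Hom^{i-1}(\Theta_s;M)=0$. Your \emph{increasing} induction via the same Koszul recurrence is shorter and avoids the limiting argument entirely: the only place the Buchsbaum hypothesis is truly used is to identify the $\theta_s$-torsion of the top module $M/\Theta_{s-1}M$, namely $(\Theta_{s-1}M:\theta_s)/\Theta_{s-1}M$, with the socle $(0:_{M/\Theta_{s-1}M}\m)$; this is a finitely generated $\m$-torsion module, hence a finite-dimensional $\k$-vector space, and every other outer term in the recurrence is handled by the inductive hypothesis (plus the base case $(0:_M\theta_1)=(0:_M\m)$). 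This is the classical argument, and it has the small advantage of being logically self-contained, whereas the paper's downward induction quietly appeals to Lemma~\ref{lem:koszulRanks}, whose own justification depends on the Proposition.

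One caveat about the statement and your last sentence. For $s<\dim M$ the top module $\Hom^s(\Theta_s;M)\cong M/\Theta_sM$ has Krull dimension $\dim M-s>0$ and is \emph{not} of finite length, so the correct range in the Proposition is $0\le i<s$ (with $i=s$ allowed only when $s=d$). You clearly notice that $M/\Theta_{s-1}M$ is not of finite length, but the closing phrase about the ``cokernel side'' reducing ``to an analogous socle statement in a further quotient of $M$'' reads as an attempt to also cover the $i=s$ term of the recurrence, where the left outer term is exactly $M/\Theta_sM$; that case cannot be rescued and is not part of the intended claim. Also, a small phrasing slip: the socle is not a submodule of something already killed by $\m$ — it is a submodule of the finitely generated module $M/\Theta_{s-1}M$ and is itself killed by $\m$; finite-dimensionality over $\k$ follows because it is finitely generated (Noetherian) and $\m$-torsion.
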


\begin{proof}
    For $s=d := \dim M$, $\Theta$ is a homogeneous system of parameters. Thus
    $\Hom^d(\Theta;M) = M/\Theta M$ is of dimension zero, and therefore of finite length and the result
    follows. 
    For $s < d$, observe that for all $j \ge 1$, $\Theta^\prime =
    (\theta_1,\dots,\theta_s,\theta_{s+1}^j)$ is a partial \hsop\ for $\FM$.
    Now, $K(\Theta^\prime)$ is the mapping cone of
    $\times \theta^j_{s+1}: K(\Theta_s)\longrightarrow K(\Theta_s)$ and the long
    exact sequence in cohomology yields
    \[
        0\longrightarrow \Hom^{i-1}(\Theta_{s};\FM)/\theta^{j+1}_s
        \Hom^{i-1}(\Theta_{s};\FM) \longrightarrow \Hom^{i}(\Theta^\prime;\FM).
    \]
    By induction on $s$ and Lemma~\ref{lem:koszulRanks}, 
    $\m\Hom^{i-1}(\Theta_s;\FM)=0$ since $\Hom^{i}(\Theta^\prime;\FM)$ is 
    annihilated by the irrelevant ideal for all $j\ge 1$. The finite length of 
    $\Hom^{i-1}(\Theta_s;\FM)$ now follows from Nakayama's Lemma.
\end{proof}

As a consequence we note the following.
\begin{cor}\label{cor:qm}
    Let $M$ be Buchsbaum $R$-module of dimension $d$ and let $\Theta$ be a
    \lsop\ for $M$. Then for all $0< s\le d$, we have
    \[
        \dim_\k (\Theta_{s-1} M :\theta_s)/(\Theta_{s-1} M) \  = \
        \sum_{i=0}^{s-1} \binom{s-1}{i}\, \dim_{\k} \loCo^i(M).
    \]
\end{cor}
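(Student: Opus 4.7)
The plan is to rewrite $(\Theta_{s-1} M : \theta_s)/\Theta_{s-1} M$ as a kernel inside a Koszul cohomology module and read off its dimension from Lemma~\ref{lem:koszulRanks}.

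First I would note the tautology
\[
(\Theta_{s-1} M : \theta_s)/\Theta_{s-1} M \ = \ \ker\!\Bigl(\theta_s \colon M/\Theta_{s-1} M \longrightarrow M/\Theta_{s-1} M\Bigr),
\]
and recognize $M/\Theta_{s-1} M$ as the top Koszul cohomology $\Hom^{s-1}(\Theta_{s-1}; M)$. To access this kernel I would use the fact that $K(\Theta_s; M)$ is the mapping cone of multiplication by $\theta_s$ on $K(\Theta_{s-1}; M)$ and extract the segment of the associated long exact cohomology sequence around degree $s-1$, in which the connecting maps act as $\times\theta_s$. Since $\Hom^s(\Theta_{s-1}; M) = 0$, this produces the short exact sequence
\[
0 \longrightarrow \Hom^{s-2}(\Theta_{s-1}; M)/\theta_s\Hom^{s-2}(\Theta_{s-1}; M) \longrightarrow \Hom^{s-1}(\Theta_s; M) \longrightarrow (\Theta_{s-1} M : \theta_s)/\Theta_{s-1} M \longrightarrow 0.
\]

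Next, since $\Theta$ is a linear system of parameters we have $\theta_s \in \m$, and by Lemma~\ref{lem:koszulRanks} (whose conclusions extend to the partial l.s.o.p.\ $\Theta_{s-1}$ via the inductive argument of the Proposition preceding this Corollary) we get $\m\,\Hom^{s-2}(\Theta_{s-1}; M) = 0$. In particular $\theta_s\Hom^{s-2}(\Theta_{s-1}; M) = 0$, so the leftmost module in the sequence collapses to $\Hom^{s-2}(\Theta_{s-1}; M)$ itself. All three terms are now finite-dimensional $\k$-vector spaces and additivity of dimension yields
\[
\dim_\k(\Theta_{s-1} M : \theta_s)/\Theta_{s-1} M \ = \ \dim_\k\Hom^{s-1}(\Theta_s; M) \ - \ \dim_\k\Hom^{s-2}(\Theta_{s-1}; M).
\]

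Finally, I would substitute the formula of Lemma~\ref{lem:koszulRanks} into each term and apply Pascal's identity. The right-hand side becomes
\[
\sum_{j=0}^{s-1}\binom{s}{s-1-j}\dim_\k\loCo^j(M) \ - \ \sum_{j=0}^{s-2}\binom{s-1}{s-2-j}\dim_\k\loCo^j(M),
\]
and the relation $\binom{s}{s-1-j} = \binom{s-1}{s-1-j} + \binom{s-1}{s-2-j}$, valid for $0 \le j \le s-2$, together with the isolated $j=s-1$ term $\binom{s}{0}=1=\binom{s-1}{s-1}$, consolidates the expression to $\sum_{j=0}^{s-1}\binom{s-1}{j}\dim_\k\loCo^j(M)$, as desired. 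The only real subtlety is verifying that $\m$ annihilates $\Hom^{s-2}(\Theta_{s-1}; M)$ when $\Theta_{s-1}$ is merely a partial l.s.o.p., but this is precisely the content of the inductive Nakayama-type argument already carried out in the Proposition preceding the Corollary.
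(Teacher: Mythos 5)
Your proof is correct and follows the same route as the paper's: both derive the short exact sequence $0 \to \Hom^{s-2}(\Theta_{s-1};M) \to \Hom^{s-1}(\Theta_s;M) \to (\Theta_{s-1}M:\theta_s)/\Theta_{s-1}M \to 0$ and then invoke Lemma~\ref{lem:koszulRanks} for the dimension count. The only difference is that the paper cites Schenzel's Lemma~6.3.4 for the sequence while you derive it directly from the mapping-cone structure of the Koszul complex and make the Pascal's-identity arithmetic explicit; both are valid instances of the same argument.
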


\begin{proof}
As in \cite[Lemma 6.3.4]{Schenzel82}, we have the short exact sequence 
\[
    0\longrightarrow H^{s-2}(\Theta_{s-1};M)\longrightarrow
    H^{s-1}(\Theta_{s};M)\longrightarrow (\Theta_{s-1}
    M:\theta_s)/(\Theta_{s-1} M) \longrightarrow 0
\]
as $H^{s-1}(\Theta_{s};M)$ is annihilated by the irrelevant ideal.  By
Lemma~\ref{lem:koszulRanks} and the Buchsbaum property~\eqref{eqn:weakMseq},
this is a sequence of $\k$-vector spaces and hence splits. The result now
follows from Lemma~\ref{lem:koszulRanks}.
\end{proof}

With this, we can reprove and generalize Schenzel's Formula to relative 
Buchsbaum complexes, and therefore provide a central tool for relative 
Stanley--Reisner Theory.

\begin{thm}[Relative Schenzel Formula]\label{thm:RS}
    Let  $\RC=(\Delta,\Gamma)$ be a relative Buchsbaum complex of
    dimension $d-1$ and $\FM = \FM[\RC]$ the associated face module.
    If $\Theta$ is a \lsop\ for $\FM$, then 
    \[
        (1-t)^{d}\Hilb(\FM,t) \ = \ \Hilb(\FM/\Theta \FM,t) \ + \
        \sum_{j=1}^{d}\binom{d}{j}\left(\sum_{i=0}^{j-1}(-1)^{j-i}
        \rBetti_{i-1} (\Delta,\Gamma)\right)t^j.
    \]
\end{thm}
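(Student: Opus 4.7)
The plan is to iterate the short exact sequences obtained by modding out the \lsop\ $\Theta = (\theta_1,\ldots,\theta_d)$ one element at a time, and to identify the resulting correction term via local cohomology using the Buchsbaum hypothesis.

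Set $N_s := \FM/\Theta_s\FM$ (so $N_0 = \FM$ and $N_d = \FM/\Theta\FM$) and $K_s := (\Theta_{s-1}\FM:\theta_s)/\Theta_{s-1}\FM$. The first step is to use the four-term exact sequence
\[
0 \longrightarrow K_s(-1) \longrightarrow N_{s-1}(-1) \xrightarrow{\,\times\theta_s\,} N_{s-1} \longrightarrow N_s \longrightarrow 0
\]
to derive $(1-t)\Hilb(N_{s-1},t) = \Hilb(N_s,t) - t\Hilb(K_s,t)$. Iterating over $s = 1,\ldots,d$ yields
\[
(1-t)^d \Hilb(\FM, t) \ = \ \Hilb(\FM/\Theta \FM, t) \ - \ \sum_{s=1}^d t(1-t)^{d-s}\Hilb(K_s, t),
\]
reducing the theorem to the computation of each $\Hilb(K_s, t)$.

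The Buchsbaum hypothesis gives $\Theta_{s-1}\FM:\theta_s = \Theta_{s-1}\FM:\m$, so $K_s = 0:_{N_{s-1}}\m = \loCo^0(N_{s-1})$. By induction on $s$ (starting from $N_0 = \FM$, which is Buchsbaum by assumption) I would show that every $N_s$ is Buchsbaum of dimension $d-s$ and
\[
\loCo^i(N_s) \ \cong \ \bigoplus_{j=0}^s \binom{s}{j}\,\loCo^{i+j}(\FM)(-j)
\]
as graded $\k$-vector spaces, for $i$ within the Buchsbaum range of $N_s$. The inductive step is driven by the long exact sequence in local cohomology associated to $0 \to N_{s-1}(-1) \xrightarrow{\theta_s} N_{s-1} \to N_s \to 0$: the connecting maps $\times\theta_s$ on $\loCo^i(N_{s-1})$ vanish throughout the Buchsbaum range because their targets are annihilated by $\m$, so the sequence breaks into short exact sequences of $\m$-torsion modules which split as $\k$-vector spaces. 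Specializing to $i = 0$ gives
\[
K_s \ \cong \ \bigoplus_{j=0}^{s-1}\binom{s-1}{j}\,\loCo^j(\FM)(-j).
\]

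By Corollary~\ref{cor:deg0}, $\loCo^j(\FM)$ is concentrated in degree $0$, and the relative Hochster formula (Theorem~\ref{thm:HF}) at $\sigma = \emptyset$ gives $\dim_\k \loCo^j(\FM) = \rBetti_{j-1}(\Delta,\Gamma)$. Hence each summand $\loCo^j(\FM)(-j)$ lives in a single degree $j$ and
\[
\Hilb(K_s,t) \ = \ \sum_{j=0}^{s-1}\binom{s-1}{j}\rBetti_{j-1}(\Delta,\Gamma)\,t^j.
\]
Substituting into the iterated identity, expanding $(1-t)^{d-s}$, and extracting the coefficient of $t^\ell$ produces a double sum whose inner piece in $s$ collapses by the Vandermonde convolution $\sum_{s}\binom{s-1}{j}\binom{d-s}{\ell-1-j}=\binom{d}{\ell}$, yielding precisely $\binom{d}{\ell}\sum_{i=0}^{\ell-1}(-1)^{\ell-i}\rBetti_{i-1}(\Delta,\Gamma)$ as the coefficient of $t^\ell$. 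The main obstacle is the graded bookkeeping in the iterated local cohomology decomposition: one must simultaneously propagate the Buchsbaum property of each $N_s$ as $s$ grows and keep track of the degree shifts forced by multiplication by each $\theta_s$.
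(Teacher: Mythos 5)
Your overall plan is correct and your iterated formula
\[
(1-t)^d\Hilb(\FM,t) \ = \ \Hilb(\FM/\Theta\FM,t) - \sum_{s=1}^d t(1-t)^{d-s}\Hilb(K_s,t)
\]
is the same one the paper derives. Where you diverge is in computing $\Hilb(K_s,t)$. The paper does this via Koszul cohomology: Lemma~\ref{lem:koszulRanks} gives $\dim_\k \Hom^i(\Theta_s;\FM)$ in terms of the $\loCo^j(\FM)$, and Corollary~\ref{cor:qm} then expresses $K_s$ as a cokernel in a short exact sequence of Koszul cohomology modules. You instead propagate a decomposition of the local cohomology of the iterated quotients $N_s=\FM/\Theta_s\FM$ directly, showing by induction that $\loCo^i(N_s)\cong\bigoplus_{j=0}^s\binom{s}{j}\loCo^{i+j}(\FM)(-j)$ in the Buchsbaum range and then reading off $K_s=\loCo^0(N_{s-1})$. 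This is a genuinely different (and somewhat more elementary, Koszul-free) route to the same identity; it also makes the degree shift $(-j)$, and therefore the $t^j$ in $\Hilb(K_s,t)$, completely transparent, whereas in the paper the graded bookkeeping behind Corollary~\ref{cor:qm} is left implicit. Your Pascal-recursion inductive step and the final Vandermonde collapse are both correct.

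There is one inaccuracy in the writeup that should be repaired. The sequence
\[
0 \longrightarrow N_{s-1}(-1)\xrightarrow{\ \theta_s\ } N_{s-1}\longrightarrow N_s\longrightarrow 0
\]
is \emph{not} exact: the kernel of $\theta_s$ is $K_s(-1)=\loCo^0(N_{s-1})(-1)\neq 0$. You acknowledge this in your first paragraph by writing down the four-term sequence, but then invoke "the long exact sequence in local cohomology associated to" the three-term version. The correct step is to factor the four-term sequence through $\theta_s N_{s-1}$, i.e.\ use the two short exact sequences
\[
0\to K_s(-1)\to N_{s-1}(-1)\to \theta_s N_{s-1}\to 0
\qquad\text{and}\qquad
0\to \theta_s N_{s-1}\to N_{s-1}\to N_s\to 0.
\]
The first gives $\loCo^0(\theta_sN_{s-1})=0$ and $\loCo^i(\theta_sN_{s-1})\cong\loCo^i(N_{s-1})(-1)$ for $i\ge1$; the second then yields the split short exact sequences $0\to\loCo^i(N_{s-1})\to\loCo^i(N_s)\to\loCo^{i+1}(N_{s-1})(-1)\to 0$ for $i<d-s$, because the map $\loCo^{i+1}(\theta_sN_{s-1})\to\loCo^{i+1}(N_{s-1})$ factors as multiplication by $\theta_s$ on $\loCo^{i+1}(N_{s-1})$ and is therefore zero. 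With this fix, the inductive step, the Buchsbaum inheritance of $N_s$, and the rest of your argument go through.
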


The formula states that the $h$-vector of a relative
complex $\RC$ is the sum of an algebraic and a topological component. In this
spirit, we denote the \Defn{algebraic component} by
\begin{align*}
    \LT_k(\RC) \ &:= \ \dim_\k (\FM[\Delta,\Gamma]/\Theta
    \FM[\Delta,\Gamma])_k,\\ 
    \intertext{where $\Theta$ is a \lsop\ and the \Defn{topological component} by}
    \GT_k(\RC) \ &:= \ 
        \binom{d}{k}\sum_{i=0}^{k-1}(-1)^{k-i}
        \rBetti_{i-1} (\Delta,\Gamma).
\end{align*}

We can rewrite the relative Schenzel formula as follows.

\begin{repthm}{thm:RS}
    Let  $\RC=(\Delta,\Gamma)$ be a relative Buchsbaum complex of dimension
    $d-1$. If $\Theta$ is a \lsop\ for $\FM = \FM[\RC]$, then 
    \[
        h_k(\RC) \ = \ \LT_k(\RC)+\GT_k(\RC)
    \]
    for any $k$.
\end{repthm}

\begin{proof}
    We closely follow Schenzel's proof of the case of simplicial
    complexes~\cite[Theorem~4.3]{Schenzel81}.  For a linear form $\theta  \in
    \k[\x]$ and a graded $\k[\x]$-module $N$ we have the exact sequence
    \[
        0\longrightarrow (0_N: \theta) \longrightarrow N 
        \xrightarrow{\  \theta\  } N(1) \longrightarrow
        (N/\theta N)(1)\longrightarrow 0
    \]
and hence 
    \[
        (1-t)\Hilb(N,t) \ = \ \Hilb(N/\theta N, t) -
        t\Hilb((0_N:\theta),t).
    \]
Iterating this argument with $N = \FM[\RC]/\Theta_s \FM[\RC]$ for $1\le s\le d$, this yields
\[
    (1-t)^{d}\Hilb(\FM,t) \ = \ \Hilb(\FM/\Theta\FM,t) - \sum_{s=1}^{d} t
    (1-t)^{s-1} \Hilb((\Theta_{s-1} \FM:\theta_s)/\Theta_{s-1} \FM,t).
\]
By Corollary~\ref{cor:qm} and Hochster's formula, we finally obtain
\[
    \Hilb((\Theta_{s-1} \FM:\theta_s)/\Theta_{s-1} \FM,t)
    \ = \
    \sum_{i=0}^{d-s} \binom{d-s}{i}\dim_{\k} \rBetti_{i-1}(\RC)\,
    t^i. \qedhere
\]
\end{proof}

\section{Relative Upper Bound Problems} \label{sec:int}

In this section we lay out model problems for relative upper bounds that will be
addressed with relative Stanley--Reisner theory and in particular relative Buchsbaum
complexes. We start by discussing the classical Upper Bound Theorem for polytopes and
spheres. We address two combinatorial isoperimetric problems that allow us to introduce
the notion of \emph{full} subcomplexes. The proofs of the respective upper bounds are
postponed to Section~\ref{ssec:arrCM} where general techniques will be available.

\subsection{The Upper Bound Theorem for spheres}\label{ssec:UBTspheres}

In the proof of the UBT for polytopes, the first step is to reduce the
problem of finding a $d$-polytope on $n$ vertices that maximizes the number
of $k$-faces to a problem about \emph{simplicial} $(d-1)$-spheres by observing that 
by perturbing the vertices of a polytope, the number of faces can only increase, cf.\ \cite{Klee64}.
In light of Observation~\ref{obs:f_via_h}, it is now sufficient to bound the $h$-vector 
of $\Delta$. The crucial lemma due to Stanley is the following. 

\begin{lem}\label{lem:UBT-CM}
    Let $\Delta$ be a $(d-1)$-dimensional Cohen--Macaulay complex on $n$
    vertices. Then
    \[
        h_k(\Delta) \ \le \ \binom{n - d - 1 + k}{k}
    \]
    for all $0 \le k \le d$. Equality holds for some $k_0$ if and only
    if $\Delta$ has no non-face of dimension $< k_0$.
\end{lem}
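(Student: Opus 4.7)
The plan is to interpret $h_k(\Delta)$ as the Hilbert function of an explicit Artinian quotient of $\k[\Delta]$ and then bound that quotient by a polynomial ring in $n-d$ variables.

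\emph{Step 1 (Identifying $h_k$).} Since $\k$ is infinite and $\k[\Delta]$ has Krull dimension $d$, choose a linear system of parameters $\Theta=(\theta_1,\dots,\theta_d)$ for $\k[\Delta]$. As $\Delta$ is Cohen--Macaulay, $\Theta$ is a regular sequence on $\k[\Delta]$, so iterated application of Proposition~\ref{prp:regular} gives
\[
  (1-t)^{d}\,\Hilb(\k[\Delta],t) \ = \ \Hilb(\k[\Delta]/\Theta\k[\Delta],t).
\]
Comparing with the definition of $h(\Delta)$ yields $h_k(\Delta)=\dim_\k(\k[\Delta]/\Theta\k[\Delta])_k$.

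\emph{Step 2 (Upper bound).} Pick $\Theta$ to consist of sufficiently generic linear forms in $\k[\x]$; in particular the $\theta_i$ are linearly independent, so $\k[\x]/\Theta \cong \k[y_1,\dots,y_{n-d}]$ is a polynomial ring in $n-d$ variables. The canonical surjection
\[
  \k[\x]/\Theta \ \twoheadrightarrow \ \k[\x]/(\I_\Delta+\Theta) \ = \ \k[\Delta]/\Theta\k[\Delta]
\]
implies
\[
  h_k(\Delta) \ \le \ \dim_\k(\k[\x]/\Theta)_k \ = \ \binom{n-d-1+k}{k}.
\]

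\emph{Step 3 (Equality: the easy direction).} Assume $\Delta$ has no non-face of dimension $<k_0$. Then every subset of $[n]$ of cardinality $\le k_0$ is a face, so $(\I_\Delta)_k=0$ for all $k\le k_0$. Consequently $(\I_\Delta+\Theta)_{k_0}=\Theta_{k_0}$ and the surjection of Step~2 is an isomorphism in degree $k_0$, producing equality.

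\emph{Step 4 (Equality: the hard direction).} Conversely, suppose $\Delta$ has a non-face $\sigma$ with $|\sigma|=r+1\le k_0$. The squarefree monomial $\x^\sigma\in\I_\Delta$ is a product of $r+1$ of the variables $x_i$ ($i\in\sigma$). Because $\k$ is infinite and $h_k(\Delta)$ does not depend on the choice of \lsop\ (Step~1), we may take $\Theta$ generic enough that the images of all $x_i$ in $\k[\x]/\Theta\cong\k[y_1,\dots,y_{n-d}]$ are nonzero linear forms. Since $\k[y_1,\dots,y_{n-d}]$ is a domain, the image of $\x^\sigma$ is a nonzero element in degree $r+1$ of the kernel $K:=(\I_\Delta+\Theta)/\Theta$ of the surjection in Step~2. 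Multiplying by a nonzero monomial of degree $k_0-r-1$ (which is injective in a domain and preserves membership in the ideal $K$) yields a nonzero element of $K$ in degree $k_0$, whence $h_{k_0}(\Delta)<\binom{n-d-1+k_0}{k_0}$. The main technical subtlety is precisely this genericity/domain argument: it is what allows one to propagate a low-degree obstruction (the generator $\x^\sigma$) up to degree $k_0$, and it relies crucially on the fact that $\k[\x]/\Theta$ is a polynomial ring rather than merely a finite-dimensional algebra.
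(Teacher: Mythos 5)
Your proof takes essentially the same route as the paper's: identify $h_k(\Delta)$ with the Hilbert function of $\k[\Delta]/\Theta\k[\Delta]$, bound it by $\k[\x]/\Theta\k[\x]\cong\k[y_1,\dots,y_{n-d}]$ via the canonical surjection, and analyze equality through the vanishing of the kernel in degree $k_0$. Your Step 4 usefully makes explicit the propagation argument --- multiplying a low-degree element of $\I_\Delta$ up to degree $k_0$ inside the domain $\k[\x]/\Theta\k[\x]$ --- that the paper leaves implicit when it asserts, via the short exact sequence $0\to\I_\Delta/\Theta\I_\Delta\to\k[\x]/\Theta\k[\x]\to\k[\Delta]/\Theta\k[\Delta]\to 0$, that equality at $k_0$ is equivalent to $\I_\Delta$ having no generators in degrees $\le k_0$.
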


\begin{proof}
    For a linear system of parameters $\Theta$, Proposition~\ref{prp:regular}
    yields
    \[
        H(\k[\Delta]/\Theta \k[\Delta],t) \ = \ h_0 + h_1t + \dots + h_d t^d
    \]
    where $(h_0,\dots,h_d)$ is the $h$-vector of $\Delta$. We have the
    canonical graded surjection $N := \k[x_1,\dots,x_n] \twoheadrightarrow \k[\Delta]$
    as $\k[\x]$-modules. Now, $\Theta$ is a regular sequence for $N$ and 
    $N/\Theta N \cong \k[y_1,\dots,y_{h_1}]$ with $h_1 = n-d$. We obtain
    \[
        h_k(\Delta) \ = \ \dim_\k (\k[\Delta]/\Theta \k[\Delta])_k \ \le \ 
         \dim_\k \k[y_1,\dots,y_{h_1}]_k \ = \
        \binom{n-d+k-1}{k}
    \]
    for all $0 \le k \le d$ which completes the proof of the inequality. For the equality case, we may assume that $\Delta$ is not the $(n-1)$-simplex and
   thus $I_\Delta \neq 0$. By \cite[Proposition 1.1.4]{Bruns-Herzog}
   we have the short exact sequence 
    \[
    0 \longrightarrow \I_\Delta/\Theta \I_\Delta
        \longrightarrow \k[\x]/\Theta \k[\x] \longrightarrow \k[\Delta]/\Theta
        \k[\Delta]\longrightarrow
        0.
    \]
    Equality holds for $k_0$ if and only if $\I_\Delta/\Theta \I_\Delta$ and hence
    $I_\Delta$ 
     has no generators in degrees $\le k_0$.
\end{proof}

In fact stronger relations hold for the $h$-vector of a Cohen--Macaulay
complex. The following type of inequalities will be the subject of
Section~\ref{ssec:primes}; see Example~\ref{ex:rst} for the proof.

\begin{prp}\label{prop:UBT-g}
    Let $\Delta$ be a $(d-1)$-dimensional Cohen--Macaulay complex. Then
    \[
        k h_k(\Delta) \ \le \ (n-d+k-1) h_{k-1}(\Delta)
    \]
    for all $k=1,\dots,d$. In particular, $h_k(\Delta)\ \le\ \binom{n-d +k-1}{k}$ and
    \[
        g_k(\Delta) \ = \ h_k(\Delta)-h_{k-1}(\Delta)\ =\
        h_{k-1}(\Delta)\left(\frac{h_k(\Delta)}{h_{k-1}(\Delta)}-1\right) \
        \le \ \binom{n-d +k -2}{k}.
    \]
\end{prp}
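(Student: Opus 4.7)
My plan is a three-step reduction to a Macaulay-type inequality on Hilbert functions.

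First, since $\Delta$ is Cohen--Macaulay of dimension $d-1$ and $\k$ is infinite, I choose a linear system of parameters $\Theta = (\theta_1, \dots, \theta_d)$ that is a regular sequence on $\k[\Delta]$. By Proposition~\ref{prp:regular}, the Artinian reduction $A := \k[\Delta]/\Theta\k[\Delta]$ is a standard graded $\k$-algebra with $\dim_\k A_k = h_k(\Delta)$; in particular $m := \dim_\k A_1 = h_1(\Delta) = n-d$.

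Second, writing $A = B/J$ for $B = \k[y_1, \dots, y_m]$ and $J \subseteq B$ a homogeneous ideal, and using the identity $k\binom{m+k-1}{k} = (m+k-1)\binom{m+k-2}{k-1}$ for $B$, one computes
\[
    k\, h_k(\Delta) - (m+k-1)\, h_{k-1}(\Delta) \ = \ (m+k-1)\dim_\k J_{k-1} - k \dim_\k J_k.
\]
So the target inequality is equivalent to the ideal-side statement
\[
    k \dim_\k J_k \ \ge \ (m+k-1)\dim_\k J_{k-1},
\]
which expresses that $\dim_\k J_k / \binom{m+k-1}{k}$ is non-decreasing in $k$.

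Third, I prove this ideal-side bound, which holds for any homogeneous ideal $J$ in a polynomial ring. The cleanest route is to pass to the generic initial ideal $\ginrev(J)$ (with respect to the reverse-lexicographic order): it is Borel-fixed with the same Hilbert function as $J$, so it suffices to verify the inequality for a Borel-fixed monomial ideal via a direct combinatorial count on its monomials. Alternatively, Macaulay's theorem $h_k \le h_{k-1}^{\langle k-1\rangle}$, applied to $A$ and combined with a careful analysis of the Macaulay expansion of $h_{k-1}$, yields the same linear ratio bound.

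The main obstacle I expect is the combinatorial verification in the third step: although both the gin approach and the Macaulay-expansion manipulation are standard, they require some care. Given the ratio inequality, the rest is immediate: iterating $h_k(\Delta) \le \tfrac{m+k-1}{k}\, h_{k-1}(\Delta)$ from $h_0 = 1$ yields $h_k \le \binom{m+k-1}{k} = \binom{n-d+k-1}{k}$ (recovering Lemma~\ref{lem:UBT-CM}), and
\[
    g_k(\Delta) \ = \ h_k - h_{k-1} \ \le \ \tfrac{m-1}{k}\, h_{k-1} \ \le \ \tfrac{m-1}{k}\binom{m+k-2}{k-1} \ = \ \binom{m+k-2}{k} \ = \ \binom{n-d+k-2}{k}.
\]
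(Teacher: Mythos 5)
Your Steps 1 and 2 are correct: the Artinian reduction identifies $h_k(\Delta)$ with $\dim_\k A_k$ for $A = \k[\Delta]/\Theta\k[\Delta]$, and writing $A = B/J$ with $B = \k[y_1,\dots,y_m]$, $m = n-d$, the identity $k\binom{m+k-1}{k} = (m+k-1)\binom{m+k-2}{k-1}$ does convert the target into $k\dim_\k J_k \ge (m+k-1)\dim_\k J_{k-1}$. However, Step 3 is a genuine gap: you assert this ideal-side ratio bound "holds for any homogeneous ideal $J$," but you prove it in neither of the two ways you propose. Both routes are substantially harder than "a direct combinatorial count." For the Macaulay-expansion route, a term-by-term comparison \emph{fails}: writing $h_{k-1} = \sum_i\binom{a_i}{i}$, the per-term ratio $\frac{a_i+1}{i+1}$ can strictly exceed $\frac{m+k-1}{k}$ for inner indices $i < k-1$ (indeed $\frac{m+j}{j+1} > \frac{m+k-1}{k}$ whenever $(m-1)(k-1-j) > 0$), so the inequality can only come from the coupled constraints $a_{k-1} > a_{k-2} > \cdots$ and $a_{k-1} \le m+k-2$, which you do not analyze. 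For the Borel-fixed route, the obvious double count of edges between $J_{k-1}$ and $J_k$ in the divisibility graph gives $m\dim J_{k-1} = \sum_{v\in J_k} c(v)$ with $c(v) \le |\supp(v)|$, and the na\"ive bound in terms of average support of monomials does not go in the needed direction; one needs Eliahou--Kervaire-type structure, which you do not invoke. Until the ratio inequality for Hilbert functions is actually established, the proof is incomplete.

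For comparison, the paper does something quite different and does not rely on any such numerical lemma. The proof is in Example~\ref{ex:rst} (pointed to from the proposition) with the special case $\Gamma = \emptyset$: it uses the ``integration over vertex links'' formula of Lemma~\ref{lem:hlk},
$\sum_{v} h_k(\Lk(v,\Delta)) = (k+1)h_{k+1}(\Delta) + (d-k)h_k(\Delta)$, together with the Kalai--Stanley monotonicity (Lemma~\ref{lem:surj}), which for Cohen--Macaulay $\Delta$ and its Cohen--Macaulay vertex stars gives $h_k(\St(v,\Delta)) = \dim_\k(\FM[\St(v,\Delta)]/\Theta\FM[\St(v,\Delta)])_k \le \dim_\k(\k[\Delta]/\Theta\k[\Delta])_k = h_k(\Delta)$. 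Summing over the $n$ vertices and rearranging yields the inequality. This McMullen-style local argument leans on the geometric/homological structure of $\Delta$ (its vertex stars are cones, hence Cohen--Macaulay), rather than on a combinatorial fact about abstract $O$-sequences. Your approach, if completed, would establish the ratio bound as a purely numerical consequence of Macaulay's theorem for any standard graded Artinian quotient of $B$, which is a genuinely different (and in a sense stronger, being $\Delta$-agnostic) route --- but you have pushed all the difficulty into the unproved Step 3.
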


Lemma~\ref{lem:UBT-CM} together with Reisner's criterion
(Theorem~\ref{thm:SR}) now implies upper bounds on the \emph{first half} of
the $h$-vector. The bounds are tight for boundary complexes of neighborly
polytopes.  The \emph{second half} of $h(P)$ is taken care of by the
Dehn--Sommerville equations which apply as $\partial P$ is Eulerian. This
enabled Stanley~\cite{Stanley75} to generalize McMullen's Upper Bound
Theorem~\cite{mcmullen1970} from polytopes to simplicial spheres.

\begin{thm}[Upper Bound Theorem for spheres]
    If $\Delta$ is a simplicial $(d-1)$-sphere on $n$ vertices, then 
    \[
    h_{k}(\Delta) \ = \ h_{d-k}(\Delta) \ \le \ \binom{n-d + k-1}{k}
    \]
    for all $0 \le k \le \lfloor\frac{d}{2}\rfloor$. Moreover, the $h$-vector
    is maximized precisely on neighborly $(d-1)$-spheres.
\end{thm}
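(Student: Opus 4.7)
The plan is to combine three ingredients already developed in the excerpt: Reisner's criterion to certify Cohen--Macaulayness, Lemma~\ref{lem:UBT-CM} to bound the first half of the $h$-vector, and the Dehn--Sommerville relations of Lemma~\ref{lem:DS} to propagate the bound to the second half.

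First I would verify that any simplicial $(d-1)$-sphere $\Delta$ is Cohen--Macaulay. For every face $\sigma \in \Delta$ (including $\sigma = \emptyset$), the link $\Lk(\sigma,\Delta)$ is either a $(d-\dim\sigma-2)$-sphere or, when $\sigma = \emptyset$, the sphere $\Delta$ itself. In either case $\rHom_i(\Lk(\sigma,\Delta)) = 0$ for all $i < \dim \Lk(\sigma,\Delta)$, so Reisner's criterion (Theorem~\ref{thm:SR}) applies and $\Delta$ is Cohen--Macaulay. Lemma~\ref{lem:UBT-CM} then yields $h_k(\Delta) \le \binom{n-d+k-1}{k}$ for \emph{every} $0 \le k \le d$, with equality at some $k_0$ iff $\Delta$ has no non-face of dimension $< k_0$, i.e.\ iff $\Delta$ is $k_0$-neighborly.

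Second, I would apply the Dehn--Sommerville relations. A simplicial sphere is Eulerian since every link is a sphere, so Lemma~\ref{lem:DS} applies with $\Gamma = \emptyset$. The reduced Euler characteristic of $\Delta \cong S^{d-1}$ is $(-1)^{d-1}$, so the correction term $(-1)^{d-1}\chi(\Delta,\emptyset) - 1$ vanishes, and we obtain the classical symmetry $h_{d-i}(\Delta) = h_i(\Delta)$ for all $i$. In particular, the bound of the previous step holds for $h_k(\Delta) = h_{d-k}(\Delta)$ on all of $0 \le k \le d$, and in the range $0 \le k \le \lfloor d/2\rfloor$ we get the advertised inequality with the symmetric form stated in the theorem.

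Finally, for the characterization of equality: if $\Delta$ is neighborly (all non-faces have dimension $\ge \lfloor d/2\rfloor$), then the equality clause of Lemma~\ref{lem:UBT-CM} gives $h_k(\Delta) = \binom{n-d+k-1}{k}$ for every $k \le \lfloor d/2\rfloor$, and the symmetry just established extends this to all $k$. Conversely, if the maximum is attained at some $k$ in the stated range, then again by Lemma~\ref{lem:UBT-CM} the Stanley--Reisner ideal has no generators in degree $\le k$, forcing $\Delta$ to be $\lfloor d/2\rfloor$-neighborly. The only subtle point — and the one I would be most careful about — is the Eulerian input to Dehn--Sommerville: one must check that the reduced Euler characteristic calibrates the correction term to zero, so that the upper bound transfers cleanly across the midpoint without loss. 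Everything else is an immediate assembly of results already proved in the excerpt.
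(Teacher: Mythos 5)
Your proposal is correct and follows exactly the route the paper sketches in Section~\ref{ssec:UBTspheres}: Reisner's criterion to certify Cohen--Macaulayness, Lemma~\ref{lem:UBT-CM} for the bound $h_k \le \binom{n-d+k-1}{k}$, and the Dehn--Sommerville relations (with the $(-1)^{d-1}\chi(\Delta)-1 = 0$ calibration you rightly flag as the only delicate point) to transfer the bound across the midpoint. One small imprecision in the equality discussion: equality at a single $k < \lfloor d/2 \rfloor$ only forces $k$-neighborliness, not $\lfloor d/2\rfloor$-neighborliness; the ``iff neighborly'' conclusion requires equality at $k = \lfloor d/2 \rfloor$ (equivalently at some $k \ge \lfloor d/2 \rfloor$ after applying the symmetry), which is what the paper's formulation in the introduction makes explicit.
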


For more on neighborly polytopes and McMullen's geometric perspective on
the upper bound theorem, we refer the reader to Section 8 of Ziegler's book
\cite{Z}.

\subsection{Combinatorial isoperimetric problems}\label{ssec:comb_iso}
The classical isoperimetric problem asks for the maximum volume of a
$d$-dimensional convex body $K$ with an upper bound on the surface area.
The following is a suitable discrete analog.

\begin{quest}[Combinatorial Isoperimetric Problem]\label{quest:comb_iso}
    Let $\Delta$ be a triangulation of a $d$-ball on $m+n$ vertices and $n$
    vertices in the boundary. What is the maximal number of $k$-faces in
    the interior of $\Delta$?
\end{quest}

This is a model problem for relative complexes. We seek to maximize
$f_k(\Delta,\partial \Delta) = f_k(\Delta) - f_k(\partial \Delta)$.  As it
turns out a resolution to the combinatorial isoperimetric problem can be
given using the ``classical'' tools of Section~\ref{sec:RelativeSR}, provided that we
make the additional assumption that the \emph{Generalized Lower Bound
Conjecture} of McMullen and Walkup~\cite{McMullenWalkup} holds for $\Gamma
= \partial\Delta$, that is, $g_k(\Gamma) = h_k(\Gamma) - h_{k-1}(\Gamma) \ge
0$ for all $k$. For a relative complex $\RC = (\Delta,\Gamma)$
with $\Gamma \neq \emptyset$, we have $h_0(\RC) = 0$ and $h_1(\RC) =
f_0(\Delta) - f_0(\Gamma)$ and we need only to worry about $h_k(\RC)$ for
$k \ge 2$.

\begin{thm}[Combinatorial isoperimetry of balls I]\label{thm:comb_iso}
    Let $\Delta$ be a simplicial $(d-1)$-ball on $m+n$ vertices with
    $n$ vertices in the boundary and assume that the Generalized Lower Bound
    Conjecture holds for $\partial\Delta$. Then the following inequalities
    hold:
    \begin{compactenum}[\rm (a)]
    \item For $2 \le k \le \frac{d}{2}$
    \[
        h_k(\Delta,\partial \Delta) \ \le \
        \binom{m+n-d+k-1}{k}.
    \]
Equality holds for some $k_0\le \frac{d}{2}$ if and only if every non-face
    $\sigma$ of $\Delta$ of dimension $< k_0$ is supported in $\partial
    \Delta$. 
    \item For $\frac{d}{2} < k \le d$
    \[
        h_{k}(\Delta, \partial \Delta) \ \le \  \binom{m+n-1-k}{d-k}.
    \]
    Equality holds for some $k_0> \frac{d}{2}$ if and only if $\Delta$ has
    no  non-face $\sigma$ of dimension $< d-k_0$.
\end{compactenum}
Moreover, the bounds are tight: For every $n \ge d \ge 0$ and $m \ge 0$, there is a 
$(d-1)$-ball that attains the upper bounds for every $k$ simultaneously. 
\end{thm}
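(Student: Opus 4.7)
The plan is to reduce both inequalities to Lemma~\ref{lem:UBT-CM} applied to $\Delta$ itself, using the identity
\[
    h_k(\Delta,\partial\Delta) \ = \ h_k(\Delta) \,-\, g_k(\partial\Delta)
\]
to control part~(a), and a relative Poincar\'e--Lefschetz-style duality $h_k(\Delta,\partial\Delta)=h_{d-k}(\Delta)$ (peculiar to simplicial balls) for part~(b).

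I would first observe that the relative complex $\RC=(\Delta,\partial\Delta)$ is Cohen--Macaulay: Reisner's criterion (Theorem~\ref{thm:SR}) ensures that both the $(d-1)$-ball $\Delta$ and its boundary $(d-2)$-sphere $\partial\Delta$ are Cohen--Macaulay, and Corollary~\ref{cor:cmincm} applies because $\dim \Delta - \dim \partial\Delta = 1$. Then any lsop is a regular sequence on $\FM[\RC]$ and Proposition~\ref{prp:regular} delivers $h(\RC)$ as the Hilbert function of the Artinian reduction. The identity above is obtained by multiplying the Hilbert series relation from the short exact sequence $0 \to \FM[\Delta,\partial\Delta] \to \k[\Delta] \to \k[\partial\Delta] \to 0$ by $(1-t)^d$, accounting for the fact that $\k[\partial\Delta]$ has Krull dimension $d-1$ and therefore contributes $(1-t)\,h(\partial\Delta)(t)$.

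For part~(a), the GLBC hypothesis gives $g_k(\partial\Delta) \ge 0$, so $h_k(\RC) \le h_k(\Delta)$, and Lemma~\ref{lem:UBT-CM} applied to the Cohen--Macaulay complex $\Delta$ on $m+n$ vertices caps this by $\binom{m+n-d+k-1}{k}$. For part~(b), I plan to use that $\FM[\Delta,\partial\Delta]$ is the canonical module of the Cohen--Macaulay ring $\k[\Delta]$ when $\Delta$ is a ball; this can be extracted from Proposition~\ref{prp:Hilbert_reciproc} since all links of nonempty faces of $\RC$ have reduced homology concentrated in top degree. The resulting duality $h_k(\RC) = h_{d-k}(\Delta)$ together with Lemma~\ref{lem:UBT-CM} applied at index $d-k_0 < d/2$ delivers the bound, and its equality case translates directly to the condition that $\Delta$ has no non-face of dimension $<d-k_0$, as stated.

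The moreover clause requires constructing a $(d-1)$-ball attaining both bounds simultaneously for every $k$. A natural candidate is the antistar of a vertex in the boundary of a neighborly simplicial $(d+1)$-polytope, which yields a ball whose interior is suitably neighborly (giving equality in part~(a)) and whose boundary is a stacked sphere (giving $g_k(\partial\Delta)=0$ in the relevant range), so the matching inequalities become tight. The chief obstacle I anticipate is the equality case of part~(a): reconciling the two simultaneous conditions --- maximality of $h_{k_0}(\Delta)$ by the equality case of Lemma~\ref{lem:UBT-CM} and the vanishing of $g_{k_0}(\partial\Delta)$ --- with the succinct combinatorial condition that every small non-face of $\Delta$ is supported in $\partial\Delta$ requires a careful inspection of how the non-faces of $\Delta$ sit relative to the boundary vertex set, and this is precisely where the GLBC hypothesis plays its essential role.
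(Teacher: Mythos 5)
Parts (a) and (b) of your proposal track the paper's proof closely: the decomposition $h_k(\Delta,\partial\Delta)=h_k(\Delta)-g_k(\partial\Delta)$ combined with GLBC and Lemma~\ref{lem:UBT-CM} for (a), and the Dehn--Sommerville identity $h_k(\Delta,\partial\Delta)=h_{d-k}(\Delta)$ fed back into Lemma~\ref{lem:UBT-CM} for (b). Your canonical-module framing of (b) is an equivalent reading of Lemma~\ref{lem:DS} (which is itself a corollary of Proposition~\ref{prp:Hilbert_reciproc}), so that part is fine.

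The tightness construction, however, does not work. You propose taking the antistar of a vertex $v$ in the boundary of a neighborly simplicial polytope $P$. Two things go wrong. First, the count of interior versus boundary vertices is not under control: for $d\ge 4$ a neighborly $P$ is $2$-neighborly, so every vertex is a neighbor of $v$, hence every vertex of the antistar lies in $\Lk(v)=\partial\Delta$ and you only get $m=0$; the theorem requires a construction for every $m\ge 0$ and $n\ge d$. Second, and more fundamentally, the boundary of the antistar is $\Lk(v,\partial P)$, which for a neighborly (in particular cyclic) polytope is again a highly neighborly sphere rather than a stacked one. Consequently $g_k(\partial\Delta)>0$ for $2\le k<\lfloor(d-1)/2\rfloor$, the GLBC inequality used in your proof of (a) is \emph{strict}, and $h_k(\Delta,\partial\Delta)$ falls short of the claimed bound. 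You want the opposite extreme for the boundary: stacked, so that $g_k(\partial\Delta)=0$ for $k\ge 2$. The paper's construction achieves both requirements by starting from a cyclic $(d-1)$-sphere $N$ on $m+n$ vertices, locating inside it (via Billera--Lee) a stacked $(d-1)$-ball $B$ on $n$ vertices, and taking $\Delta$ to be the complementary ball spanned by the facets of $N$ not in $B$; then $\partial\Delta=\partial B$ is a stacked $(d-2)$-sphere on $n$ vertices, $\Delta$ inherits the neighborliness of $N$, and both inequality chains become equalities simultaneously. Without some such complementary-ball device, the tightness claim is not established.
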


\begin{proof}
For (a), notice that 
\[
    h_k(\Delta,\partial \Delta) \ = \ h_k(\Delta)-g_k(\partial \Delta) \ \le \ 
    \binom{m+n-d+k-1}{k}-g_k(\partial \Delta)
\] 
by Lemma~\ref{lem:UBT-CM}. The Generalized Lower Bound Conjecture for
$\partial\Delta$ yields the claim.

For part~(b), notice that by the Dehn--Sommerville relations
(Lemma~\ref{lem:DS}), we have $h_{k}(\Delta,\partial \Delta)  =  h_{d-k}
(\Delta)$ for all $0\le k\le d$ and we can again appeal to
Lemma~\ref{lem:UBT-CM}.

For tightness, let us consider any cyclic $(d-1)$-sphere $N$ on $m+n$ vertices.
Now, using the method of Billera--Lee~\cite{BL}, we may find a stacked
$(d-1)$-ball $B\subseteq N$ on $n$ vertices. With this, we can set $\Delta$ as
the subcomplex of $N$ induced by the facets not in $B$. Then, $\Delta$
is neighborly, and $\partial \Delta$ is stacked. Therefore, all inequalities
above are attained with equality.
\end{proof}

\subsection{Combinatorial isoperimetry II: Full complexes} \label{ssec:full}

One of the key features of relative Stanley--Reisner Theory is that we can
impose restrictions on the `position' of $\Gamma$ in $\Delta$.  A
profitable way to encode such a positional restriction of $\Gamma$ is the
notion of full subcomplexes:

\begin{dfn}\label{dfn:full_subcomplex}
    A subcomplex $\Gamma \subseteq \Delta$ is \Defn{full} if every face of
    $\Delta$ whose vertices are contained in $\Gamma$ is also a face of
    $\Gamma$. We call the relative complex $\RC = (\Delta,\Gamma)$ full if
    $\Gamma$ is full in $\Delta$.
\end{dfn}

The notion of full subcomplex generalizes the idea of vertex-induced
subgraphs. This is a very natural notion that makes prominent appearances
in PL topology \cite{ZeemanBK,RourkeSanders}, algebraic topology
\cite{GR,JMR}, graph theory, commutative algebra~\cite{Hochster77} and
geometric group theory \cite{CD,Davis}. 

While it may seem quite restrictive
to consider only full subcomplexes, we 
shall later see that the notion can be refined effectively using the
more flexible notion of ``full arrangements'', compare
Section~\ref{ssec:change_presentation}.

\begin{prp}
Let $\Delta$ denote any simplicial complex, and let $\Gamma$ denote any subcomplex. The following are equivalent:
\begin{enumerate}[\rm (i)]
\item $\Gamma$ is full in $\Delta$;
\item For every face $F\in \Delta$ with $\partial F\in \Gamma$, we have $F \in \Gamma$;
\item $\Gamma=\Delta \cap \K_{\V(\Gamma)}$.
\end{enumerate} 
\end{prp}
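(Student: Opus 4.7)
I will prove the equivalences by establishing the cycle (i) $\Rightarrow$ (iii) $\Rightarrow$ (ii) $\Rightarrow$ (i). The statement is essentially a book-keeping exercise, so the main task is to carry out the small inductive argument in the last implication cleanly; there is no substantial obstacle.

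For (i) $\Rightarrow$ (iii): The inclusion $\Gamma \subseteq \Delta \cap \K_{\V(\Gamma)}$ is immediate, since $\Gamma \subseteq \Delta$ and every face of $\Gamma$ has all its vertices in $\V(\Gamma)$. Conversely, if $\sigma \in \Delta \cap \K_{\V(\Gamma)}$, then $\sigma \in \Delta$ and every vertex of $\sigma$ lies in $\V(\Gamma)$, so fullness forces $\sigma \in \Gamma$.

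For (iii) $\Rightarrow$ (ii): Suppose $F \in \Delta$ satisfies $\partial F \subseteq \Gamma$. If $F = \emptyset$ then $F\in\Gamma$ is automatic (assuming $\Gamma\ne\emptyset$, otherwise there is nothing to prove). Otherwise every vertex of $F$ lies in $\partial F$ and hence in $\V(\Gamma)$, so $F \in \K_{\V(\Gamma)}$. Combined with $F \in \Delta$, identity~(iii) yields $F \in \Gamma$.

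For (ii) $\Rightarrow$ (i): Let $F \in \Delta$ be a face whose vertex set is contained in $\V(\Gamma)$; we show $F \in \Gamma$ by induction on $\dim F$. If $\dim F \le 0$, then either $F = \emptyset \in \Gamma$, or $F = \{v\}$ with $v \in \V(\Gamma)$, so $F \in \Gamma$ by definition of $\V(\Gamma)$. If $\dim F \ge 1$, then every proper face $F' \subsetneq F$ still has its vertices in $\V(\Gamma)$ and lies in $\Delta$, so by induction $F' \in \Gamma$. Thus $\partial F \subseteq \Gamma$, and property~(ii) gives $F \in \Gamma$. This closes the cycle and proves the proposition.
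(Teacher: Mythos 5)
The paper states this proposition without proof, so there is no argument of the paper's to compare against; judged on its own, your cyclic argument (i)$\Rightarrow$(iii)$\Rightarrow$(ii)$\Rightarrow$(i) is the natural one and is essentially correct. One small point worth noticing: in the step (iii)$\Rightarrow$(ii) you assert that ``every vertex of $F$ lies in $\partial F$,'' which is true only when $\dim F \ge 1$; for $F=\{v\}$ a single vertex, $\partial F=\{\emptyset\}$ contains no vertices, and in fact one cannot deduce $v\in\V(\Gamma)$ in that case. This is not really a defect of your argument so much as an imprecision already present in condition (ii) as stated --- taken literally, (ii) would force every vertex of $\Delta$ to lie in $\Gamma$, which plainly cannot be equivalent to fullness. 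The intended reading of (ii) is for faces $F$ of dimension at least $1$; notice that your induction in (ii)$\Rightarrow$(i) invokes (ii) only for such $F$, so under that reading your cycle of implications closes up correctly. It would be cleaner to state that restriction explicitly rather than leaving it implicit in the word ``Otherwise.''
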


Notice that fullness is not a topological 
invariant; it is preserved under subdivisions, but not under PL homeomorphisms. 
The notion of fullness is, for instance, useful when identifying two simplicial 
complexes along a common subcomplex.  Fullness then guarantees that the result 
is again a simplicial complex. Hence, the notion of fullness can in particular be 
used to bound the complexity of PL handlebodies.

\begin{thm}[Combinatorial isoperimetry for manifolds]\label{thm:iso_mfds}
    Let $M$ denote a simplicial $(d-1)$-manifold on $m+n$ vertices, and let $B$
    denote a $(d-2)$-dimensional submanifold on $n$ vertices of $\partial M$ such that $B$ is
    full in $M$. Then 
    \[
        h_k(M,B)\ \le\ \binom{m+n-d+k-1}{k}\ -\ \binom{n-d+k-1}{k}\ +\
        \binom{d}{k}\sum_{i=0}^{k-1}(-1)^{k-i}\rBetti_{i-1}(M,B)
    \]
    for all $0\le k\le d.$
\end{thm}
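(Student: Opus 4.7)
The strategy is to invoke the Relative Schenzel Formula (Theorem~\ref{thm:RS}), which splits $h_k(M,B) = \LT_k(M,B) + \GT_k(M,B)$. Since $\GT_k(M,B) = \binom{d}{k}\sum_{i=0}^{k-1}(-1)^{k-i}\rBetti_{i-1}(M,B)$ is exactly the topological summand appearing on the right of the claim, the whole burden reduces to the purely algebraic inequality
\[
\LT_k(M,B) \ \le \ \binom{m+n-d+k-1}{k} - \binom{n-d+k-1}{k}.
\]
To justify invoking Schenzel I first verify that $(M,B)$ is Buchsbaum via Theorem~\ref{thm:topb}, by checking that each vertex link $(\Lk(v,M),\Lk(v,B))$ is Cohen--Macaulay. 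For $v \in \V(B)$, both links are balls or spheres with dimensions $d-2$ and $d-3$ respectively, and since the difference in dimension is $1$, Corollary~\ref{cor:cmincm} gives Cohen--Macaulayness of the pair; for $v\notin\V(B)$ the link is simply $\Lk(v,M)$, again a Cohen--Macaulay ball or sphere.

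The bound on $\LT_k(M,B)$ is the heart of the plan and will use the fullness hypothesis crucially. Order the vertices so that $\V(B)=\{1,\ldots,n\}$; fullness means $B = M\cap\K_{\V(B)}$ and, translated into rings, $\k[M]/(x_{n+1},\ldots,x_{n+m}) \cong \k[B]$. Combined with the defining short exact sequence $0\to\FM[M,B]\to\k[M]\to\k[B]\to 0$, this identifies $\FM[M,B]$ with the ideal $J := (x_{n+1},\ldots,x_{n+m})\cdot\k[M]\subseteq\k[M]$. I now lift to the polynomial ring $R := \k[x_1,\ldots,x_{n+m}]$, set $J_R := (x_{n+1},\ldots,x_{n+m})R$, and observe $R/J_R = \k[x_1,\ldots,x_n]$. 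Choosing a generic \lsop\ $\Theta$ for $\FM[M,B]$ (possible over an infinite field, and yielding the same $\LT_k$ by Theorem~\ref{thm:RS}) that is simultaneously a regular sequence on $R$ and on $\k[x_1,\ldots,x_n]$, the surjection $R\twoheadrightarrow\k[M]$ induces a surjection $J_R/\Theta J_R\twoheadrightarrow J/\Theta J$, while the sequence $0\to J_R\to R\to\k[x_1,\ldots,x_n]\to 0$ stays exact modulo $\Theta$. These two observations combine to
\[
\LT_k(M,B) \ \le \ \dim_\k(J_R/\Theta J_R)_k \ = \ \binom{m+n-d+k-1}{k} - \binom{n-d+k-1}{k},
\]
which together with Theorem~\ref{thm:RS} yields the claim.

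The main obstacle is the indispensable use of \emph{fullness}: it is precisely fullness that allows me to realize $\FM[M,B]$ as an ideal generated by a distinguished subset of the variables, and without this identification the clean surjection $J_R \twoheadrightarrow J$ onto a polynomial-ring ideal — the engine behind the bound — collapses. A secondary technical point is choosing $\Theta$ with all three simultaneous regularity properties; this is routine generically over an infinite field once $n\ge d$, and the corner case of very small $n$ needs a brief separate argument (both binomials degenerate there).
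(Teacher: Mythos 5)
Your proof is correct in its essentials and takes the same Schenzel-plus-comparison route as the paper; the difference is that you unwind the paper's abstract change-of-presentation machinery in this special case rather than invoking it wholesale. The paper's proof cites the nerve-ideal apparatus: with $\scr{G}=\{B\}$, the nerve ideal $\ch[M,\scr{G}]=\langle\x^\tau:\tau\not\subseteq\V(B)\rangle=(x_{n+1},\dots,x_{n+m})$ is the Stanley--Reisner ideal of $\K_n$, the relative complex $(\K_{m+n}^{(d)},\K_n^{(d)})$ is Cohen--Macaulay, and then Theorems~\ref{thm:upb},~\ref{thm:RSI} and Proposition~\ref{prp:bi} deliver the bound $\LT_k\le\binom{m+n-d+k-1}{k}-\binom{n-d+k-1}{k}$. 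You instead use fullness to identify $\FM[M,B]$ directly with the image of $(x_{n+1},\dots,x_{n+m})$ in $\k[M]$, lift to $J_R=(x_{n+1},\dots,x_{n+m})R$ (which is exactly the nerve ideal $\ch[M,\{B\}]$), and obtain the same estimate from the surjection $J_R/\Theta J_R\twoheadrightarrow\FM/\Theta\FM$ together with the short exact sequence $0\to J_R\to R\to\k[x_1,\dots,x_n]\to0$ preserved after going modulo~$\Theta$. This is cleaner and self-contained for this particular isoperimetric statement; the paper's formulation trades that transparency for a framework that uniformly handles arrangements of several subcomplexes (Theorem~\ref{thm:arrCM}) and the Minkowski applications, where the nerve is no longer a single simplex. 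Your remark that a generic $\Theta$ works, and that $\LT_k$ is $\Theta$-independent because Theorem~\ref{thm:RS} pins it down as $h_k-\GT_k$, is exactly right. Your flagged corner case $n=d-1$ (so $B$ is a $(d-2)$-simplex) is real: there $\Theta$ cannot be a regular sequence on $\k[x_1,\dots,x_n]$ of length $d$, and one has to pass to skeleta as in Theorem~\ref{thm:RSI} (or note the degenerate binomials directly) rather than appeal to the naive short exact sequence; the paper's route through $(\K_{m+n}^{(d)},\K_n^{(d)})$ handles this silently.
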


By Theorem~\ref{thm:topb}, $\RC = (M,B)$ is a relative Buchsbaum complex and
we can use Theorem~\ref{thm:RS} to upper bound $h_k(\RC)$. We postpone the
estimation on the algebraic component to Section~\ref{sec:loct} where the
necessary tools are developed. In the case when $M$ is a $d$-ball and $B$ is
the bounding $(d-1)$-sphere, we can add the equality case to
Theorem~\ref{thm:iso_mfds}.

\begin{thm}[Combinatorial isoperimetry of balls II]\label{thm:comb_iso_full}
    Let $\Delta$ be a simplicial $(d-1)$-ball on $m+n$ vertices with
    $n$ vertices in the boundary and assume that $\partial\Delta \subseteq
    \Delta$ is a full subcomplex.
     Then the following inequalities
    hold:
\begin{compactenum}[\rm (a)]
    \item For every $0\le k\le \frac{d}{2}$
    \[
        h_k(\Delta,\partial \Delta)\ \le\
        \binom{m+n-d+k-1}{k}-\binom{n-d+k-1}{k}
    \]
    Equality holds for some $k_0\le \frac{d}{2}$ if and only if every non-face
    $\sigma$ of $\Delta$ of dimension $< k_0$ is supported in $\partial
    \Delta$. 
    \item For every $\frac{d}{2}< k \le d$
    \[
        h_{k}(\Delta, \partial \Delta) \ \le \ \binom{m+n-1-k}{d-k}.
    \]
    Equality holds for some $k_0> \frac{d}{2}$ if and only if $\Delta$ has
    no non-face $\sigma$ of dimension $< d-k_0$.
    \end{compactenum}
    The bounds are tight: For every $n \ge d \ge 0$ and $m \ge 0$, there is a
    $d$-ball that attains the upper bounds for all $k$ simultaneously. 
\end{thm}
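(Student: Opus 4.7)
The plan is to split the range of $k$ into two halves and handle each by a different mechanism. For $k_0 > d/2$ I will establish a Dehn--Sommerville-type reduction $h_{k_0}(\RC) = h_{d-k_0}(\Delta)$ and then invoke the classical upper bound theorem for Cohen--Macaulay complexes (Lemma~\ref{lem:UBT-CM}); for $k \le d/2$ I will apply Theorem~\ref{thm:iso_mfds} and show that the topological component vanishes identically in this range.

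To establish the symmetry, I would first check that $\RC = (\Delta,\partial\Delta)$ is weakly Eulerian. For a nonempty face $\sigma$ of $\Delta$, the link $\Lk(\sigma,\RC)$ is either a sphere (when $\sigma$ is an interior face, so $\Lk(\sigma,\partial\Delta)=\emptyset$) or a ball/boundary-sphere pair (when $\sigma$ is a boundary face), and in both cases $\chi(\Lk(\sigma,\RC)) = (-1)^{\dim\Lk(\sigma,\RC)}$. Since $\Delta$ is contractible and $\partial\Delta$ is a $(d-2)$-sphere, $\chi(\Delta,\partial\Delta) = -(-1)^{d-2} = (-1)^{d-1}$, so the correction term in Lemma~\ref{lem:DS} vanishes and we obtain $h_{d-i}(\RC) = h_i(\Delta)$ for all $i$. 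Part (b) then follows from Lemma~\ref{lem:UBT-CM} applied to the Cohen--Macaulay ball $\Delta$ on $m+n$ vertices at index $d-k_0$, with the equality characterization inherited verbatim.

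For part (a), I would apply Theorem~\ref{thm:iso_mfds} with $M=\Delta$ and $B=\partial\Delta$; the assumed fullness of $\partial\Delta$ in $\Delta$ is precisely the standing hypothesis. Since $\Delta$ is contractible, the long exact sequence of the pair gives $\rHom_j(\Delta,\partial\Delta)\cong \rHom_{j-1}(\partial\Delta)$, which is nonzero only in degree $j = d-1$. Therefore $\rBetti_{i-1}(\Delta,\partial\Delta) = 0$ for every $i \le d-1$, so the topological summand in the bound of Theorem~\ref{thm:iso_mfds} vanishes for all $k \le d$, leaving precisely the asserted algebraic bound. The equality case descends from the corresponding equality statement in Theorem~\ref{thm:iso_mfds}.

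The main obstacles are twofold. First, importing the equality analysis from Theorem~\ref{thm:iso_mfds} is nontrivial, since its proof is postponed to Section~\ref{sec:loct}; the refined algebraic tools developed there are what will allow one to distinguish non-faces that do and do not lie in $\partial\Delta$. Second, the tightness claim requires exhibiting, for each $(m,n,d)$, a neighborly $(d-1)$-ball $\Delta$ whose boundary is both a neighborly $(d-2)$-sphere and a full subcomplex of $\Delta$. I would construct this by a Billera--Lee-type procedure inside a neighborly $(d-1)$-sphere on $m+n$ vertices, removing a carefully chosen shellable subball so that the residual complex is neighborly and its boundary is full. For such $\Delta$ both Lemma~\ref{lem:UBT-CM} is saturated and, using Pascal's rule applied to the UBT bound for the neighborly boundary, $g_k(\partial\Delta) = \binom{n-d+k-1}{k}$, so the upper bounds in (a) and (b) are simultaneously attained for every $k$.
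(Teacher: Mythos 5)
Your argument for the inequalities is sound and close to the paper's own route: the verification that $(\Delta,\partial\Delta)$ is weakly Eulerian, the Dehn--Sommerville reduction $h_k(\RC)=h_{d-k}(\Delta)$, the computation that $\chi(\Delta,\partial\Delta)=(-1)^{d-1}$ kills the correction term in Lemma~\ref{lem:DS}, and the observation that the topological summand in Theorem~\ref{thm:iso_mfds} vanishes identically are all correct, and part (b) is handled exactly as in the paper via Lemma~\ref{lem:UBT-CM}.

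The first gap is in the equality statement of part (a). You write that it ``descends from the corresponding equality statement in Theorem~\ref{thm:iso_mfds},'' but that theorem states only an inequality and carries no equality characterization. The paper instead applies Theorem~\ref{thm:arrCM} with $\scr{G}=\{\partial\Delta\}$: $\Delta$ is a CM ball and $\partial\Delta$ a codimension-one CM full subcomplex, the $(m-1)\binom{-d+k-1}{k}$ term drops out since there is a single member in the arrangement, and that theorem delivers both the bound $\binom{m+n-d+k-1}{k}-\binom{n-d+k-1}{k}$ and the precise criterion ``every non-face of $\Delta$ of dimension $<k_0$ is supported on $\partial\Delta$.'' Both theorems rest on the same nerve-ideal/Schenzel machinery, so your route is not wrong in spirit — but the reference you invoke simply does not contain the ingredient you need, and you must either go through Theorem~\ref{thm:arrCM} or redo its equality analysis.

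The second gap is the tightness construction, and it is more substantial. You propose a Billera--Lee-type step: remove ``a carefully chosen shellable subball'' $B$ from a neighborly $(d-1)$-sphere on $m+n$ vertices so that the residual ball is neighborly and its boundary $\partial B$ (on $n$ vertices) is full and itself neighborly. But the Billera--Lee construction used for Theorem~\ref{thm:comb_iso} produces a \emph{stacked} subball $B$, whose boundary is stacked — precisely the opposite of what you need here, since equality in part (a) forces $g_k(\partial\Delta)=\binom{n-d+k-1}{k}$, i.e.\ $\partial\Delta$ must be a neighborly $(d-2)$-sphere. It is not at all clear that an arbitrary neighborly sphere on $m+n$ vertices contains, on an $n$-vertex subset, an induced subcomplex that is the boundary of a ball; that existence claim is a genuine proof obligation your sketch does not discharge. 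The paper sidesteps the problem by triangulating the Cayley polytope of two Matschke--Pfeifle--Pilaud cyclic polytopes $C_1$ (on $n$ vertices) and $C_2$ (on $m$ vertices), then deleting the face $C_1$: by construction $\partial\Delta=\partial C_1$ is automatically full (it is the boundary of a face of the Cayley polytope), its neighborliness and the absence of small non-faces off the boundary are supplied by the MPP theorem, and no search for a ``carefully chosen'' subball is required.
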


\begin{proof}[Proof of Tightness]
We borrow a construction that we will see again in
Section~\ref{sec:UBTMS} and apply it to two well-chosen cyclic $(d-1)$-polytopes
$C_1,C_2$ used by Matschke, Pfeifle and Pilaud~\cite[Theorem~2.6]{MPP} (compare
Theorem~\ref{thm:M-neighborly}) with $f_0(C_1) = n$ and $f_0(C_2) = m$. Let
\[
    C \ = \ \conv\bigl( C_1 \times \{0\} \cup C_2 \times \{1\} \bigr) \subset
    \R^d \times \R
\]
be the \emph{Cayley polytope} of $C_1$ and $C_2$. Using Theorem~2.6 of \cite{MPP} the Cayley polytope over $C_1$ and $C_2$ may be chosen in such a way that it has no non-face of dimension $<\frac{d}{2}-1$, and such that every non-face of dimension $\frac{d}{2}-1$ is supported in either $C_1$ or $C_2$.
By construction, $C_1$ and $C_2$ are the only non-simplex
faces of $C$. Let us triangulate $C_2$ without new vertices, and such that
there are no non-faces of dimension $\le\frac{d}{2}-1$, and let $\Delta$
be the simplicial complex obtained from $\partial C$ by deleting $
C_1$. Then $\Delta$ is a triangulated $d$-ball with full boundary $\partial
\Delta = \partial C_1$,  and the conditions in
(a) and (b) are met and hence yields an example that attains the upper
bounds.   \end{proof}

\section{Estimating the algebraic contribution}\label{sec:loct}

We discuss three techniques for bounding the $h$-vector entries
$h_k(\Delta,\Gamma)$ based on bounds on the algebraic contribution
$\LT(\Delta,\Gamma)$. 

The first method is based on the idea of a \emph{change of presentation}: We
consider presentations of $\FM[\Delta,\Gamma]$ as quotients of monomial ideals
$I/J$ where $I$ is simpler in structure than $\I_{\Gamma}$.  A particularly
important candidate is the \emph{nerve ideal} $\ch$ that arises from coverings
of $\Gamma$ by full subcomplexes. The nerve ideal can be analyzed in terms of
nerve complex of that covering. This in particular allows us to
interpolate between full and general subcomplexes $\Gamma$. As a special case,
we recover Lemma~\ref{lem:UBT-CM}.

The second method is based on a more delicate trick. It uses a formula that
integrates over the $h$-numbers of subcomplexes to the $h$-vector of the total
complex. We then employ a lemma of Kalai--Stanley for an upper bound on the
local contributions to obtain the desired bounds. The second method has an
interesting refinement that we describe in Section~\ref{ssec:rel_diff}.  In
particular, we find an interesting reverse isoperimetric inequality that
considerably improves on, and is substantially different from, all known
bounds in the area.

Finally, we discuss the role of relative shellability, a combinatorial/geometric method
that can be used to give bounds on $h$-numbers in our setting.

\subsection{Estimates via change of presentation}
\label{ssec:change_presentation}
The idea of this section is that if $\FM[\Delta,\Gamma]$ has a ``nice''
presentation as a quotient, then this presentation can be used to estimate
the algebraic contribution of $\RC = (\Delta,\Gamma)$.  We will see an interesting connection
to poset topology when attempting to characterize the case of equality
and an application of Borsuk's Nerve Lemma (in its filtered version
due to Bj\"orner).

\enlargethispage{-3mm}

Let $M$ be a module over $\k[\x]$. We write $M\varpropto I$ for a
monomial ideal $I \subseteq \k[\x]$ if there is a monomial ideal $J \subseteq
I$ such that $M \cong I / J$ as finely graded modules. 

\begin{lem}\label{lem:inc}
    Let $M \cong I/J$ be a module over $\k[\x]$ for some monomial ideals
    $J\subseteq I$ in $\k[\x]$.  For a sequence $\Theta =
    (\theta_1,\dots,\theta_\ell)$ of linear forms we then have
    \[
        \dim_\k(M/\Theta M)_k \ \le \ \dim_\k (I/\Theta I)_k
    \] 
    for all $k\ge 0$. Moreover, if tensoring with $\k[\x]/\Theta$ preserves
    exactness of $0 \rightarrow J \hookrightarrow I \twoheadrightarrow M
    \rightarrow 0$, then equality holds for some $k_0$ if and only if $M_k
    \cong \I_k$ for all $k \le k_0$.
\end{lem}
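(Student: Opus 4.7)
The plan is to derive both claims from the presentation $0 \to J \to I \to M \to 0$ by applying the right-exact functor $-\otimes_{\k[\x]}\k[\x]/\langle\Theta\rangle$. This produces the right-exact sequence of finitely generated graded $\k[\x]/\Theta$-modules
\[
J/\Theta J \longrightarrow I/\Theta I \longrightarrow M/\Theta M \longrightarrow 0,
\]
and reading off graded dimensions in each degree $k$ yields
\[
\dim_\k(M/\Theta M)_k \;=\; \dim_\k(I/\Theta I)_k \;-\; \dim_\k\bigl(\text{image of } J/\Theta J \text{ in } I/\Theta I\bigr)_k \;\le\; \dim_\k(I/\Theta I)_k,
\]
which is the first assertion.

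For the equality characterization, I would invoke the hypothesis that tensoring preserves exactness --- equivalent to the vanishing $\mathrm{Tor}^{\k[\x]}_1(M,\k[\x]/\Theta) = 0$ --- so that the displayed right-exact sequence upgrades to a short exact sequence. The subtracted term then becomes exactly $\dim_\k(J/\Theta J)_k$, and equality at a given $k_0$ translates into the condition $(J/\Theta J)_{k_0} = 0$. The direction from ``$M_k \cong I_k$ for all $k \le k_0$'' to equality at $k_0$ is then immediate: the hypothesis gives $J_k = 0$ for all $k \le k_0$, so the degree-$k_0$ part of $\Theta J$, contained in $\theta_1 J_{k_0-1} + \cdots + \theta_\ell J_{k_0-1}$, vanishes, and hence $(J/\Theta J)_{k_0} = J_{k_0} = 0$.

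The converse is the crux and where I expect the main obstacle. My plan is to exploit that $J$ is a monomial ideal together with the injection $J/\Theta J \hookrightarrow I/\Theta I$ coming from exactness. Let $d_0$ denote the smallest degree in which $J$ is nonzero; since $(\Theta J)_{d_0} = 0$ for degree reasons, $(J/\Theta J)_{d_0} = J_{d_0} \ne 0$ whenever $J \ne 0$. If $d_0 \le k_0$ one must then propagate this nonvanishing forward to degree $k_0$ in order to contradict $(J/\Theta J)_{k_0} = 0$, thereby forcing $d_0 > k_0$ and $J_k = 0$ for all $k \le k_0$. The difficulty is that in general the Hilbert function of $J/\Theta J$ can drop back to zero (as in the toy example $J = (x)\subset I = \k[x]$ with $\Theta = (x)$, where, tellingly, tensoring fails to be exact). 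Under the Tor-vanishing hypothesis this pathology must be ruled out, and concretely I would show that for a monomial generator $m \in J$ of degree $d \le k_0$ the class $[m\,x^{k_0-d}] \in (J/\Theta J)_{k_0}$ cannot vanish: otherwise a relation $m\,x^{k_0-d} \in \Theta J$ would produce, via a standard diagram chase on the Koszul resolution of $\k[\x]/\Theta$, a nonzero element of $\mathrm{Tor}^{\k[\x]}_1(M,\k[\x]/\Theta)$, contradicting the assumption.
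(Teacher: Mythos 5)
The first inequality and the easy implication (``$M_k\cong I_k$ for all $k\le k_0$'' implies equality at $k_0$) are correct and argued exactly as in the paper: right-exactness gives the surjection $I/\Theta I\twoheadrightarrow M/\Theta M$, and under the exactness hypothesis the short exact sequence $0\to J/\Theta J\to I/\Theta I\to M/\Theta M\to 0$ reduces the question to whether $(J/\Theta J)_{k_0}=0$, with the easy direction immediate since $J_k=0$ for all $k\le k_0$ forces $(J/\Theta J)_{k_0}=0$.

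Your argument for the hard direction does not work, however. A side point first: exactness of the tensored sequence is \emph{not} equivalent to $\mathrm{Tor}_1(M,\k[\x]/\Theta\k[\x])=0$; it is only equivalent to the connecting map $\mathrm{Tor}_1(M,\k[\x]/\Theta\k[\x])\to J/\Theta J$ being zero, a strictly weaker condition. More seriously, the proposed ``diagram chase'' starting from a relation $m\,x^{k_0-d}=\sum_i\theta_i j_i$ with $j_i\in J$ produces nothing: every term in this relation already lies in $J$, so after passing to $M=I/J$ it becomes the tautology $0=\sum_i\theta_i\cdot 0$, and the associated Koszul $1$-chain $(j_1,\dots,j_\ell)$ maps to the zero chain in $K(\Theta)\otimes M$. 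There is no nonzero Tor class to extract. In fact the propagation you need can genuinely fail under the stated hypotheses alone: in $\k[x,y]$ take $I=(x,y)$, $J=(x)$, $\Theta=(x,y)$. Then $J\cap\Theta I=\Theta J$, so the tensored sequence is exact (in every degree $\ge 2$ all three quotients vanish, and in degree $1$ it reads $0\to\k\to\k^2\to\k\to 0$), and equality holds at $k_0=2$, yet $J_1\ne 0$. Thus a purely formal argument from the stated hypotheses cannot close the gap; one needs some further input --- for instance that $J/\Theta J$ embeds into something torsion-free over $\k[\x]/\Theta\k[\x]$, so that its Hilbert function cannot drop back to zero once positive. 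Such an input is implicitly available in the paper's applications (where $\Theta$ is an l.s.o.p.\ of suitable length for the face module), but it is neither part of the lemma's hypotheses nor supplied by your Tor argument.

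For comparison, the paper's own proof of this direction is a one-liner asserting that, in the nontrivial cases, $\Theta$ is a partial l.s.o.p.\ for $J$ and that this together with $(J/\Theta J)_{k_0}=0$ forces $J$ to have no generators in degree $\le k_0$. That is a different route from yours (no Tor groups appear), but equally terse on the crucial no-gap point. Your instinct that the obstacle is precisely propagating the nonvanishing of $J/\Theta J$ forward in degree is correct; the mechanism you propose for it is not.
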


\begin{proof}
    By assumption, we have a short exact sequence
\[
        0 \longrightarrow J \longrightarrow
        I \longrightarrow M \longrightarrow 0.
\]
    The first claim follows from the fact that tensoring with 
    $\k[\x]/\Theta\k[\x]$ is a right-exact functor and thus
    $I/\Theta I \twoheadrightarrow M/\Theta M$ is a (graded) surjection.
    Assume now that 
    \[
        0 \longrightarrow J/\Theta J
        \longrightarrow I/\Theta I \longrightarrow M/\Theta M\longrightarrow
        0
    \]
    is exact. In all nontrivial cases $\Theta$ is at best a partial \lsop~for $J$.
    Hence, if $(J/\Theta J)_{k_0} = 0$, then $J$ has no generators in degrees $\le k_0$.
\end{proof}

This result subsumes Lemma~\ref{lem:UBT-CM}: If $M = \k[\Delta]$ is the
Stanley--Reisner ring of a $(d-1)$-dimensional Cohen--Macaulay simplicial
complex on $n$ vertices, then $M \varpropto I$ for $I = \k[x_1,\dots,x_n]$. For a
regular \lsop\ $\Theta$, we infer from Lemma~\ref{lem:inc} that
\[
        h_k(\Delta) \ = \ \dim_\k(M/\Theta M)_k \ \le \ \dim_\k(I/\Theta
        I)_k \ = \ \dim_\k\k[y_1,\dots,y_{n-d}]_k \ = \ \binom{n-d+k-1}{k}.
\]

To conclude tightness in Lemma~\ref{lem:inc}, we need to decide whether a
sequence is regular for all modules in a given exact sequence. To this end, we can use the following  well-known observation. Recall that
$\Theta_m$ is the restriction of the sequence $\Theta$ to the first $m$
elements.  
 
\begin{prp}\label{prp:exact}
Let $R$ be any ring. Let \[D \rightarrow C \rightarrow B \xrightarrow{\varphi}
A \rightarrow 0\] denote a exact sequence of $R$-modules, and let
$\Theta=(\theta_1,\dots,\theta_\ell)$ denote a family of elements of $R$.
Assume that for every $1 \le m \le  \ell$, $\varphi$ induces a surjection
\[
(\Theta_{m-1} B:\theta_m)/\Theta_{m-1} B \longtwoheadrightarrow (\Theta_{m-1}
A:\theta_m)/\Theta_{m-1} A.
\]
Then we have an exact sequence
\[D/\Theta D \rightarrow C/\Theta C \rightarrow B /\Theta B \xrightarrow{\overline{\varphi}} A /\Theta A \rightarrow 0.\]
\end{prp}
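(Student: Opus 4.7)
The plan is to first observe that tensoring with $R/\Theta R$ is a right-exact functor and, applied to the three-term right-exact piece $C \to B \to A \to 0$, automatically produces exactness at $B/\Theta B$ and $A/\Theta A$. Thus the entire content of the proposition reduces to verifying exactness at $C/\Theta C$, i.e.\ $\ker(\bar\phi) = \Img(\bar\psi)$ where $\bar\phi, \bar\psi$ denote the maps induced by $C \to B$ and $D \to C$. This is precisely where the surjectivity hypothesis enters.

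I would proceed by induction on $\ell$. The base case $\ell = 0$ is the given sequence. For the inductive step, assume exactness of
\[
D/\Theta_{m-1}D \xrightarrow{\psi_{m-1}} C/\Theta_{m-1}C \xrightarrow{\phi_{m-1}} B/\Theta_{m-1}B \xrightarrow{\varphi_{m-1}} A/\Theta_{m-1}A \to 0
\]
and kill $\theta_m$ further. Right-exactness handles the last two maps, so fix $c \in C$ with $\phi_{m-1}(c) \equiv \theta_m b \pmod{\Theta_{m-1}B}$ for some $b \in B$. Applying $\varphi_{m-1}$ and using $\varphi_{m-1}\phi_{m-1}=0$, the element $\varphi_{m-1}(b)$ represents a class in $(\Theta_{m-1}A:\theta_m)/\Theta_{m-1}A$; by hypothesis it lifts to some $b' \in (\Theta_{m-1}B:\theta_m)/\Theta_{m-1}B$ with $\varphi_{m-1}(b-b')=0$ in $A/\Theta_{m-1}A$. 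By the inductive hypothesis at $B/\Theta_{m-1}B$, there exists $c' \in C$ with $\phi_{m-1}(c') \equiv b-b' \pmod{\Theta_{m-1}B}$; then
\[
\phi_{m-1}(c - \theta_m c') \ \equiv \ \phi_{m-1}(c) - \theta_m(b-b') \ \equiv \ \theta_m b' \ \equiv \ 0 \pmod{\Theta_{m-1}B},
\]
the last congruence holding because $b' \in (\Theta_{m-1}B:\theta_m)$. Exactness at $C/\Theta_{m-1}C$ then lifts $c - \theta_m c'$ to an element of $D/\Theta_{m-1}D$, which modulo $\theta_m$ expresses the class of $c$ as lying in the image of $\bar\psi$, as required.

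The main obstacle is the two-step lifting: one first uses the hypothesis to drag $\theta_m$-torsion in $A$ back into $\theta_m$-torsion in $B$, and only then uses the inductive exactness at $B/\Theta_{m-1}B$ to absorb the remaining discrepancy by an element of $C$. The hypothesis is calibrated precisely so that the cokernel of $(\Theta_{m-1}B:\theta_m)/\Theta_{m-1}B \to (\Theta_{m-1}A:\theta_m)/\Theta_{m-1}A$ vanishes at every inductive stage; without this vanishing, the $\theta_m$-torsion appearing in $A$ would have no reason to originate from $\theta_m$-torsion in $B$, and the diagram chase would break down at that step.
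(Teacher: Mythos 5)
Your proof is correct. The paper itself states Proposition~\ref{prp:exact} without proof, calling it a ``well-known observation,'' so there is no in-paper argument to compare against; your induction on $\ell$, killing one $\theta_m$ at a time, with the two-step lifting (first use the hypothesized surjection on $\theta_m$-torsion to replace $b$ by $b-b'$ with $\varphi(b-b')\in\Theta_{m-1}A$, then use inductive exactness at $B/\Theta_{m-1}B$ and $C/\Theta_{m-1}C$), is precisely the diagram chase one would write down, and every step checks out. Two small remarks: you correctly observe that right-exactness of $-\otimes R/\Theta R$ reduces the claim to exactness at $C/\Theta C$, so the hypothesis is only ever used to fix exactness at that one spot; and it is worth stating explicitly that the inductive hypothesis is the full four-term exactness at level $\Theta_{m-1}$, since you invoke it at both $B/\Theta_{m-1}B$ and $C/\Theta_{m-1}C$, which your write-up does do.
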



In the situation of Proposition~\ref{prp:exact}, we also say that $\varphi$ is a 
\Defn{$\Theta$-surjection}; if the maps between the annihilator modules are even isomorphisms, then we call the map a \Defn{$\Theta$-isomorphism}.

Lemma~\ref{lem:UBT-CM} compares enumerative properties of $\Delta$ to those
of the much simpler complex $\K_n$. This is possible because both are
Cohen--Macaulay. For simplicial complexes, this approach suffices. 
In order to use a reasoning similar to Lemma~\ref{lem:UBT-CM} for relative
complexes, we will use a cover of the subcomplex $\Gamma$ by full
subcomplexes.

\begin{dfn}\label{dfn:arrment}
    Let $\Delta$ be a simplicial complex. An \Defn{arrangement of full
    complexes}, \Defn{full arrangement} for short, is a finite collection
    $\scr{G}$ of full subcomplexes of $\Delta$. 
\end{dfn} 

For an arrangement $\scr{G}$ of complexes,
the collection
\[
    \mc{P}(\scr{G}) \ := \ \Bigl\{ \bigcap S : S \subseteq \scr{G} \Bigr\}
    \cup \{ \Delta \}
\]
together with the partial order given by \emph{reverse} inclusion is the
\Defn{intersection poset} of $\scr{G}$. This is a poset with minimal element
$\hat{0} := \Delta$ and maximal element  $\hat{1} := \bigcap\scr{G}$. Note
that any $\Gamma \in \mc{P}(\scr{G})$ is a full subcomplex of~$\Delta$. The
\Defn{support} of $\scr{G}$ is the subcomplex 
\[
        \cfs{G} \ := \ \bigcup_{\Gamma\in \scr{G}} \Gamma \ \subseteq \ \Delta.
\]
This covering of $\cfs{G}$ by full subcomplexes can be used to obtain a simple
presentation of $\FM[\Delta,\cfs{G}]$.
\begin{dfn}
    For an arrangement $\scr{G}$ of complexes of $\Delta$ we define the
    \Defn{nerve ideal} as the monomial ideal 
    \[
        \ch[\Delta,\scr{G}] \ := \ 
        \langle \x^\tau : \tau \not\subseteq \V(\Gamma) \text{
        for all } \Gamma \in \scr{G} \rangle \ \subseteq \ \k[\x].
    \]
\end{dfn}

For $\Gamma \in \scr{G}$, the smallest simplex containing $\Gamma$ is given by
$\K_{\V(\Gamma)}$. The \Defn{coarse nerve} of $\scr{G}$ is the simplicial 
complex 
\[
    \mathfrak{N}[\Delta,\scr{G}] \ := \ \bigcup\{ \K_{\V(\Gamma)} : \Gamma \in
    \scr{G} \}. 
\]
The nerve ideal $\ch[\Delta,\scr{G}]$ then is the Stanley--Reisner ideal of
the coarse nerve.  The connection to $(\Delta,\cfs{G})$  is the following.

\begin{prp}\label{prp:upb}
    Let $\scr{G}$ be a full arrangement of $\Delta$. Then
    \[
        \FM[\Delta,\cfs{G}] \ \cong \ (\ch[\Delta,\scr{G}] + \I_\Delta)/\I_\Delta \ \cong \ 
        \ch[\Delta,\scr{G}]/(\I_\Delta \cap \ch[\Delta,\scr{G}]).
    \]
\end{prp}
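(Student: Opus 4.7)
The plan is to reduce both isomorphisms to straightforward ideal-theoretic statements, with the only nontrivial ingredient being the translation of fullness into a condition on monomial generators.

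First, I would recall that by the definition of the face module in Section~\ref{sec:RelativeSR}, one has $\FM[\Delta,\cfs{G}] \cong \I_{\cfs{G}}/\I_\Delta$ as $\Z^n$-graded $\k[\x]$-modules, using that $\cfs{G} \subseteq \Delta$ ensures $\I_\Delta \subseteq \I_{\cfs{G}}$. The first isomorphism in the proposition therefore reduces to the equality of ideals
\[
\I_{\cfs{G}} \ = \ \ch[\Delta,\scr{G}] + \I_\Delta
\]
in $\k[\x]$. The second isomorphism is then immediate from the second isomorphism theorem applied to the submodules $\ch[\Delta,\scr{G}]$ and $\I_\Delta$ of $\k[\x]$.

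To prove the ideal equality, I would compare the two sides on squarefree monomial generators $\x^\tau$ with $\tau \subseteq [n]$. The key use of fullness is the equivalence
\[
\tau \in \Gamma \ \iff \ \tau \in \Delta \text{ and } \tau \subseteq \V(\Gamma),
\]
valid for every $\Gamma \in \scr{G}$, which is precisely the characterization $\Gamma = \Delta \cap \K_{\V(\Gamma)}$ of full subcomplexes recorded in the proposition preceding Definition~\ref{dfn:arrment}. Consequently, for $\tau \in \Delta$,
\[
\tau \notin \cfs{G} \ \iff \ \tau \notin \Gamma \text{ for all } \Gamma \in \scr{G} \ \iff \ \tau \not\subseteq \V(\Gamma) \text{ for all } \Gamma \in \scr{G}.
\]
A generator $\x^\tau$ of $\I_{\cfs{G}}$ satisfies $\tau \notin \cfs{G}$; if $\tau \notin \Delta$ then $\x^\tau \in \I_\Delta$, and otherwise the displayed equivalence places $\x^\tau \in \ch[\Delta,\scr{G}]$. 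Conversely, any generator $\x^\tau$ of $\ch[\Delta,\scr{G}]$ satisfies $\tau \not\subseteq \V(\Gamma)$ for all $\Gamma$, and the same equivalence (together with the trivial inclusion $\I_\Delta \subseteq \I_{\cfs{G}}$) shows $\x^\tau \in \I_{\cfs{G}}$. This yields the claimed equality of ideals.

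There is no real obstacle here; the only subtlety is to be careful that fullness is used both ways, i.e.\ that $\tau \subseteq \V(\Gamma)$ with $\tau \in \Delta$ forces $\tau \in \Gamma$ (this is fullness), while the reverse implication $\tau \in \Gamma \Rightarrow \tau \subseteq \V(\Gamma)$ is tautological. Without the fullness hypothesis the first implication can fail, and then $\ch[\Delta,\scr{G}] + \I_\Delta$ is in general strictly smaller than $\I_{\cfs{G}}$, which is exactly why the proposition singles out full arrangements.
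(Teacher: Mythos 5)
Your proof is correct and takes essentially the same approach as the paper: a monomial-by-monomial comparison using the fullness characterization $\Gamma = \Delta \cap \K_{\V(\Gamma)}$. The only stylistic difference is that you organize it as an ideal-level equality $\I_{\cfs{G}} = \ch[\Delta,\scr{G}] + \I_\Delta$ followed by the second isomorphism theorem, whereas the paper directly compares the fine-graded pieces of the two quotient modules; these are the same argument.
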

\begin{proof}
    Let $\x^\a$ be a monomial and $\sigma = \supp(\x^\a)$. Then $\x^\a = 0$ in
    both $\FM[\Delta,\cfs{G}]$ and $(\ch[\Delta,\scr{G}]+ \I_\Delta)/\I_\Delta$ if $\sigma
    \not\in \Delta$. Thus, let us assume that $\sigma \in \Delta$.
    Now $\FM[\Delta,\cfs{G}]_\a \neq 0$ iff $\sigma \not\in \cfs{G}$, which is
    the case if and only if $\sigma \not\in \Gamma$ for all $\Gamma \in
    \scr{G}$. Since all subcomplexes in $\scr{G}$ are full, this is
    equivalent to $\sigma \not\subseteq \V(\Gamma)$ for all $\Gamma \in
    \scr{G}$. This, in turn, is equivalent to
    $((\ch[\Delta,\scr{G}]+ \I_\Delta)/\I_\Delta)_\a \neq 0$.
\end{proof}

For an arrangement of full subcomplexes a good relative complex to compare
$\RC = (\Delta,\Gamma)$ to is $(\K_n,\mathfrak{N}[\Delta,\scr{G}])$.
Proposition~\ref{prp:upb} and Lemma~\ref{lem:inc} then imply immediately:

\begin{thm}\label{thm:upb}
    Let $\Delta$ be a simplicial complex, and let $\scr{G}$ be a full
    arrangement of subcomplexes. Then $\FM=\FM[\Delta,\cfs{G}] \varpropto
    \ch=\ch[\Delta,\scr{G}]$ and for every collection of linear forms
    $\Theta$
    \[
        \dim_\k (\FM/\Theta \FM)_k \ \le \ \dim_\k (\ch/\Theta \ch)_k 
    \]
    for all $k$. If $\Theta$ is a \lsop\ for $M$, and the surjection $\ch \twoheadrightarrow \FM$ 
    is a $\Theta$-surjection, then the following
    are equivalent:
    \begin{enumerate}[\rm (i)]
    \item Equality holds for some $k_0$;
    \item $\FM_k \cong (\ch)_k$ for all $k\le k_0$;
    \item $\I_\Delta \cap \ch$ is generated in degrees $> k_0$;
    \item every non-face of $\Delta$ of dimension $< k_0$ is supported on
        $\V(\Gamma)$ for some $\Gamma\in\scr{G}$.
    \end{enumerate}
\end{thm}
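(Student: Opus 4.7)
The plan is to derive Theorem~\ref{thm:upb} as a direct application of Lemma~\ref{lem:inc} once we combine it with the presentation of $\FM$ provided by Proposition~\ref{prp:upb}. The first part is essentially bookkeeping; all real work is in the characterization of equality, and even there, most of the argument is carried by Lemma~\ref{lem:inc}. The new content to prove is (iii) $\Leftrightarrow$ (iv), which rests on a direct inspection of the monomials lying in $\I_\Delta \cap \ch$.

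First I would apply Proposition~\ref{prp:upb} to identify $\FM = \FM[\Delta,\cfs{G}]$ with the quotient $\ch / (\I_\Delta \cap \ch)$, where $\ch = \ch[\Delta,\scr{G}]$. This immediately gives $\FM \varpropto \ch$ in the sense of Section~\ref{ssec:change_presentation}, with $J = \I_\Delta \cap \ch$ and $I = \ch$. Now Lemma~\ref{lem:inc} supplies the degree-wise inequality $\dim_\k(\FM/\Theta\FM)_k \le \dim_\k(\ch/\Theta\ch)_k$ for any sequence $\Theta$ of linear forms.

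For the equivalences, assume $\Theta$ is an~\lsop\ for $\FM$ and that $\ch \twoheadrightarrow \FM$ is a $\Theta$-surjection, so that
\[
0 \longrightarrow (\I_\Delta \cap \ch)/\Theta(\I_\Delta \cap \ch) \longrightarrow \ch / \Theta \ch \longrightarrow \FM / \Theta \FM \longrightarrow 0
\]
is exact. The equivalence of (i) and (ii) is then the "moreover" part of Lemma~\ref{lem:inc}: equality in degree $k_0$ is precisely the vanishing of $(\I_\Delta \cap \ch)/\Theta(\I_\Delta \cap \ch)$ in degree $k_0$, and since $\Theta$ is at best a partial~\lsop\ for $\I_\Delta \cap \ch$, this happens iff $\I_\Delta\cap \ch$ has no minimal generators in degree $\le k_0$, giving (ii). The equivalence (ii) $\Leftrightarrow$ (iii) is immediate from the short exact sequence $0 \to \I_\Delta \cap \ch \to \ch \to \FM \to 0$ in the $\Z^n$-graded category: $\FM_k \cong \ch_k$ in all degrees $k\le k_0$ is the same as $(\I_\Delta \cap \ch)_k = 0$ for $k\le k_0$, i.e.\ $\I_\Delta \cap \ch$ is generated in degrees $> k_0$.

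The only genuinely combinatorial step is (iii) $\Leftrightarrow$ (iv). Both $\I_\Delta$ and $\ch$ are squarefree monomial ideals, so $\I_\Delta \cap \ch$ is the monomial ideal spanned by all $\x^\alpha$ whose support $\sigma_\alpha$ satisfies both $\sigma_\alpha \notin \Delta$ and $\sigma_\alpha \not\subseteq \V(\Gamma)$ for every $\Gamma\in\scr{G}$; in particular, every minimal generator of $\I_\Delta \cap \ch$ has the form $\x^\sigma$ for a non-face $\sigma$ of $\Delta$ that is not contained in any $\V(\Gamma)$. The degree of such a generator equals $|\sigma| = \dim \sigma + 1$. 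Hence $\I_\Delta \cap \ch$ is generated in degrees $>k_0$ iff no non-face of $\Delta$ of dimension $<k_0$ fails to be supported on some $\V(\Gamma)$, which is exactly~(iv). The main thing to watch is that "supported on $\V(\Gamma)$" in~(iv) matches "$\sigma \subseteq \V(\Gamma)$" in the definition of $\ch$; because all $\Gamma \in \scr{G}$ are full in $\Delta$, these two conditions agree when $\sigma \in \Delta$, and non-faces trivially don't require the fullness hypothesis at all, closing the loop.
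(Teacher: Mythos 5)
Your proof is correct and takes exactly the route the paper intends: the paper states that Theorem~\ref{thm:upb} follows ``immediately'' from Proposition~\ref{prp:upb} and Lemma~\ref{lem:inc}, and you have filled in precisely the bookkeeping that claim leaves implicit, including using Proposition~\ref{prp:exact} to translate the $\Theta$-surjection hypothesis into the exactness-after-tensoring hypothesis of Lemma~\ref{lem:inc} and then verifying (ii)$\Leftrightarrow$(iii)$\Leftrightarrow$(iv) by inspecting the squarefree generators of $\I_\Delta\cap\ch$. The only minor blemish is expository: your remark about fullness in the step (iii)$\Leftrightarrow$(iv) is unnecessary (that equivalence is a purely combinatorial identity about monomial ideals; fullness is already consumed in Proposition~\ref{prp:upb} to obtain the presentation), but this does not affect correctness.
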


To help decide whether $\ch \twoheadrightarrow \FM$ is a $\Theta$-surjection, it is useful to keep some simple tricks in
mind. For instance, if $\Theta$ is a regular sequence for $\FM$, then $\Theta_{m-1} \FM:\theta_m/\Theta_{m-1} \FM \equiv
0$, so that $\ch \twoheadrightarrow \FM$ is trivially a $\Theta$-surjection. 
This is in particular applicable
if $\FM$ is Cohen--Macaulay.

\begin{prp}
    A \lsop\ $\Theta$ of length $\ell$ induces a $\Theta$-surjection $\ch
    \twoheadrightarrow \FM[\RC]$ if the $(\ell-1)$-skeleton
    $\RC^{(\ell)}$ is Cohen--Macaulay.
\end{prp}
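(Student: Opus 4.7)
The plan is to show that the hypothesis $\RC^{(\ell)}$ Cohen--Macaulay actually forces $\Theta$ to be an $\FM[\RC]$-regular sequence; once this is established, the annihilator quotients $(\Theta_{m-1}\FM[\RC] : \theta_m)/\Theta_{m-1}\FM[\RC]$ all vanish for $m \le \ell$, the target of the induced annihilator map of Proposition~\ref{prp:exact} is zero, and the $\Theta$-surjection condition is trivially satisfied independently of any feature of $\ch$.

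The key step is to derive the depth bound $\depth \FM[\RC] \ge \ell$ from the Cohen--Macaulayness of the skeleton. Since depth is detected by vanishing of local cohomology, the bound $\depth \FM[\RC] \ge \ell$ is equivalent, via Hochster's formula (Theorem~\ref{thm:HF}), to the requirement $\rHom_j(\Lk(\sigma,\RC)) = 0$ for all $\sigma \in \Delta$ and all $j < \ell - \dim\sigma - 2$. For faces with $\dim\sigma \ge \ell - 2$ this range is vacuous, so only faces in $\Delta^{(\ell)}$ contribute. Since $\RC^{(\ell)}$ being Cohen--Macaulay implies that it is pure of dimension $\ell - 1$, every such face lies in a facet of $\RC^{(\ell)}$, so $\dim\Lk(\sigma,\RC^{(\ell)}) = \ell - \dim\sigma - 2$. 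Reisner's criterion (Theorem~\ref{thm:SR}) applied to $\RC^{(\ell)}$ then gives $\rHom_j(\Lk(\sigma,\RC^{(\ell)})) = 0$ precisely for $j < \ell - \dim\sigma - 2$. Since $\Lk(\sigma,\RC^{(\ell)})$ is the $(\ell - \dim\sigma - 2)$-skeleton of $\Lk(\sigma,\RC)$, and passage to a skeleton preserves reduced (relative) homology in all dimensions strictly below the skeleton dimension, this vanishing transfers to the full link $\Lk(\sigma,\RC)$, yielding the desired depth bound.

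Once the depth bound is in hand, the regularity of $\Theta$ on $\FM[\RC]$ follows from the standard fact that, with $\k$ infinite, any linear system of parameters of length at most the depth of a finitely generated graded $\k[\x]$-module is regular on that module. Regularity says exactly that $(\Theta_{m-1}\FM[\RC] : \theta_m) = \Theta_{m-1}\FM[\RC]$ for all $m \le \ell$, so the target annihilator module is zero and the map from the corresponding annihilator quotient of $\ch$ is vacuously surjective, which is what Proposition~\ref{prp:exact} requires. The main obstacle is the first step: one must carefully translate Reisner's criterion on the skeleton into the depth condition on the whole face module via Hochster's formula. What makes this translation work is the purity of $\RC^{(\ell)}$ forced by Cohen--Macaulayness, since this is precisely what promotes Reisner's range $j < \dim\Lk(\sigma,\RC^{(\ell)})$ to exactly the range $j < \ell - \dim\sigma - 2$ demanded by Hochster.
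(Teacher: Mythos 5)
Your derivation of the depth bound $\depth \FM[\RC] \ge \ell$ from the Cohen--Macaulayness of $\RC^{(\ell)}$ — Hochster's formula plus Reisner's criterion on the skeleton, transferred to the full relative links via the skeleton-homology comparison — is a sensible unpacking of the depth content behind the result the paper cites, and modulo a couple of harmless off-by-one slips in the range of $\dim\sigma$ it works. However, your closing step rests on a claim that is simply false: it is \emph{not} a standard fact that every partial linear system of parameters of length at most $\depth M$ is a regular sequence for an arbitrary finitely generated graded $\k[\x]$-module $M$, nor does the field being infinite rescue this. What is true (for $\k$ infinite) is that \emph{some} regular sequence of linear forms of length $\depth M$ exists; a given lsop of that length may nonetheless meet an embedded associated prime while still cutting dimension as required. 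Concretely, over $R=\k[x,y,z]$, the module $M = R/(x) \oplus R/(x,y)$ has $\depth M = 1$, and $y$ is a partial lsop of length one (it cuts $\dim M$ from $2$ to $1$), yet $y$ annihilates the summand $R/(x,y)$ and is therefore a zerodivisor.

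The paper sidesteps this precisely by invoking Hibi's Corollary~2.6, a statement special to Stanley--Reisner theory in which the combinatorial structure of the face module — its associated primes being governed by the facets of $\RC$ and controlled by the Cohen--Macaulay skeleton — is what forces an lsop of the given length to avoid all associated primes, not the depth bound alone. To close the gap you would need to argue at the level of associated primes of $\FM[\RC]$ rather than deduce regularity from depth, or else cite Hibi as the paper does.
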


\begin{proof} This follows since every \lsop\ $\Theta$ of length $\le \ell$ is regular if the $(\ell-1)$-skeleton is
    Cohen--Macaulay by a result of Hibi, cf.\ \cite[Corollary~2.6]{Hibi}.\end{proof}

For a more general criterion, we consider complexes whose skeleta are Buchsbaum. 

\begin{thm}\label{thm:reduction_to_topology} 
    Assume that the $(\ell-1)$-skeleta $(\K_{\V(\Delta)},
    \mathfrak{N}[\Delta,\scr{G}])^{(\ell)}$ and $(\Delta,\cfs{G})^{(\ell)}$
    are Buchsbaum, and let $\Theta$ be any \lsop\ of length $\ell$.  Then we
    have a $\Theta$-surjection resp.\ $\Theta$-isomorphism if for every face
    $\sigma$ of $\Delta$, the embedding
\begin{equation}\label{eq:em}
        \Lk(\sigma,(\Delta,\cfs{G})) \longhookrightarrow
        \Lk(\sigma,(\K_{\V(\Delta)},\mathfrak{N}[\Delta,\scr{G}]))
\end{equation}
    induces a surjection (resp.\ isomorphism) of cohomology groups up to
    degree $\ell-\dim \sigma -2$.
\end{thm}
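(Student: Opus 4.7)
The plan is to verify the hypotheses of Proposition~\ref{prp:exact} for the canonical surjection $\varphi \colon \ch \twoheadrightarrow \FM$ of Proposition~\ref{prp:upb}, where we abbreviate $\RC = (\Delta, \cfs{G})$, $\FM = \FM[\RC]$ and $\ch = \ch[\Delta,\scr{G}]$. Concretely, for each $m = 1, \ldots, \ell$ the goal is to establish that the induced map
\[
(\Theta_{m-1}\ch : \theta_m)/\Theta_{m-1}\ch \ \longtwoheadrightarrow \ (\Theta_{m-1}\FM : \theta_m)/\Theta_{m-1}\FM
\]
is surjective (respectively, bijective). Since $\Theta$ has length $\ell$ and the $(\ell-1)$-skeletal face modules of both relative pairs are Buchsbaum by hypothesis, the analysis of these annihilator quotients may be carried out on the skeleta, on which $\Theta$ restricts to a full \hsop.

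The proof of Corollary~\ref{cor:qm} furnishes, naturally in each Buchsbaum module $M$, a short exact sequence
\[
0 \to H^{m-2}(\Theta_{m-1}; M) \to H^{m-1}(\Theta_m; M) \to (\Theta_{m-1}M:\theta_m)/\Theta_{m-1}M \to 0.
\]
Through this naturality and Lemma~\ref{lem:koszulRanks}, the required property on the annihilator quotients reduces to showing that the induced maps
\[
\loCo^i(\ch^{(\ell)}) \ \longrightarrow \ \loCo^i(\FM^{(\ell)})
\]
are surjective (resp.\ bijective) for every $0 \le i < \ell$. Now the relative Hochster formula (Theorem~\ref{thm:HF}) identifies, for each $\alpha\le 0$, the corresponding fine-graded component with a reduced homology group $\rHom_{i - \dim\sigma_\alpha - 2}(\Lk(\sigma_\alpha, -))$ of the appropriate pair, where $\sigma_\alpha = \supp(\alpha)$. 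The naturality of the \v{C}ech complex underlying the proof of Theorem~\ref{thm:HF} then makes the map above correspond, in each fine degree, to the map on link (co)homologies induced by the embedding~\eqref{eq:em}. Since the indices satisfy $i - \dim\sigma_\alpha - 2 \le \ell - \dim\sigma_\alpha - 2$ for $i \le \ell - 1$, the hypothesis supplies exactly the required surjectivity or bijectivity in every relevant degree.

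The main technical obstacle is the careful bookkeeping of this naturality through both the Koszul exact sequence and the \v{C}ech resolution (including the degree-reversal inherent in Hochster's formula), so that the topological hypothesis really translates into the algebraic surjectivity being claimed. In the isomorphism case a concluding five-lemma chase on the Koszul exact sequence above is then needed to upgrade bijectivity on Koszul cohomology (and hence on local cohomology) to bijectivity on the annihilator quotients themselves.
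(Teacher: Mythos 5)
Your proposal follows essentially the same strategy as the paper's proof: verify the hypotheses of Proposition~\ref{prp:exact} for $\ch \twoheadrightarrow \FM$ via the short exact sequence from Corollary~\ref{cor:qm}, reduce the surjectivity of annihilator quotients to surjectivity on local cohomology, and translate that, through the $\Z^n$-graded \v{C}ech-complex naturality underlying Hochster's formula, into the stated topological condition on link cohomology.

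The one point you gloss over, and which deserves explicit mention, is the passage from surjectivity on local cohomology to surjectivity on Koszul cohomology. You appeal to ``naturality and Lemma~\ref{lem:koszulRanks},'' but Lemma~\ref{lem:koszulRanks} is only a dimension count: it does not by itself furnish a \emph{natural} identification of $H^i(\Theta_s;M)$ with a sum of shifted local cohomology modules. What the paper uses at this juncture is Schenzel's truncated quasi-isomorphism in the derived category,
\[
   \tau^\ell\bigl(K^\bullet(\Theta_m;\FM)\otimes_{\k[\x]} C^\bullet(\FM)\bigr)\xrightarrow{\ \sim\ }\tau^\ell K^\bullet(\Theta_m;\FM),
\]
which is functorial in the Buchsbaum module and therefore transports a surjection $\loCo^\bullet(\ch)\to\loCo^\bullet(\FM)$ to a surjection on Koszul cohomology. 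You flag ``careful bookkeeping'' as the main obstacle, which shows the right instinct, but the derived-category quasi-isomorphism is precisely the tool that discharges it and should be invoked explicitly. With that supplied, the final Snake-lemma (or five-lemma) step you describe then goes through exactly as in the paper.
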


\begin{proof}
The basic idea is that the modules $\Theta_{m-1}\ch:\theta_m/\Theta_{m-1} \ch$ can be 
written as cokernels in short exact sequences of cohomology groups of Koszul complexes: We have
\[
\renewcommand{\labelstyle}{\textstyle}
\xymatrix@C-11pt{
0\ar[r]^{} & H^{m-2}(\Theta_{m-1};M) \ar[r]^{} 
  &H^{m-1}(\Theta_{m};M) \ar[r]^{} 
  & \Theta_{m-1}M:\theta_m/\Theta_{m-1} M \ar[r]^{} 
  & 0}
\]
for $\k[x]$-modules $M$ as in Corollary~\ref{cor:qm}. For Buchsbaum complexes, these homology modules are
determined in terms of local cohomology of $\ch$ and $\FM$, and
by the connection between the $\Z^n$-graded \v{C}ech complex and homology of links (exploited in Hochster's formula), we conclude that
if for all $\sigma\in\Delta$, the embedding~\eqref{eq:em}
induces a surjection of cohomology groups up
to degree $\ell-\dim \sigma -2$, then we also have a surjection on the level of local cohomology modules of $\ch$
and $\FM$ up to dimension $\ell$.

Let now $m\le \ell$. Observe that the key to Lemma~\ref{lem:koszulRanks} is a
quasi-isomorphism of chain complexes
\[
   \tau^\ell(K^\bullet(\Theta_m;\FM)\otimes_{\k[\x]} C^\bullet
   (\FM))\xrightarrow{\ \sim\ }\tau^\ell K^\bullet(\Theta_m;\FM)
\]
in the derived category $D(\k[\x])$, cf.\ \cite{Schenzel81,Schenzel82}.
Here the former chain complex is a chain complex with trivial differentials,
with
\[
    C^i(\FM)=\left\{\begin{array}{ll}H^i(\FM) &\ \text{if\ } i\in
        \{0,\dots,\dim \FM-1\}\text{\ and}\\
          0 &\ \text{otherwise}.
                  \end{array}\right.
\]
so that the complex $C^\bullet (\FM)$ is the exact chain complex of local
cohomology modules of $\FM$, and $\tau^\ell$ denotes the truncation of a
chain complex in degree $\ell$.  Analogously, we have a quasiisomorphism
\[
    \tau^\ell(K^\bullet(\Theta_m;\ch)\otimes_{\k[\x]} C^\bullet
    (\ch))\xrightarrow{\ \sim\ }\tau^\ell K^\bullet(\Theta_m;\ch).
\] 
It follows that a surjection
on the level of local cohomology modules of $\ch$ and $\FM$ induces a
surjection on the level of Koszul
cohomology.

To conclude the desired surjection of modules  $(\Theta_{m-1}\ch:\theta_m)/\Theta_{m-1}\ch\longtwoheadrightarrow(\Theta_{m-1}\FM:\theta_m)/\Theta_{m-1}\FM$, consider
\[
\renewcommand{\labelstyle}{\textstyle}
\xymatrix@C-11pt{
0\ar[r]^{} & H^{m-2}(\Theta_{m-1};\ch) \ar[d]^{} \ar[r]^{} 
  &H^{m-1}(\Theta_{m};\ch) \ar[d]^{} \ar[r]^{} 
  & \Theta_{m-1}\ch:\theta_m/\Theta_{m-1} \ch \ar[d]^{} \ar[r]^{} 
  & 0 \\
0\ar[r]^{} & H^{m-2}(\Theta_{m-1};\FM) \ar[r]^{} 
  &H^{m-1}(\Theta_{m};\FM) \ar[r]^{} 
  & \Theta_{m-1}\FM:\theta_m/\Theta_{m-1} \FM \ar[r]^{} 
  & 0 } 
\]
and the Snake lemma. The claim for the isomorphism follows analogously.
\end{proof}

This motivates us to notice a beautiful relation to Borsuk's Nerve Lemma 
\cite{Borsuk}: \emph{Not all full arrangements are created equal}.

Let us call a full arrangement $\scr{G}$ in $\Delta$ an \Defn{$\ell$-good} cover 
if, for every 
subset $\{\Gamma_1,\dots,\Gamma_t\}$ of $t$ elements of $\scr{G}$, the relative complex 
$(\Delta,\Delta\cap\bigcap_{i=1}^t \Gamma_i)$ is $(\ell-t)$-acyclic, that is, its homology vanishes up to dimension $\ell-t$. We call $\scr{G}$ 
\Defn{$\ell$-magnificent} (w.r.t.\ $(\Delta,\cfs{G})$) if, for every face 
$\sigma$ of $\Delta$, the restriction of $\scr{G}$ to $\Lk(\sigma,\Delta)$ is 
{$(\ell-\dim\sigma-1)$-good}. We have the following application of the Nerve 
Lemma (in its generalization due to Bj\"orner, cf.\ \cite{NFH, BWW}).
\begin{thm}\label{thm:magnificent}
Assume that $\scr{G}$ is an $\ell$-magnificent cover, and that 
$(\K_{\V(\Delta)},\mathfrak{N}[\Delta,\scr{G}])^{(\ell)}$ and $(\Delta,\cfs{G})^{(\ell)}$ 
are Buchsbaum. Then for every face $\sigma$ of $\Delta$, the embedding
\[\Lk(\sigma,(\Delta,\cfs{G})) \longhookrightarrow 
\Lk(\sigma,(\K_{\V(\Delta)},\mathfrak{N}[\Delta,\scr{G}]\, ))\] induces an isomorphism of relative homology up to dimension $\ell-\dim \sigma -2$, and a surjection in degree $\ell-\dim \sigma -1$. In particular, $\ch \twoheadrightarrow \FM$ is 
a $\Theta$-isomorphism for every \lsop\ $\Theta$ of length~$\ell$, and a $\Theta$-surjection for every \lsop\ $\Theta$ of length~$\ell+1$. 
\end{thm}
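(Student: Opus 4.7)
The plan is to first localize at $\sigma$ by restriction to links, then compare the two pairs via parallel Mayer--Vietoris spectral sequences and the filtered Nerve Lemma, and finally invoke Theorem~\ref{thm:reduction_to_topology} to translate the resulting topological iso/surjection into the algebraic statement about $\ch \twoheadrightarrow \FM$.

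For the reduction, observe that the hypothesis is designed to localize perfectly under $\Lk(\sigma,-)$: the restricted arrangement $\{\Lk(\sigma,\Gamma) : \Gamma \in \scr{G}\}$ is full in $\Lk(\sigma,\Delta)$, its support is $\Lk(\sigma,\cfs{G})$, its coarse nerve is $\Lk(\sigma,\mathfrak{N}[\Delta,\scr{G}])$, it is $(\ell-\dim\sigma-1)$-good by the definition of $\ell$-magnificence, and the relevant skeleta remain Buchsbaum by Corollary~\ref{cor:bher}. Thus it suffices to prove the absolute case $\sigma = \emptyset$: if $\scr{G}$ is $\ell$-good on $\Delta$ with the stated Buchsbaum hypotheses, then the embedding of pairs
\[
(\Delta,\cfs{G}) \ \longhookrightarrow \ (\K_{\V(\Delta)},\mathfrak{N}[\Delta,\scr{G}])
\]
induces an isomorphism on $H_k$ for $k \le \ell-1$ and a surjection on $H_\ell$.

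For the homological comparison, I would analyze two parallel Mayer--Vietoris spectral sequences. The cover of $\mathfrak{N}[\Delta,\scr{G}] = \bigcup_i \K_{\V(\Gamma_i)}$ by simplices has every nonempty intersection $\bigcap_{i\in S}\K_{\V(\Gamma_i)} = \K_{\bigcap_i \V(\Gamma_i)}$ again a simplex, so the classical Nerve Lemma collapses its Mayer--Vietoris sequence to identify $\mathfrak{N}[\Delta,\scr{G}]$ with the abstract nerve $N(\scr{G})$ up to homotopy. For the cover of $\cfs{G}$ by $\{\Gamma_i\}$, the $\ell$-goodness condition says precisely that each intersection inclusion $\bigcap_{i\in S}\Gamma_i \hookrightarrow \Delta$ is an $(\ell-|S|)$-equivalence in reduced homology; this is exactly the hypothesis of Bj\"orner's filtered Nerve Lemma \cite{NFH,BWW}, which I would apply at the level of the relative chain complex of the pair $(\Delta,\cfs{G})$. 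Combining the two spectral sequences compatibly along the embedding of pairs, together with the contractibility of $\K_{\V(\Delta)}$, will produce the desired isomorphism of relative $H_k$ for $k \le \ell-1$ and surjection for $k = \ell$.

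The algebraic conclusion then follows directly from Theorem~\ref{thm:reduction_to_topology}: the isomorphism up to degree $\ell-\dim\sigma-2$ at each link yields a $\Theta$-isomorphism $\ch \twoheadrightarrow \FM$ for every \lsop\ $\Theta$ of length $\ell$, and the surjection in degree $\ell-\dim\sigma-1$ upgrades this to a $\Theta$-surjection for every \lsop\ of length $\ell+1$. The main technical obstacle is the relative (rather than absolute) Mayer--Vietoris bookkeeping: I must ensure that the filtered Nerve Lemma degeneration is transferred properly to the relative chain complex and that the induced comparison map on $E_\infty$-pages faithfully computes the natural map on relative homology of the two pairs. Once this is handled, the rest is essentially formal, and the localization under links in the first step is purely organizational once one knows how to treat $\sigma = \emptyset$.
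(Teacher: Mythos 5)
Your high-level plan---verify the topological hypothesis of Theorem~\ref{thm:reduction_to_topology} via a Nerve-Lemma-style comparison of Mayer--Vietoris data---is in the spirit of the paper's hint, but the central step is not carried out and, as written, does not close.

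The fundamental problem is the one you flag but do not resolve. The $\ell$-goodness hypothesis controls only the \emph{relative} groups $\rHom_j(\Delta,\bigcap_{i\in S}\Gamma_i)$; it says nothing about the absolute acyclicity of the intersections $\bigcap_{i\in S}\Gamma_i$ themselves. This is \emph{not} the hypothesis of Bj\"orner's Nerve Lemma, which asks for absolute $(k-|S|+1)$-connectivity (or acyclicity) of the $|S|$-fold intersections. Consequently, comparing the two Mayer--Vietoris spectral sequences for the covers $\{\Gamma_i\}$ of $\cfs{G}$ and $\{\K_{\V(\Gamma_i)}\}$ of $\mathfrak{N}$ cannot succeed on its own: the induced map of $E^1$-pages $\bigoplus \rHom_q(\bigcap\Gamma_i)\to\bigoplus\rHom_q(\K_{\bigcap\V(\Gamma_i)})$ is governed by the absolute homology of $\bigcap\Gamma_i$, which is not assumed to vanish. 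Your appeal to ``the contractibility of $\K_{\V(\Delta)}$'' gestures at the missing ingredient without saying how it enters.

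The fix is to run the relative Mayer--Vietoris spectral sequence on a pair where the relative hypothesis bites directly. Fix $\sigma\in\Delta$ and set $\ell'=\ell-\dim\sigma-1$. Since fullness gives $\cfs{G}=\Delta\cap\mathfrak{N}$, excision and the long exact sequence of a triple reduce the asserted iso/surjection at $\sigma$ to the single vanishing statement
\[
\rHom_j\bigl(\K_{\V(\Delta)\setminus\sigma},\ \Lk(\sigma,\Delta)\cup\Lk(\sigma,\mathfrak{N})\bigr)\ =\ 0\quad\text{for all } j\le\ell'.
\]
Cover the subcomplex by $B_i:=\Lk(\sigma,\Delta)\cup\K_{\V(\Gamma_i)\setminus\sigma}$ over the indices $i$ with $\sigma\in\Gamma_i$. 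Fullness gives $\bigcap_{i\in S}B_i=\Lk(\sigma,\Delta)\cup\K_{\bigcap_{i\in S}\V(\Gamma_i)\setminus\sigma}$, and one more excision (against the contractible simplex $\K_{\bigcap\V(\Gamma_i)\setminus\sigma}$) identifies
\[
\rHom_q\Bigl(\K_{\V(\Delta)\setminus\sigma},\ \textstyle\bigcap_{i\in S}B_i\Bigr)\ \cong\ \rHom_{q-1}\Bigl(\Lk(\sigma,\Delta),\ \textstyle\bigcap_{i\in S}\Lk(\sigma,\Gamma_i)\Bigr).
\]
These are exactly the groups that $\ell$-magnificence kills: they vanish for $q-1\le\ell'-|S|$. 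Hence the $E^1$-page of the relative Mayer--Vietoris spectral sequence vanishes in total degree $\le\ell'$, giving the claim; Theorem~\ref{thm:reduction_to_topology} then supplies the algebraic conclusion as you say.

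A further slip in your localization step: the coarse nerve of the restriction of $\scr{G}$ to $\Lk(\sigma,\Delta)$ is $\bigcup\K_{\V(\Lk(\sigma,\Gamma_i))}$, which in general is a \emph{proper} subcomplex of $\Lk(\sigma,\mathfrak{N})=\bigcup\K_{\V(\Gamma_i)\setminus\sigma}$, because a vertex $v\in\V(\Gamma_i)\setminus\sigma$ with $\sigma\cup\{v\}\notin\Delta$ contributes to the latter but not the former. This does not ultimately matter---as the computation above shows, only the intersections with $\Lk(\sigma,\Delta)$ enter, and those coincide---but it means the reduction to $\sigma=\emptyset$ is not ``purely organizational'': one should run the argument directly at each $\sigma$ against $\Lk(\sigma,\mathfrak{N})$ rather than against the coarse nerve of the restricted arrangement.
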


\begin{ex}\label{ex:cm}
If $\Delta$ is Cohen--Macaulay, and $\scr{G}$ is a collection of disjoint Cohen--Macaulay subcomplexes of $\Delta$ of the same dimension as $\Delta$, then the cover is $d$-magnificent.
\end{ex}

We will see some more interesting examples and an application when investigating Minkowski sums of polytopes, compare also Theorem~\ref{thm:twosummands}.
We finally record a simple trick to compute $\dim_\k (\I/\Theta \I)_k$ for Lemma~\ref{lem:inc} using a dual form of Schenzel's Formula.

\begin{thm}\label{thm:RSI}
    Let $\Gamma \subseteq \K_n$ be a simplicial complex with Stanley--Reisner
    ideal $\I_\Gamma$. For $m \ge \ell$ assume that the relative complex of 
    $(m-1)$- and $(\ell-1)$-skeleta $(\K_n^{(m)},\Gamma^{(\ell)})$ is
    Buchsbaum and let $\FM = \FM[\K_n^{(m)},\Gamma^{(\ell)}]$ be the
    corresponding face module. If $\Theta$ is a \lsop\ for $\FM$ 
    then for all $0 \le j \le \ell$ 
    \[
        \dim_\k (\I_\Gamma/\Theta \I_\Gamma)_j \ = \
        [(1-t)^{m}\Hilb(\I_\Gamma,t)]_j -
        \binom{m}{j} \sum_{i=0}^{j-1}(-1)^{j-i}
        \rBetti_{i-2}\bigl(\Gamma^{(\ell)}\bigr),
    \] 
    where $[(1-t)^{m}\Hilb(\I_\Gamma,t)]_j$ denotes the coefficient of $t^j$
    in $(1-t)^{m}\Hilb(\I_\Gamma,t)$.
\end{thm}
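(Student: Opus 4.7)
The plan is to apply the relative Schenzel formula (Theorem~\ref{thm:RS}) to the Buchsbaum pair $\RC = (\K_n^{(m)}, \Gamma^{(\ell)})$ and then translate the resulting identity into a statement about $\I_\Gamma$ and the reduced Betti numbers of $\Gamma^{(\ell)}$. The face module is $\FM = \I_{\Gamma^{(\ell)}}/\I_{\K_n^{(m)}}$, and the core observation is a degree-wise comparison. A monomial of degree $j \le \ell$ has support of size at most $\ell$, so its support belongs to $\Gamma^{(\ell)}$ if and only if it belongs to $\Gamma$; hence $(\I_\Gamma)_j = (\I_{\Gamma^{(\ell)}})_j$ for $j \le \ell$. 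Moreover, $\I_{\K_n^{(m)}}$ is generated in degree $m+1$, so $(\I_{\K_n^{(m)}})_j = 0$ for $j \le m$. Combining these, one gets $\FM_j = (\I_\Gamma)_j$ and $(\Theta\FM)_j = (\Theta\I_\Gamma)_j$ for $j \le \ell$, which yields $(\FM/\Theta\FM)_j = (\I_\Gamma/\Theta\I_\Gamma)_j$ in this range.

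Next, apply Theorem~\ref{thm:RS} to $\RC$, which has dimension $m-1$. Extracting the coefficient of $t^j$ gives
\[
\dim_\k(\FM/\Theta\FM)_j \ = \ \bigl[(1-t)^{m}\Hilb(\FM,t)\bigr]_j \ - \ \binom{m}{j}\sum_{i=0}^{j-1}(-1)^{j-i}\rBetti_{i-1}(\RC).
\]
Since $\Hilb(\FM,t)$ and $\Hilb(\I_\Gamma,t)$ agree in all degrees $\le \ell$, and since the coefficient of $t^j$ in $(1-t)^{m}F(t)$ depends only on the coefficients of $F$ in degrees at most $j$, we may replace $\Hilb(\FM,t)$ by $\Hilb(\I_\Gamma,t)$ on the right-hand side whenever $j \le \ell$.

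Finally, identify the relative Betti numbers of $\RC$. As the $(m-1)$-skeleton of a simplex, $\K_n^{(m)}$ has $\rHom_i(\K_n^{(m)}) = 0$ for $i \ne m-1$. The long exact sequence in reduced homology of the pair $(\K_n^{(m)},\Gamma^{(\ell)})$ then yields $\rHom_i(\RC) \cong \rHom_{i-1}(\Gamma^{(\ell)})$ for all $i \le m-2$, so $\rBetti_{i-1}(\RC) = \rBetti_{i-2}(\Gamma^{(\ell)})$ for $i \le m-1$. In the sum above, the index $i$ ranges only up to $j-1 \le \ell - 1 \le m - 1$, so this substitution applies to every term and produces the claimed formula. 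The main obstacle is really just the degree-wise bookkeeping that establishes $(\FM/\Theta\FM)_j = (\I_\Gamma/\Theta\I_\Gamma)_j$ for $j \le \ell$; once this is in hand, the Schenzel formula and the long exact sequence of a pair do the rest.
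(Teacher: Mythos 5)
Your proof is correct and follows essentially the same route as the paper: apply the relative Schenzel formula to $(\K_n^{(m)},\Gamma^{(\ell)})$, substitute $\rBetti_{i-1}(\RC)=\rBetti_{i-2}(\Gamma^{(\ell)})$ via the long exact sequence of the pair (using that $\K_n^{(m)}$ has homology only in degree $m-1$), and observe that $\FM$ and $\I_\Gamma$ agree in degrees $\le \ell$. Your version merely spells out the degree-wise identification $(\FM/\Theta\FM)_j = (\I_\Gamma/\Theta\I_\Gamma)_j$ a bit more explicitly than the paper's terse ``$(\I_\Gamma)_{\le\ell}\cong M_{\le\ell}$, the formula follows.''
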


If $(\K_n^{(m)},\Gamma^{(\ell)})$ is Buchsbaum and hence locally
Cohen--Macaulay, we necessarily have $\ell\le m\le \ell+1$ by Corollary
\ref{cor:cmincm}. 
  
\begin{proof}
Notice that $\K_n^{(m)}$ is Cohen--Macaulay and by the long exact sequence in
relative homology
\[
    \rHom_{i-2}(\Gamma^{(\ell)}) \ \cong \
    \rHom_{i-1}(\K_n^{(m)},\Gamma^{(\ell)}) \quad \text{ for all } i-1<
    \ell-1.
\]
Hence, by Theorem~\ref{thm:RS}, we obtain 
\[
        [(1-t)^m \Hilb(\FM,t)]_j  \ = \ 
         [\Hilb(\FM/\Theta \FM,t)]_j  + 
        \binom{m}{j} \sum_{i=0}^{j-1}(-1)^{j-i}
        \rBetti_{i-2}(\Gamma^{(\ell)}).
\]
Passing to the $(\ell-1)$-skeleton changes the ideal $\I_\Gamma$ in degrees $>
\ell$, so that $(\I_\Gamma)_{\le \ell}\cong M_{\le \ell}$. 
The formula follows.
\end{proof}

\begin{cor}\label{cor:RSI}
    Let $\Gamma \subseteq \K_n$ be any simplicial complex.  Assume that for $m 
    \ge \ell$ the relative complex
    $(\K_n^{(m)},\Gamma^{(\ell)})$ is Cohen--Macaulay. If $\Theta$ is a full
    \lsop\ for $\FM[\K_n^{(m)},\Gamma^{(\ell)}]$, then for all $0\le j \le
    \ell$
    \[
        \dim_\k (\I_\Gamma/\Theta \I_\Gamma)_j \ = \ [(1-t)^m
        \Hilb(\I_\Gamma,t)]_j.
    \] 
\end{cor}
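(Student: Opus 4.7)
The plan is to deduce the corollary directly from the relative Schenzel-type formula of Theorem~\ref{thm:RSI}, by showing that under the stronger Cohen--Macaulay hypothesis the topological correction term vanishes in the relevant range $0 \le j \le \ell$. Since every Cohen--Macaulay module is Buchsbaum, the hypotheses of Theorem~\ref{thm:RSI} are met, yielding
\[
\dim_\k (\I_\Gamma/\Theta \I_\Gamma)_j \ = \ [(1-t)^{m}\Hilb(\I_\Gamma,t)]_j \ - \ \binom{m}{j} \sum_{i=0}^{j-1}(-1)^{j-i}\rBetti_{i-2}\bigl(\Gamma^{(\ell)}\bigr).
\]
Thus it suffices to verify that $\rBetti_{i-2}(\Gamma^{(\ell)}) = 0$ for all $0 \le i \le j-1$, which I expect to be a straightforward application of Reisner's criterion plus a long exact sequence argument.

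The key step is as follows. Apply Reisner's criterion (Theorem~\ref{thm:SR}) at the empty face of the Cohen--Macaulay relative complex $(\K_n^{(m)},\Gamma^{(\ell)})$ of dimension $m-1$ to conclude that $\rHom_k(\K_n^{(m)},\Gamma^{(\ell)}) = 0$ for all $k < m-1$. Now invoke the portion of the long exact sequence in relative homology already used in the proof of Theorem~\ref{thm:RSI}, namely the isomorphism
\[
\rHom_{i-2}\bigl(\Gamma^{(\ell)}\bigr) \ \cong \ \rHom_{i-1}\bigl(\K_n^{(m)},\Gamma^{(\ell)}\bigr) \quad \text{for}\ i-1 < \ell-1,
\]
which is valid because $\K_n^{(m)}$, being the $(m-1)$-skeleton of a simplex, is itself Cohen--Macaulay, so $\rHom_{i-1}(\K_n^{(m)}) = \rHom_{i-2}(\K_n^{(m)}) = 0$ for $i-1 < m-1$. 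Combining these two facts, in the summation range $0 \le i \le j-1 \le \ell-1$ we have $i-1 \le \ell-2 < m-1$ (using $\ell \le m$) and $i-1 < \ell-1$, so both conditions apply and $\rBetti_{i-2}(\Gamma^{(\ell)}) = 0$. Substituting into the displayed formula gives the desired identity.

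The only minor nuisance is the boundary indices $i = 0, 1$, where reduced homology in negative degree appears; these contribute zero either by the standard convention on reduced homology or directly from the long exact sequence together with Cohen--Macaulayness of $\K_n^{(m)}$. Beyond this bookkeeping there is no genuine obstacle, so the corollary follows immediately once the correction term in Theorem~\ref{thm:RSI} is shown to be trivial.
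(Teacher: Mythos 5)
Your argument is correct, and the index checks (including the boundary terms $\rBetti_{-2},\rBetti_{-1}$) hold up. The paper leaves the corollary unproved, but the intended derivation is almost certainly more direct and does not need to track Betti numbers at all: since the Stanley--Reisner module $\FM=\FM[\K_n^{(m)},\Gamma^{(\ell)}]$ is Cohen--Macaulay, any \lsop\ $\Theta$ is automatically a regular sequence, so Proposition~\ref{prp:regular} gives $\Hilb(\FM/\Theta\FM,t)=(1-t)^{m}\Hilb(\FM,t)$ on the nose; combined with the observation (already used at the end of the proof of Theorem~\ref{thm:RSI}) that $\FM$ and $\I_\Gamma$ agree in degrees $\le\ell$, and hence so do $\FM/\Theta\FM$ and $\I_\Gamma/\Theta\I_\Gamma$, the identity follows for all $j\le\ell$ without ever invoking Theorem~\ref{thm:RSI}. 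Your route instead applies Theorem~\ref{thm:RSI} and then kills each $\rBetti_{i-2}(\Gamma^{(\ell)})$ in the correction term via Reisner's criterion and the long exact sequence of the pair, which in effect re-derives the consequence of regularity term by term. Both routes rest on the same fact (Reisner's characterization of CM), and both are valid; the direct one via Proposition~\ref{prp:regular} is shorter and avoids the careful bookkeeping of index ranges that your argument requires, so it is worth having in your toolbox.
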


To summarize, we reduced the problem of bounding the $h$-numbers, or
equivalently the problem of bounding
$[(1-t)^{d}\Hilb(\FM[\Delta,\Gamma],t)]_j$, to the problem of bounding
$[(1-t)^{\ell}\Hilb(\I,t)]_j$ for some Stanley--Reisner ideal with
$\FM[\Delta,\Gamma]\varpropto \I$. The full power of this approach is seen in
combination with Theorem~\ref{thm:upb}.
Let us close with a simple observation that will close the cycle by computing 
$[\Hilb(\I,t)]_j$ as a straightforward application of the inclusion-exclusion principle on the involved non-face ideals of cliques. 

\enlargethispage{5mm}

\begin{prp}\label{prp:bi}
    Let $\Delta$ be a pure $(d-1)$-dimensional simplicial complex, and let
    $\scr{G}$ be a full arrangement. Then 
    \[
        \dim_\k \ch[\Delta,\scr{G}]_k \ = \  \sum_{p\in\mc{P}(\scr{G})}
        \mu_{\mc{P}(\scr{G})}(\Delta,p) \binom{f_0(p)+k-1}{k}
    \]
    where $\mu_{\mc{P}(\scr{G})}$ is the M\"obius function of the
    intersection poset. \qed
\end{prp}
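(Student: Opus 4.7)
The plan is to compute $\dim_\k \ch_k$ directly by inclusion--exclusion over the family $\scr{G}$ and then recognize the resulting alternating sum of binomial coefficients as the Möbius expansion on $\mc{P}(\scr{G})$.

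To begin, I would note that a monomial $\x^\alpha$ of degree $k$ lies in $\ch = \ch[\Delta, \scr{G}]$ precisely when $\supp(\alpha) \not\subseteq \V(\Gamma)$ for every $\Gamma \in \scr{G}$. Writing $U_p$ for the collection of degree-$k$ monomials whose support lies in $\V(p)$, one has $|U_p| = \binom{f_0(p)+k-1}{k}$, and $\dim_\k \ch_k = \dim_\k \k[\x]_k - |\bigcup_{\Gamma \in \scr{G}} U_\Gamma|$. Since intersections of vertex sets commute with intersections of subcomplexes, $\V(\bigcap_{\Gamma \in S} \Gamma) = \bigcap_{\Gamma \in S} \V(\Gamma)$, and straightforward inclusion--exclusion yields
\[
    \dim_\k \ch_k \ = \ \sum_{S \subseteq \scr{G}}(-1)^{|S|}\binom{f_0(\bigcap S) + k - 1}{k},
\]
with the convention $\bigcap \emptyset := \Delta$. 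Grouping the terms by the value $p = \bigcap S \in \mc{P}(\scr{G})$ rewrites the right-hand side as $\sum_{p \in \mc{P}(\scr{G})} c(p)\binom{f_0(p)+k-1}{k}$, where $c(p) := \sum_{S: \bigcap S = p}(-1)^{|S|}$.

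The heart of the argument is to show $c(p) = \mu_{\mc{P}(\scr{G})}(\Delta, p)$, which I would do by Möbius inversion on $\mc{P}(\scr{G})$: it suffices to verify $\sum_{q \le p} c(q) = [p = \Delta]$, where $\le$ denotes reverse inclusion. Unwinding, this equals $\sum_{S \subseteq \scr{G}:\, \bigcap S \supseteq p}(-1)^{|S|}$. Since $\bigcap S \supseteq p$ is equivalent to $p \subseteq \Gamma$ for every $\Gamma \in S$, the condition on $S$ is simply $S \subseteq \scr{G}[p] := \{\Gamma \in \scr{G} : p \subseteq \Gamma\}$, so the sum factors as $\prod_{\Gamma \in \scr{G}[p]}(1 + (-1))$ and vanishes whenever $\scr{G}[p] \neq \emptyset$. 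Since every $p \in \mc{P}(\scr{G}) \setminus \{\Delta\}$ arises as an intersection of members of $\scr{G}$ while $\Delta \notin \scr{G}$ (the natural convention under which the formula is stated), the set $\scr{G}[p]$ is empty precisely when $p = \Delta$, yielding the desired identity. The only real pitfall is keeping the reverse-inclusion convention on $\mc{P}(\scr{G})$ straight and recognizing the sign-bookkeeping as a standard Möbius computation.
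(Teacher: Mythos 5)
Your proof is correct and matches the paper's intended argument: the paper gives no explicit proof, describing Proposition~\ref{prp:bi} only as ``a straightforward application of the inclusion-exclusion principle on the involved non-face ideals of cliques,'' which is precisely the computation you carry out (inclusion--exclusion over the clique monomials supported on the $\V(\Gamma)$, followed by the standard identification of the alternating sum $\sum_{S : \bigcap S = p}(-1)^{|S|}$ with $\mu_{\mc{P}(\scr{G})}(\Delta,p)$ via the crosscut/closure description of the intersection poset). Your observation that $\Delta\notin\scr{G}$ is the implicit running convention is also correct.
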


%
%

\subsection{Arrangements of Cohen--Macaulay complexes}\label{ssec:arrCM}
The estimates via change of presentation enable us to extend the results
from full CM complexes to full arrangements of CM complexes.

\begin{thm}\label{thm:arrCM}
    Let $\Delta$ be a $(d-1)$-dimensional Cohen--Macaulay complex and
    $\scr{G} = \{ \Gamma_1,\dots,\Gamma_m\}$ a full arrangement of $m$
    pairwise disjoint codimension one Cohen--Macaulay subcomplexes of
    $\Delta$. Then for $0 \le k \le d$
    \[
        h_k(\Delta,\cfs{G}) \ \le \ \binom{f_0(\Delta)-d+k-1}{k} -
        \sum_{i=1}^m \binom{f_0(\Gamma_i)-d+k-1}{k}+(m-1)\binom{-d+k-1}{k}
    \]        
    Equality holds for some $k_0$ if and only if every non-face of $\Delta$ of
    dimension $< k_0$ is supported on some $\Gamma_i$.
\end{thm}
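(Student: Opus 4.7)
The plan is to apply the change-of-presentation bound of Theorem~\ref{thm:upb} to the pair $\RC = (\Delta, \cfs{G})$ using the full arrangement $\scr{G}$, and then compute both sides explicitly via the intersection poset and Hilbert series.

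First I would establish that $\RC = (\Delta, \cfs{G})$ is Buchsbaum. Since the $\Gamma_i$ are pairwise vertex-disjoint, every vertex $v \in \V(\Delta)$ lies in at most one $\Gamma_i$; consequently $\Lk(v, \RC)$ is either $\Lk(v, \Delta)$ or the pair $(\Lk(v, \Delta), \Lk(v, \Gamma_i))$, both Cohen--Macaulay by Corollary~\ref{cor:cmincm} (two CM complexes differing in dimension by one). By Theorem~\ref{thm:topb}, $\RC$ is Buchsbaum, so the Relative Schenzel Formula (Theorem~\ref{thm:RS}) decomposes $h_k(\RC) = \LT_k(\RC) + \GT_k(\RC)$.

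Next, disjointness collapses the intersection poset $\mc{P}(\scr{G})$ to the three-layer poset with bottom $\Delta$, atoms $\Gamma_1, \dots, \Gamma_m$, and top $\emp$, with Möbius values $\mu(\Delta, \Delta) = 1$, $\mu(\Delta, \Gamma_i) = -1$, $\mu(\Delta, \emp) = m - 1$. Proposition~\ref{prp:bi} then yields
\[
  \Hilb(\ch, t) \ = \ \frac{1}{(1-t)^{f_0(\Delta)}} \; - \; \sum_{i=1}^m \frac{1}{(1-t)^{f_0(\Gamma_i)}} \; + \; (m-1),
\]
so that the $k$-th coefficient of $(1-t)^d \Hilb(\ch, t)$ is exactly the right-hand side of the theorem, using the identity $(-1)^k \binom{d}{k} = \binom{-d+k-1}{k}$.

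The main technical step is turning this Hilbert-series identity into the claimed inequality on $h_k(\RC)$. By Theorem~\ref{thm:upb}, $\LT_k(\RC) \le \dim_\k(\ch/\Theta\ch)_k$ for any \lsop\ $\Theta$ of $\FM[\RC]$. To evaluate the right-hand side, I would apply Theorem~\ref{thm:RSI} to the ideal $\ch = \I_{\mathfrak{N}[\Delta,\scr{G}]}$, after verifying that $(\K_{\V(\Delta)}^{(d)}, \mathfrak{N}^{(d)})$ is Buchsbaum --- which again reduces to Corollary~\ref{cor:cmincm} since $\mathfrak{N}^{(d)}$ is a disjoint union of skeleta of simplices. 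This expresses $\dim(\ch/\Theta\ch)_k$ as the Hilbert-series coefficient minus a topological correction governed by $\rBetti_0(\mathfrak{N}^{(d)}) = m - 1$ (higher Betti numbers of $\mathfrak{N}^{(d)}$ live in the top dimension $d-1$ and do not enter the relevant range). A comparison of the long exact sequences for $(\Delta,\cfs{G})$ and $(\K_{\V(\Delta)},\mathfrak{N})$ shows this correction precisely cancels $\GT_k(\RC)$ in the Schenzel decomposition, because both corrections ultimately track the $m-1$ components of $\cfs{G}$ via $\rBetti_1(\Delta,\cfs{G}) = m-1$. This cancellation is the most delicate part of the argument and the main obstacle.

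For the equality case, I would verify that $\scr{G}$ is $d$-magnificent. Restrictions of $\scr{G}$ to links of nonempty faces involve at most one $\Gamma_i$ by disjointness, and the required acyclicity is provided by Corollary~\ref{cor:cmincm}. At the base level, $t$-fold intersections for $t \ge 2$ equal $\emp$, and the required acyclicity of $(\Delta, \emp)$ up to the appropriate dimension follows from Cohen--Macaulayness of $\Delta$ together with Corollary~\ref{cor:cmincm} for $t=1$. Theorem~\ref{thm:magnificent} then provides a $\Theta$-isomorphism $\ch \twoheadrightarrow \FM$, triggering the equality clause of Theorem~\ref{thm:upb}: equality holds at some $k_0$ if and only if every non-face of $\Delta$ of dimension $< k_0$ is supported on $\V(\Gamma)$ for some $\Gamma \in \scr{G}$.
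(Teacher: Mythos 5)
Your proposal follows essentially the same route as the paper: establish that $(\Delta,\cfs{G})$ is Buchsbaum via Theorem~\ref{thm:topb}, apply the relative Schenzel formula, bound $\LT_k$ by a change of presentation (Theorem~\ref{thm:upb}) via the nerve ideal, compute $\dim(\ch/\Theta\ch)_k$ by Theorem~\ref{thm:RSI} and Proposition~\ref{prp:bi}, compute $\GT_k$ from the relative Betti numbers, observe the correction terms cancel, and handle equality through Theorem~\ref{thm:upb} and the magnificent-cover machinery.

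Two remarks. First, you correctly assert that $(\K_{\V(\Delta)}^{(d)},\mathfrak{N}^{(d)})$ is \emph{Buchsbaum}; the paper in fact writes that it is Cohen--Macaulay, but this cannot be right for $m\ge 2$, $d\ge 2$, since $\mathfrak{N}^{(d)}$ is disconnected (so $\rHom_0\ne0$) while having dimension $d-1>0$, and indeed the paper then uses the Buchsbaum-level Theorem~\ref{thm:RSI} with a nonvanishing Betti-number correction, which would be zero in the CM case by Corollary~\ref{cor:RSI}. Your reading of the situation is the correct one. Second, the cancellation you describe as ``the most delicate part'' is not an obstacle needing a ``comparison of long exact sequences''; it is a direct two-line computation. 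One has $\rBetti_1(\Delta,\cfs{G})=m-1$ and all other relative Betti numbers in the relevant range vanish, giving $\GT_k(\RC) = (-1)^{k-2}(m-1)\binom{d}{k}$ for $k\ge3$ and $0$ otherwise; on the other side, $\rBetti_0(\mathfrak{N}^{(d)})=m-1$ is the only Betti number entering the correction of Theorem~\ref{thm:RSI} for $j\le d$, giving exactly the same quantity, which enters with opposite sign. You should state this explicitly rather than gesture at it. With that replaced, the argument is complete and coincides with the paper's proof.
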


Let us write
\[
    \mathbf{1}_{k\ge a} \  = \ 
    \begin{cases} 
1 &  \text{ if } k \ge a \text{ and}\\
0 &  \text{ otherwise.}
    \end{cases}
\]

\begin{proof}
Set $\Gamma := \cfs{G} = \Gamma_1\cup\cdots\cup \Gamma_m$.  
To begin with, we observe that for a vertex $v \in \Gamma_i$, we have
$\Lk(v,(\Delta,\Gamma)) = \Lk(v,(\Delta,\Gamma_i))$. We conclude from
Corollary~\ref{cor:cmincm} that $\Lk(\Delta,\Gamma,v)$ is Cohen-Macaulay for
all vertices $v \in \Delta$. Hence, by Theorem~\ref{thm:topb}, $\RC =
(\Delta,\Gamma)$ is Buchsbaum and $\FM = \FM[\RC]$ a Buchsbaum module. We can
therefore use the relative Schenzel formula (Theorem~\ref{thm:RS}) to bound
$h_k(\RC)$ in terms of the topological contribution $\GT_i(\RC)$ and the
algebraic contribution $\LT_i(\RC)$.

\emph{The topological contribution.} From the Cohen--Macaulayness of the
complexes $\Gamma_i$ it follows that 
\[
    \rBetti_0(\Gamma) \ = \ m-1 \quad\text{and}\quad
    \rBetti_{d-2}(\Gamma) \ = \ \sum_i\rBetti_{d-2}(\Gamma_i)
\]
and $\rBetti_i(\Gamma)=0$ for all other $i$. The long exact sequence in relative
homology
\[ 
 \cdots\; \longrightarrow \, \rHom_{i+1}(\Delta, \Gamma) \,
 \longrightarrow \, \rHom_{i}( \Gamma) \, \longrightarrow \,
 \rHom_{i}(\Delta) \, \longrightarrow \,
 \rHom_{i}(\Delta, \Gamma) \, \longrightarrow \cdots
\]
splits into short sequences and we deduce 
\[
    \rBetti_1(\RC)\   = \ m-1
    \quad\text{and}
    \quad
    \rBetti_{d-1}(\RC) \ = \ \rBetti_{d-1}(\Delta)+\sum_i\rBetti_{d-2}(\Gamma_i)
\]
and $\rBetti_i(\RC)=0$ otherwise. Hence, for $k \le d$, the topological contribution in
Theorem~\ref{thm:RS} is
\[
    \GT_k(\RC) \ = \
    (-1)^{k-2}
    (m-1)\mathbf{1}_{k\ge 3}\binom{d}{k}.
\]

\emph{The algebraic contribution.} The nerve ideal $\ch = \ch[\Delta,\Gamma]$ is the
Stanley--Reisner ideal of the coarse nerve
$
\mathfrak{N} \ = \ \bigcup_{i=1}^m \K_{\V(\Gamma_i)} \subseteq \K_{n}.
$ 
Since each $\Gamma_i$ is of  dimension $d-2$, $\mathfrak{N}$ is the disjoint union
of simplices of dimension $\ge d-2$. Hence, the relative complex
$(\K_n^{(d)},\mathfrak{N}^{(d)})$ is Cohen--Macaulay by Corollary~\ref{cor:cmincm}. The homology of $\mathfrak{N}$ is concentrated in degree 0 with
$\rBetti_0(\mathfrak{N}) = m-1$.  Therefore, we obtain
for a \lsop\ $\Theta$ for $\FM$
\begin{align*}
\LT_k(\RC) &\ = \  \dim_{\k} [\FM/\Theta \FM]_k \\
    & \ \le \  \dim_\k (\ch/\Theta \ch)_k & \text{(by Theorem~\ref{thm:upb})}\\
    & \ = \  [(1-t)^{d}\Hilb(\ch,t)]_k -
        \binom{d}{k} \sum_{i=0}^{k-1}(-1)^{k-i}
        \rBetti_{i-2}(\mathfrak{N}) & \text{(by Theorem~\ref{thm:RSI})}\\
    & \ = \  \sum_{p \in
    \mc{P}(\scr{G})}
    \mu_{\mc{P}}(\Delta,p) \binom{f_0(p)-d+k-1}{k}-(-1)^{k-2}
    (m-1)\mathbf{1}_{k\ge 3}\binom{d}{k}. & \text{(by
    Proposition~\ref{prp:bi})}
\end{align*}
The intersection poset is $\mc{P} = \scr{G} \cup \{ \Delta, \emptyset \}$ and
hence the M\"obius function is given by 
\[
    \mu_{\mc{P}(\scr{G})}(\Delta,p) \ = \ \
    \begin{cases}
                1 &  \text{if }p=\Delta,\\ 
               -1 &  \text{if }p=\Gamma_i  \text{ and}\\
               m-1 &  \text{if }p=\emptyset.
    \end{cases}
\]
Putting the computation of $\GT(\RC)$ and the bound on $\LT(\RC)$ together
yields the bound on $h_k(\RC)$.

\emph{Case of Equality.} Equality can hold for some $k_0$ if and only if it
holds for the algebraic contributions.  The equality is then this of
Theorem~\ref{thm:upb}.
\end{proof}

%


The following result interpolates between the two extreme situations of
Theorem~\ref{thm:arrCM} and the case that $\cfs{G}$ is itself full.
\renewcommand\subset{\mathbf{\color{red}\blacksquare}}

\begin{thm}
    Let $\Delta$ be a $(d-1)$-dimensional Cohen--Macaulay complex and
    $\scr{G} = \{ \Gamma_1,\dots,\Gamma_m\}$ an arrangement of $m$ pairwise
    disjoint, codimension one CM subcomplexes. Assume that for every subset
    $S$ of $[m]$ with $|S|\le \ell,\ \ell>1$, the complex $\cup_{i\in S}
    \Gamma_i$ is full in $\Delta$. Then we have, for all $k\in [d]$,
    \[
        h_k(\Delta,\cup\Gamma_i)\le \sum_{p\in \mc{P}(\scr{G})} \mu_p
        \binom{f_0(p)-d+k-1}{k} + (-1)^{k-2} (m-1)\mathbf{1}_{k\ge
        3}\binom{d}{k}+(-1)^{k+\ell+1} \binom{m-1}{\ell}\mathbf{1}_{k\ge
        \ell+2}\binom{d}{k} 
    \]
    for $\scr{G}=\{\cup_{i\in S} \Gamma_i: S \subseteq [m], |S|\le
    \ell\}\cup\{\emp\}$ 
and
\[\mu_p  \ = \ \mu_{\mc{P}(\scr{G})}(\Delta,p) \ = \ \left\{\begin{array}{cl}
                1 &\  \text{if }p=\Delta,\\ 
               (-1)^{|S|} &\  \text{if }p=\cup_{i\in S} \Gamma_i \text{ and}\\
              - \sum^{\ell}_{j=0} (-1)^j \binom{m}{j}&\  \text{if }p=\emptyset.
               \end{array}
\right.\]
\end{thm}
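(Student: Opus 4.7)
The approach mirrors the proof of Theorem~\ref{thm:arrCM} closely, but leverages the hypothesis to work with the richer full arrangement $\scr{G}$. Set $\Gamma := \bigcup_i \Gamma_i$. As in Theorem~\ref{thm:arrCM}, for each vertex $v \in \Gamma_i$ we have $\Lk(v,(\Delta,\Gamma)) = \Lk(v,(\Delta,\Gamma_i))$, which is Cohen--Macaulay by Corollary~\ref{cor:cmincm}; hence $\RC := (\Delta,\Gamma)$ is Buchsbaum (Theorem~\ref{thm:topb}) and the relative Schenzel formula (Theorem~\ref{thm:RS}) gives $h_k(\RC) = \LT_k(\RC) + \GT_k(\RC)$. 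The topological contribution is unchanged: the long exact sequence in relative homology, together with the Cohen--Macaulayness of $\Delta$ and each $\Gamma_i$, yields $\rBetti_1(\RC) = m-1$ and $\rBetti_j(\RC) = 0$ for $2 \le j \le d-2$, so $\GT_k(\RC) = (-1)^{k-2}(m-1)\mathbf{1}_{k \ge 3}\binom{d}{k}$.

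The new input lies in bounding $\LT_k(\RC)$. By hypothesis, every $\bigcup_{i \in S}\Gamma_i$ with $|S| \le \ell$ is full in $\Delta$, so $\scr{G}$ is a full arrangement and Theorem~\ref{thm:upb} yields $\LT_k(\RC) \le \dim_\k(\ch/\Theta\,\ch)_k$ with $\ch = \ch[\Delta,\scr{G}]$. Using Theorem~\ref{thm:RSI} (whose Buchsbaum hypothesis is a subsidiary verification, to be obtained either from Corollary~\ref{cor:cmincm} applied to the coarse nerve or from Theorem~\ref{thm:magnificent} applied to a suitable cover),
\[
\dim_\k(\ch/\Theta\,\ch)_k \ = \ [(1-t)^d\Hilb(\ch,t)]_k \ - \ \binom{d}{k}\sum_{i=0}^{k-1}(-1)^{k-i}\rBetti_{i-2}(\mathfrak{N}[\Delta,\scr{G}]),
\]
and Proposition~\ref{prp:bi} converts $[(1-t)^d\Hilb(\ch,t)]_k$ into the Möbius sum $\sum_p \mu_p \binom{f_0(p)-d+k-1}{k}$.

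The Möbius function of $\mc{P}(\scr{G})$ is computed as follows. By pairwise disjointness of the $\Gamma_i$, the interval $[\Delta,\bigcup_{i\in S}\Gamma_i]$ is isomorphic to the Boolean lattice on $S$, giving $\mu(\Delta,\bigcup_{i\in S}\Gamma_i) = (-1)^{|S|}$; the value at $\emp$ is then forced by $\sum_p \mu(\Delta,p) = 0$, yielding $\mu_\emp = -\sum_{j=0}^\ell(-1)^j\binom{m}{j}$ as stated. For the Betti number, the coarse nerve is $\mathfrak{N} = \bigcup_{1\le|S|\le\ell}\K_{\bigsqcup_{i\in S}\V(\Gamma_i)}$; this complex is the simplicial ``blow-up'' of the $(\ell-1)$-skeleton of $\K_{m-1}$ where each vertex $i$ is replaced by the simplex $\K_{\V(\Gamma_i)}$. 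The projection $\pi\colon \V(\mathfrak{N}) \to [m]$ sending $\V(\Gamma_i) \to i$, together with any section $[m] \to \V(\mathfrak{N})$ picking one vertex per $\V(\Gamma_i)$, witnesses a homotopy equivalence $\mathfrak{N} \simeq \mr{Sk}_{\ell-1}(\K_{m-1})$. Hence the only non-vanishing reduced Betti number of $\mathfrak{N}$ is $\rBetti_{\ell-1}(\mathfrak{N}) = \binom{m-1}{\ell}$, and the nerve-correction sum collapses to a single term at $i=\ell+1$.

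Assembling the Möbius sum, the nerve correction, and the topological contribution $\GT_k(\RC)$, the asserted upper bound follows. The main technical obstacle is the homotopy-theoretic identification $\mathfrak{N} \simeq \mr{Sk}_{\ell-1}(\K_{m-1})$, most cleanly established via the filtered Nerve Lemma of Bj\"orner (see~\cite{NFH,BWW}) applied to the cover of $\mathfrak{N}$ by its maximal colorful simplices $\{\K_{\bigsqcup_{i\in S}\V(\Gamma_i)}:|S|=\ell\}$, whose pattern of nonempty intersections is precisely governed by the combinatorics of $\mr{Sk}_{\ell-1}(\K_{m-1})$. Once the Betti number and Möbius values are in hand, the remaining sign bookkeeping is routine.
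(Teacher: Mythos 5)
Your argument follows the paper's own proof exactly: that proof is just a three-line reduction to the observations that the topological contribution is unchanged from Theorem~\ref{thm:arrCM} and that the coarse nerve $\mathfrak{N}$ is homotopy equivalent to the $(\ell-1)$-skeleton of the simplex on $m$ vertices, and you correctly supply the collapsing map witnessing the homotopy equivalence, the Betti number $\rBetti_{\ell-1}(\mathfrak{N})=\binom{m-1}{\ell}$, and the M\"obius function of the truncated Boolean lattice. The one step you should not defer as ``routine bookkeeping'' is the final sign: substituting $\rBetti_{\ell-1}(\mathfrak{N})=\binom{m-1}{\ell}$ into the correction term of Theorem~\ref{thm:RSI} yields $-\binom{d}{k}(-1)^{k-\ell-1}\binom{m-1}{\ell}\mathbf{1}_{k\ge\ell+2}=(-1)^{k+\ell}\binom{d}{k}\binom{m-1}{\ell}\mathbf{1}_{k\ge\ell+2}$, which disagrees by a sign with the printed $(-1)^{k+\ell+1}$, and the $\ell=1$ specialization (which must recover Theorem~\ref{thm:arrCM} via cancellation of the nerve correction against $\GT_k$) likewise points to $(-1)^{k+\ell}$ --- so carry the computation through rather than leaving it implicit.
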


\begin{proof}The proof is analogous to the proof of Theorem~\ref{thm:arrCM}; the topological part is unchanged, and it remains only to estimate $\rBetti_{i-2}(\mathfrak{N})$. But $\mathfrak{N}$ is homotopy equivalent to the $(\ell-1)$-skeleton of a simplex on $m$ vertices, so that the claim follows.
\end{proof}
We conclude with the proofs of Theorems~\ref{thm:iso_mfds} and 
\ref{thm:comb_iso_full}.

\begin{proof}[\textbf{Proof of Theorem~\ref{thm:comb_iso_full}}]
    By Reisner's Theorem~\ref{thm:SR}, $\Delta$ and $(\Delta,\partial
    \Delta)$ are Cohen--Macaulay. Therefore, Claim (a) is a special case of
    Theorem~\ref{thm:arrCM} with $\scr{G} = \{ \partial \Delta \}$. To see
    Claim (b), notice that by the Dehn--Sommerville relations~\ref{lem:DS},
    we have $h_{k}(\Delta,\partial \Delta)=h_{d-k} (\Delta)\ \text{for
    all}\ 0\le k\le d$, and to $\Delta$ we can apply the standard upper bound theorem. Therefore, the claim follows with characterization
    of equality in Theorem~\ref{thm:arrCM} and Lemma~\ref{lem:UBT-CM}.
\end{proof}

We close with the proof for the combinatorial isoperimetric problem for
manifolds.

\begin{proof}[\textbf{Proof of Theorem~\ref{thm:iso_mfds}}]
By Theorem~\ref{thm:topb}, $\RC = (M,B)$ is a Buchsbaum complex and $\FM =
\FM[\RC]$ a Buchsbaum module. To apply Theorem~\ref{thm:RS}, it remains for
us to bound $h_k(\RC)$ in terms of the topological contribution $\GT_k(\RC)$
and the algebraic contribution $\LT_k(\RC)$.
The topological contribution depends only on the relative Betti numbers $(M,B)$ and hence
is
\[
    \GT_k(\RC) \ = \ \binom{d}{k}\sum_{i=0}^{k-1}(-1)^{k-i}\rBetti_{i-1}(\RC).
\]

As for \emph{the algebraic contribution:} The nerve ideal $\ch = \ch[M,B]$ is
the Stanley--Reisner ideal of the simplicial complex $\K_n$, ${n} \ge d-1$.
Hence, the relative complex $(\K_{m+n}^{(d)},\K_n^{(d)})$ is Buchsbaum by
Corollary~\ref{cor:cmincm} and Theorem~\ref{thm:topb}, and in fact Cohen--Macaulay 
since the homology is concentrated in degree $d-1$.  Therefore,
by Theorems~\ref{thm:upb} and~\ref{thm:RSI} and Proposition~\ref{prp:bi},
\[
    \LT_k(\RC) \ \le \ \binom{m+n-d+k-1}{k}\ -\ \binom{n-d+k-1}{k}. \qedhere
\]
\end{proof}

\subsection{Local-to-global estimates} \label{ssec:primes} 
The purpose of this section is to provide iterative inequalities of the type
given in Proposition~\ref{prop:UBT-g}. We will provide the desired bounds for
$h_k(\RC)$ by combining an integration formula for multivariate formal power series
 with an observation of Stanley and Kalai and a careful use of the fullness property. 
 
\begin{lem}[Formula for local $h$-vectors, cf.\ \cite{mcmullen1970},{\cite[Lemma 2.3]{Swartz}}]\label{lem:hlk}
    For a pure relative simplicial complex $\RC = (\Delta,\Gamma)$ of dimension
    $d-1$ on $[n]$ 
    \[
        \sum_{i=1}^n h_k(\Lk(i,\RC)) \ = \ (k+1)h_{k+1}(\RC) + (d-k)
        h_k(\RC)
    \]
    for all $0 \le k \le d$.
\end{lem}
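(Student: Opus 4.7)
The plan is to prove the identity by first establishing its $f$-vector analogue via a double count, and then converting between $f$- and $h$-polynomials by a generating-function manipulation. For the first step, observe that, by the definition $\Lk(i,\RC) = (\Lk(i,\Delta),\Lk(i,\Gamma))$, a face $\tau$ of $\Lk(i,\RC)$ of dimension $k-1$ is precisely a set $\tau \subseteq [n] \setminus \{i\}$ with $|\tau|=k$ and $\tau \cup \{i\} \in \Delta \setminus \Gamma$, since $\tau \cup \{i\} \in \Gamma$ is equivalent to $\tau \in \Lk(i,\Gamma)$. Double-counting pairs $(i,\sigma)$ with $i \in \sigma$ and $\sigma$ a $k$-face of $\RC$ thus gives
\[
    \sum_{i=1}^n f_{k-1}(\Lk(i,\RC)) \ = \ (k+1)\, f_k(\RC).
\]

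The next step is to repackage this equation in terms of $h$-vectors. Set
\[
    F_\RC(x) \ := \ \sum_{k\ge 0} f_{k-1}(\RC)\, x^k \qquad \text{and} \qquad p_\RC(y) \ := \ \sum_{k\ge 0} h_k(\RC)\, y^k,
\]
and similarly for each link $\Lk(i,\RC)$, regarded as a relative complex of dimension $d-2$ (with trailing zero $h$-numbers if its actual dimension is smaller). A direct manipulation of~\eqref{eqn:f-h-polynomial} yields
\[
    F_\RC(x) \ = \ (1+x)^d\, p_\RC\!\left(\tfrac{x}{1+x}\right),
\]
and the analogous identity with $d$ replaced by $d-1$ for each $F_{\Lk(i,\RC)}$. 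In this language the face-counting identity translates into the compact statement $\sum_i F_{\Lk(i,\RC)}(x) = F_\RC'(x)$.

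To conclude, I would differentiate the boxed expression for $F_\RC(x)$ using the chain rule, substitute into the previous identity, divide through by $(1+x)^{d-1}$, and finally change variable to $y = x/(1+x)$, under which $(1+x)^{-1} = 1-y$. This yields the polynomial identity
\[
    \sum_{i=1}^n p_{\Lk(i,\RC)}(y) \ = \ d\, p_\RC(y) + (1-y)\, p_\RC'(y),
\]
and reading off the coefficient of $y^k$ on both sides produces exactly $(k+1) h_{k+1}(\RC) + (d-k)\, h_k(\RC)$ on the right. The identity is classical in the non-relative case; the only point that requires care in adapting it is the first paragraph's double count, where one must verify that the relative structure $\Lk(i,\RC)$ really excludes the ``wrong'' faces $\tau \cup \{i\} \in \Gamma$. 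This is immediate from the definition and, beyond this bookkeeping, no conceptual obstacle arises.
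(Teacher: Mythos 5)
Your proposal is correct, and the core idea is the same as the paper's: differentiate a generating function to produce the sum over vertex links, then compare coefficients. The paper executes this by applying $\sum_i \partial/\partial t_i$ to the $\Z^n$-graded Hilbert series of the face module $\FM[\RC]$ and then specializing $t_1 = \cdots = t_n = t$; you instead establish the key identity $\sum_{i} f_{k-1}(\Lk(i,\RC)) = (k+1)\,f_k(\RC)$ by a direct combinatorial double count on pairs $(i,\sigma)$ with $i\in\sigma\in\RC$, and only then pass to generating functions. This is a slightly more elementary packaging, avoiding any reference to the fine-graded Hilbert series; beyond that the two arguments coincide up to the substitution $t = x/(1+x)$ (your $y$ is the paper's coarse variable $t$), and the final chain-rule computation and coefficient extraction match the paper's exactly. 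Your remark that purity is what forces every nonvoid link to have dimension $d-2$, so that the $(d-1)$-version of~\eqref{eqn:f-h-polynomial} is applicable to each link, is the one genuine point of care, and you handle it correctly.
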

\begin{proof}
    For $\alpha=(\alpha_1,\dots,\alpha_n)\in \Z^{n}$ and $1 \le i \le
    n$ let us write $\alpha {\setminus} i =
    (\alpha_1,\dots,\alpha_{i-1},0,\alpha_{i+1},\dots,\alpha_n)$.
    Let us abbreviate $\frac{\partial}{\partial \t} := \frac{\partial}{\partial
    t_1} + \cdots + \frac{\partial}{\partial t_n}$. For the fine graded
    Hilbert series of the face module $\FM = \FM[\Delta,\Gamma]$ we compute
    \begin{align*}
        \frac{\partial}{\partial \t}\sm \Hilb(\FM,\t) \ & :=  \ 
        \sum_{\supp(\alpha )\in \RC} \frac{\partial}{\partial \t}\sm  \t^\alpha \ = \ 
\sum_{i=1}^n \sum_{\supp(\alpha )\in \RC} \frac{\partial}{\partial
        t_i} \t^\alpha \\ \ &= \ 
        \sum_{i=1}^n \sum_{\supp(\alpha )\in \St(i,\RC)} \alpha_i
        t_i^{\alpha_i-1} \t^{\alpha {\setminus} i} \ = \
        \sum_{i=1}^n \frac{1}{(1-t_i)^2} \sum_{\supp(\alpha )\in \Lk(i,\RC)}
         \t^{\alpha {\setminus} i}.
    \end{align*}
    If we now specialize $t_1 = \cdots = t_n = t$, we obtain
    \[
    \frac{\mathrm{d}}{\mathrm{d} t} \Hilb(\FM,t) \ = \ 
        \frac{1}{(1-t)^2}
        \sum_{i=1}^n \Hilb(\FM[\Lk(i,\RC)],t) \ = \
        \sum_{i=1}^n \frac{\sum_{k=0}^{d-1}
        h_k(\Lk(i,\RC))t^k}{(1-t)^{d+1}}
    \]
    where $\Hilb(\FM,t)$ is the coarse Hilbert series. On the other hand we
    can directly compute the derivative of $\Hilb(\FM,t)$ as
    \begin{align*}
        \frac{\mathrm{d}}{\mathrm{d}t} \Hilb(\FM,t)  \ &= \
        \frac{\mathrm{d}}{\mathrm{d} t} \frac{\sum_{k=0}^{d}
        h_k(\RC))t^k}{(1-t)^{d}} \ = \ 
        \frac{\sum_{k=0}^{d} kh_k(\RC))t^{k-1}}{(1-t)^{d}} +
        \frac{\sum_{k=0}^d d\,h_k(\RC))t^{k}}{(1-t)^{d+1}} \\
& = \  \frac{\sum_{k=0}^{d-1} ((k+1) h_{k+1}(\RC)+ (d-k) h_{k}(\RC)) t^{k}}{(1-t)^{d+1}}. \qedhere
\end{align*}
\end{proof}

To bound $h_k(\St(v,\RC))$, we need a relative version of a simple
lemma of Stanley~\cite{Stanleymono} and Kalai~\cite{Kalaishifting}. 

\begin{lem}\label{lem:surj}
    Let $\RC=(\Delta,\Gamma)$ be a relative complex of dimension $d-1$ on
    vertex set $[n]$. Let $\Delta' \subseteq \Delta$ be any subcomplex of $\Delta$, 
    and set $\Gamma'=\Delta'\cap \Gamma$ and $\RC'=(\Delta',\Gamma')$.   Then, for every $k$,
    \[
        \dim_\k (\FM[\RC']/\Theta\FM[\RC'])_k
        \ \le \ \dim_\k(\FM[\RC]/\Theta\FM[\RC])_k.
    \]
\end{lem}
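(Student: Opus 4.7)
The plan is to construct a natural graded $\k[\x]$-linear surjection $\varphi : \FM[\RC]\longtwoheadrightarrow \FM[\RC']$ and then invoke right-exactness of the functor $(-)\otimes_{\k[\x]}\k[\x]/\Theta\k[\x]$. Since a surjection is preserved by this tensor product, one obtains a graded surjection
\[
    \FM[\RC]/\Theta\FM[\RC] \ \longtwoheadrightarrow \ \FM[\RC']/\Theta\FM[\RC'],
\]
and the claimed inequality of dimensions then drops out in each degree.

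To build $\varphi$, I would first record three elementary containments among the relevant monomial ideals in $\k[\x]$. Since $\Delta'\subseteq \Delta$, every non-face of $\Delta$ is also a non-face of $\Delta'$, so $\I_\Delta \subseteq \I_{\Delta'}$. Since $\Gamma'\subseteq \Gamma$, likewise $\I_\Gamma\subseteq \I_{\Gamma'}$. Finally, because $\Gamma' = \Delta'\cap\Gamma$, a face $\tau$ fails to lie in $\Gamma'$ exactly when $\tau\notin\Delta'$ or $\tau\notin\Gamma$, which translates into the ideal identity $\I_{\Gamma'} = \I_\Gamma + \I_{\Delta'}$.

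With these observations in hand, I would consider the composition
\[
    \I_\Gamma \ \longhookrightarrow \ \I_{\Gamma'} \ \longtwoheadrightarrow \ \I_{\Gamma'}/\I_{\Delta'} \ = \ \FM[\RC'].
\]
The image of this composition equals $(\I_\Gamma+\I_{\Delta'})/\I_{\Delta'} = \I_{\Gamma'}/\I_{\Delta'}$ by the ideal identity above, so the composition is surjective. Its kernel is $\I_\Gamma\cap\I_{\Delta'}$, which contains $\I_\Delta$ (since $\I_\Delta$ lies in both $\I_\Gamma$ and $\I_{\Delta'}$). Consequently the composition descends to the desired graded $\k[\x]$-linear surjection
\[
    \varphi\colon \FM[\RC] \ = \ \I_\Gamma/\I_\Delta \ \longtwoheadrightarrow \ \I_{\Gamma'}/\I_{\Delta'} \ = \ \FM[\RC'].
\]

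There is no real obstacle here: once the ideal identity $\I_{\Gamma'}=\I_\Gamma+\I_{\Delta'}$ is noted, the remainder of the argument is purely formal, with the dimension inequality being an immediate consequence of right-exactness of tensor product. The only step requiring even mild care is the verification that the map $\varphi$ respects the $\k[\x]$-module structure, but this is automatic because it is induced by the inclusion of ideals inside $\k[\x]$ followed by a quotient of $\k[\x]$-modules.
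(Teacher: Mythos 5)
Your proof is correct and follows essentially the same approach as the paper: the paper simply asserts the existence of the graded surjection $\FM[\RC]\longtwoheadrightarrow\FM[\RC']$ and invokes right-exactness, whereas you spell out (correctly) the ideal identity $\I_{\Gamma'}=\I_\Gamma+\I_{\Delta'}$ and the containment $\I_\Delta\subseteq\I_\Gamma\cap\I_{\Delta'}$ that make the map well-defined and surjective. This is the same argument with the implicit step made explicit.
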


\begin{proof}
This follows immediately if we consider $\FM[\RC']$ as an
$\k[\x]$-module: By right-exactness of the tensor product, 
we have a degree preserving surjection
$\FM[\RC]/\Theta\FM[\RC] \longtwoheadrightarrow
    \FM[\RC']/\Theta\FM[\RC'].$\end{proof}


The last ingredient is a property for vertex stars of full subcomplexes.
\begin{lem}\label{lem:fulleq}
    Let $\Gamma \subseteq \Delta$ be a pair of simplicial complexes. Then
    $\Gamma$ is full in $\Delta$ if and only if $\St(v,\Gamma) = \St(v,\Delta)\cap \Gamma$
    for all $v \in \V(\Gamma)$. \qed
\end{lem}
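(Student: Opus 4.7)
The plan is to prove both directions of the equivalence by unwinding the definitions; neither direction should present any serious obstacle, since the key content is just that fullness of $\Gamma$ lets us promote faces of $\Delta$ to faces of $\Gamma$ the moment their vertex sets lie in $\V(\Gamma)$. Throughout I will use the trivial observation that $\St(v,\Gamma) \subseteq \St(v,\Delta)\cap \Gamma$ always holds: if $\tau \in \Gamma$ and $v \cup \tau \in \Gamma$, then certainly $v \cup \tau \in \Delta$. So only the reverse inclusion is at stake.

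For the forward direction, assume $\Gamma$ is full in $\Delta$ and fix $v \in \V(\Gamma)$. Given $\tau \in \St(v,\Delta) \cap \Gamma$, I would note that $v \cup \tau \in \Delta$ and that every vertex of $v \cup \tau$ lies in $\V(\Gamma)$: the vertex $v$ does by assumption, and the vertices of $\tau$ do because $\tau \in \Gamma$. Fullness of $\Gamma$ in $\Delta$ (via the equivalent characterization $\Gamma = \Delta \cap \K_{\V(\Gamma)}$ established just after Definition~\ref{dfn:full_subcomplex}) then forces $v \cup \tau \in \Gamma$, i.e.\ $\tau \in \St(v,\Gamma)$.

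For the reverse direction, I would assume the stated star-identity and show by induction on $|\sigma|$ that every $\sigma \in \Delta$ with $\V(\sigma) \subseteq \V(\Gamma)$ lies in $\Gamma$. The cases $|\sigma| \le 1$ are immediate from $\V(\sigma) \subseteq \V(\Gamma)$. For $|\sigma| \ge 2$, pick any vertex $v \in \sigma$ and set $\tau := \sigma \setminus \{v\}$; then $\tau \in \Delta$ with $\V(\tau) \subseteq \V(\Gamma)$, so $\tau \in \Gamma$ by the inductive hypothesis. Since $\sigma = v \cup \tau \in \Delta$ and $\tau \in \Gamma$, we have $\tau \in \St(v,\Delta)\cap\Gamma = \St(v,\Gamma)$ by assumption, which gives $\sigma = v \cup \tau \in \Gamma$ as desired.

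The only mild subtlety is making sure the base case is handled cleanly when $\Gamma = \emptyset$ versus $\Gamma = \emp$, but this is a non-issue: if $\V(\Gamma) = \emptyset$ then both conditions are vacuous, and otherwise $\emptyset \in \Gamma$ so the induction starts without trouble. Since both implications are essentially one-line arguments, no technical obstacle arises; the statement is really a reformulation of fullness in terms of the local (star) structure at each vertex.
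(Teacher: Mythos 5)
Your proof is correct; the paper gives no proof of this lemma at all (it is stated with a bare \qed), so your argument simply fills in what the authors left as an exercise, and does so by the natural definitional route: the forward direction unwinds fullness via the characterization $\Gamma = \Delta \cap \K_{\V(\Gamma)}$, and the reverse direction reconstructs fullness by induction on the cardinality of a face, peeling off one vertex at a time and invoking the star identity. One very minor imprecision: when $\V(\Gamma) = \emptyset$ the star condition is indeed vacuous, but the fullness condition is not quite vacuous — it still requires $\emptyset \in \Gamma$, which distinguishes $\Gamma = \emp$ (full) from the void complex $\Gamma = \varnothing$ (not full in a nonvoid $\Delta$). Since the void subcomplex is an edge case the paper does not concern itself with, this does not affect the substance of the argument.
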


For a relative complex $\RC = (\Delta,\Gamma)$ and a vertex $v\in \Delta$, let us write
$\widetilde{\St}(v,\RC):=(\St(v,\Delta),\St(v,\Delta)\cap \Gamma)$.

\begin{ex}\label{ex:rst}
    Let $\RC = (\Delta,\Gamma)$ be a relative complex such that both $\Delta$
    and $\Gamma$ are Cohen--Macaulay of the same dimension and $\Gamma$ is
    full in $\Delta$. By Lemmas~\ref{lem:hlk} and~\ref{lem:lkst} 
    \[
            \sum_{v\in \Delta} h_k(\St(v,\RC)) \ = \
            (k+1)h_{k+1}(\RC)+(d-k) h_k(\RC).
    \]
    Since $\Gamma$ is full, Lemma~\ref{lem:fulleq} yields
    $\St(v,\RC)=\widetilde{\St}(v,\RC)$ for all vertices $v \in \Gamma$.
    Therefore, for a \lsop\ $\Theta$ and for every vertex $v\in \Gamma$
    \begin{align*}
         h_k(\St(v,\RC)) &\ = \ 
        h_k(\widetilde{\St}(v,\RC))&\ \ \\
        &\ = \ \dim_\k \bigl(\FM[\widetilde{\St}(v,\RC)] /
            \Theta\FM[\widetilde{\St}(v,\RC)]\bigr)_k 
            & \text{(by Cohen--Macaulayness)}\\
        &\ \le \ \dim_\k\bigl(\FM[\RC]/\Theta\FM[\RC]\bigr)_k&\ \text{(by Lemma~\ref{lem:surj})}\\
        &\ = \  h_k(\RC)&\ \text{(by Cohen--Macaulayness)}.
    \end{align*}
    If  $v\notin \Gamma$, the reasoning becomes a little more difficult as 
    $\St(v,\RC)$ not necessarily coincides with $\widetilde{\St}(v,\RC)$ any
    more. However, we can simply estimate
    \begin{align*}
        h_k(\St(v,\RC)) &\ = \  h_k({\St}(v,\Delta))&\ \ \\
        & \ = \ \dim_\k \bigl(\FM[{\St}(v,\Delta)] /
                        \Theta\FM[{\St}(v,\Delta)]\bigr)_k\ 
                        & \text{(by Cohen--Macaulayness)}\\
        & \ \le \ \dim_\k \bigl(\FM[\Delta]/\Theta\FM[\Delta]\bigr)_k 
                        & \text{(by Lemma~\ref{lem:surj})}\\
        & \ = \ h_k(\Delta) & \text{(by Cohen--Macaulayness)}\\
        & \ =\  h_k(\RC)+h_k(\Gamma)&\ \text{(by linearity of the h-vector)}.
    \end{align*}
    These inequalities, for the special case of simplicial polytopes
    were the key to McMullen's proof of the UBT for polytopes.
    Integrating these inequalities over all vertices and using Lemma~\ref{lem:hlk},
    we obtain
    \begin{equation}\label{eqn:mcmullen-gen}
        (k+1)\, h_{k+1}(\RC) \ \le \ (f_0(\Delta)-d+k) \, h_{k}(\RC) +
        f_0(\RC)\, h_k(\Gamma).
    \end{equation}
\end{ex}

To handle situations with
$\dim \Gamma < \dim \Delta$, let us define for $\ell \ge 0$
\[
    h_i^{\langle\ell\rangle}(\RC) \ := \ [(1-t)^\ell \Hilb(\FM[\RC],t)]_i \ =
    \ \sum^i_{k=0} (-1)^{i-k} \binom{\ell-k}{\ell-i}f_{k-1}(\RC).
\]
Comparing this with the definition of $h$-vectors in
Section~\ref{sec:RelativeSR}, we see that $h_i^{\langle d \rangle}(\RC) =
h_i(\RC)$ for $d = \dim \RC + 1$.
Also, for an arrangement $\scr{G}$ and a vertex $v \in \Delta$ we set
$\scr{G}(v):=\{\Gamma\in \scr{G}: v\in \Gamma\}$ and consequently
$
    \cfs{G}(v) \ := \ \bigcup_{\Gamma\in \scr{G}(v) } \Gamma.$
We call the pair $(\Delta,\scr{G})$ \Defn{universally Buchsbaum} of dimension
$d-1$ if for every vertex $v$ of the $(d-1)$-complex $\Delta$, the relative
complex $(\Delta,\cfs{G}(v))$ is Buchsbaum of dimension $(d-1)$. 
\begin{lem}\label{lem:h_uBuchs}
    Let $(\Delta,\scr{G})$ be universally Buchsbaum of dimension
    $d-1$, where $\scr{G}$ is some full arrangement of subcomplexes of
    $\Delta$. Let $v$ be any vertex of $\Delta$. Then
    \begin{equation}\label{eqn:h_uBuchs}
        h_k(\Lk(v,\Delta),\Lk(v,\cfs{G})) \ \le \
        h_k(\Delta,\cfs{G}(v))-\GT_k(\Delta,\cfs{G}(v)).
    \end{equation}
\end{lem}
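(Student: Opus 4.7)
\textbf{Proof plan for Lemma~\ref{lem:h_uBuchs}.} Write $\RC := (\Delta, \cfs{G}(v))$, which is Buchsbaum of dimension $d-1$ by hypothesis. The plan is to reduce the bound on $h_k(\Lk(v,(\Delta,\cfs{G})))$ to a comparison between the face modules of $\St(v,\RC)$ and $\RC$ modulo a common linear system of parameters, and then to pass from that comparison back to $h_k$-numbers via the relative Schenzel formula on one side and a universal Hilbert-series inequality on the other.

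First, note that only subcomplexes $\Gamma \in \scr{G}$ containing $v$ contribute to the link at $v$, so $\Lk(v,\cfs{G}) = \Lk(v,\cfs{G}(v))$ and hence $h_k(\Lk(v,(\Delta,\cfs{G}))) = h_k(\Lk(v,\RC))$. Lemma~\ref{lem:lkst} gives $h_k(\Lk(v,\RC)) = h_k(\St(v,\RC))$. Fullness of each $\Gamma \in \scr{G}(v)$, combined with Lemma~\ref{lem:fulleq}, yields $\St(v,\Gamma) = \St(v,\Delta) \cap \Gamma$; taking the union over $\scr{G}(v)$ shows $\St(v,\cfs{G}(v)) = \St(v,\Delta) \cap \cfs{G}(v)$, i.e.\ $\St(v,\RC) = \widetilde{\St}(v,\RC)$, which is precisely the setup required to trigger Lemma~\ref{lem:surj}.

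Now fix a generic \lsop\ $\Theta$ for $\FM[\RC]$. Lemma~\ref{lem:surj} applied with $\Delta' = \St(v,\Delta)$ then yields
\[
\dim_\k(\FM[\St(v,\RC)]/\Theta\,\FM[\St(v,\RC)])_k \ \le \ \dim_\k(\FM[\RC]/\Theta\,\FM[\RC])_k \ = \ \LT_k(\RC).
\]
To compare the left-hand side with $h_k(\St(v,\RC))$ without having to verify Buchsbaumness of $\St(v,\RC)$, I would appeal to the universal inequality $h_k(M) \le \dim_\k(M/\Theta M)_k$, valid whenever $M$ is a finitely generated graded module of Krull dimension $d$ and $\Theta$ is a \lsop\ of length $d$. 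This follows by iterating the identity $(1-t)\Hilb(N,t) = \Hilb(N/\theta N,t) - t\Hilb(0:_N \theta,\,t)$, which exhibits $\Hilb(M/\Theta M,t) - (1-t)^d \Hilb(M,t)$ as a formal power series with non-negative coefficients. Chaining these inequalities and combining with the Schenzel identity $\LT_k(\RC) = h_k(\RC) - \GT_k(\RC)$ of Theorem~\ref{thm:RS} concludes the argument.

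The main obstacle is precisely the last conversion: the star $\St(v,\RC)$ need not be Buchsbaum (the Reisner-type criterion of Theorem~\ref{thm:topb} can fail at faces $\sigma \not\ni v$, since $\Lk(\sigma \cup v,\Delta)$ need not be Cohen--Macaulay without a Buchsbaum assumption on $\Delta$ itself), so Schenzel cannot be invoked on the star to produce an equality $h_k = \LT_k$. The remedy is to accept the one-sided Hilbert-series inequality above, which is enough because the lemma asks only for an upper bound. A minor subtlety is the degenerate situation in which every facet of $\Delta$ through $v$ lies in $\cfs{G}(v)$, so that $\dim\St(v,\RC)<d-1$ and $\Lk(v,\RC)$ is the empty relative complex; here both sides of the claimed inequality vanish and nothing further is required.
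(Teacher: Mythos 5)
Your first three reductions exactly reproduce the paper's: observing that $\Lk(v,\cfs{G}) = \Lk(v,\cfs{G}(v))$, passing to the star via Lemma~\ref{lem:lkst}, and using fullness (Lemma~\ref{lem:fulleq}) to identify $\St(v,\RC(v))$ with $\widetilde{\St}(v,\RC(v))$. The problem is the next step.

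Your claimed universal inequality $h_k(M) \le \dim_\k(M/\Theta M)_k$ is false. Iterating $(1-t)\Hilb(N,t) = \Hilb(N/\theta N,t) - t\Hilb(0:_N\theta,\,t)$ expresses the difference $\Hilb(M/\Theta M,t) - (1-t)^d\Hilb(M,t)$ as $\sum_{s=1}^{d}(1-t)^{d-s}\,t\,\Hilb(0:_{N_{s-1}}\theta_s,t)$, and the factors $(1-t)^{d-s}$ introduce negative coefficients that the nonnegativity of each $\Hilb(0:\theta_s)$ does not control. A concrete failure: let $\Delta$ be the disjoint union of two boundaries of tetrahedra, so $d=3$ and $\Delta$ is Buchsbaum. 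Then $h(\Delta)=(1,5,-1,3)$, while $\rBetti_0(\Delta)=1$ yields $\GT_3(\Delta) = \binom{3}{3}\rBetti_0(\Delta) = 1$, and by Theorem~\ref{thm:RS} one has $\dim_\k(\k[\Delta]/\Theta\k[\Delta])_3 = \LT_3(\Delta) = h_3(\Delta) - \GT_3(\Delta) = 2 < 3 = h_3(\Delta)$. So the estimate you invoke already fails for face rings of Buchsbaum complexes, which is exactly the generality you would need it in.

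The paper avoids this entirely via an observation you explicitly tried to sidestep: $\widetilde{\St}(v,\RC(v))$ is in fact Cohen--Macaulay, not merely a candidate for Schenzel's formula. After the identification with $\St(v,\RC(v))$, this relative complex is the cone with apex $v$ over $\Lk(v,\RC(v))$; the latter is Cohen--Macaulay by Theorem~\ref{thm:topb}(iii) applied to the Buchsbaum complex $\RC(v)=(\Delta,\cfs{G}(v))$, and coning preserves Cohen--Macaulayness. For a Cohen--Macaulay face module the \lsop\ is a regular sequence (Proposition~\ref{prp:regular}), so $h_k = \dim_\k(\FM/\Theta\FM)_k$ \emph{with equality}, and no one-sided Hilbert-series estimate is needed. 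Your worry that the Reisner-type criterion ``can fail at faces $\sigma\not\ni v$'' also misdiagnoses the situation: in a cone, the link of any face $\sigma\not\ni v$ is itself the cone $v\ast\Lk(\sigma\cup v,\RC(v))$, hence acyclic, while links of faces containing $v$ are links inside the Cohen--Macaulay complex $\Lk(v,\RC(v))$; the vanishing required for Cohen--Macaulayness is automatic on both sides. Replacing your one-sided estimate by this Cohen--Macaulay observation, and then finishing with Lemma~\ref{lem:surj} and Theorem~\ref{thm:RS} as you do, yields the paper's proof.
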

\begin{proof}
    Let $v \in \Delta$ be a vertex and let us write $\RC(v) =
    (\Delta,\cfs{G}(v))$. With a \lsop\ $\Theta$ we deduce
    \begin{align*}
        h_k(\Lk(v,\Delta),\Lk(v,\cfs{G})) & \ = \ h_k(\Lk(v,\RC(v))&\ \ \\
        & \ = \ h_k(\St(v,\RC(v))&\ \ \text{(Lemma~\ref{lem:lkst})}\\
        & \ = \ h_k(\widetilde{\St}(v,\RC(v))&\ \ \text{(using fullness)}\\
        & \ = \ \dim_\k \bigl(\FM[\widetilde{\St}(v,\RC(v))] /
                \Theta\FM[\widetilde{\St}(v,\RC(v))]\big)_k
                & \text{(since $\widetilde{\St}(v,\RC(v))$ is CM)}\\
        & \ \le \  \dim_\k \big(\FM[\RC(v)]/\Theta\FM[\RC(v)]\bigr)_k 
                & \text{(by Lemma~\ref{lem:surj})}
    \end{align*} 
By Theorem~\ref{thm:RS}, the last expression equals $h_k(\Delta,\cfs{G}(v))-\GT_k(\Delta,\cfs{G}(v))$.
\end{proof}

Summing equation~\eqref{eqn:h_uBuchs} over all vertices of $\Delta$ and using
Lemma~\ref{lem:hlk} as in Example~\ref{ex:rst}, we obtain the following result.

\begin{thm}\label{thm:loc}
    Let $\RC = (\Delta,\cfs{G})$ be a relative complex of dimension $d-1$
    where $\scr{G}$ is a full arrangement. If $(\Delta,\scr{G})$ is
    universally Buchsbaum, then
    \[
        (k+1)\, h_{k+1}(\RC) \ \le \  (f_0(\Delta)-d+k) \, h_{k}(\RC) +
        \sum_{v\in \Delta} \Bigl( h_k^{\langle
        d\rangle}(\cfs{G},\cfs{G}(v))-\GT_k(\Delta,\cfs{G}(v)) \Bigr).
    \]
\end{thm}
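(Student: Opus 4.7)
The strategy, following Example~\ref{ex:rst}, is to integrate the vertex-by-vertex bound of Lemma~\ref{lem:h_uBuchs} against the identity of Lemma~\ref{lem:hlk}. Concretely, I would sum the inequality
\[
h_k(\Lk(v,\RC)) \;\le\; h_k(\Delta,\cfs{G}(v)) - \GT_k(\Delta,\cfs{G}(v))
\]
over all vertices $v$ of $\Delta$, and equate the left-hand side with $(k+1) h_{k+1}(\RC) + (d-k) h_k(\RC)$ via Lemma~\ref{lem:hlk}. This produces the claimed bound once the $h_k(\RC)$ contribution is isolated inside each $h_k(\Delta,\cfs{G}(v))$.

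For that isolation step, I would invoke additivity of face numbers: since $\cfs{G}(v) \subseteq \cfs{G} \subseteq \Delta$, for every $k$
\[
f_k(\Delta,\cfs{G}(v)) \;=\; f_k(\Delta,\cfs{G}) + f_k(\cfs{G},\cfs{G}(v)),
\]
and applying the linear transformation $f \mapsto h^{\langle d \rangle}$ yields
\[
h_k(\Delta,\cfs{G}(v)) \;=\; h_k(\RC) + h_k^{\langle d \rangle}(\cfs{G},\cfs{G}(v)),
\]
using $\dim \RC = d-1$ to replace $h_k^{\langle d \rangle}(\RC)$ by $h_k(\RC)$. Summing this over $v$ gives $\sum_v h_k(\Delta,\cfs{G}(v)) = f_0(\Delta)\, h_k(\RC) + \sum_v h_k^{\langle d \rangle}(\cfs{G},\cfs{G}(v))$.

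Combining this with the summed Lemma~\ref{lem:h_uBuchs} estimate and transposing the $(d-k)\,h_k(\RC)$ term from Lemma~\ref{lem:hlk} to the right, the coefficient of $h_k(\RC)$ on the right becomes $f_0(\Delta) - (d-k) = f_0(\Delta)-d+k$, which is exactly the stated bound.

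The argument is bookkeeping once Lemmas~\ref{lem:hlk} and~\ref{lem:h_uBuchs} are in hand, and no genuine obstacle appears. The only point requiring care is that the sum must run over \emph{all} vertices of $\Delta$, including those outside $\cfs{G}$; for such a vertex $v$ we have $\cfs{G}(v) = \emptyset$, the additivity identity becomes tautological, and Lemma~\ref{lem:h_uBuchs} continues to apply because the universal Buchsbaum hypothesis forces $\Delta = (\Delta,\emptyset)$ itself to be Buchsbaum.
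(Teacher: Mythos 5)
Your proof is correct and follows exactly the paper's approach: sum the bound of Lemma~\ref{lem:h_uBuchs} over all vertices of $\Delta$, combine with the identity of Lemma~\ref{lem:hlk}, and rearrange (the paper compresses this into ``as in Example~\ref{ex:rst}''). Your explicit additivity step $h_k(\Delta,\cfs{G}(v)) = h_k(\RC) + h_k^{\langle d\rangle}(\cfs{G},\cfs{G}(v))$ is the correct way to make the bookkeeping precise, using that $(\Delta,\cfs{G}(v))$ has dimension $d-1$ by the universally Buchsbaum hypothesis.
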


The results are somewhat simpler if the pair $(\Delta,\scr{G})$ is
\Defn{universally Cohen--Macaulay} of dimension $d-1$, i.e., 
$(\Delta,\cfs{G}(v))$ is Cohen--Macaulay of dimension $d-1$ for every vertex
$v \in \Delta$. This also means that the topological terms in Theorem~\ref{thm:loc} vanish.

\begin{cor}\label{cor:loc}
    Let $(\Delta,\scr{G})$ be universally Cohen--Macaulay of dimension $d-1$
    where $\scr{G}$ is a full arrangement of $\Delta$. 
    \begin{compactenum}[\rm (1)]
    \item  For every vertex $v \in \Delta$ 
        \[
            h_k(\Lk(v,\Delta),\Lk(v,\cfs{G})) \ \le \
            h_k(\Delta,\cfs{G}(v)) \ = \ h_k(\Delta,\cfs{G})+h_k^{\langle
            d\rangle}(\cfs{G},\cfs{G}(v)).
        \]
        Equality holds up to some $k_0$ if and only if for every $\sigma$ of
        $(\Delta,\scr{G}(v))$ of dimension $<k_0$, the simplex $\sigma \ast v$
        is a face of $\Delta$.
    \item Moreover, we have
        \[
            (k+1)\, h_{k+1}(\RC) \ \le \  (f_0(\Delta)-d+k) \, h_{k}(\RC)
            + \sum_{v\in \Delta} h_k^{\langle
            d\rangle}(\cfs{G},\cfs{G}(v)).
        \]
        Equality holds if and only if it holds for all $v$ in {\rm (1)}.
\end{compactenum}
\end{cor}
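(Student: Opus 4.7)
My plan is to derive Corollary~\ref{cor:loc} as the Cohen--Macaulay refinement of Lemma~\ref{lem:h_uBuchs} and Theorem~\ref{thm:loc}, with the additional equality characterization extracted by tracking the unique source of inequality in the underlying chain of estimates.

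For the identity in part~(1), I would first observe that the assignment $\Xi\mapsto h_k^{\langle d\rangle}(\Xi)$ is linear in $f$-vectors, so the inclusions $\cfs{G}(v)\subseteq\cfs{G}\subseteq\Delta$ yield
\[
h_k(\Delta,\cfs{G}(v)) - h_k(\Delta,\cfs{G}) \;=\; h_k^{\langle d\rangle}(\cfs{G}) - h_k^{\langle d\rangle}(\cfs{G}(v)) \;=\; h_k^{\langle d\rangle}(\cfs{G},\cfs{G}(v)).
\]
For the inequality, set $\RC(v) := (\Delta,\cfs{G}(v))$; by universal Cohen--Macaulayness, $\RC(v)$ and $\widetilde{\St}(v,\RC(v))$ are both Cohen--Macaulay of dimension $d-1$, so no topological term appears in the relative Schenzel formula (Theorem~\ref{thm:RS}) applied to either. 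I would then reproduce the estimation chain of Lemma~\ref{lem:h_uBuchs}: Lemma~\ref{lem:lkst} identifies $h_k(\Lk(v,\RC))$ with $h_k(\St(v,\RC(v)))$, fullness via Lemma~\ref{lem:fulleq} identifies $\St(v,\RC(v))=\widetilde{\St}(v,\RC(v))$, Cohen--Macaulayness expresses this as $\dim_\k(\FM[\widetilde{\St}(v,\RC(v))]/\Theta\FM[\widetilde{\St}(v,\RC(v))])_k$ for a \lsop\ $\Theta$, Lemma~\ref{lem:surj} bounds this by $\dim_\k(\FM[\RC(v)]/\Theta\FM[\RC(v)])_k$, and a final application of Cohen--Macaulayness of $\RC(v)$ returns $h_k(\RC(v))=h_k(\Delta,\cfs{G}(v))$.

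The only inequality in this chain stems from Lemma~\ref{lem:surj}, whose kernel I would identify with the face module of the relative complex $(\Delta,\cfs{G}(v)\cup\St(v,\Delta))$; its monomial generators correspond exactly to the faces $\sigma$ of $\RC(v)$ satisfying $\sigma\ast v\notin\Delta$. Since $\FM[\RC(v)]$ and $\FM[\widetilde{\St}(v,\RC(v))]$ are Cohen--Macaulay of Krull dimension~$d$, the depth lemma forces this kernel to be Cohen--Macaulay of dimension~$d$ as well, so $\Theta$ is regular on it and the short exact sequence remains exact modulo~$\Theta$. Thus equality in Lemma~\ref{lem:surj} up to degree $k_0$ is equivalent to the $h$-vector of $(\Delta,\cfs{G}(v)\cup\St(v,\Delta))$ vanishing in degrees $\le k_0$, which by the $h$-to-$f$ conversion formula and Cohen--Macaulayness translates to no such face $\sigma$ of dimension $<k_0$ existing, i.e.\ precisely the characterization stated in~(1). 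The main subtlety I expect is verifying the Cohen--Macaulayness of this kernel module; this is essentially a depth calculation via the depth lemma applied to the relevant short exact sequence of face modules.

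Finally, part~(2) follows from part~(1) by summing over vertices and invoking Lemma~\ref{lem:hlk}:
\[
(k+1)h_{k+1}(\RC)+(d-k)h_k(\RC) \;=\; \sum_{v\in\V(\Delta)} h_k(\Lk(v,\RC)) \;\le\; \sum_{v\in\V(\Delta)} \bigl(h_k(\RC)+h_k^{\langle d\rangle}(\cfs{G},\cfs{G}(v))\bigr),
\]
where elements of $[n]\setminus\V(\Delta)$ drop out of Lemma~\ref{lem:hlk} because their links are void; rearranging gives the desired inequality, and equality holds if and only if equality holds in each summand, i.e.\ if and only if the condition in~(1) holds at every vertex of~$\Delta$.
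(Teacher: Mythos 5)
Your proposal follows the paper's route very closely: part (1)'s identity is established by the same linearity argument, the inequality is the estimation chain of Lemma~\ref{lem:h_uBuchs}, and part~(2) is obtained by summing and applying Lemma~\ref{lem:hlk}. The equality characterization is also analyzed by the same mechanism --- passing to the short exact sequence whose kernel is $\FM[\Delta,\cfs{G}(v)\cup\St(v,\Delta)]$ and asking when that sequence stays exact after tensoring with $\k[\x]/\Theta$.

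There is, however, one genuine misstep in the equality argument. You deduce from the depth lemma that the kernel $\ker(\varphi_v)$ is Cohen--Macaulay, and then infer that ``$\Theta$ is regular on it and the short exact sequence remains exact modulo $\Theta$.'' That inference is not valid: regularity of $\Theta$ on the submodule $A=\ker(\varphi_v)$ does not control $\Tor_1(C,\k[\x]/\Theta)$, and it is precisely the vanishing of $\Tor_1(C,\k[\x]/\Theta)$ --- where $C=\FM[\widetilde{\St}(v,\RC(v))]$ --- that forces $A/\Theta A \to B/\Theta B$ to be injective. Concretely, the long exact sequence in $\Tor$ gives an injection $\Tor_1(C,\k[\x]/\Theta)\hookrightarrow A/\Theta A$, so CM-ness of $A$ alone tells you nothing. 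The correct observation, and the one the paper makes, is that $C$ is Cohen--Macaulay of dimension $d$ by the universal CM hypothesis, hence $\Theta$ is a regular sequence on $C$, hence $\Tor_1(C,\k[\x]/\Theta)=0$, hence the sequence remains exact. Your intuition that ``the main subtlety is CM-ness of the kernel'' is therefore misplaced: the load-bearing hypothesis is CM-ness of the quotient $C$, which you don't need to derive since it is given. The rest of your translation --- identifying the kernel's $\ge 0$ degrees with non-faces $\sigma$ such that $\sigma\ast v\notin\Delta$ and converting ``generated in degree $>k_0$'' into ``no such face of dimension $<k_0$'' --- is correct.
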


\begin{proof}
    {\rm (1)} and {\rm (2)} are Lemma~\ref{lem:h_uBuchs} and
    Theorem~\ref{thm:loc} for universally CM pairs.  It remains to
    characterize the equality cases. Recall that the inclusion
    $\St(v,\Delta) \cap \cfs{G}(v) \subseteq \cfs{G}(v)$ induces a
    degree-preserving surjection
    \[
        \varphi_v: \FM[\RC(v)] \ = \ \I_{\cfs{G}(v)} / \I_\Delta 
        \ \longtwoheadrightarrow \  
        \I_{\cfs{G}(v)} / \I_{\St(v,\Delta)} \ = \
            \FM[\widetilde{\St}(v,\RC(v))].
    \]
    Since $\FM[\widetilde{\St}(v,\RC(v))]$ is CM, for a \lsop\ $\Theta$ for
    $\FM[\RC(v)]$ we get a short exact sequence 
    \[
        0\longrightarrow \ker(\varphi_v)/\Theta\ker(\varphi_v) \longrightarrow 
        \FM[\RC(v)]/\Theta \FM[\RC(v)]\longrightarrow 
        \FM[\widetilde{\St}(v,\RC(v))]/\Theta 
        \FM[\widetilde{\St}(v,\RC(v))]\longrightarrow 0.
    \]
    Therefore, equality holds if and only if the surjection $\varphi_v$ is an
    isomorphism if and only if \[(\ker(\varphi_v)/\Theta\ker(\varphi_v))_{\le k_0}\ =\ (\FM[\Delta,\cfs{G}(v)\cup \St(v,\Delta)]/\Theta\FM[\Delta,\cfs{G}(v)\cup \St(v,\Delta)])_{\le k_0}\ =\ 0\]
    which is only the case if the face module $\FM[\Delta,\cfs{G}(v)\cup \St(v,\Delta)]$ is generated in degree $>k_0$.
\end{proof}

\begin{rem}\label{rem:tight}
The equality cases in Theorem~\ref{thm:loc} are a bit harder to characterize;
    one can use Proposition~\ref{prp:exact} and
    Theorem~\ref{thm:reduction_to_topology}.
\end{rem}

\subsection{A reverse isoperimetric inequality} \label{ssec:rel_diff}

We can use the philosophy of Lemma~\ref{lem:surj} in yet another way to
provide upper bounds on algebraic $h$-numbers by replacing
Lemma~\ref{lem:surj} with a stronger inequality. The results, even though they
require more work, yield inequalities stronger than the ones provided in
Section~\ref{ssec:primes}. For simplicity, we focus on the Cohen--Macaulay
case and leave the general case to the interested reader.

Let $\RC = (\Delta,\Gamma)$ be a relative complex. A \Defn{relative
subcomplex} of $\RC$ is a relative complex $\RC' = (\Delta',
\Gamma')$ with $\Delta' \subseteq \Delta$ and $\Gamma' \subseteq \Gamma$. The
pair $(\RC,\RC')$ is again a relative complex with face module
\[
    \FM[\RC,\RC'] \ := \ \ker( \FM[\RC] \twoheadrightarrow
    \FM[\RC']) \ \cong \ \FM[\Delta, \Gamma \cup \Delta'].
\]
We say that $\RC'$ is a full relative subcomplex if $\Delta' \subseteq
\Delta$ is full.

\begin{thm}\label{thm:rii}
    Let $\RC$ be a $(d-1)$-dimensional Cohen--Macaulay relative complex and
    $\RC'$ a codimension one Cohen--Macaulay full relative subcomplex.  Then
    \[
        h_k(\RC,\RC') \ \ge \ h_{k-1}(\RC')
    \]
for all $k$.
\end{thm}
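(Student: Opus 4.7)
First I would translate the statement into a cleaner algebraic inequality. Under the natural compatibility $\Gamma' = \Gamma \cap \Delta'$ for a full relative subcomplex, the sequence $0 \to \FM[\RC,\RC'] \to \FM[\RC] \to \FM[\RC'] \to 0$ is exact, so additivity of Hilbert series forces the polynomial identity
\[
    h(\RC,\RC')(t) \ = \ h(\RC)(t) \, - \, (1-t)\,h(\RC')(t).
\]
Comparing coefficients gives $h_k(\RC,\RC') - h_{k-1}(\RC') = h_k(\RC) - h_k(\RC')$, so it is enough to establish $h_k(\RC) \ge h_k(\RC')$ for every $k$.

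The plan is to apply Lemma~\ref{lem:surj} with a carefully engineered \lsop\ for $\FM[\RC]$ whose last element acts trivially on $\FM[\RC']$. Let $\Theta' = (\theta_1,\dots,\theta_{d-1})$ be a generic sequence of linear forms; by Cohen--Macaulayness this is simultaneously a full \lsop\ for $\FM[\RC']$ (Krull dimension $d-1$) and a partial \lsop\ for $\FM[\RC]$ (Krull dimension $d$). I want to extend $\Theta'$ by a single form $\theta_d$ drawn from the subspace
\[
    V \ := \ \langle\, x_v \ :\ v \in \V(\Delta)\setminus \V(\Delta')\,\rangle \ \subseteq \ \k[\x]_1
\]
so that $\Theta := (\theta_1,\dots,\theta_d)$ is a full \lsop\ for $\FM[\RC]$. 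By Cohen--Macaulayness the obstacle is that $\theta_d$ must avoid the finitely many minimal primes $\mathfrak{p}_\sigma = (x_v : v\notin\sigma)$ of $\FM[\RC]/\Theta'\FM[\RC]$ indexed by the $(d-1)$-dimensional facets $\sigma$ of $\RC$; prime avoidance produces such a $\theta_d \in V$ as soon as $V \not\subseteq \mathfrak{p}_\sigma$ for every such $\sigma$, that is, as soon as every $(d-1)$-facet of $\RC$ contains a vertex of $\V(\Delta)\setminus\V(\Delta')$. This is where fullness and the codimension hypothesis are used: a top facet $\sigma \subseteq \V(\Delta')$ would lie in $\Delta\cap 2^{\V(\Delta')} = \Delta'$ by fullness and outside $\Gamma' \subseteq \Gamma$, producing a $(d-1)$-dimensional face of $\RC'$ and contradicting $\dim \RC' = d-2$.

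With such a $\Theta$ in hand, every $x_v$ with $v\notin \V(\Delta')$ annihilates $\FM[\RC']$: multiplying any monomial supported on a face of $\Delta'$ by $x_v$ yields a monomial whose support leaves $\V(\Delta')$, so by fullness it lies outside $\Delta'$ and the product is zero in $\FM[\RC']=\I_{\Gamma'}/\I_{\Delta'}$. Hence $\theta_d\cdot\FM[\RC'] = 0$, which forces $\FM[\RC']/\Theta\FM[\RC'] = \FM[\RC']/\Theta'\FM[\RC']$ with $k$-th graded dimension equal to $h_k(\RC')$ by Cohen--Macaulayness of $\FM[\RC']$. Feeding this into Lemma~\ref{lem:surj} applied to the relative subcomplex $\RC'$ of $\RC$ with the sequence $\Theta$ then yields
\[
    h_k(\RC') \ = \ \dim_\k (\FM[\RC']/\Theta\FM[\RC'])_k \ \le \ \dim_\k (\FM[\RC]/\Theta\FM[\RC])_k \ = \ h_k(\RC),
\]
which is the desired inequality. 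The main (but modest) obstacle is the prime-avoidance step establishing that $\theta_d$ can be chosen inside $V$; once that is in place the rest of the argument is purely formal manipulation.
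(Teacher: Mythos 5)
Your proof is correct, and it takes a mildly different route from the paper. Your opening step, using the exact sequence $0\to\FM[\RC,\RC']\to\FM[\RC]\to\FM[\RC']\to 0$ to obtain the identity $h(\RC,\RC')(t) = h(\RC)(t) - (1-t)\,h(\RC')(t)$ and thereby reduce the theorem to the monotonicity $h_k(\RC)\ge h_k(\RC')$, does not appear in the paper. The paper works directly with $\FM[\RC,\RC']$: it observes $\theta_d\FM[\RC]\subseteq\FM[\RC,\RC']$, shows that multiplication by $\theta_d$ descends to a degree-one \emph{injection} $\FM[\RC']/(\theta_1,\dots,\theta_{d-1})\FM[\RC']\hookrightarrow\FM[\RC,\RC']/(\theta_1,\dots,\theta_d)\FM[\RC,\RC']$ (after identifying the kernel with the image of $\FM[\RC,\RC']$), and reads off $h_{k-1}(\RC')\le h_k(\RC,\RC')$ in one step. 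Both arguments turn on the identical choice of $\theta_d$ in $\k\text{-span}\{x_v : v\notin\V(\Delta')\}$, justified by the same fullness/codimension observation that every facet of $\RC$ contains a vertex outside $\V(\Delta')$; you then substitute the degree-zero surjection of Lemma~\ref{lem:surj}, after noting that $\theta_d$ annihilates $\FM[\RC']$, where the paper builds the injection by hand. What your version buys is the transparent reformulation of the theorem as an $h$-vector monotonicity across a codimension drop, at the modest cost of the Hilbert-series manipulation up front. One small imprecision worth noting: the primes $\mathfrak{p}_\sigma$ you cite for prime avoidance are the minimal primes of $\FM[\RC]$, not of $\FM[\RC]/(\theta_1,\dots,\theta_{d-1})\FM[\RC]$; the clean criterion is that $(\theta_1,\dots,\theta_d)$ is a \lsop\ for $\FM[\RC]$ precisely when, for each facet $\sigma$ of $\RC$, the images of the $\theta_i$ in $\k[x_v : v\in\sigma]$ span its degree-one part, and a generic $\theta_d$ in your subspace $V$ achieves this exactly because $\sigma\not\subseteq\V(\Delta')$---the fullness argument you already supply.
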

\begin{proof}
    Let $\theta_1,\dots,\theta_d$ be a \lsop\ for $\FM = \FM[\RC]$ such that
    $\Theta = (\theta_1,\dots, \theta_{d-1})$ is a \lsop\ for $\FM' =
    \FM[\RC']$ and $\theta_d$ is a linear form $\theta_d \in
    \k\text{-span} \{ x_v : v \not\in \V(\Delta')\}$; this can be done as
    $\Delta'$ is full in $\Delta$ and of codimension one, so that every facet
    of $\Delta$ contains at least one vertex not in $\Delta'$, compare also
    \cite[Section III.9]{Stanley96}.
    
    Consider the injective map $\widetilde{\varphi}:\FM/\Theta \FM \rightarrow \FM /
    \Theta \FM$ given by multiplication by $\theta_d$. Now 
    $\theta_d \FM \subseteq \FM[\RC,\RC']$ by choice of $\theta_d$ and hence, we get a
    homogeneous map 
    $
        \varphi : \FM / \Theta \FM \, \rightarrow \, \FM[\RC,\RC'] /
        (\Theta,\theta_d) \FM[\RC,\RC']
    $ of degree one induced by the multiplication with $\theta_d$.
    Again by the regularity of $\theta_d$ and using the fullness property, the kernel 
    of $\varphi$ is given by $\ker \varphi = \FM[\RC,\RC']/ \Theta \FM[\RC,\RC']$. 
    Factoring out the kernel we get an injection 
    \[
        \overline{\varphi} : 
        \FM' / \Theta \FM' \  \longhookrightarrow \  \FM[\RC,\RC'] /
        (\Theta,\theta_d) \FM[\RC,\RC'].
    \]
    Since $\overline{\varphi}$ is homogeneous of degree one, we obtain
    \[
    h_{k-1}(\RC') \ = \ \dim_\k (\FM' / \Theta \FM')_{k-1}
    \ \le \ 
    \dim_\k (\FM[\RC,\RC'] / (\Theta,\theta_d) \FM[\RC,\RC'])_k \ = \
    h_k(\RC,\RC').\qedhere
    \]
\end{proof}

A simple application of this inequality yields a reverse isoperimetric
inequality.

\begin{cor}\label{cor:rev_iso}
    Let $\Delta$ be a simplicial ball and assume that $\partial \Delta$ is full in
    $\Delta$, then 
    \[
        h_k(\Delta,\partial \Delta) \ \ge\ h_{k-1}(\partial \Delta)
    \]
for all $k$.
\end{cor}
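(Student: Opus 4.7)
The plan is to derive Corollary~\ref{cor:rev_iso} as an immediate specialization of Theorem~\ref{thm:rii}. I would set $\RC := (\Delta, \emp)$ and $\RC' := (\partial\Delta, \emp)$, viewing $\RC'$ as a relative subcomplex of $\RC$ in the sense of Section~\ref{ssec:rel_diff} (so that $\Delta' = \partial\Delta \subseteq \Delta$ and $\Gamma' = \emp \subseteq \emp$).

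First I would check the hypotheses of Theorem~\ref{thm:rii}. Since $\Delta$ is a simplicial $(d-1)$-ball, all links $\Lk(\sigma,\Delta)$ are themselves balls or spheres and hence have vanishing reduced homology below the top dimension; Reisner's criterion (Theorem~\ref{thm:SR}) therefore implies that $\RC$ is Cohen--Macaulay of dimension $d-1$. The subcomplex $\partial\Delta$ is a simplicial $(d-2)$-sphere, hence also Cohen--Macaulay and of codimension one in $\Delta$. Fullness of $\partial\Delta$ in $\Delta$ is the standing assumption, which under our choice $\Gamma = \Gamma' = \emp$ is exactly the condition that $\RC'$ is a \emph{full} relative subcomplex of $\RC$.

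With all hypotheses verified, Theorem~\ref{thm:rii} yields $h_k(\RC,\RC') \ge h_{k-1}(\RC')$. To finish, I would translate back to absolute notation by unpacking the face module:
\[
    \FM[\RC,\RC'] \ = \ \ker\bigl(\FM[\RC] \twoheadrightarrow \FM[\RC']\bigr) \ \cong \ \FM[\Delta,\emp\cup\partial\Delta] \ = \ \FM[\Delta,\partial\Delta],
\]
so that $h_k(\RC,\RC') = h_k(\Delta,\partial\Delta)$, while $h_{k-1}(\RC') = h_{k-1}(\partial\Delta,\emp) = h_{k-1}(\partial\Delta)$. The Theorem~\ref{thm:rii} inequality thus reads precisely $h_k(\Delta,\partial\Delta) \ge h_{k-1}(\partial\Delta)$, as desired.

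I do not anticipate any genuine obstacle: all the substance is packaged into Theorem~\ref{thm:rii}, whose proof already builds the key injection $\overline{\varphi}$ via a carefully chosen \lsop\ with last parameter supported outside $\V(\partial\Delta)$ (which exists precisely because $\partial\Delta$ is full of codimension one). The only thing to be careful about is the bookkeeping between absolute and relative notation, and the observation that Cohen--Macaulayness of a simplicial ball and its boundary sphere is classical.
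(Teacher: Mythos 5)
Your plan---specialize Theorem~\ref{thm:rii} with $\RC$ the ball and $\RC'$ its boundary sphere---is exactly the paper's intended route, but taking $\Gamma = \Gamma' = \emp = \{\emptyset\}$ (rather than the void complex $\emptyset$) introduces a genuine error. The relative complex $(\Delta,\emp)$ is not Cohen--Macaulay once $\dim\Delta\ge 1$: Reisner's criterion (Theorem~\ref{thm:SR}) at $\sigma=\emptyset$ requires $\rHom_i(\Delta,\emp)=0$ for $i<\dim\Delta$, but $\rHom_\bullet(\Delta,\K_0)$ is \emph{unreduced} homology (as the paper notes), so $\rHom_0(\Delta,\emp)=H_0(\Delta)=\k$ for any connected $\Delta$. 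Your CM check inspected reduced homology of the links $\Lk(\sigma,\Delta)$, which is the correct criterion for $(\Delta,\emptyset)$ but not for $(\Delta,\emp)$. The closing identification $h_{k-1}(\partial\Delta,\emp)=h_{k-1}(\partial\Delta)$ is also false: $\FM[\partial\Delta,\emp]$ is the augmentation ideal $\k[\partial\Delta]_{\ge 1}$, and one computes $h_j(\partial\Delta,\emp)=h_j(\partial\Delta)-(-1)^j\binom{d-1}{j}$.

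Both issues disappear with $\Gamma=\Gamma'=\emptyset$: then $\FM[\RC]\cong\k[\Delta]$ and $\FM[\RC']\cong\k[\partial\Delta]$ are Cohen--Macaulay of dimensions $d$ and $d-1$, $\RC'$ is a full codimension-one relative subcomplex since $\partial\Delta$ is assumed full, $\FM[\RC,\RC']\cong\FM[\Delta,\emptyset\cup\partial\Delta]=\FM[\Delta,\partial\Delta]$, and the conclusion of Theorem~\ref{thm:rii} reads verbatim $h_k(\Delta,\partial\Delta)\ge h_{k-1}(\partial\Delta)$. (The paper's remark ``If $\Gamma=\emp$ is the empty complex, then $\FM[\RC]\cong\k[\Delta]$'' is itself a typo for $\Gamma=\emptyset$ and may be the source of the slip; nevertheless the two choices are not interchangeable in the hypotheses and the right-hand side of this inequality.)
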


In the same situation, Lemma~\ref{lem:surj} only yields $h_k(\Delta,\partial
\Delta)\ \ge\ 0$ which also follows easily since $(\Delta,\partial \Delta)$ is
Cohen--Macaulay. This \emph{almost} is a Lefschetz-type result that
characterizes primitive Betti numbers, compare also~\cite{Stanley96}. We refer
to \cite{AdipII} for related applications towards a quantitative lower bound
theorem, and also Remark~\ref{rem:bb} for a small application.



\subsection{Relations to relative shellability.}\label{ssec:rel_shell}

The estimates of Section~\ref{ssec:primes} are reminiscent of McMullen's approach
to $h$-vectors and the Upper Bound Theorem via shellings. In this section we
want to put our techniques into perspective via the notion of relative
shellability. The results presented here are not essential for the following
sections, but provide a combinatorial viewpoint.

Let $\RC = (\Delta,\Gamma)$ be a pure relative complex of dimension $d-1$ and let
$F \in \Delta {\setminus} \Gamma$ be a
facet. The deletion $\RC' = \RC - F := (\Delta - F, \Gamma)$ is a step in a \Defn{relative shelling}
if $\RC' \cap F$ is pure of codimension one. A relative complex is shellable
if there is sequence of shelling steps to the relative complex
$(\Gamma,\Gamma)$. If $\Gamma = \emptyset$, then this is the classical notion
of shelling of simplicial complexes. Relative shellings
where introduced by Stanley~\cite{Stanley87} and further developed
in~\cite{AB-SSZ}. Shellability has proven to be an invaluable tool
in topological combinatorics. The basis for our situation is the 
following result due to Kind--Kleinschmidt \cite{KK79} and
Stanley~\cite{Stanley96}. 

\begin{prp}
    A shellable relative complex is Cohen--Macaulay over any ground
    field.
\end{prp}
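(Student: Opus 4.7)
The plan is to exhibit a finite filtration of $\FM[\RC]$ whose successive quotients are Cohen--Macaulay $\k[\x]$-modules of the top dimension, and then push Cohen--Macaulayness up the filtration by the depth lemma.

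First I would reverse the shelling: write it as the sequence of facet deletions $F_1, \ldots, F_s$, set $G_i := F_{s+1-i}$, and let $\Delta_i := \Gamma \cup G_1 \cup \cdots \cup G_i$, so that $\Gamma = \Delta_0 \subseteq \Delta_1 \subseteq \cdots \subseteq \Delta_s = \Delta$, and each step enlarges the complex by a single facet $G_i$ whose intersection with $\Delta_{i-1}$ is a pure $(d-2)$-dimensional subcomplex of $\partial G_i$. The corresponding chain of Stanley--Reisner ideals produces a decreasing filtration
\[
\FM[\RC] \ = \ \FM[\Delta,\Delta_0] \ \supseteq \ \FM[\Delta,\Delta_1] \ \supseteq \ \cdots \ \supseteq \ \FM[\Delta,\Delta_s] \ = \ 0
\]
of finely graded $\k[\x]$-submodules of $\FM[\RC]$, with successive quotients canonically isomorphic to $\I_{\Delta_{i-1}}/\I_{\Delta_i}$.

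Second I would identify each quotient as a free module of rank one over a polynomial subring. The shelling hypothesis yields a \emph{restriction} $R(G_i) := \{v \in G_i : G_i {\setminus} \{v\} \in \Delta_{i-1}\}$, and a short combinatorial check --- using purity of $G_i \cap \Delta_{i-1}$ in $\partial G_i$ --- shows that the new faces $\Delta_i {\setminus} \Delta_{i-1}$ are exactly the interval $\{\sigma : R(G_i) \subseteq \sigma \subseteq G_i\}$. Hence $\I_{\Delta_{i-1}}/\I_{\Delta_i}$ has finely graded $\k$-basis $\{\x^\alpha : R(G_i) \subseteq \supp(\alpha) \subseteq G_i\}$, and the variables $x_u$ with $u \notin G_i$ annihilate this quotient (since $\supp(\alpha) \cup \{u\} \not\subseteq G_i$ and is not in $\Delta_{i-1}$, hence not in $\Delta_i$ either). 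This identifies the quotient with the free rank-one module $\x^{R(G_i)} \cdot \k[x_v : v \in G_i]$ over a polynomial ring in $d = |G_i|$ variables, which is Cohen--Macaulay of dimension $d$.

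Finally I would induct downwards on $i$. The base case $\FM[\Delta,\Delta_s] = 0$ is trivial. Assuming $\FM[\Delta,\Delta_i]$ is Cohen--Macaulay of dimension $d$, the short exact sequence
\[
0 \longrightarrow \FM[\Delta,\Delta_i] \longrightarrow \FM[\Delta,\Delta_{i-1}] \longrightarrow \I_{\Delta_{i-1}}/\I_{\Delta_i} \longrightarrow 0
\]
together with the depth lemma ($\depth B \ge \min(\depth A, \depth C)$) gives $\depth \FM[\Delta,\Delta_{i-1}] \ge d$; combined with $\dim \FM[\Delta,\Delta_{i-1}] = d$ --- forced from below by the surjection onto the $d$-dimensional quotient and from above by $\dim \FM[\RC] = d$ --- this makes $\FM[\Delta,\Delta_{i-1}]$ Cohen--Macaulay of dimension $d$. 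Setting $i=1$ yields Cohen--Macaulayness of $\FM[\RC]$. The main obstacle is the second step: verifying both that the shelling condition produces the clean interval description $[R(G_i), G_i]$ of new faces, and that $x_u$ acts trivially for $u \notin G_i$. Everything else --- the depth lemma, Cohen--Macaulayness of free modules over polynomial rings, and the downward induction --- is field-independent, which explains why the conclusion holds over any ground field $\k$.
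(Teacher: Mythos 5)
The paper does not prove this proposition: it is cited from Kind--Kleinschmidt and Stanley's book. Your argument is correct and is the standard one in the non-relative case, adapted appropriately to the relative setting. You reverse the shelling into a filtration $\Gamma = \Delta_0 \subset \Delta_1 \subset \cdots \subset \Delta_s = \Delta$; you show the successive quotients $\FM[\Delta,\Delta_{i-1}]/\FM[\Delta,\Delta_i] \cong \I_{\Delta_{i-1}}/\I_{\Delta_i}$ are shifted free rank-one modules over polynomial rings in $d=|G_i|$ variables via the interval description $\Delta_i \setminus \Delta_{i-1} = [R(G_i),G_i]$; and you propagate Cohen--Macaulayness through the filtration with the depth lemma. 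Each ingredient is correctly handled: the interval description indeed follows from the purity of $2^{G_i}\cap\Delta_{i-1}$ in $\partial G_i$; the annihilation of the quotient by $x_u$ for $u\notin G_i$ is checked properly (since $\supp(\alpha)\supseteq R(G_i)$ is already a non-face of $\Delta_{i-1}$, enlarging it cannot land in $\Delta_{i-1}$, and it cannot land in $2^{G_i}$ either); and the dimension count is clean (at least $d$ from the surjection onto a $d$-dimensional quotient, at most $d$ from the inclusion $\FM[\Delta,\Delta_{i-1}]\subseteq \k[\Delta]$). The conclusion that the argument is field-independent is also right, since every step is a statement about monomial bases, free modules over polynomial rings, and the depth lemma. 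One cosmetic remark: the natural base of the induction is $i=s$, where the short exact sequence collapses to an isomorphism $\FM[\Delta,\Delta_{s-1}]\cong\I_{\Delta_{s-1}}/\I_{\Delta_s}$, so no convention about the zero module being Cohen--Macaulay is actually needed.
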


In particular, the $h$-vector of a relative complex can be read off a shelling.

\begin{prp}\label{prp:shell_to_h}
    Let $\RC' = \RC - F$ be a shelling step and let $\sigma$ be the
    unique minimal face in $2^F\setminus\RC'$. Then
    $h_k(\RC)  = h_k(\RC') + 1$ for $k = |\sigma|$ and
    $h_k(\RC)=h_k(\RC')$ otherwise.
\end{prp}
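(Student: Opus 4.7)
The plan is to track how a single shelling step affects the $f$-vector, and then translate this into $h$-vector language via the defining identity~\eqref{eqn:f-h-polynomial}.

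First, I would identify the set of faces removed from $\RC$ to form $\RC'$. Since only the facet $F \in \Delta{\setminus}\Gamma$ is deleted (and $\Gamma$ is unchanged), the symmetric difference is $\RC{\setminus}\RC' = 2^F {\setminus} \RC'$. The shelling condition says $\RC' \cap F$ is pure of codimension one, which means (regarding it as a subcomplex of the simplex $2^F$) that $2^F {\setminus} \RC'$ is upward-closed inside the Boolean lattice $2^F$. By hypothesis it has a unique minimal element $\sigma$, so
\[
    \RC {\setminus} \RC' \ = \ \{\tau : \sigma \subseteq \tau \subseteq F\},
\]
an interval of the Boolean lattice. Writing $s = |\sigma|$ and $d = |F|$, the number of $(i-1)$-dimensional faces in this interval is $\binom{d-s}{i-s}$.

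Next, plug this into the generating-function form of the $f$-to-$h$ conversion~\eqref{eqn:f-h-polynomial}. Subtracting the identity for $\RC'$ from that for $\RC$ yields
\[
    \sum_{i=0}^{d}\bigl(f_{i-1}(\RC)-f_{i-1}(\RC')\bigr)\, t^{d-i}
    \ = \ \sum_{k=0}^{d}\bigl(h_k(\RC)-h_k(\RC')\bigr)\,(t+1)^{d-k}.
\]
By the previous step the left-hand side equals
\[
    \sum_{i=s}^{d}\binom{d-s}{i-s}\, t^{d-i} \ = \ \sum_{j=0}^{d-s}\binom{d-s}{j}t^{d-s-j} \ = \ (t+1)^{d-s}.
\]
Comparing coefficients of the linearly independent polynomials $(t+1)^{d-k}$ on both sides, one reads off $h_s(\RC)-h_s(\RC') = 1$ and $h_k(\RC) = h_k(\RC')$ for all $k \neq s$, which is exactly the claim since $s = |\sigma|$.

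The main (very mild) obstacle is the first step, namely the translation of ``$\RC' \cap F$ is pure of codimension one'' into ``$2^F{\setminus}\RC'$ is an upper interval of the Boolean lattice $2^F$.'' Once this combinatorial observation is in place, the rest reduces to a one-line generating-function computation and the algebraic independence of the basis $\{(t+1)^{d-k}\}_k$ of $\k[t]_{\le d}$, so no further work is needed.
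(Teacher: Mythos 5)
The paper states this proposition without proof, so there is no argument to compare against; your generating-function calculation is correct and is the standard derivation. The only place worth tightening is the first step, where the logic is slightly reversed. Upward-closedness of $2^F\setminus\Delta'$ is not a consequence of purity of codimension one; it holds automatically, because $\Delta'\cap 2^F$ is a subcomplex of the simplex $2^F$, and the complement of a downward-closed set is upward-closed. What purity of codimension one of $\RC'\cap F$ is (equivalent to) providing is the existence of a unique minimal element of that upward-closed set --- which the proposition in any case hands to you as $\sigma$. Two further points that are worth making explicit in the relative setting: the set ``$2^F\setminus\RC'$'' should be read as $2^F\setminus\Delta'$ rather than $2^F\setminus(\Delta'\setminus\Gamma)$, since the latter also contains the downward-closed set $\Gamma\cap 2^F$ and so is not upward-closed when $\Gamma\neq\emptyset$; and $\RC\setminus\RC' = (\Delta\setminus\Delta')\setminus\Gamma$ coincides with the full interval $[\sigma,F]$ because $\sigma\notin\Gamma$ (indeed $\Gamma\subseteq\Delta'$, so $\Delta\setminus\Delta'\subseteq\Delta\setminus\Gamma$) and $\Gamma$ is closed under taking subsets, whence $[\sigma,F]\cap\Gamma=\emptyset$. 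With these observations in place, your identity $\sum_i\bigl(f_{i-1}(\RC)-f_{i-1}(\RC')\bigr)t^{d-i}=(t+1)^{d-|\sigma|}$ gives the claim exactly as you say.
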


Let us revisit the situation of Example~\ref{ex:rst} from the perspective of
relative shellings: We call $\RC = (\Delta,\Gamma)$ \Defn{universally
shellable} if for every vertex $v$ of $\Delta$ there is a shelling of $\RC$
that removes $\St(v,\RC)$ first. For a universally shellable complex $\RC$
such that $\Gamma$ is full, the arguments of~\cite{mcmullen1970} (see
also~\cite[Section~8.4]{Z}) yield once again
\[
    (k+1)\, h_{k+1}(\RC) \ \le \  (f_0(\Delta)-d+k) \, h_{k}(\RC) +
    f_0(\RC)\, h_k(\Gamma).
\]
This is sufficient to provide a solution to the upper bound problem for
universally shellable relative complexes in the sense of
Lemma~\ref{lem:UBT-CM}; see also Theorem~\ref{thm:comb_iso_full}.
The challenge, of course, is to show that a given relative complex is shellable, that is,
to exhibit an actual shelling. For this, one can use a variety of methods
from poset theory~\cite{Bj1}, geometry~\cite{BM}, and tools such as Alexander duality and gluing theorems
for relative shellings, cf.~\cite{AB-SSZ}.

\section{The Upper Bound Theorem for Minkowski Sums}
\label{sec:UBTMS}

We now come to our main application of relative Stanley--Reisner theory: A
tight upper bound theorem for Minkowski sums of polytopes. In analogy to the
classical UBT, the class of polytopes that maximize the number of $k$-faces is
rather special and we devote the first section to their definition and the
statement of results. The proofs are rather intricate and we illustrate the
main ideas in the case of two summands $P_1+P_2$ in Section~\ref{ssec:two}
which recovers the results of~\cite{Karavelas11} with a simple argument.

The transition from Minkowski sums to relative simplicial complexes
is via the \emph{Cayley polytope} and the \emph{(relative) Cayley complex},
whose definition and properties are presented in Subsection~\ref{ssec:cayley}.
In particular, the Cayley complex allows us to introduce the notion of an
$h$-vector for special families of simplicial polytopes and reduce the upper
bound problem to one on $h$-vectors. The general scheme for the proof is then
similar to that of the UBT for polytopes: 
We will prove sharp upper bounds for
the `first half' of the $h$-vector (Section~\ref{ssec:small_h}).  For the
`second half' of the $h$-vector we prove Dehn--Sommerville-type relations in
Section~\ref{ssec:DSM}.  Unfortunately, this formula does not express $h_k$ of the 
second half as positive linear combinations of such from the first half,
so that we need a further strengthening of the bounds provided in
Section~\ref{ssec:UBTM}. We finally conclude the Upper Bound Theorem for
Minkowski sums (Theorem~\ref{mthm:minkm}). 
\enlargethispage{3mm}
While some statements in this section are general, we
focus in this section on Minkowski sums of \emph{pure} collections, i.e.,
Minkowski sums of polytopes in $\R^d$ with at least $d+1$ vertices each.
We discuss the nonpure case in Section~\ref{sec:nonpure}.

\newcommand\n{\mathbf{n}}
\subsection{Minkowski-neighborly polytopes and main results} \label{ssec:M-neighborly}
Let us recall the setup for the Minkowski upper bound problem. For given $m,d
\ge 1$ and $\n = (n_1,\dots,n_m) \in \Z_{\ge d+1}^m$, we seek to find tight upper bounds on 
\[
    f_k(P_1+P_2+\cdots+P_m)
\]
for polytopes $P_1,\dots,P_m$ such that $f_0(P_i) = n_i$ for all
$i=1,\dots,m$. We shall focus here on \Defn{pure} families, that is, families where each of the summands 
has at least $d+1$ vertices. To ease the notational burden, let us write $P_{[m]}:=
(P_1,\dots,P_m)$ and $f_k(P_{[m]}) = (f_k(P_1),\dots,f_k(P_m))$. We also
abbreviate $|P_{[m]}| := P_1 + P_2 + \cdots + P_m$ for the Minkowski sum of a
family. We extend these notions to subfamilies $P_S = (P_i : i \in S)$ for $S
\subseteq [m]$.

As for the UBT, we can make certain genericity assumptions. Recall that every
face $F$ of $|P_{[m]}|$ can be written as $F = F_1+\cdots+F_m= |F_{[m]}|$
where $F_i \subseteq P_i$ are unique nonempty faces. It follows that
\begin{equation}\label{eqn:Mink_dim}
    \dim F \ \le \ \dim F_1 + \cdots + \dim F_m.
\end{equation}

We call the polytopes $P_{[m]}$ in \Defn{relatively general position} if
equality holds in~\eqref{eqn:Mink_dim} for all proper faces $F \subsetneq
|P_{[m]}|$.
Similar to the situation of the UBT for polytopes and spheres, it
is possible to reduce the UBPM to simplicial polytopes in relatively general
position
 by a simple perturbation; compare
\cite[Theorem~1]{FukudaWeibel10}.  
We need a notion similar to neighborliness of
polytopes that will describe the polytopes attaining the upper bound.  
\begin{dfn}\label{dfn:M-neighborly} 
    Let $P_{[m]}=(P_1,\dots,P_m)$ be a collection of polytopes in $\R^d$.
    Then $P_{[m]}$ is \Defn{Minkowski $(k,\ell)$-neighborly} for $k \ge
    0$ if for every subset $J \subseteq [m]$ of cardinality $\ell$, and for any choice of vertices $\emptyset \neq S_j \subseteq
    \V(P_j)$ with $j \in J$ such that
    \[
        \sum_{j\in J} |S_j| \ \le \ k + |J| - 1, 
    \]  
    the polytope $\sum_{j\in J}\conv(S_j)$ is a simplex of $|P_J|$.
\end{dfn}

For $\ell = 1$, this recovers the definition of $k$-neighborly polytopes. For $\ell=m$, the number of $k$-faces in a Minkowski
$(k,m)$-neighborly family, if it exists, satisfies
\[
    f_k(|P_{[m]}|) \ = \ \sum_{\substack{\alpha \in \Z^m_{\ge 1}\\ 
    |\alpha|=m+k}} \prod_{i=1}^m \binom{f_0(P_i)}{\alpha_i}
\]
which is the trivial upper bound for face numbers of Minkowski sums. The following theorem characterizes 
Minkowski neighborly polytopes and generalizes the standard properties of 
neighborly polytopes.

\begin{thm}\label{thm:M-neighborly}
Let $m,d \ge 1$ be fixed.
\begin{compactenum}[\rm (i)] 
    \item There is no pure Minkowski $(k,\ell)$-neighborly family $P_{[m]}$ in $\R^d$ for $k +\ell -1>
            \frac{d+\ell-1}{2} $.
    \item For all $\n \in \Z_{\ge d+1}^m$ there is a family $P_{[m]}$ in $\R^d$ with
        $f_0(P_{[m]})=\n$  that is Minkowski $(k,\ell)$-neighborly for all
        $\ell\le m$ and $\ell-1 \le k+\ell-1 \le
        \lfloor\frac{d+\ell-1}{2}\rfloor$.
\end{compactenum}
\end{thm}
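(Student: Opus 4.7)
The plan is to attack part (ii) by an explicit construction and part (i) by passing to the Cayley polytope and invoking the classical UBT; for $\ell=1$ the impossibility in (i) reduces to the classical UBT for polytopes, so the new substance lies in $\ell\ge 2$.

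For (ii), I would invoke the Cayley construction of Matschke--Pfeifle--Pilaud~\cite[Theorem~2.6]{MPP}: given $d$, $m$, and $\n\in\Z^m_{\ge d+1}$, they produce a family $P_{[m]}$ whose Cayley polytope $\Cay(P_{[m]})$ is a $\lfloor(d+m-1)/2\rfloor$-neighborly simplicial polytope of dimension $d+m-1$. Minkowski $(k,\ell)$-neighborliness for every $\ell\le m$ and every $k$ with $k+\ell-1\le\lfloor(d+\ell-1)/2\rfloor$ is then read off from any $\ell$-subfamily $P_J$ via the Cayley--Minkowski dictionary: a transversal vertex subset of $\Cay(P_J)$ is a simplex face of $\Cay(P_J)$ if and only if the corresponding Minkowski sum $\sum_{j\in J}\conv(S_j)$ is a simplex face of $|P_J|$. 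The required neighborliness of $\Cay(P_J)$ is inherited from $\Cay(P_{[m]})$ under restriction to the subfamily.

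For (i), suppose $P_{[m]}$ is pure and $(k,\ell)$-Minkowski neighborly with $k+\ell-1>(d+\ell-1)/2$. Fix an $\ell$-subfamily $P_J$ and form $C:=\Cay(P_J)$, which has dimension $D:=d+\ell-1$ and $N:=\sum_{j\in J}n_j$ vertices; purity gives $N\ge\ell(d+1)>D+1$ for $\ell\ge 2$, so $\partial C$ is a simplicial $(D-1)$-sphere to which the classical UBT applies. The Minkowski hypothesis and the Cayley dictionary say that every transversal vertex subset of $\partial C$ of size $\le k+\ell-1$ is a simplex face, yielding a concrete lower bound on the face numbers of $\partial C$ by counting compositions of $k+\ell-1$ into $\ell$ positive parts weighted by the $n_j$. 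The plan is to contradict the UBT inequality $h_k(\partial C)\le\binom{N-D+k-1}{k}$ for $k\le\lfloor D/2\rfloor$.

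The main obstacle is the gap between the given transversal neighborliness and the standard $(k+\ell-1)$-neighborliness of $C$ that would feed the UBT directly. A naive completion argument---adjoin one arbitrary vertex from each missing class---upgrades the hypothesis only to plain $k$-neighborliness of $C$, losing $\ell-1$. Bridging this gap is where real work is needed: one option is a colored refinement of the UBT for $\partial C$ viewed as an $\ell$-partite simplicial sphere, another is to invoke the local-to-global estimates of Section~\ref{ssec:primes} for the arrangement of color subcomplexes inside $\partial C$. Once such a colored bound is available, a direct binomial comparison forces $k+\ell-1\le\lfloor(d+\ell-1)/2\rfloor$.
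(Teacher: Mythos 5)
Your treatment of part (ii) matches the paper: both simply invoke the explicit cyclic-polytope constructions of Matschke--Pfeifle--Pilaud~\cite[Theorem~2.6]{MPP}, together with the fact that the properties restrict to subfamilies via sub-Cayley polytopes. Nothing to add there.

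For part (i), your route is genuinely different from the paper's, and the gap you identify is real and not easily repaired inside your framework. The paper proves~(i) by an elementary Radon argument on the Cayley polytope (its Proposition~\ref{prp:radon_cayley}): since $\Cay(P_J)$ is a $(d+\ell-1)$-dimensional polytope with more than $d+\ell$ vertices (here purity is used), any $d+\ell+1$ vertices admit a Radon partition $M_1\sqcup M_2$ with intersecting convex hulls, and the smaller side $M_1$, of size at most $\lfloor(d+\ell+1)/2\rfloor = \lfloor(d+\ell-1)/2\rfloor+1 \le k+\ell-1$, cannot span a face; keeping track of color classes to make $M_1$ transversal gives the desired violation of Minkowski $(k,\ell)$-neighborliness. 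This argument produces the tight bound directly and uses no face-enumeration inequalities at all. Your plan, by contrast, wants to feed the transversal neighborliness into the classical UBT for $\partial C$ and read off a contradiction. The obstruction you diagnose is exactly right: transversal $(k+\ell-1)$-neighborliness only implies ordinary $k$-neighborliness of $\partial C$ (adjoin one vertex per missing color class), so the UBT/Radon ceiling $k\le\lfloor D/2\rfloor$ falls short by $\ell-1$, and no naive completion closes the gap. The remedies you float (a colored UBT for the $\ell$-partite sphere, or the local-to-global estimates of Section~\ref{ssec:primes}) are in principle sound but amount to deploying a large part of the machinery that the paper develops precisely in order to prove the much harder Upper Bound Theorem for Minkowski sums --- using them here is circular in spirit and heavy in practice. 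The lesson is that nonexistence of overly neighborly Cayley polytopes is an affine (Radon) phenomenon, not an enumerative one, and should be attacked as such.
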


The first claim is a straightforward consequence of Radon's Theorem
once we phrase the UBPM in the language of
Cayley polytopes; cf.~Proposition~\ref{prp:radon_cayley}.  
It suffices to prove the assertion for $\ell=m$, the general case of
the assertion is a straightforward corollary.

As for Theorem~\ref{thm:M-neighborly}(ii), the construction is provided by Theorem~2.6 in \cite{MPP}. The constructions are based on cyclic
polytopes and generalize those of~\cite{Karavelas11,
Karavelas12}. 

Theorem~\ref{thm:M-neighborly} suggests the following notion: A family
$P_{[m]}$ of polytopes is called \Defn{Minkowski neighborly}  if $P_{[m]}$ is
Minkowski $(k,\ell)$-neighborly for all $\ell\le m$ and $\ell-1 \le k+\ell-1 <
\lfloor\frac{d+\ell-1}{2}\rfloor$.  As in the case of the UBT for spheres, the
face numbers of Minkowski neighborly polytopes only depend on $m$, $d$ and
$f_0(P_{[m]})$.

\begin{prp}\label{prop:M-neigh-fvector}
If $P_{[m]},P_{[m]}^\prime$ are two
Minkowski neighborly families of $m$ simplicial $d$-polytopes with
$f_0(P_{[m]}) = f_0(P^\prime_{[m]})$, then
$
    f_k(|P_{[m]}|) \ = \ f_k(|P^\prime_{[m]}|)
$
for all $0 \le k \le d$.
\end{prp}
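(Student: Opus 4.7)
First I would invoke the standard perturbation argument of Fukuda--Weibel~\cite[Theorem~1]{FukudaWeibel10}, under which face numbers of Minkowski sums are monotone and stabilize in relatively general position. I may therefore assume both $P_{[m]}$ and $P_{[m]}'$ are in relatively general position, so that every face $F$ of $|P_{[m]}|$ decomposes uniquely as $F = F_1 + \cdots + F_m$ with $F_i$ a non-empty face of the simplicial polytope $P_i$ and $\dim F = \sum_i \dim F_i$. I would then partition the $k$-faces by their \emph{type} $J = \{i : \dim F_i > 0\} \subseteq [m]$ and write $\ell = |J|$, reducing the claim to showing that each ``type-$J$'' count $f_k^J(|P_{[m]}|)$ depends only on $\n$, $d$, $k$, and $\ell$.

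For fixed $J$, a type-$J$ $k$-face corresponds to a choice of vertex subsets $S_j \subseteq \V(P_j)$ of sizes $\alpha_j \ge 2$ for $j \in J$ (with $\sum_{j \in J}(\alpha_j - 1) = k$) together with a single-vertex choice in each $P_i$, $i \notin J$, subject to $\sum_{j \in J}\conv(S_j)$ being an actual face of $|P_J|$. The Minkowski $(k+1,\ell)$-neighborly hypothesis (Definition~\ref{dfn:M-neighborly}) renders this last requirement automatic whenever $k+\ell \le \lfloor (d+\ell-1)/2 \rfloor$, so in that ``first-half'' range one obtains
\[
f_k^J(|P_{[m]}|) \ = \ \Bigl(\prod_{i \notin J} n_i\Bigr) \sum_{\substack{\alpha \in \Z^J_{\ge 2}\\ \sum_j \alpha_j = k+\ell}} \prod_{j \in J} \binom{n_j}{\alpha_j},
\]
which visibly depends only on $\n$, $k$, and $\ell$ and hence coincides with the corresponding count for $P_{[m]}'$.

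For $k$ outside this first-half range (for the given $J$), I would pass to the Cayley polytope $\Cay(P_J)$ of Subsection~\ref{ssec:cayley}: in relatively general position $\partial \Cay(P_J)$ is a simplicial $(d+\ell-2)$-sphere whose stratified face structure encodes the type-$J$ face structure of $|P_J|$, and Minkowski neighborliness of $P_J$ translates into a neighborliness property of the associated Cayley complex. The Dehn--Sommerville-type relations for Cayley sums to be developed in Section~\ref{ssec:DSM} will then express the remaining $f_k^J$ as a linear combination of first-half entries with coefficients depending only on $d$ and $\ell$, and applying this to every subfamily $P_J$ and summing over $J$ yields the equality of $f$-vectors. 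The hard part will be formulating and proving the Cayley-sphere Dehn--Sommerville relations in a form strong enough to close the loop between the two halves of the relevant $h$-vector---this parallels the classical argument for simplicial spheres but must be adapted to the stratified structure imposed by the summand decomposition, which is precisely the agenda of the sections that follow.
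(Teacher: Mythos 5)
Your overall plan (reduce to relatively general position, use Minkowski neighborliness for the small face numbers, and close the gap with a Dehn--Sommerville--type relation on Cayley complexes) is structurally the same as the paper's, which derives the claim from Corollary~\ref{cor:minkh} together with the Dehn--Sommerville machinery of Section~\ref{ssec:DSM} (Lemma~\ref{lem:recursive}, Corollary~\ref{cor:lh_hh}) applied to the relative Cayley complex $\mr{T}_{[m]}^\circ$. However, your refinement of the face count by \emph{type} $J$ introduces two genuine problems.

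First, the factorization $f_k^J=\bigl(\prod_{i\notin J}n_i\bigr)\sum_\alpha\prod_j\binom{n_j}{\alpha_j}$ cannot be justified by the $(k+1,\ell)$-neighborly hypothesis. That hypothesis only asserts that $\sum_{j\in J}\conv(S_j)$ is a face of $|P_J|$; it says nothing about whether $\sum_{j\in J}\conv(S_j)+\sum_{i\notin J}v_i$ is a face of $|P_{[m]}|$. Indeed, for an arbitrary face $F_J$ of $|P_J|$, the set of vertex tuples $(v_i)_{i\notin J}$ extending it to a face of $|P_{[m]}|$ is governed by where the normal cone of $F_J$ meets the (generically transverse) normal fans of $P_i$, $i\notin J$, and is typically not all of $\prod_{i\notin J}\V(P_i)$. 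The correct condition making every choice work is that $\Cay(\conv S_1,\dots,\conv S_m)$ is a face of $\Cay(P_{[m]})$, which is the Minkowski $(k',m)$-neighborly condition with $k'+m-1\ge k+m$; for a Minkowski neighborly family this holds only when $k+m\le\lfloor\frac{d+m-1}{2}\rfloor$, a strictly smaller range than the $k+\ell\le\lfloor\frac{d+\ell-1}{2}\rfloor$ you state when $\ell<m$. For example with $d=10$, $m=3$, $J=\{1\}$ your range allows $k=4$, but the $(k',3)$-neighborliness needed to make the choices free runs out already past $k=2$.

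Second, the Dehn--Sommerville step as stated does not address the right quantity. The Cayley complexes $\mr{T}_S^\circ\subseteq\Cay(P_{[m]})$ stratify by the \emph{support} $S=\{i:F_i\ne\emptyset\}$, whereas your type $J=\{i:\dim F_i>0\}$ is a finer stratification of the single top stratum $\mr{T}_{[m]}^\circ$ (all $F_i$ nonempty). Faces of $\Cay(P_J)$ are faces of $\Cay(P_{[m]})$ with support contained in $J$; they are not the type-$J$ faces of $|P_{[m]}|$, which have full support $[m]$ and merely have their positive-dimensional part concentrated on $J$. Consequently the Dehn--Sommerville relations for $\partial\Cay(P_J)$ relate the face numbers of $|P_J|$, not the type-$J$ face numbers $f_k^J(|P_{[m]}|)$, and cannot by themselves close the loop you need. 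The paper sidesteps this by not decomposing by type at all: it controls the $h$-vector of $\mr{T}_{[m]}^\circ$ directly (Proposition~\ref{prp:CP2}(i), Corollary~\ref{cor:minkh}), and Lemma~\ref{lem:recursive}/Corollary~\ref{cor:lh_hh} express the large $h$-entries in terms of small $\wt h$-entries of the support strata $\mr{T}_S^\circ$. If you want to carry your type decomposition further you would need a Dehn--Sommerville-type identity among the $f_k^J$ themselves, which is not what the sphere $\partial\Cay(P_J)$ supplies.
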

    
This result will be a simple consequence of the Dehn--Sommerville relations
for Cayley complexes developed in Section~\ref{ssec:DSM}; cf.\ Corollary
\ref{cor:lh_hh}. Unless $\sum_i f_0(P_i)\le d+m$, it is not true that Minkowski neighborly families of simplicial polytopes in relative general position have combinatorially equivalent Minkowski sums, so that the combinatorial types of such Minkowski sums remain to be understood; instead, Proposition~\ref{prop:M-neigh-fvector} allows us to study the face numbers.

With the help of Proposition~\ref{prop:M-neigh-fvector}, we
define $\mr{nb}_k(d,m,\n)  := f_k(|\mr{Nb}_{[m]}|)$ for $m,d \ge 1$ and $\n \in \Z^m_{\ge d+1}$,
where $\mr{Nb}_{[m]}$ is any Minkowski neighborly family of $m$ simplicial
$d$-polytopes in $\R^d$ with $\n=f_0(\mr{Nb}_{[m]})$. 

\begin{thm}[Upper Bound Theorem for Minkowski sums]\label{thm:UBTMS}
    Let $m,d \ge 1$ and $\n \in \Z^m_{\ge d+1}$. If $P_{[m]} = (P_1,\dots,P_m)$
    is a pure family of $m$ polytopes in $\R^d$ with $f_0(P_{[m]})=\n$, then
    \[
        f_k(|P_{[m]}|) \ \le \ \mr{nb}_k(d,m,\n).
    \]
    for all $k = 0, \dots, d-1$. Moreover, the family $P_{[m]}$ is Minkowski
    neighborly if and only if equality holds for some $k_0,\ k_0+1 \ge\frac{d+2m-2}{2}$.
\end{thm}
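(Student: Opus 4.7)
The plan is to translate the Minkowski upper bound problem into one in relative Stanley--Reisner theory via the Cayley construction and then apply the machinery of Sections~\ref{sec:SchenzelFormula}--\ref{sec:loct}. By the standard perturbation of \cite[Theorem~1]{FukudaWeibel10}, I may assume each $P_i$ is simplicial and that the family $P_{[m]}$ is in relatively general position, since perturbation only increases face numbers. I then pass to the Cayley polytope $C=\Cay(P_{[m]})\subset \R^{d+m-1}$ and the relative Cayley complex $\RC=(\partial C,\Gamma)$ to be developed in Section~\ref{ssec:cayley}, where $\Gamma$ is the subcomplex of faces of $\partial C$ that fail to involve every summand. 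The faces of $|P_{[m]}|$ biject with faces of $\RC$, and by Observation~\ref{obs:f_via_h} it suffices to bound each $h_k(\RC)$.

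For the first half $0\le k\le \lfloor(d+m-1)/2\rfloor$ the crucial observation is that $\Gamma$ decomposes as a disjoint union of $m$ subcomplexes $\Gamma_i\cong \partial P_i$ sitting in $\partial C$, and by purity of the family these $\Gamma_i$ are pairwise disjoint codimension-one Cohen--Macaulay subcomplexes of the simplicial $(d+m-2)$-sphere $\partial C$, forming a full arrangement $\scr{G}=\{\Gamma_1,\dots,\Gamma_m\}$. Theorem~\ref{thm:arrCM} then yields the desired upper bound on $h_k(\RC)$, and its equality clause translates, through the correspondence between non-faces of $\partial C$ and forbidden joins of vertex subsets across summands, into the Minkowski $(k-\ell+1,\ell)$-neighborliness condition of Definition~\ref{dfn:M-neighborly} for every $\ell\le m$.

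For $k$ past the midpoint I will establish Cayley-specific Dehn--Sommerville-type relations in Section~\ref{ssec:DSM}, verifying that $\RC$ is weakly Eulerian and specializing the reciprocity formula of Lemma~\ref{lem:DS}. The main obstacle, flagged in the outline, is that this formula does \emph{not} express $h_k(\RC)$ in the second half as a positive combination of first-half $h$-numbers, so the bound of the previous paragraph is insufficient on its own. To close the gap I plan to refine the estimate on the algebraic component $\LT_k(\RC)$ using the local-to-global inequalities of Theorem~\ref{thm:loc} combined with the reverse isoperimetric estimate of Theorem~\ref{thm:rii}; these provide precisely the additional room needed to absorb the negative contributions introduced by the Dehn--Sommerville correction terms.

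Finally, the equality characterization uses the two-consecutive-index hypothesis $k_0,k_0+1\ge \frac{d+2m-2}{2}$: this threshold lies at or beyond the midpoint of the $h$-vector of $\RC$, so the Dehn--Sommerville identities force equality at the corresponding dual indices in the first half. The equality clause of Theorem~\ref{thm:arrCM} then forces the Minkowski-neighborly non-face condition, and Proposition~\ref{prop:M-neigh-fvector} delivers $f_k(|P_{[m]}|)=\mathrm{nb}_k(d,m,\n)$ for all $k$, completing the proof.
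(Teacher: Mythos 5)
Your plan correctly identifies the Cayley reduction and the broad architecture (bound the first half, use Dehn--Sommerville for the second half, then characterize equality), but there is a concrete error that breaks the first step for $m\ge 3$. You claim $\Gamma$ decomposes as a disjoint union of $m$ copies of $\partial P_i$, and then invoke Theorem~\ref{thm:arrCM} (which assumes a full arrangement of \emph{pairwise disjoint} codimension-one CM subcomplexes). This decomposition is correct only when $m=2$. For $m\ge 3$ the boundary $\cfs{T}$ of the Cayley complex $\mr{T}_{[m]}$ is covered by the $m$ subcomplexes $\mr{T}_{[m]\setminus\{j\}}$, each itself a Cayley complex on $m-1$ summands of codimension one in $\mr{T}_{[m]}$, and these overlap pairwise in the smaller Cayley complexes $\mr{T}_{[m]\setminus\{i,j\}}$ (see Proposition~\ref{prp:CP2}(ii) and Figure~\ref{fig:Cayley_boundary}). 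So the hypotheses of Theorem~\ref{thm:arrCM} are violated. The paper instead handles the first half via Corollary~\ref{cor:minkh}, a change-of-presentation argument comparing to the coarse nerve $\mathfrak{N} = \bigcup_{S\subsetneq[m]} \bigast_{i\in S}\Delta_{\V(P_i\times e_i)}$, whose intersection poset is the face poset of a simplex; alternatively via Theorem~\ref{thm:minkit}, a local-to-global estimate coming from the universal Cohen--Macaulayness of $(\mr{T}_{[m]},\scr{T})$ (Proposition~\ref{prp:CP2}(iv), Corollary~\ref{cor:loc}(2)). Either route requires the full nerve machinery, not the disjoint case.

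Your sketch of the second half is also significantly lighter than what the argument actually requires. You flag correctly that the Dehn--Sommerville formula does not deliver second-half $h_k$ as positive combinations of first-half ones, and that some negative terms must be absorbed. But the actual resolution in the paper is Lemma~\ref{lem:recursive}: a careful re-expansion of $h_{k+m-1}(\mr{T}_{[m]})$ into paired blocks $\widetilde{h}(\mr{T}_S^\circ) - c\sum_{R\prec S}\widetilde{h}(\mr{T}_R^\circ)$ with controlled coefficients, so that each block has the form estimated by Lemma~\ref{lem:cenm} (which is itself an application of Theorem~\ref{thm:minkit} plus the recursive identity for $\MUBT$). Gesturing at Theorem~\ref{thm:loc} and Theorem~\ref{thm:rii} does not close this gap; the combinatorial bookkeeping of Lemma~\ref{lem:recursive} is what makes the estimate go through. (Remark~\ref{rem:bb} does note that Theorem~\ref{thm:rii} can be made the engine of an alternative proof, but then the paired-block decomposition must be adapted accordingly, which your proposal does not attempt.) Before anything else, you should replace the disjoint-union claim by the correct covering of $\cfs{T}$ by the subfamily Cayley complexes and redo the first-half bound along the lines of Corollary~\ref{cor:minkh} or Theorem~\ref{thm:minkit}.
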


Unfortunately, closed formulas for $\mr{nb}_k(d,m,\n)$ are rather involved,
even for small $k$. As in the case of the UBT
for polytopes/spheres, upper bounds are best expressed in terms of
$h$-numbers. We introduce $h$-numbers for simplicial families in relatively
general position in the next section and give a rigorous treatment in
Section~\ref{ssec:cayley}.

\subsection{Minkowski sums of two polytopes} \label{ssec:two}
In this section we illustrate the
general proof strategy along the case of two summands. Let $P_{[2]} = (P_1,P_2)$
be two simplicial $d$-dimensional polytopes in $\R^d$ in relatively general
position with $f_0(P_{[2]}) = (n_1,n_2)$.  We seek to find the maximum possible
$f_k(|P_{[2]}|) = f_k(P_1+P_2)$ for any fixed choice of $k$. Let us define the
\Defn{Cayley polytope} of $P_{[2]}$ as the $(d+1)$-dimensional polytope
\[
    C \ = \ \Cay(P_1,P_2) \ := \ \conv( P_1 \times \{0\} \cup P_2 \times \{1\} )
    \ \subseteq \ \R^d \times \R
\]
as sketched in Figure~\ref{fig:Cayley}.
\begin{figure}[htb]
    \includegraphics[width=6cm]{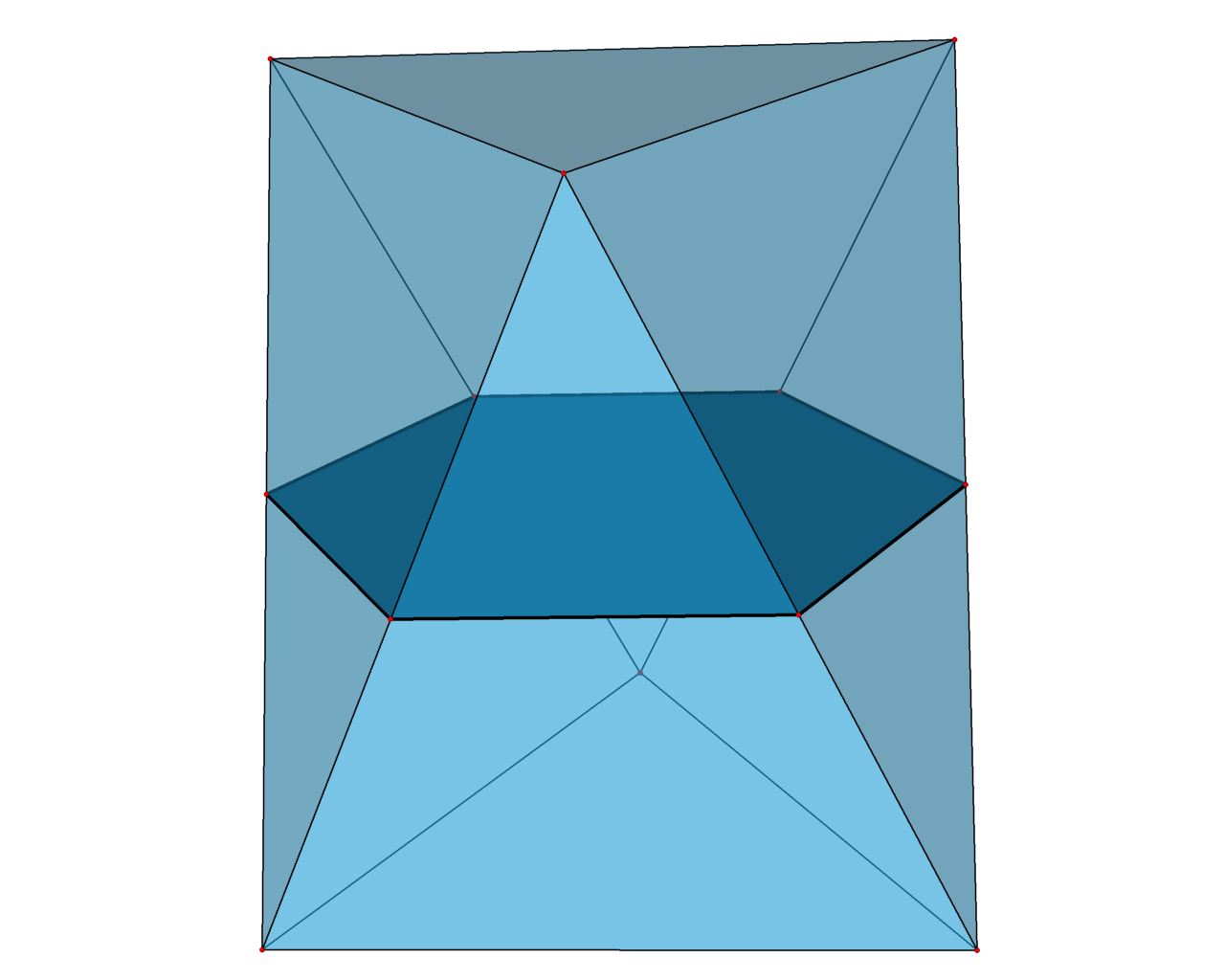}
\caption{Cayley polytope of two triangles and the middle section.}
\label{fig:Cayley}
\end{figure}

The Cayley polytope has the favorable property that for $L = \R^d \times
\{\frac{1}{2}\}$ 
\[
    C \cap L \ \cong \ P_1 + P_2
\]
where the isomorphism is affine.  As the intersection of $L$ with faces of $C$
is transverse, we infer
\[
    f_k(P_1+P_2) \ = \ f_k(C \cap L) \ = \ f_{k+1}(C) - f_{k+1}(P_1) -
    f_{k+1}(P_2)
\]
for $k = 0,\dots,d-1$.  By assumption on $P_{[2]}$, the only proper faces of
$C$ which are possibly not simplices are $P_1$ and $P_2$ and we define $\Delta
:= \partial C {\setminus} \{P_1,P_2\}$ as the simplicial complex spanned by all
proper faces different from $P_1$ and $P_2$. Observe that the boundary complexes
$\partial P_1, \partial P_2$ are disjoint subcomplexes of $\Delta$ and we define
$\Gamma := \partial P_1 \cup \partial P_2$.  For the relative simplicial complex
$\RC = (\Delta,\Gamma)$, we record
\[
    f_k(P_1 + P_2) \ = \ f_{k+1}(\RC) \ = \ f_{k+1}(\Delta) - f_{k+1}(\Gamma)
\]
for all $k=0,\dots,d-1$. For later perspective, $\Delta$ is called the \emph{Cayley complex}, and $(\Delta,\Gamma)$ is the relative Cayley complex.

We can now appeal to Observation~\ref{obs:f_via_h}
to reduce the task to bounding $h_k(\RC)$ instead. Hence, we define
    $h_k(P_{[2]})  :=  h_k(\Delta,\Gamma)$
for $i=0,\dots,d$. This setup now fits into the scheme of a relative upper
bound problem. Using the developed techniques of relative Stanley--Reisner
theory we can resolve this upper bound problem which recovers the the main
theorem of Karavelas and Tzanaki~\cite[Theorem~18]{Karavelas11}.

\begin{thm}[UBT for two summands]\label{thm:twosummands}
    Let $P_{[2]} = (P_1,P_2)$ be two simplicial $d$-polytopes in relatively
    general position with $n_1$ and $n_2$ vertices, respectively. Then
    \begin{align*}
        h_{k+1}(P_{[2]}) \ &\le \ 
            \binom{n_1+n_2 - d + k - 1}{k+1} -
            \binom{n_1 - d + k - 1}{k+1} - 
            \binom{n_2 - d + k - 1}{k+1}+ (-1)^{k+1}\binom{d+1}{k+1}
\\
            \intertext{for $k+1 \le \lfloor\frac{d+1}{2}\rfloor$ and }
    h_{k+1}(P_{[2]}) \ &\le \ \binom{n_1+n_2 -k - 2}{d-k}+ (-1)^{k+1}\binom{d+1}{k+1}             \end{align*}
            for $k+1 > \lfloor\frac{d+1}{2}\rfloor$. Equality holds for all $k$ simultaneously if and only if $P_{[2]}$ is Minkowski neighborly.
\end{thm}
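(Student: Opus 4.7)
The plan is to apply the relative Stanley--Reisner machinery to the relative Cayley complex $\RC=(\Delta,\Gamma)$, using the setup's identity $h_{k+1}(P_{[2]}) = h_{k+1}(\RC)$ to reduce to bounding the $h$-vector of $\RC$. The first observation is that $\RC$ is Buchsbaum: $\Delta$ is a triangulated $d$-manifold with boundary $\Gamma = \partial P_1 \sqcup \partial P_2$, so all vertex links are $(d-1)$-spheres or $(d-1)$-balls, and Theorem~\ref{thm:topb} applies. The relative Schenzel formula (Theorem~\ref{thm:RS}) then decomposes $h_{k+1}(\RC) = \LT_{k+1}(\RC) + \GT_{k+1}(\RC)$. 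Since $\Delta \simeq S^{d-1}$, Lefschetz duality gives $H_i(\Delta,\Gamma) \cong H_{d-i}(\Delta)$, so $\rBetti_1(\Delta,\Gamma) = \rBetti_d(\Delta,\Gamma) = 1$ and all other Betti numbers vanish; substitution yields $\GT_{k+1}(\RC) = (-1)^{k+1}\binom{d+1}{k+1}$ in the range $2 \le k+1 \le d$.

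For the first inequality, I would take the full arrangement $\scr{G} = \{\partial P_1, \partial P_2\}$ of disjoint Cohen--Macaulay simplicial $(d-1)$-spheres and apply Theorem~\ref{thm:upb}, giving $\LT_{k+1}(\RC) \le \dim_\k(\ch/\Theta\ch)_{k+1}$, where $\ch$ is the Stanley--Reisner ideal of the coarse nerve $\mathfrak{N} = \K_{\V(P_1)} \cup \K_{\V(P_2)}$. After verifying that $(\K_n^{(d+1)},\mathfrak{N}^{(d+1)})$ is Buchsbaum via local Cohen--Macaulayness of links (Corollary~\ref{cor:cmincm} together with Theorem~\ref{thm:topb}), Theorem~\ref{thm:RSI} and Proposition~\ref{prp:bi} compute this quantity explicitly. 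The crucial point is that the Buchsbaum correction coming from $\rBetti_0(\mathfrak{N}) = 1$ precisely cancels the $(1-t)^{d+1}$ contribution arising from the Hilbert-series expansion of $\Hilb(\ch,t)$; hence $\LT_{k+1}(\RC) + \GT_{k+1}(\RC)$ produces exactly the stated bound with its single correction term $(-1)^{k+1}\binom{d+1}{k+1}$, uniformly in $k$.

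For the upper half, I would invoke the relative Dehn--Sommerville relations (Lemma~\ref{lem:DS}): $\RC$ is weakly Eulerian since links of nonempty faces are either Eulerian spheres (for interior faces) or Eulerian ball/sphere pairs (for boundary faces), and a short computation with $\chi(\Delta,\Gamma) = -1+(-1)^d$ yields
\[
h_{k+1}(\RC) \ = \ h_{d-k}(\Delta) + (-1)^{k+1}\binom{d+1}{k+1}.
\]
Since $\Delta \simeq S^{d-1}$ has reduced homology concentrated in top degree $d-1$, Schenzel applied to the Buchsbaum complex $\Delta$ shows that its topological correction vanishes for $h_{d-k}(\Delta)$ with $d-k \le d$, so $h_{d-k}(\Delta) = \LT_{d-k}(\Delta)$. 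Applying Lemma~\ref{lem:inc} to the surjection $\k[x_1,\ldots,x_{n_1+n_2}] \twoheadrightarrow \k[\Delta]$ then yields $h_{d-k}(\Delta) \le \binom{n_1+n_2-k-2}{d-k}$, which is the second stated inequality after reassembly.

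For the equality case, Minkowski neighborliness translates via the Cayley correspondence into the condition that every low-dimensional non-face of $\Delta$ is supported on a single $\V(P_i)$: Minkowski $(k,2)$-neighborliness rules out mixed non-faces, while neighborliness of each $P_i$ (the case $\ell=1$) rules out monochromatic ones. This matches exactly condition (iv) of Theorem~\ref{thm:upb} and forces equality in the first regime, provided the $\Theta$-isomorphism condition on $\ch \twoheadrightarrow \FM[\RC]$ is verified; this is done via Theorem~\ref{thm:reduction_to_topology}, using the acyclicity of the pairs $(\Delta,\partial P_i)$ that follows from the cobordism structure $\Delta \simeq S^{d-1}\times I$ with one boundary component collapsed. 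Equality in the second regime follows from the equality case of Lemma~\ref{lem:UBT-CM} applied to $\Delta$, which likewise reduces to the same non-face condition. The converse reads off Minkowski neighborliness from the algebraic equalities. The principal technical obstacle is this coordinated verification of the magnificence / $\Theta$-isomorphism conditions across both regimes and matching them degree-by-degree with the Minkowski neighborliness constraints, especially at the transition $k+1 \approx \lfloor(d+1)/2\rfloor$.
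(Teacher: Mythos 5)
Your proposal follows essentially the same route as the paper: the first inequality is the two-summand instance of Theorem~\ref{thm:arrCM} (which you re-derive from the same ingredients — nerve ideal, relative Schenzel formula, Theorems~\ref{thm:upb} and~\ref{thm:RSI}, and Proposition~\ref{prp:bi}), the second comes from the relative Dehn--Sommerville relations combined with the Cohen--Macaulay bound on $\Delta$, and the equality characterization goes through the $\Theta$-isomorphism / nerve-lemma machinery (the paper cites Theorem~\ref{thm:magnificent} and Example~\ref{ex:cm}; you invoke Theorem~\ref{thm:reduction_to_topology} directly). One small correction: in the first regime the term $(-1)^{k+1}\binom{d+1}{k+1}$ is the $p=\emptyset$ contribution to the M\"obius-function sum of Proposition~\ref{prp:bi}, not the Schenzel correction $\GT_{k+1}(\RC)$ itself — the latter is zero for $k+1\le 2$ and only for $k+1\ge 3$ does it numerically equal the nerve-ideal correction term in Theorem~\ref{thm:RSI} with which it cancels, leaving the pure M\"obius sum.
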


\begin{proof}
    The complex $\Delta' := \Delta \cup P_1 \cup P_2$ is a
    $d$-sphere and hence Cohen--Macaulay. In particular $\cfs{G} = \{
    \partial P_1, \partial P_2\}$ is, up to excision, a full arrangement of disjoint codimension one CM
    subcomplexes of $\Delta'$. For the first inequality is provided by
    Theorem~\ref{thm:arrCM}.
    
    For the second inequality, we use the Dehn--Sommerville relations for
    relative complexes (Lemma~\ref{lem:DS}) together with the fact that
    $(\Delta,\cfs{G})$ is weakly Eulerian.  Finally, we observe that the full
    arrangement $\cfs{G}$ is $d$-magnificent in the sense of
    Theorem~\ref{thm:magnificent} (see example Example~\ref{ex:cm}).  It now
    follows with Theorems~\ref{thm:upb} and~\ref{thm:magnificent} that
    tightness in the inequalities implies the desired neighborliness.
\end{proof}

\subsection{Cayley polytopes and Cayley complexes} \label{ssec:cayley}
The geometric construction of the previous section is easily generalized to
higher dimensions.  For a family $P_{[m]} = (P_1,\dots,P_m)$ of $m$
polytopes in $\R^d$, we define the \Defn{Cayley polytope} as 
\[
    \Cay(P_{[m]}) \ := \ \conv\Biggl(\,\bigcup_{i=1}^m P_i \times
    e_{i}\,\Biggr) \ \subseteq \ \R^d \times \R^m.
\]
The coordinate projection $\R^d \times \R^m \rightarrow \R^m$ restricts to a
linear projection 
\begin{equation}\label{eq:projection}
    \pi : \Cay(P_{[m]}) \ \longrightarrow \ \K_{m-1} \ = \ \conv\{e_1,\dots,e_m\}
\end{equation}
of the Cayley polytope to the (geometric) standard $(m-1)$-simplex. It is easy
to see that for $\lambda = (\lambda_1,\dots,\lambda_m) \in \Delta_{m-1}$, we
have
\begin{equation}\label{eq:fib}
 \pi^{-1}(\lambda) \ \cong \ \lambda_1P_1 + \cdots + \lambda_m P_m.
\end{equation}   
In particular, for any $\lambda \in \relint \Delta_{m-1}$, $\pi^{-1}(\lambda)$
is combinatorially equivalent (and even \emph{normally} equivalent, cf.\ 
\cite[Section~7]{Z}) to $P_1+\cdots+P_m$.  Let us denote by $\Delta_J
= \conv\{e_i : i \in J\}$ the faces of $\Delta_{m-1}$ for the various subsets
$J \subseteq [m]$.  Cayley polytopes are an indispensable tool in the study of
Minkowski sums, cf.\ \cite{dLRS}. For nonempty faces $F_i \subseteq P_i$ for
$i=1,\dots,m$
\[
    F_1+ \cdots + F_m \subseteq |P_{[m]}| \text{ is a face} \quad
    \Longleftrightarrow \quad \Cay(F_1,\dots,F_m) \subseteq \Cay(P_1,\dots,P_m)
    \text{ is a face};
\]
see~\cite[Observation 9.2.1]{dLRS}.
Together with the next result, this
correspondence yields a simple proof of Theorem~\ref{thm:M-neighborly}(i).

\begin{prp}\label{prp:radon_cayley}
    Let $P_{[m]}$ be a family of $m$ polytopes in $\R^d$.  If $\sum_{i=1}^m
    f_0(P_i)>d+m$, then there exist a choice of vertices  $\emptyset \neq S_i
    \subseteq \V(P_i)$ for $i=1,\dots,m$ with
    \[
        \sum_{i=1}^{m} |S_i|  \ \le \ \left\lfloor \frac{d+m+1}{2}
        \right\rfloor
    \]
    such that $\Cay(\conv(S_1),\dots,\conv(S_m))$ is not a face of $\Cay(P_{[m]})$.
\end{prp}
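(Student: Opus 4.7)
My plan is to apply Radon's theorem to the vertex set of the Cayley polytope $C:=\Cay(P_{[m]})$. Since $C$ lies in the affine hyperplane $\{(x,y)\in\R^d\times\R^m : y_1+\cdots+y_m=1\}$, its affine hull has dimension at most $d+m-1$; its vertex set decomposes as $\V(C)=\bigsqcup_{i=1}^m V_i$ with $V_i:=\V(P_i)\times\{e_i\}$, and by hypothesis $|\V(C)|=\sum f_0(P_i)\ge d+m+1$. I first choose $V\subseteq\V(C)$ of size $d+m+1$ meeting every fiber $V_i$ (possible since $|\V(C)|\ge d+m+1\ge m$). Radon's theorem in the ambient $(d+m-1)$-dimensional affine space then produces a minimal affine dependence whose signed support partitions a subset $V^*=A\sqcup B$ with $\conv(A)\cap\conv(B)\neq\emptyset$ and all weights strictly positive; taking $A$ to be the smaller part yields $|A|\le\lfloor(d+m+1)/2\rfloor$.

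The crucial observation is that $A\cap V_i$ is nonempty if and only if $B\cap V_i$ is: projecting the affine dependence under $\R^d\times\R^m\to\R^m$ and using linear independence of $\{e_1,\ldots,e_m\}$ forces $\sum_{v\in V^*\cap V_i}\lambda_v=0$ for each $i$, so both signs must appear in every fiber that is touched. Setting $I^*:=\{i:A\cap V_i\neq\emptyset\}$, I define $S_i$ to be the image of $A\cap V_i$ under projection to $\V(P_i)$ for $i\in I^*$, and augment with a single arbitrary vertex $S_i=\{w_i\}\subseteq\V(P_i)$ for $i\notin I^*$.

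To verify that $\Cay(\conv(S_1),\ldots,\conv(S_m))$ is not a face of $C$ I would argue by contradiction: were it a face $F$, then $A\subseteq\V(F)=\bigsqcup_i S_i\times\{e_i\}$ (using $A\cap V_i\subseteq S_i\times\{e_i\}$ for $i\in I^*$ and $A\cap V_i=\emptyset$ otherwise), so $\conv(A)\subseteq F$, and any point $p\in\conv(A)\cap\conv(B)$ lies in $F$; writing $p$ as a strictly positive combination of vertices in $B$, the face property of $F$ forces $B\subseteq\V(F)$, which yields $B\cap V_i\subseteq A\cap V_i$ for each $i\in I^*$, contradicting $A\cap B=\emptyset$.

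The main obstacle is controlling $\sum_i|S_i|=|A|+(m-|I^*|)$ in the degenerate case $I^*\subsetneq[m]$, where augmentation threatens to inflate the sum beyond $\lfloor(d+m+1)/2\rfloor$. The key estimate is that the support $V^*$ must then lie in the affine span of $\bigcup_{i\in I^*}V_i$, which has dimension at most $d_{I^*}+|I^*|-1$, where $d_{I^*}$ denotes the dimension of the affine hull of $\bigcup_{i\in I^*}P_i$ in $\R^d$. This tightens the Radon bound to $|A|\le\lfloor(d_{I^*}+|I^*|+1)/2\rfloor$, and the desired inequality reduces to $d_{I^*}+(m-|I^*|)\le d$; arranging this by a careful choice of $V$ (possibly combined with induction on $m$ or a perturbation to generic position that excludes configurations in which the missing fibers contribute ``extra'' directions not captured by the active ones) is the hard part of the argument.
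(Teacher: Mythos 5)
Your approach is the same as the paper's---apply Radon's theorem inside the $(d+m-1)$-dimensional Cayley polytope and take the smaller part of the Radon partition. The paper's own proof is a one-liner: it chooses $M$ of size $d+m+1$, takes a Radon partition $M=M_1\sqcup M_2$, notes $|M_1|\le\lfloor(d+m+1)/2\rfloor$, and declares $\conv(M_1)$ ``the desired non-face'' without ever addressing whether $M_1$ meets every fiber. So your attempt to be more careful is well motivated, and your projection observation (that $\sum_{v\in V^*\cap V_i}\lambda_v=0$ for each $i$, forcing $A$ and $B$ to touch exactly the same set of fibers) is correct and genuinely clarifying; it is not stated in the paper even though it is implicitly what makes the argument tick.

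However, the gap you flag at the end is not repairable by the augmentation strategy. In the case $I^*\subsetneq[m]$, your own projection observation shows the Radon circuit lives in $\bigcup_{i\in I^*}V_i$, an affine subspace of dimension at most $d+|I^*|-1$ (with equality when the family is pure), so a minimal circuit has at most $d+|I^*|+1$ elements and $|A|\le\lfloor(d+|I^*|+1)/2\rfloor$. This is indeed a tighter bound, but adding $m-|I^*|$ singleton fibers gives $\sum_i|S_i|\le\lfloor(d+|I^*|+1)/2\rfloor+(m-|I^*|)$, and the function $j\mapsto\lfloor(d+j+1)/2\rfloor-j$ is non-increasing, so for $|I^*|<m$ this exceeds $\lfloor(d+m+1)/2\rfloor$ rather than staying below it. The inequality you would need, $d_{I^*}+(m-|I^*|)\le d$, fails outright in the pure case ($d_{I^*}=d$ forces $|I^*|=m$), so no clever choice of $V$ or perturbation can rescue the augmentation. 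Worse, the target itself is suspect: with all $S_i$ nonempty one has $\sum|S_i|\ge m$, so the stated bound $\lfloor(d+m+1)/2\rfloor$ is not even attainable once $m>d+1$, and the proposition cannot be literally correct in that range.

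The actual way out---and what the paper's terse proof is implicitly doing---is to \emph{not} augment. If $M_1$ touches only the fibers $I^*$, then $\conv(M_1)$ is a non-face of the sub-Cayley polytope $\Cay(P_{I^*})$ (faces of faces are faces), with the tight bound $|M_1|\le\lfloor(d+|I^*|+1)/2\rfloor$ that your observation furnishes. In the application (Theorem~\ref{thm:M-neighborly}(i), which quantifies over all $\ell\le m$), a non-face involving only $|I^*|$ summands is exactly what is needed to refute $(k,|I^*|)$-neighborliness. In other words, the correct takeaway is a slightly re-scoped statement (nonemptiness only for $i\in I^*$, with the $|I^*|$-dependent bound), not a way to force all $m$ sets to be nonempty. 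Once you drop the augmentation and record the conclusion this way, your proof is complete and in fact more rigorous than the one in the paper.
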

\begin{proof}
    Let $M$ be any choice of $d+m+1$ vertices of the $(d+m-1)$-dimensional
    Cayley polytope in $\R^{d+m-1}$.  By Radon's theorem, $M$ may be
    partitioned into two sets $M_1,\ M_2$, whose convex hulls intersect. Without loss of generality, we may assume that $M_1$ is the smaller of the two, so that $|M_1| \le \left\lfloor
    \frac{d+m+1}{2}\right\rfloor$. 
    Hence $\conv(M_1)$ is the desired non-face of $\Cay(P_{[m]})$.    
\end{proof}

The following simple proposition summarizes the most important properties of
the Cayley polytope. For proofs and more information see~\cite{dLRS}. An
illustration of the Cayley polytope for three summands is given in
Figure~\ref{fig:cayley_sketch}.
\begin{figure}[htb]
    \includegraphics[width=10cm]{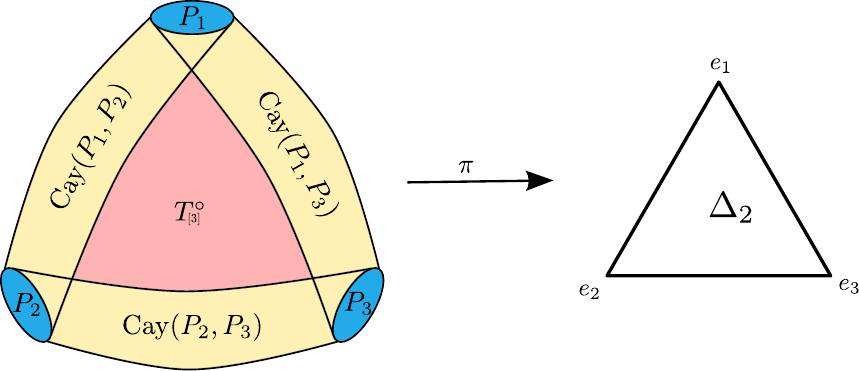}
    \caption{An illustration of the Cayley polytope for
    $P_{[3]}=(P_1,P_2,P_3)$ including the projection to the simplex.}
    \label{fig:cayley_sketch}
\end{figure}

\begin{prp}\label{prp:CP}
    Let $\Cay(P_{[m]})$ be the Cayley polytope associated to
    $P_{[m]}=(P_1,\dots,P_m)$, and let $\pi: \Cay(P_{[m]}) \rightarrow \K_{m-1}$ denote the projection of Cayley polytopes~\eqref{eq:projection}.
    \begin{enumerate}[\rm (i)]
        \item For $\lambda \in \relint \Delta_J$, $\pi^{-1}(\lambda)$ is
            combinatorially equivalent to $ \sum_{i \in J} P_i $.
        \item For any $J \subseteq [m]$ 
            \[
                \pi^{-1}(\Delta_J) \ \cong \ \Cay(P_J).
            \]
        \item If all polytopes $P_i$ are of the same dimension $d$, i.e., if $P_{[m]}$ is pure, then
            \[
                \dim \Cay(P_J) \ = \ d + |J| - 1
            \]
            for all $J \subseteq [m]$.
        \item If $P_{[m]}$ is a family of simplicial polytopes in relatively
            general position, then the only non-simplex faces of $\Cay(P_{[m]})$ are
            $\Cay(P_J)$ for all $\emptyset \neq J \subseteq [m]$.
    \end{enumerate}
\end{prp}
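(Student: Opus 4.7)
My plan is to dispatch (i) and (ii) directly from the definitions, to derive (iii) by a straightforward affine-hull computation on top of (ii), and to reduce (iv)---the only substantive item---to a face-by-face analysis via the linear functionals that expose them. The overall theme is that the projection $\pi$ cleanly separates height (the $\R^m$-factor, a simplex) from shape (the $\R^d$-factor, where the $P_i$ live and Minkowski additivity applies), so once one knows the dimension formula $\dim\Cay(F_J)=\dim\sum_{i\in J}F_i+(|J|-1)$ in full generality, every statement reduces to comparing dimensions and vertex counts.

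I would handle (i)--(iii) in order. For (i) I invoke equation~\eqref{eq:fib}: the fiber $\pi^{-1}(\lambda)$ is affinely isomorphic to $\sum_i\lambda_i P_i$, and for $\lambda\in\relint\Delta_J$ exactly the coordinates with $i\in J$ are positive, so positive rescaling (which preserves normal fans) gives combinatorial equivalence with $|P_J|$. For (ii), writing $\Delta_J=\{y\in\Delta_{m-1}:y_i=0\text{ for }i\notin J\}$, the inclusion $\Cay(P_J)\subseteq\pi^{-1}(\Delta_J)$ is obvious, and conversely any point of $\Cay(P_{[m]})$ expressed as a convex combination $\sum_k\mu_k(p_k,e_{i_k})$ whose image under $\pi$ lies in $\Delta_J$ forces $\mu_k=0$ whenever $i_k\notin J$, placing the point in $\Cay(P_J)$. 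For (iii), fix $i_0\in J$ and $p_{i_0}\in P_{i_0}$: the affine hull of $\Cay(P_J)$ through $(p_{i_0},e_{i_0})$ is generated by the linear parts of the various $P_i\times\{e_i\}$ (summing inside $\R^d$ to $\mathrm{lin}(\sum_{i\in J}P_i)=\R^d$ by purity) together with the linearly independent directions $(p_i-p_{i_0},e_i-e_{i_0})$ for $i\in J\setminus\{i_0\}$, for a total of $d+(|J|-1)$.

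Part (iv) is the main work, and I proceed by classifying faces via exposing functionals. A proper face $F$ of $\Cay(P_{[m]})$ is cut out by some $(v,w)\in\R^d\times\R^m$: letting $J\subseteq[m]$ consist of those $i$ maximizing $\max_{P_i}\langle v,\cdot\rangle+w_i$ and $F_v(P_i)\subseteq P_i$ denote the $v$-exposed face, one has $F=\Cay((F_v(P_i))_{i\in J})$. Applying the affine-hull computation of (iii) to this subfamily yields
\[
\dim F \;=\; \dim\textstyle\sum_{i\in J}F_v(P_i) + (|J|-1), \qquad f_0(F) \;=\; \textstyle\sum_{i\in J}f_0(F_v(P_i)),
\]
so combining $f_0(F_v(P_i))\ge\dim F_v(P_i)+1$ (equality iff $F_v(P_i)$ is a simplex) with Minkowski subadditivity $\dim\sum F_v(P_i)\le\sum\dim F_v(P_i)$, I conclude that $F$ is a simplex iff every $F_v(P_i)$ is a simplex \emph{and} $(F_v(P_i))_{i\in J}$ is in relatively general position. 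The latter is inherited from $P_{[m]}$: when $v\neq 0$, the entire tuple $(F_v(P_i))_{i\in [m]}$ forms a proper face of $|P_{[m]}|$ whose dimensional equality, combined with Minkowski subadditivity across the partition $J\sqcup([m]\setminus J)$, forces equality on the $J$-part. For the former, since each $P_i$ is simplicial the only non-simplex face of $P_i$ is $P_i$ itself; in the pure case $F_v(P_i)=P_i$ forces $v$ to be constant on the full-dimensional $P_i$, hence $v=0$, and then $F_v(P_j)=P_j$ for every $j\in J$, giving $F=\Cay(P_J)$. The main technical obstacle is the inheritance of relative general position by subfamilies---a standard but careful subadditive dimension argument---after which (iv) is essentially immediate.
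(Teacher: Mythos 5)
The paper does not prove Proposition~\ref{prp:CP}; it simply refers the reader to de Loera--Rambau--Santos~\cite{dLRS} for proofs (Observation~9.2.1 there, and the surrounding material on the Cayley trick). So there is no in-paper argument to compare against, and your self-contained proof is genuinely useful. Your treatment of (i)--(iii) is correct: (i) is exactly equation~\eqref{eq:fib} plus the fact that positive rescaling of summands preserves the normal fan of a Minkowski sum; (ii) is the elementary convex-combination support argument; (iii) is the affine-hull dimension count $\dim\Cay(P_J)=\dim\sum_{i\in J}P_i+(|J|-1)$, which collapses to $d+|J|-1$ under purity.

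Your proof of (iv) is also correct for pure families, and the one step worth highlighting is the propagation of relative general position to subfamilies: you correctly observe that for $v\neq 0$ the full sum $\sum_{i\in[m]}F_v(P_i)$ is a proper face of $|P_{[m]}|$, so the equality $\dim\sum_{[m]}F_v(P_i)=\sum_{[m]}\dim F_v(P_i)$ together with subadditivity across $J\sqcup([m]\setminus J)$ forces equality on the $J$-part. Combined with the vertex count $f_0(\Cay(F_J))=\sum_J f_0(F_i)$ and the dimension formula from (iii), this gives the clean simplex criterion. You also correctly note that $v\neq 0$ and $\dim P_i=d$ force $F_v(P_i)\subsetneq P_i$, hence a simplex by simpliciality. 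One remark: you restrict (iv) to the pure case, and this restriction is in fact necessary --- for nonpure families the statement of (iv) as printed can fail (take $P_1$ a planar square embedded in $\R^3$ and $P_2$ a generic $3$-simplex; then $\Cay(P_1,v)$ for a vertex $v$ of $P_2$ is a non-simplex face not of the form $\Cay(P_J)$). The paper's framing of (iii) (``if $P_{[m]}$ is pure'') might suggest (iv) is meant unconditionally, but Section~\ref{sec:UBTMS} is explicitly about pure collections and the nonpure case (where the non-full-dimensional summands are necessarily simplices) is handled separately in Section~\ref{sec:nonpure}, so your scoping is the right one; it would just be worth saying explicitly that you are invoking purity in (iv) and why it is needed.
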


The proposition suggests that the boundary of the Cayley polytope
$\Cay(P_{[m]})$ is stratified along the facial structure of the
$(m-1)$-simplex. We define the \Defn{Cayley complex} $\mr{T}_{[m]} =
\mr{T}(P_{[m]})$ as the closure of $\pi^{-1}(\relint \Delta_{m-1}) \cap \partial
\Cay(P_{[m]})$. 

Then a family $P_{[m]}$ of simplicial
polytopes is in relatively general position if and only if $\mr{T}_{[m]}$ is a simplicial complex.  For a subset $S \subseteq [m]$,
let us write $\mr{T}_S := \mr{T}(P_S)$, and
$\mr{T}_{\emptyset}=\emp$. It is easy to see that the boundary of $\mr{T}_{[m]}$ is
covered by the Cayley complexes $\mr{T}_{J}$ for $J \subsetneq [m]$ and we define the
\Defn{Cayley arrangement} as $\scr{T} := \{ \mr{T}_{[m] {\setminus} j}: j \in [m]
\}$. 

\begin{ex} Consider  $P_{[3]}=(P_1,P_2,P_3)$ a family of three distinct pentagons. Then $\cfs{T}$ is a two-dimensional torus, cf.\ Figure~\ref{fig:Cayley_boundary},
which is glued from the Cayley complexes for $\Cay(P_i,P_j)$, $1 \le i < j \le
3$, and $\mr{T}_{[3]}$ is the complementary three-dimensional torus in the Cayley polytope $\Cay(\mr{T}_{[3]})$.
\begin{figure}[htb]
    \includegraphics[width=7.5cm]{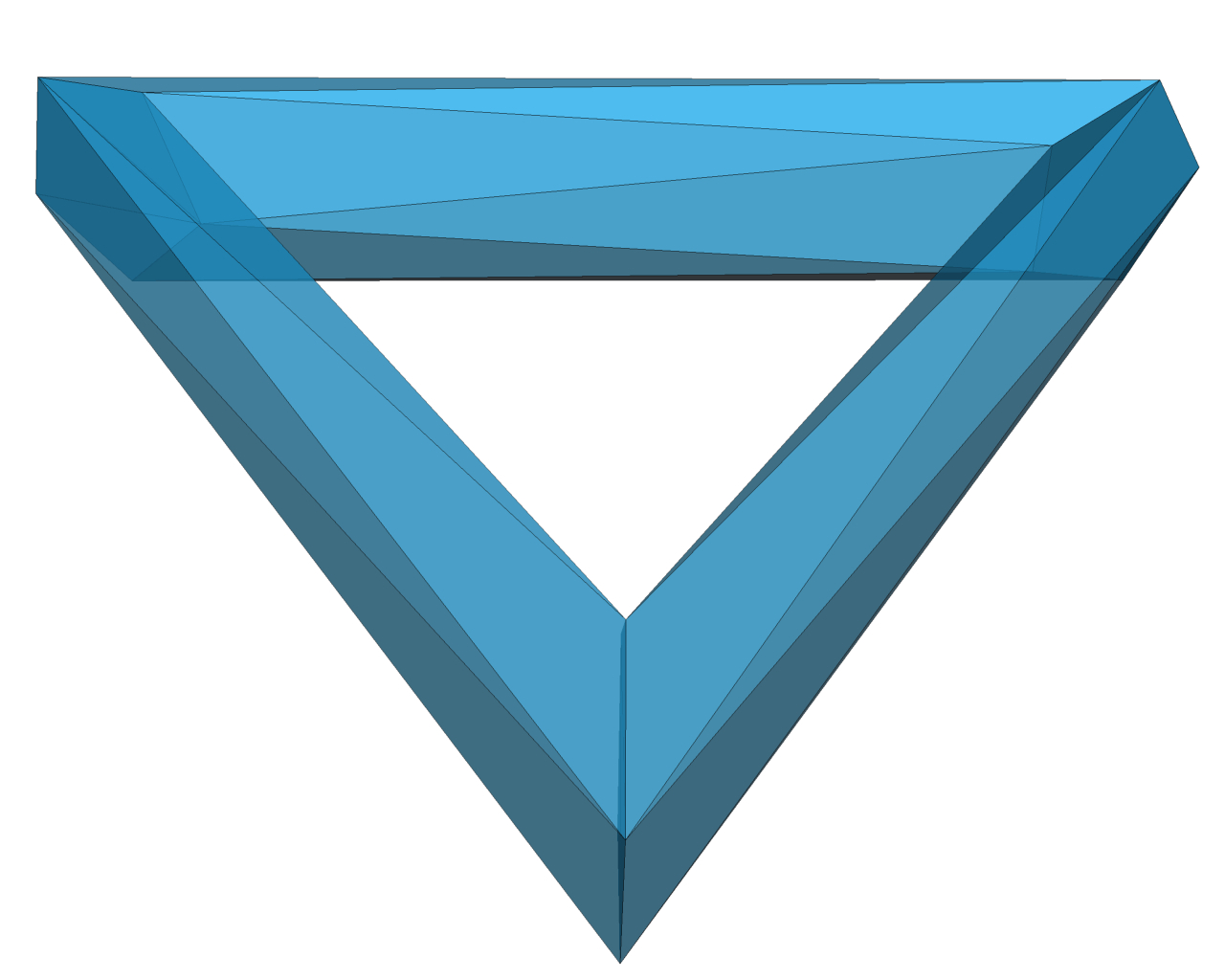}
    \caption{Boundary of the Cayley complex for $P_{[3]}=(P_1,P_2,P_3)$ being three
        general position pentagons in $\R^2$. The Cayley complex $\mr{T}_{[3]}$ is a solid torus in the three-sphere $\partial \Cay(\mr{T}_{[3]})$}
\label{fig:Cayley_boundary}
\end{figure}
\end{ex}

Finally, we define the \Defn{relative Cayley complex} as
\[
    \mr{T}^\circ_{[m]} \ := \ \Bigl(\mr{T}_{[m]}, \cfs{T} = \bigcup_i
    \mr{T}_{[m]{\setminus} i} \Bigr)
\]
and consequently $\mr{T}^\circ_{\emptyset}=\varnothing$.
For $S \subseteq [m]$, we define the restrictions $\mr{T}_S, \scr{T}_S,$ and
$\mr{T}^\circ_S$ analogously.

To apply our techniques, it remains to see that the topological properties of
the Cayley complex are well-behaved.

\begin{prp}\label{prp:CP2} 
    Let $P_{[m]} = (P_1,\dots,P_m)$ be a family of simplicial $d$-polytopes in
    relatively general position. Let $\mr{T}_{[m]} = \mr{T}(P_{[m]})$ be the
    corresponding Cayley
    complex and $\scr{T}$ the Cayley arrangement.
    \begin{enumerate}[\rm (i)]
        \item For $0 \le k \le d-1$
            \[
            f_{k+m-1}(\mr{T}^\circ_{[m]}) \ = \ f_{k}(|P_{[m]}|).
            \]
        \item $\scr{T}$ is an arrangement of full subcomplexes of $\mr{T}_{[m]}$.
        \item $\mr{T}^\circ_{[m]} = (\mr{T}_{[m]},\cfs{T})$ is relative Buchsbaum. In fact, $\mr{T}_{[m]}$ is a manifold, and $\cfs{T}$ is its boundary.
        \item $(\mr{T}_{[m]}, \scr{T})$ is universally Cohen--Macaulay.
        \item We have $\beta_i(\mr{T}^\circ_{[m]})=\mathbf{1}_{i=m-1}+\mathbf{1}_{i=d+m-2}$.
    \end{enumerate}
\end{prp}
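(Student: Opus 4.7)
My plan is to handle (i) and (ii) by a direct combinatorial analysis of the face structure of $\Cay(P_{[m]})$, to treat (iii) and (v) topologically via the projection $\pi$ to the simplex, and to reserve the heavy lifting for (iv).

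The starting point is the bijection (see e.g.\ \cite[Observation 9.2.1]{dLRS}) between proper faces of $\Cay(P_{[m]})$ and tuples $(F_1,\dots,F_m)$ of faces $F_i\subseteq P_i$ (not all empty, possibly some empty) such that $\sum_{j\in J}F_j$ is a face of $|P_J|$, where $J=\{i:F_i\neq\emptyset\}$; the corresponding face is $\Cay(F_1,\dots,F_m)$ and has dimension $|J|-1+\sum_{j\in J}\dim F_j$. By the definition of $\mr{T}_{[m]}$, its faces are exactly these tuples whose projection meets $\relint \Delta_{m-1}$, i.e.\ subfaces of \emph{mixed} faces (those with $J=[m]$). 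The faces of $\mr{T}^\circ_{[m]}=(\mr{T}_{[m]},\cfs{T})$ are exactly the mixed ones. Under relative general position, $\sum_{i}\dim F_i=k$ for a mixed $k$-face of $|P_{[m]}|$, so the matched Cayley face has dimension $k+m-1$; this gives (i). For (ii), observe that the vertices of $\mr{T}_{[m]\setminus i}$ are the vertices of $P_j\times\{e_j\}$ for $j\neq i$, and a mixed-subface $\Cay(F_1,\dots,F_m)$ has every vertex in $\V(\mr{T}_{[m]\setminus i})$ iff no vertex sits in layer $e_i$ iff $F_i=\emptyset$ iff the face lies in $\mr{T}_{[m]\setminus i}$.

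For (iii) and (v), the key tool is the projection $\pi:\partial\Cay(P_{[m]})\to\Delta_{m-1}$ from Proposition~\ref{prp:CP}. For every $\lambda\in\relint\Delta_J$, the fiber $\pi^{-1}(\lambda)\cap \partial\Cay$ is the boundary of the $d$-polytope $\sum_{j\in J}\lambda_j P_j$ and is therefore a $(d-1)$-sphere. By construction, $\mr{T}_{[m]}$ is the closed subset of the $(d+m-2)$-sphere $\partial\Cay$ obtained by removing the relative interiors of the facets $\Cay(P_{[m]\setminus i})$; a local analysis near each stratum of $\mr{T}_{[m]}$ shows it is a PL-manifold with boundary $\cfs{T}=\bigcup_i\mr{T}_{[m]\setminus i}$. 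Since every vertex link of such a manifold is a sphere or ball and hence Cohen--Macaulay, Theorem~\ref{thm:topb} yields that $\mr{T}^\circ_{[m]}$ is Buchsbaum, proving (iii). For (v), the projection exhibits $\mr{T}_{[m]}\simeq S^{d-1}$ (contractible base $\Delta_{m-1}$) and $\cfs{T}$ has the homology of $S^{d-1}\times S^{m-2}$ (the projection restricted to $\cfs{T}$ lands in $\partial\Delta_{m-1}$). The inclusion $\cfs{T}\hookrightarrow\mr{T}_{[m]}$ is an isomorphism on the ``$S^{d-1}$ factor'' of homology, so the long exact sequence of the pair gives $\rBetti_{m-1}=\rBetti_{d+m-2}=1$ and zero otherwise; degenerate cases (small $d$ or $m$, when these degrees coincide) are compatible and verified directly.

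The main obstacle is (iv). For a vertex $v$ in layer $j$, one has $\cfs{T}(v)=\bigcup_{i\neq j}\mr{T}_{[m]\setminus i}$, and $\pi$ sends $\cfs{T}(v)$ onto $\St(e_j,\partial\Delta_{m-1})\cong B^{m-2}$, while the complementary boundary piece $\mr{T}_{[m]\setminus j}$ projects to the opposite facet $\Delta_{[m]\setminus j}$. The plan is to apply Reisner's criterion (Theorem~\ref{thm:SR}): for every face $\sigma\in\mr{T}_{[m]}$, we must show $\rHom_i(\Lk(\sigma,(\mr{T}_{[m]},\cfs{T}(v))))=0$ for $i<\dim\Lk$. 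I will split into cases depending on which layers $\sigma$ hits and whether $\sigma$ meets $\cfs{T}(v)$; in each case the link itself has the shape of a relative Cayley complex (for a smaller $m$) with the special vertex-star removed, so the required acyclicity reduces either to the Buchsbaum property from (iii) or to an inductive instance of (iv). Equivalently, one checks that the cover $\scr{T}\cap\Lk(\sigma,\mr{T}_{[m]})$ is $(d+m-2-\dim\sigma-1)$-magnificent in the sense of Theorem~\ref{thm:magnificent}, using that all pairwise and higher intersections $\mr{T}_{[m]\setminus S}$ are again Cayley complexes of fewer summands and hence homotopy equivalent to $(d-1)$-spheres; the resulting acyclicity pattern forces the required vanishing. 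The bookkeeping of link types and their projections to faces of $\Delta_{m-1}$ is the delicate combinatorial point.
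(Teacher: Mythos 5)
Your treatment of (i), (ii), (iii) and (v) matches the paper's strategy, with a minor change of bookkeeping. For (i) and (ii) you enumerate Cayley faces directly (tuples $(F_1,\dots,F_m)$ and their layers) where the paper argues via transverse fibers of $\pi$ and normal equivalence; both are valid, and your version is slightly more explicit. For (v), the paper computes $H_\ast(\mr{T}_{[m]},\cfs{T})$ by excision to $H_\ast(\partial\Cay,W)$ with $W=\pi^{-1}(\partial\Delta_{m-1})\cong B_d\times S^{m-2}$, which never requires identifying the homology of $\cfs{T}$ or the map $H_\ast(\cfs{T})\to H_\ast(\mr{T}_{[m]})$. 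You instead use the long exact sequence of the pair, which does require knowing both groups and the map; your ``$S^{d-1}$-factor is preserved'' claim is correct but is a genuine extra fact to justify (it follows because $\pi$ restricts to a trivialized sphere bundle), and the excision route is cleaner here. Also note that $\partial\Delta_{m-1}\cong S^{m-2}$, not $S^{m-1}$ as the paper's displayed line suggests; your degree count implicitly uses the correct index, so you are fine.

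The real issue is (iv), which you yourself flag. You describe a case analysis (``split into cases depending on which layers $\sigma$ hits'') and invoke magnificence, but you do not actually verify the required link acyclicity nor close the induction; the phrase ``the bookkeeping of link types and their projections to faces of $\Delta_{m-1}$ is the delicate combinatorial point'' is an admission that the proof is not finished. The missing idea is that the argument of (iii) localizes cleanly: the projection $\pi$ restricts on $\Lk(\sigma,\mr{T}_{[m]})$ and on $\Lk(\sigma,\cfs{T}(v))$ to a projection onto a face of $\Delta_{m-1}$ (determined by the layers $\sigma$ occupies) whose fibers are again boundaries of polytope sums, so each relative link is, up to excision, a ball relative to a ball in its boundary, hence acyclic below its top dimension; Reisner's criterion then gives Cohen--Macaulayness of $(\mr{T}_{[m]},\cfs{T}(v))$. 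That is the content behind the paper's terse ``the same argument applied to (relative) links then shows (iii) and (iv).'' Without spelling out that local version, your (iv) remains a plan rather than a proof.
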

\begin{proof}
    The relative complex $\mr{T}^\circ_{[m]}$ is exactly the set of faces of $\Cay(P_{[m]})$ for
    which the intersection with $\pi^{-1}(\lambda)$ is nonempty for any
    $\lambda \in \relint\Delta_{m-1}$.  For any such $\lambda$ the
    intersection of $\pi^{-1}(\lambda)$ is normally equivalent to $|P_{[m]}|$ (i.e., their normal fans coincide). Hence,
    the $f$-vector of $\mr{T}^\circ_{[m]}$ is the shifted $f$-vector of
    $|P_{[m]}|$ which proves (i).
    
    For (ii) note that $\mr{T}_S$ is the subcomplex of $\mr{T}_{[m]}$ induced by the vertices $\V(P_i), i\in S$.

    Let $W = \pi^{-1}(\partial \Delta_{m-1}) \subsetneq \partial \Cay(P_{[m]})$
    be the shadow boundary, which can be seen as a subset of $\R^d\times \partial \Delta_{m-1}$.
    The fibers $\pi^{-1}(x),\ x\in \partial \Delta_{m-1}$ are convex and of dimension $d$; hence $W$ is a full-dimensional submanifold of $\R^d\times \partial \Delta_{m-1}$ that collapses to $\partial \Delta_{m-1}\cong S^{m-1}$. It follows in particular that $W$ is in fact homeomorphic to $B_d \times S^{m-1}$ where $B_d$ is a $d$-ball. 
    
    For (v), we can use excision to compute
    $\rHom_{\bullet}(\mr{T}_{[m]},\cfs{T}) \cong \rHom_{\bullet}(\partial \Cay(P_{[m]}),
    W)$. The same argument applied to (relative) links then shows (iii) and~(iv).
\end{proof}

So, for a proof of Theorem~\ref{thm:UBTMS} it is sufficient to find tight
upper bounds on the $h$-vector of the Cayley complex. To emphasize the
relation to $P_{[m]}$, we define the \Defn{$h$-vector} of a simplicial family
in relatively general position as
\[
    h_\ast(P_{[m]}) \ := \ h_\ast(\mr{T}_{[m]}^\circ).
\]
In particular, $\mr{T}_{\{i\}} = \partial P_i$. For two summands, the relative Cayley
complex is a cylinder over a sphere relative to its boundary; cf.\
Section~\ref{ssec:two}. 

\subsection{Initial terms of the $h$-vector}\label{ssec:small_h} 

In the proof of the UBT, it is only necessary to find tight upper bounds on
$h_k$ for $k \le \lfloor\frac{d}{2}\rfloor$ and let the Dehn--Sommerville
equations take care of the rest. In this section we find bounds for $h_k$ 
for $k \le \frac{d-m+1}{2}$. For higher $k$, we will
also employ suitably generalized versions of the Dehn--Sommerville equations
which we treat in the next section. In contrast to the case of spheres, we
will need bounds on $g$-vectors (and more).

\begin{thm}\label{thm:minkit}
    Let $P_{[m]} = (P_1,\dots,P_m) $ be a family of simplicial $d$-polytopes
    in $\R^d$ with Cayley complex $\mr{T}_{[m]} = \mr{T}(P_{[m]})$. Then we have
\[  (k+m) g_{k+m} (\mr{T}_{[m]}^\circ) \ \le \  
    (f_0(\mr{T}_{[m]})-d-m) h_{k+m-1} (\mr{T}_{[m]}^\circ)+\sum_{i=1}^m 
    f_0(\mr{T}_{\{i\}}) g_{k+m-1}(\mr{T}_{[m]{\setminus} \{i\}}^\circ). 
\]
We have  
\[(i+|S|)  g_{i+|S|} (\mr{T}_{S}^\circ) \ = \ (f_0(\mr{T}_{S})-d-|S|)  h_{i+|S|-1} (\mr{T}_{S}^\circ)+\sum_{i\in S} f_0(\mr{T}_{\{i\}}) g_{i+|S|-1}(\mr{T}_{S{\setminus} \{i\}}^\circ)\] for all $i\le k_0$ and $S\subseteq[m]$ if and only if all non-faces of $\mr{T}_{S}$ of dimension $< k_0+|S|$ are supported in some $\V(\mr{T}_R)$, $R\subsetneq S$. 
\end{thm}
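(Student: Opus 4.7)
The plan is to deduce the theorem as a direct application of Corollary~\ref{cor:loc} to the pair $(\mr{T}_{[m]},\scr{T})$, followed by an index shift that rewrites the resulting $h$-vector inequality in terms of the $g$-vector.

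First I would verify the hypotheses of Corollary~\ref{cor:loc}: by Proposition~\ref{prp:CP2}(ii,iv), $\scr{T}=\{\mr{T}_{[m]\setminus\{i\}}:i\in[m]\}$ is a full arrangement in $\mr{T}_{[m]}$ and the pair $(\mr{T}_{[m]},\scr{T})$ is universally Cohen--Macaulay. Since $\dim\mr{T}_{[m]}=d+m-2$, Corollary~\ref{cor:loc}(2) reads
\[
(k+1)\,h_{k+1}(\mr{T}_{[m]}^\circ) \ \le \ \bigl(f_0(\mr{T}_{[m]})-(d+m-1)+k\bigr)\,h_k(\mr{T}_{[m]}^\circ)\, + \sum_{v\in\mr{T}_{[m]}} h_k^{\langle d+m-1\rangle}(\cfs{T},\cfs{T}(v)).
\]

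The next step is to identify each local contribution. For $v\in\V(P_j)$, I claim that $(\cfs{T},\cfs{T}(v))\cong\mr{T}_{[m]\setminus\{j\}}^\circ$ as relative complexes. Indeed, by fullness a face $\sigma\in\cfs{T}$ belongs to $\cfs{T}(v)=\bigcup_{i\neq j}\mr{T}_{[m]\setminus\{i\}}$ iff $\sigma$ avoids $\V(P_i)$ for some $i\neq j$; hence the faces of $(\cfs{T},\cfs{T}(v))$ are exactly the faces of $\mr{T}_{[m]\setminus\{j\}}$ that meet $\V(P_i)$ for every $i\neq j$, and these are the faces of $\mr{T}_{[m]\setminus\{j\}}^\circ$. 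Since $\dim\mr{T}_{[m]\setminus\{j\}}^\circ=d+m-3$, the definition of the shifted $h$-polynomial gives
\[
h_k^{\langle d+m-1\rangle}(\cfs{T},\cfs{T}(v)) \ = \ g_k(\mr{T}_{[m]\setminus\{j\}}^\circ).
\]
Grouping the vertices by their host polytope, and using $\mr{T}_{\{j\}}=\partial P_j$ so that $f_0(\mr{T}_{\{j\}})=f_0(P_j)$, the sum collapses to $\sum_{j=1}^m f_0(\mr{T}_{\{j\}})\,g_k(\mr{T}_{[m]\setminus\{j\}}^\circ)$.

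The third step is purely arithmetic: I substitute $k\mapsto k+m-1$ in the displayed inequality and subtract $(k+m)\,h_{k+m-1}(\mr{T}_{[m]}^\circ)$ from both sides. This converts the LHS into $(k+m)\,g_{k+m}(\mr{T}_{[m]}^\circ)$, while the coefficient of $h_{k+m-1}(\mr{T}_{[m]}^\circ)$ becomes $(f_0(\mr{T}_{[m]})-(d+m-1)+(k+m-1))-(k+m)=f_0(\mr{T}_{[m]})-d-m$, producing exactly the stated inequality.

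For the equality statement, I would invoke the characterization in Corollary~\ref{cor:loc}(1): equality up to the shifted index $k_0+m-1$ holds iff, for every $v\in\V(P_j)$ and every face $\sigma$ of $(\mr{T}_{[m]},\cfs{T}(v))\cong\mr{T}_{[m]\setminus\{j\}}^\circ$ of dimension $<k_0+m-1$, the simplex $\sigma\ast v$ is a face of $\mr{T}_{[m]}$. Setting $\tau=\sigma\cup\{v\}$ and taking the contrapositive across all $v$, this is equivalent to the condition that every non-face of $\mr{T}_{[m]}$ of dimension $<k_0+m$ must avoid $\V(P_i)$ for some $i$, i.e.\ be supported on $\V(\mr{T}_R)$ for some $R\subsetneq[m]$. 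Applying the same reasoning to each subfamily $P_S$ in place of $P_{[m]}$ yields the iterated characterization for all $S\subseteq[m]$.

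The main obstacle is the bookkeeping: keeping track of the dimension shifts across $\mr{T}_{[m]}^\circ$, $\mr{T}_{[m]\setminus\{j\}}^\circ$, and the shifted polynomial $h^{\langle d+m-1\rangle}$, and carefully translating the ``face extends by $v$'' formulation from Corollary~\ref{cor:loc}(1) into the ``non-face lives in a smaller Cayley skeleton'' formulation of the theorem.
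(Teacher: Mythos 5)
Your proof follows the paper's argument exactly: invoke Proposition~\ref{prp:CP2}(iv) for universal Cohen--Macaulayness, apply Corollary~\ref{cor:loc}(2) to $(\mr{T}_{[m]},\scr{T})$, identify the local terms via fullness, shift indices and subtract, and read off the equality case from Corollary~\ref{cor:loc}(1) iterated over the subfamilies $S\subseteq[m]$. One small slip in the equality discussion: you write $(\mr{T}_{[m]},\cfs{T}(v))\cong\mr{T}_{[m]\setminus\{j\}}^\circ$, but that isomorphism holds for $(\cfs{T},\cfs{T}(v))$ (as you correctly stated earlier), while Corollary~\ref{cor:loc}(1) genuinely ranges $\sigma$ over $(\mr{T}_{[m]},\cfs{T}(v))$, which is strictly larger; the subsequent translation to non-faces of $\mr{T}_{[m]}$ is still valid, but the mislabeled isomorphism should be dropped.
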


\begin{proof}
By Proposition~\ref{prp:CP2}(iv), the pair $(\mr{T}_{[m]},\scr{T})$ is universally
Cohen--Macaulay. By Corollary~\ref{cor:loc}(2), we conclude at once that
\[(k+m)  h_{k+m} (\mr{T}_{[m]}^\circ) \ \le \  
    (f_0(\mr{T}_{[m]})-d+k)  h_{k+m-1} (\mr{T}_{[m]}^\circ)+\sum_{i=1}^m
        f_0(\mr{T}_{\{i\}}) g_{k+m-1}(\mr{T}_{[m]{\setminus} \{i\}}^\circ).
\]
The desired inequality follows by subtracting $(k+m)  h_{k+m-1} (\mr{T}_{[m]}^\circ)$ on both sides.  The characterization for the case of equality follows with iterative application of the characterization in Corollary~\ref{cor:loc}(2).
\end{proof}

Theorem~\ref{thm:minkit} is the key to the UBTM. An alternative, geometric
proof can be given rather elegantly using relative shellability: It is a consequence of the
work of Bruggesser--Mani \cite{BM}, Proposition~\ref{prp:shell_to_h} and
Alexander duality of shellings provided in \cite{AB-SSZ}. This program has been implemented to some extent in~\cite{MR3392772}.

The theorem directly enables us to give (tight) upper bounds on small
$h$-entries. The following corollary is a direct consequence of
Theorem~\ref{thm:minkit}. We present 
an alternative, direct proof by change of presentation (see
Section~\ref{ssec:change_presentation}).

\begin{cor}\label{cor:minkh}
    Let $P_{[m]} = (P_1,\dots,P_m)$ be a family of $m$ simplicial
    $d$-polytopes in relatively general position and let $\mr{T}_{[m]}^\circ$ be
    the corresponding relative Cayley complex.  Then for all $-m+1\le k\le d$
    \[
        h_{k+m-1} (\mr{T}_{[m]}^\circ) \ \le \  \sum_{S\subseteq [m]}
        (-1)^{m-|S|} \binom{f_0(\mr{T}_S)-d+k-1}{k+m-1},
    \]
Equality holds for some $k_0+m-1$ if and only if all non-faces of $\mr{T}_{[m]}$ of
dimension $< k_0+m-1$ are supported in some $\V(\mr{T}_S)$, $S\subsetneq [m]$. 
\end{cor}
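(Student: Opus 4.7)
The plan is to apply the change-of-presentation machinery to the Cayley arrangement $\scr{T}$. By Proposition~\ref{prp:CP2}(ii), $\scr{T}$ is a full arrangement of $\mr{T}_{[m]}$ with support $\cfs{T}$, and by~(iii) the relative complex $\mr{T}_{[m]}^\circ$ is Buchsbaum, so the relative Schenzel formula (Theorem~\ref{thm:RS}) decomposes
\[
h_{k+m-1}(\mr{T}_{[m]}^\circ) \ = \ \LT_{k+m-1}(\mr{T}_{[m]}^\circ) + \GT_{k+m-1}(\mr{T}_{[m]}^\circ).
\]
From the Betti numbers $\rBetti_i(\mr{T}_{[m]}^\circ) = \mathbf{1}_{i=m-1} + \mathbf{1}_{i=d+m-2}$ in part~(v), only $\rBetti_{m-1}$ can contribute in the range $k \le d$, yielding $\GT_{k+m-1} = (-1)^{k-1}\binom{d+m-1}{k+m-1}\mathbf{1}_{k\ge 2}$.

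For the algebraic part, I would apply Theorem~\ref{thm:upb} to the nerve ideal $\ch = \ch[\mr{T}_{[m]},\scr{T}]$, giving $\LT_{k+m-1} \le \dim_\k(\ch/\Theta\ch)_{k+m-1}$ for any common \lsop\ $\Theta$, and then evaluate the right-hand side via Theorem~\ref{thm:RSI}, which produces $[(1-t)^{d+m-1}\Hilb(\ch,t)]_{k+m-1}$ minus a topological correction in terms of the Betti numbers of the coarse nerve $\mathfrak{N} = \bigcup_i \K_{\V(\mr{T}_{[m]\setminus\{i\}})}$. Since $\bigcap_{i\in I}\K_{\V(\mr{T}_{[m]\setminus\{i\}})} = \K_{\V(\mr{T}_{[m]\setminus I})}$ is a simplex (hence contractible) for every proper $I \subsetneq [m]$ and empty for $I = [m]$, Borsuk's Nerve Lemma identifies $\mathfrak{N}$ up to homotopy with $\partial \Delta_{m-1} \simeq S^{m-2}$, so $\rBetti_i(\mathfrak{N}) = \mathbf{1}_{i=m-2}$. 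The correction then becomes exactly $-\GT_{k+m-1}$, which cancels the topological contribution when we sum and leaves
\[
h_{k+m-1}(\mr{T}_{[m]}^\circ) \ \le \ [(1-t)^{d+m-1}\Hilb(\ch,t)]_{k+m-1}.
\]

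The Hilbert series is finally computed via Proposition~\ref{prp:bi}: the intersection poset is the Boolean lattice on $[m]$ under reverse inclusion with M\"obius function $\mu_{\mc{P}(\scr{T})}(\mr{T}_{[m]},\mr{T}_S) = (-1)^{m-|S|}$, so
\[
\Hilb(\ch,t) \ = \ \sum_{S\subseteq[m]} (-1)^{m-|S|}(1-t)^{-f_0(\mr{T}_S)}.
\]
Multiplying by $(1-t)^{d+m-1}$, extracting the coefficient of $t^{k+m-1}$, and invoking $\binom{-n}{r}=(-1)^r\binom{n+r-1}{r}$ produces precisely $\sum_{S\subseteq[m]}(-1)^{m-|S|}\binom{f_0(\mr{T}_S)-d+k-1}{k+m-1}$, the bound claimed in the corollary.

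For the equality case, I would combine Proposition~\ref{prp:CP2}(iv) (universal Cohen--Macaulayness of $(\mr{T}_{[m]},\scr{T})$) with Theorem~\ref{thm:reduction_to_topology}: applying the Nerve Lemma vertex-wise, and more generally to each link, shows that the embedding $\Lk(\sigma,\mr{T}_{[m]}^\circ) \hookrightarrow \Lk(\sigma,(\K_n,\mathfrak{N}))$ induces isomorphisms on reduced (co)homology up to the relevant degree, promoting $\ch \twoheadrightarrow \FM[\mr{T}_{[m]}^\circ]$ to a $\Theta$-isomorphism. The characterization of equality in Theorem~\ref{thm:upb} then translates directly into the stated condition on non-faces of $\mr{T}_{[m]}$. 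The hard part will be verifying both the Buchsbaum hypothesis of Theorem~\ref{thm:RSI} on the skeleton pair $(\K_n^{(d+m-1)},\mathfrak{N}^{(d+m-1)})$ and the link-wise acyclicity needed for Theorem~\ref{thm:reduction_to_topology}, since $\mathfrak{N}$ is only $(m-2)$-spherical rather than Cohen--Macaulay; careful sign tracking will also be required in the boundary cases $k\in\{0,1\}$ and $k=d$ where $\GT_{k+m-1}$ and the Theorem~\ref{thm:RSI} correction vanish or simplify for different reasons.
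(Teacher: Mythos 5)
Your proposal follows the paper's proof step for step: Schenzel decomposition via Proposition~\ref{prp:CP2}, the topological component from the Betti numbers in part~(v), change of presentation to the nerve ideal $\ch$ via Theorem~\ref{thm:upb}, evaluation via Theorem~\ref{thm:RSI} and Proposition~\ref{prp:bi} with the Boolean-lattice M\"obius function, cancellation of the two topological corrections, and magnificence/Theorem~\ref{thm:reduction_to_topology} for the equality case. The one genuine refinement you offer is the explicit Nerve-Lemma computation $\mathfrak{N}\simeq\partial\Delta_{m-1}\simeq S^{m-2}$ from the contractibility of the proper intersections $\K_{\V(\mr{T}_{[m]\setminus I})}$, where the paper simply asserts the homology of $\mathfrak{N}^{(d+m-1)}$; your observation that only $\rBetti_{m-2}$ enters the Theorem~\ref{thm:RSI} correction in the relevant degree range means this suffices, so your version is a bit more self-contained. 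The technical caveats you flag at the end (Buchsbaumness of $(\K_n^{(d+m-1)},\mathfrak{N}^{(d+m-1)})$ via Corollary~\ref{cor:cmincm} and Theorem~\ref{thm:topb}, and the link-wise version of the Nerve Lemma for Theorem~\ref{thm:magnificent}) are exactly the points the paper handles, so there is no gap in your outline.
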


\begin{proof}
By Proposition~\ref{prp:CP2}(iii), the relative complex $\mr{T}^\circ_{[m]} =
(\mr{T}_{[m]},\cfs{T})$ is Buchsbaum and thus we can apply Theorem~\ref{thm:RS}.

For the topological contribution, we use Proposition~\ref{prp:CP2}(v) to infer
that all Betti numbers are zero except for 
$\rBetti_{m-1}(\mr{T}^\circ_{[m]})= 1$ and $\rBetti_{d+m-2}(\mr{T}^\circ_{[m]})= 1$. Hence,
for $k \ge 2$ 
\[
    \GT_{k+m-1}(\mr{T}_{[m]}^\circ) \ = \ 
    \binom{d+m-1}{d-k} \sum_{i=0}^{k+m-2}(-1)^{k+m-1-i}
    \rBetti_{i-1}(\mr{T}_{[m]}^\circ) \ = \ \mathbf{1}_{k\ge 2} 
    (-1)^{k-1}
    \binom{d+m-1}{d-k}.
\]
For the algebraic component $\LT_{k+m-1}(\mr{T}_{[m]}^\circ)$, recall from
Proposition~\ref{prp:CP2}(ii) that $\scr{T}$ is an arrangement of full
subcomplexes of $\mr{T}_{[m]}$.  Hence, for $\FM = \FM[\mr{T}_{[m]}^\circ,\cfs{T}]$,
nerve ideal $\ch = \ch[\mr{T}_{[m]},\cfs{T}]$, and a \lsop\ $\Theta$ of
length $\ell = d+m-1$ we obtain for $-m+1 \le k \le d$ 
\[
    \LT_{k+m-1}(\mr{T}_{[m]},\cfs{T}) \ = \ \dim_\k (\FM/\Theta\FM)_{k+m-1} \ \le \
    \dim_\k (\ch/\Theta \ch)_{k+m-1}
\]
by Theorem~\ref{thm:upb}.  Now, upon closer examination, we notice that $\ch$
is generated by squarefree monomials corresponding to subsets $\tau \subseteq
\bigcup_i \V(P_i \times e_i)$ such that $\tau \cap \V(P_i \times e_i) \neq
\emptyset$ for all $i$. Hence, $\ch$ is generated in degree $m$ by $\prod_i
f_0(P_i)$ minimal generators.  That is, the coarse nerve $\mathfrak{N}$ with
Stanley--Reisner ideal $\ch$ is
\[
    \mathfrak{N} \ = \ \bigcup_{S\subsetneq [m]} \bigast_{\ \ i\in S} \Delta_{\V(P_i
    \times e_i)}.
\]
 While $\mathfrak{N}$ is in general not Buchsbaum, its
$(d+m-2)$-skeleton is. In particular $\mathfrak{N}^{(d+m-1)}$ has homology only in
dimensions $d+m-2$ and $m-2$ and $\rBetti_{m-2}(\mathfrak{N}^{(d+m-2)}) = 1$.  By
Corollary~\ref{cor:cmincm} and Theorem~\ref{thm:topb}, the relative complex
$(\K_{[n]}^{(d+m-1)},\mathfrak{N}^{(d+m-1)})$ is Buchsbaum.  Moreover, the intersection
poset $\mc{P} = \mc{P}(\scr{T})$ of the arrangement coincides with the dual to
the face poset of $\K_{[m]}$ and hence 
\[
    \mu_{\mc{P}}(\mr{T}_{[m]},\mr{T}_S)=(-1)^{m-|S|}
\]
for all $S \subseteq [m]$.  We can now use Theorem~\ref{thm:RSI} and
Proposition~\ref{prp:bi} to evaluate
\[\dim_\k (\ch/\Theta \ch)_{k+m-1}\ = \ \sum_{S\subseteq [m]} (-1)^{m-|S|}
\binom{f_0(\mr{T}_S)-d+k-1}{k+m-1}-(-1)^{k-1} \mathbf{1}_{k\ge
2}\binom{d+m-1}{d-k}.\]

 Consider the cover
$\widehat{\scr{T}}:=\{{\mr{T}}_{\overline i}:=\mr{T}_{[m]\setminus \{i\}} : i\in  [m]\}$ of $\partial \mr{T}_{[m]}=\cfs{T}$. For tightness, notice that for every strict subset $R\subsetneq [m]$ of $\widehat{\scr{T}}$, $(\mr{T}_{[m]},\mr{T}_{[m]}\cap \bigcap_{i\in R} {\mr{T}}_{\overline i})$ is acyclic, that is, its Betti numbers are trivial. On the other hand, 
\[\bigg(\mr{T}_{[m]},\mr{T}_{[m]}\cap \bigcap_{i\in [m]} {\mr{T}}_{\overline i}\bigg)\ =\ \left(\mr{T}_{[m]},\{\emptyset\}\right)\]
has only one nontrivial homology group, that in dimension $d-1$.
Hence, $\widehat{\scr{T}}$ is $(d+m-2)$-magnificent w.r.t.\ $(\mr{T}_{[m]},\cfs{T})$ in the sense of Theorem~\ref{thm:magnificent}. Tightness follows with Theorem~\ref{thm:upb} and Theorem~\ref{thm:reduction_to_topology}.
\end{proof}

\subsection{The Dehn--Sommerville formula and other linear relations.}\label{ssec:DSM} 

Let us give some linear relations among the $h$-vectors of our particular
simplicial complexes.  To give Dehn--Sommerville-type relations among the
entries of the $h$-vector of the relative  Cayley complex, it will prove
useful to renormalize the $h$-vector to
\[
    \widetilde{h}_{k+m-1} (\mr{T}_{[m]}^\circ) \ := \ h_{k+m-1}
    (\mr{T}_{[m]}^\circ)+(-1)^{k}\binom{d+m-1}{k+m-1}
\] 
and 
\[
    \widetilde{g}_{k+m-1}(\mr{T}_{[m]}^\circ)\  :=\
    \widetilde{h}_{k+m-1}(\mr{T}_{[m]}^\circ) \ -\
    \widetilde{h}_{k+m-2}(\mr{T}_{[m]}^\circ).
\] 
On a purely enumerative level, this corresponds to setting the number of
empty faces of $\mr{T}_{[m]}^\circ$ to 
\[
    \widetilde{f}_{-1} (\mr{T}_{[m]}^\circ)\ = \ \widetilde{h}_{0}
    (\mr{T}_{[m]}^\circ)\ = \ (-1)^{m-1}\ = \ -\mu_{\mc{P}(\scr{T}_{[m]})}(\mr{T}_{[m]},
    \emptyset).  
\] 
With this, we can compute $\widetilde{h}$ from $(\widetilde{f}_{-1},f_0,\cdots)$ as usual and rewrite Corollary~\ref{cor:minkh} as
\[
    \widetilde{h}_{k+m-1} (\mr{T}_{[m]}^\circ) \ \le \  \sum_{\emptyset \neq
    S\subseteq [m]} (-1)^{m-|S|} \binom{f_0(\mr{T}_S)-d+k-1}{k+m-1}.
\]  An important ingredient to our approach is, once
again, Dehn--Sommerville duality.
\begin{lem}\label{lem:DSM} 
    Let $P_{[m]}$ be a pure collection of $m$ polytopes in $\R^d$, $d\ge 1$,
    such that the Cayley complex $\mr{T}(P_1,\dots,P_m)$ is simplicial. Then,
    for all $-m+1\le k\le d$, we have 
    \[
        h_{d-k}(\mr{T}_{[m]})\ = \ \widetilde{h}_{k+m-1}
        (\mr{T}_{[m]}^\circ) \ = \ h_{k+m-1}
        (\mr{T}_{[m]}^\circ)+(-1)^{k}\binom{d+m-1}{d-k}.
    \] 
\end{lem}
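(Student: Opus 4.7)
The plan is to apply the relative Dehn--Sommerville identity (Lemma~\ref{lem:DS}) to the $(d+m-2)$-dimensional relative complex $\mr{T}_{[m]}^\circ = (\mr{T}_{[m]}, \cfs{T})$ and then rearrange. The central hypothesis to check is that $\mr{T}_{[m]}^\circ$ is weakly Eulerian. Recall from Proposition~\ref{prp:CP2}(iii) that $\mr{T}_{[m]}$ is a manifold with boundary $\cfs{T}$. For any face $\sigma \in \mr{T}_{[m]}\setminus\cfs{T}$ of the relative complex, no chain $\sigma\cup\tau$ can lie in the subcomplex $\cfs{T}$ since $\sigma \notin \cfs{T}$; hence $\Lk(\sigma,\cfs{T})$ is the void complex and $\sigma$ is an interior face of the manifold, so that $\Lk(\sigma,\mr{T}_{[m]})$ is a sphere of dimension $\dim\Lk(\sigma,\mr{T}_{[m]}^\circ)$. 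Its reduced Euler characteristic then matches $(-1)^{\dim \Lk(\sigma,\mr{T}_{[m]}^\circ)}$, as required.

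Next, combining the Betti numbers recorded in Proposition~\ref{prp:CP2}(v) with the Euler--Poincar\'e identity yields
\[
\chi(\mr{T}_{[m]}^\circ) \ = \ (-1)^{m-1} + (-1)^{d+m-2},
\]
and a short calculation (noting $(-1)^{2m}=1$) gives
\[
(-1)^{d+m-2}\chi(\mr{T}_{[m]}^\circ)\,-\,1 \ = \ (-1)^{d+2m-3}+1-1 \ = \ (-1)^{d-1}.
\]

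Lemma~\ref{lem:DS}, applied to $\mr{T}_{[m]}^\circ$, then reads
\[
h_{(d+m-1)-j}(\mr{T}_{[m]}^\circ) \ = \ h_{j}(\mr{T}_{[m]}) \,+\, (-1)^{j}\binom{d+m-1}{j}\bigl((-1)^{d+m-2}\chi(\mr{T}_{[m]}^\circ)-1\bigr).
\]
Setting $j = d-k$ and substituting the value computed above produces
\[
h_{k+m-1}(\mr{T}_{[m]}^\circ) \ = \ h_{d-k}(\mr{T}_{[m]}) \,+\, (-1)^{2d-k-1}\binom{d+m-1}{d-k} \ = \ h_{d-k}(\mr{T}_{[m]}) - (-1)^{k}\binom{d+m-1}{d-k}.
\]
Rearranging this identity and using the binomial symmetry $\binom{d+m-1}{d-k}=\binom{d+m-1}{k+m-1}$ to match the defining expansion of $\widetilde{h}$ yields the two claimed equalities simultaneously, with the range $-m+1\le k\le d$ arising from the natural range $0\le j\le d+m-1$ for the $h$-indices. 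The only conceptual step is the verification of the weakly Eulerian property; everything else is bookkeeping with Betti numbers and signs.
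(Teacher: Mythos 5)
Your proof is correct and takes essentially the same route as the paper: apply Lemma~\ref{lem:DS} to $\mr{T}^\circ_{[m]}$ using the manifold-with-boundary structure from Proposition~\ref{prp:CP2}(iii) to establish the weakly Eulerian hypothesis, plug in the Betti numbers from Proposition~\ref{prp:CP2}(v), and shift indices. The paper's own proof is a one-liner that leaves the weakly Eulerian check and the sign arithmetic implicit; you have merely spelled these out, so the two arguments coincide in substance.
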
 
\begin{proof} 
    By Proposition~\ref{prp:CP2}, the complexes $\mr{T}_{[m]}^\circ$ and
    $\mr{T}_{[m]}$ are homology manifolds. Moreover,
    $\rBetti_{m-1}(\mr{T}^\circ_{[m]})= 1$ and
    $\rBetti_{d+m-2}(\mr{T}^\circ_{[m]})= 1$, but
    $\rBetti_{i}(\mr{T}^\circ_{[m]})\equiv 0$ otherwise. The assertion now
    follows by Lemma~\ref{lem:DS}.  \end{proof} 
    
    This allows us to translate
    from bounds on ${h}_{\ast}(\mr{T}_{[m]})$ to bounds on
    ${h}_{\ast}(\mr{T}_{[m]}^\circ)$, and vice versa. Let us define 
\[    \widetilde{g}_{k+m-1}^{\langle\ell\rangle} (\cdot) \ := \ \sum_{i=0}^{\ell}
    (-1)^i \binom{\ell}{i} \widetilde{h}_{k+m-1-i}(\cdot) \text{ and }\\ 
    {g}_{k+m-1}^{\langle\ell\rangle} (\cdot) \ := \ \sum_{i=0}^{\ell} (-1)^i
    \binom{\ell}{i} {h}_{k+m-1-i}(\cdot).\]
With this, we have the following elementary relations. 
\begin{prp}\label{prp:recursive} 
    For $\mr{T}_{[m]}$ as above, any $\ell\ge 0$, and any $1\le s\le m$, we have 
    \[ 
        h_{k+m-1}(\mr{T}_{[m]}) \ =\ \sum_{S\subseteq [m]}
        \widetilde{g}_{k+m-1}^{\langle m-|S|\rangle} (\mr{T}_{S}^\circ). 
    \] 
\end{prp}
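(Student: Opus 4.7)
The plan is to derive the formula from an elementary partition of the face set of $\mr{T}_{[m]}$ indexed by subsets of $[m]$, and then translate from the $f$-level to the $h$-level via a routine power-series computation. For a face $\sigma$ of $\mr{T}_{[m]}$, define its \emph{type} as $S(\sigma):=\{i\in[m]:\sigma\cap(P_i\times e_i)\neq\emptyset\}\subseteq[m]$. Propositions~\ref{prp:CP} and~\ref{prp:CP2}(ii) ensure that $\sigma$ lies in $\mr{T}_{S(\sigma)}$ but not in $\mr{T}_T$ for any $T\subsetneq S(\sigma)$; equivalently, $\sigma$ is a face of the relative complex $\mr{T}_{S(\sigma)}^\circ$. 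Thus the faces of $\mr{T}_{[m]}$ are partitioned over $\emptyset\neq S\subseteq[m]$ by their type, so that $f_k(\mr{T}_{[m]})=\sum_{\emptyset\neq S\subseteq[m]}f_k(\mr{T}_S^\circ)$ for every $k\ge 0$.

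Next I would extend the identity to $k=-1$ by adopting the convention $\widetilde{f}_{-1}(\mr{T}_S^\circ):=(-1)^{|S|-1}$ for $\emptyset\neq S\subseteq[m]$, foreshadowed by the normalization $\widetilde{h}_0(\mr{T}_{[m]}^\circ)=(-1)^{m-1}$ introduced before the statement. The elementary identity $\sum_{\emptyset\neq S\subseteq[m]}(-1)^{|S|-1}=1=\widetilde{f}_{-1}(\mr{T}_{[m]})$ then promotes the partition to
\[
\widetilde{f}_j(\mr{T}_{[m]}) \ = \ \sum_{S\subseteq[m]}\widetilde{f}_j(\mr{T}_S^\circ)\qquad\text{for every } j\ge -1,
\]
where $\widetilde{f}_j=f_j$ for $j\ge 0$ and the $S=\emptyset$ summand is zero.

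Finally, I would compute all $h$-polynomials at the common ambient dimension $d+m-1$. Multiplying the $\widetilde{f}$-polynomial identity termwise by $(1-t)^{d+m-1-i}$ in degree $i$ gives, on the left, the standard $h$-polynomial $\sum_k h_k(\mr{T}_{[m]})\,t^k$, since $\widetilde{f}_{-1}(\mr{T}_{[m]})=1$ coincides with the genuine empty-face count and $\dim\mr{T}_{[m]}=d+m-2$. On the right, since $\dim\mr{T}_S^\circ=d+|S|-2$, the factorization $(1-t)^{d+m-1-i}=(1-t)^{m-|S|}\cdot(1-t)^{d+|S|-1-i}$ identifies the $S$-summand as
\[
(1-t)^{m-|S|}\sum_j\widetilde{h}_j(\mr{T}_S^\circ)\,t^j \ = \ \sum_k\widetilde{g}^{\langle m-|S|\rangle}_k(\mr{T}_S^\circ)\,t^k,
\]
by the very definition of $\widetilde{g}^{\langle m-|S|\rangle}$. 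Extracting the coefficient of $t^{k+m-1}$ yields the proposition.

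The only substantive step is the stratification in the first paragraph, which is a direct consequence of the Cayley construction together with the full-arrangement property of $\scr{T}$ recorded in Proposition~\ref{prp:CP2}(ii); once it is in place, the remainder is formal bookkeeping, with the tilde convention precisely absorbing the boundary correction at $f_{-1}$ and the discrepancy between the intrinsic dimension $d+|S|-1$ of $\mr{T}_S^\circ$ and the ambient dimension $d+m-1$ of $\mr{T}_{[m]}$.
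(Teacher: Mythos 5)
Your proof is correct and rests on the same idea as the paper's: the face-type stratification of $\mr{T}_{[m]}$ into the pieces $\mr{T}_S^\circ$ (using that $\scr{T}$ is a full arrangement), followed by the $f$-to-$h$ conversion at the ambient degree $d+m-1$. The only difference is bookkeeping: the paper first writes $h_{k+m-1}(\mr{T}_{[m]})=\sum_{S\subseteq[m]} g^{\langle m-|S|\rangle}_{k+m-1}(\mr{T}_S^\circ)$ and then converts $g$ to $\widetilde g$ via the binomial cancellation $\sum_{S\subseteq[m]}(-1)^{|S|}=0$, whereas you absorb the single empty face of $\mr{T}_{[m]}$ into the summands at the $f$-level via the convention $\widetilde f_{-1}(\mr{T}_S^\circ)=(-1)^{|S|-1}$ and land on the tilded version directly, which makes the empty-face accounting more transparent.
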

\begin{proof}
    From the stratification of the Cayley complex $\mr{T}_{[m]}$ into the open
    Cayley complexes $\mr{T}_{S}^\circ$, $S\subseteq [m]$, it follows by
    linearity of $h$-vector that
\begin{equation}\label{eq:splitt}
         h_{k+m-1}(\mr{T}_{[m]})\ =\ \sum_{S\subseteq [m]} {g}_{k+m-1}^{\langle
        m-|S|\rangle} (\mr{T}_{S}^\circ).
 \end{equation}
    Now, observe that the binomial correction terms, when passing from $g_\ast$ to $\widetilde{g}_\ast$, cancel out in the sum~\eqref{eq:splitt} since $\sum_{S\subseteq [m]} -1^{S}=0$, so that
    \[ 
        \sum_{S\subseteq [m]} {g}_{k+m-1}^{\langle m-|S|\rangle}
        (\mr{T}_{S}^\circ)\ =\ \sum_{S\subseteq [m]}
        \widetilde{g}_{k+m-1}^{\langle m-|S|\rangle} (\mr{T}_{S}^\circ).
        \qedhere 
    \] 
\end{proof} \enlargethispage{5mm}
To visualize Proposition~\ref{prp:recursive}, note that every $g$-vector entry $\widetilde{g}_{k+m-1}^{\langle m-|S|\rangle} (\mr{T}_{S}^\circ)$ can be written 
as a sum of $h$-numbers $\widetilde{h}_{i}^{\langle m-|S|\rangle} (\mr{T}_{S}^\circ)$. Hence, we can encode the formula of Proposition~\ref{prp:recursive} by recording the coefficients of these $h$-numbers in the following table:
\begin{table}[h!tb]
\small
 \centering
$\arraycolsep=10pt\def\arraystretch{1.1}
\begin{array}{c|cccccc}
  & |S| = m & m-1 & m-2 & m-3 & m-4 &  \cdots \\ \hline 
 k+m-1 & \ 1 & {\ 1} & {\ 1} & {\ 1} & {\ 1}& \cdots \\ 
 k+m-2 & \ 0 & \mbox{-}1 & \mbox{-}2 & \mbox{-}{3} & \mbox{-}{4} &\cdots \\ 
 k+m-3 & \ 0 & {\ 0} & \ 1 & \ 3 & {\ 6}& \cdots \\ 
 k+m-4 & \ 0 & {\ 0} & \ 0 & \mbox{-}1 &  \mbox{-}4 &\cdots \\ 
 \vdots & \vdots & \vdots & \vdots  & \vdots & \vdots &\ddots
\end{array}$
\vskip 2mm
\caption{Visualization of Proposition~\ref{prp:recursive}, recording the coefficients for $h$-numbers.}\label{tab:DS1}
\end{table}

For $k, m, d\in \Z$, let us 
now define ${\mr{c}}:\Z^3\rightarrow \Z$ by 
\[
    \mr{c}'(k, m, d) \ := \ 2k+2m-1-d \quad \Longleftrightarrow \quad
    k+m-1 \ = \ \frac{d+\mr{c}'(k, m,
    d)-1}{2},
\] and 
\[
    \mr{c}(k, m, d) \ := \ \min \{m, \max \{\mr{c}'(k, m, d),1 \}\}.
\]
Let us furthermore denote the \Defn{covering relation} by $\prec$, i.e., for
$R, S\subseteq [m]$ with $R \subseteq S$ we write $R \prec S$ to denote the fact that there is no set $Q$ with $R\subsetneq Q\subsetneq S$.

\begin{lem}\label{lem:recursive} 
    For $\mr{T}_{[m]}$ as above, any $1\le s\le m$, and
  $k+m-1\le \frac{d+m-1}{2}$, and with $\mr{c}=\mr{c}(k,m,d)$ we have 
    \begin{align*} 
 &  \ {h}_{k+m-1}(\mr{T}_{[m]})\\
     =&\ \sum_{j=0}^{\lfloor\nicefrac{m}{2}\rfloor}
     \sum_{s=\mr{c}-2j+1}^{m-2j} \sum_{\substack{S\subseteq [m]\\ |S|=s}}
     \binom{m-s}{2j}\Bigg(\widetilde{h}_{k+m-1-2j}(\mr{T}_S^\circ)-\frac{1}{2j+1}
     \sum_{\substack {R\prec S}}\widetilde{h}_{k+m-2-2j}(\mr{T}_R^\circ)\Bigg)
     \tag{A}\label{eqn:A}\\ 
      +&\ 
\sum_{j=0}^{\lfloor\nicefrac{m}{2}\rfloor} \sum_{\substack{S\subseteq [m]\\
|S|=\mr{c}-2j}} \tag{B}\label{eqn:B} 
\binom{m-|S|-1}{2j}\Bigg(\widetilde{h}_{k+m-1-2j}(\mr{T}_S^\circ)-\frac{m-S}{(m-|S|+1)(2j+1)} 
\sum_{\substack {R\prec S}}\widetilde{h}_{k+m-2-2j}(\mr{T}_R^\circ)\Bigg). 
\end{align*} 
\end{lem}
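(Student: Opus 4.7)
The plan is to derive the claimed identity by starting from Proposition~\ref{prp:recursive}, reorganizing the alternating binomial sums into consecutive pairs, shifting the ``lower-index'' term of each pair from a sum indexed by $S$ to one indexed by immediate predecessors $R \prec S$ via an incidence identity, and finally truncating the range of cardinalities by invoking Dehn--Sommerville duality on the small-$|S|$ residuals.

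First I would establish the pairing identity
\[
\widetilde{g}^{\langle r\rangle}_{k+m-1}(\mr{T}_S^\circ) \ = \ \sum_{j=0}^{\lfloor r/2\rfloor}\binom{r}{2j}\Bigl(\widetilde{h}_{k+m-1-2j}(\mr{T}_S^\circ)\,-\,\tfrac{r-2j}{2j+1}\,\widetilde{h}_{k+m-2-2j}(\mr{T}_S^\circ)\Bigr),
\]
obtained by grouping consecutive even--odd terms in the binomial expansion of $\widetilde{g}^{\langle r\rangle}_{k+m-1}$ and using the elementary identity $\binom{r}{2j}(r-2j)/(2j+1) = \binom{r}{2j+1}$. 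Substituting $r = m - |S|$ into Proposition~\ref{prp:recursive} then yields
\[
h_{k+m-1}(\mr{T}_{[m]}) \ = \ \sum_{S, j}\binom{m-|S|}{2j}\widetilde{h}_{k+m-1-2j}(\mr{T}_S^\circ) \, - \, \sum_{S, j}\binom{m-|S|}{2j+1}\widetilde{h}_{k+m-2-2j}(\mr{T}_S^\circ).
\]

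Next, to convert the second double sum into the desired form indexed by predecessors, I would use the incidence identity $\binom{m-|R|}{2j+1} = \sum_{S:\ R\prec S}\binom{m-|S|}{2j}/(2j+1)$, which holds because every $R \subseteq [m]$ of cardinality $t$ has exactly $m-t$ supersets of cardinality $t+1$. Swapping the roles of $R$ and $S$ in the double summation, one arrives at an \emph{unrestricted} version of expression~(A), i.e.\ the very same identity, but with $s = |S|$ ranging over $\{0, 1, \dots, m-2j\}$ instead of $\{\mr{c}-2j+1, \dots, m-2j\}$.

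The subtle and hardest step will be to truncate this unrestricted sum to $s \ge \mr{c}(k,m,d) - 2j + 1$ while reproducing the boundary correction (B) at $s = \mr{c}(k,m,d) - 2j$. For $s \le \mr{c} - 2j$, the index $k+m-1-2j$ crosses the midpoint of $\mr{T}_S^\circ$, so Lemma~\ref{lem:DSM} applied to $\mr{T}_S$ converts $\widetilde{h}_{k+m-1-2j}(\mr{T}_S^\circ)$ into $h_{d+|S|-k-m+2j}(\mr{T}_S)$ at a ``small'' dual index. Recursively reapplying Proposition~\ref{prp:recursive} to $\mr{T}_S$ and re-pairing produces contributions that can be absorbed into neighboring cardinality levels; Pascal's relation $\binom{m-|S|}{2j} = \binom{m-|S|-1}{2j} + \binom{m-|S|-1}{2j-1}$ together with the Step~1 identity account for the modified coefficients $\binom{m-|S|-1}{2j}$ and $(m-|S|)/((m-|S|+1)(2j+1))$ appearing in (B). The main obstacle is the combinatorial bookkeeping: tracking signs and partial cancellations across subsets of decreasing cardinality and verifying that all residual contributions from $s < \mr{c} - 2j$ cancel exactly, leaving behind only the modified boundary term (B) at $s = \mr{c} - 2j$.
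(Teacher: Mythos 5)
Your first two steps are correct and well-executed: the pairing identity $\widetilde{g}^{\langle r\rangle}_{k+m-1} = \sum_j\binom{r}{2j}\bigl[\widetilde{h}_{k+m-1-2j}-\tfrac{r-2j}{2j+1}\widetilde{h}_{k+m-2-2j}\bigr]$ follows from $\binom{r}{2j}\tfrac{r-2j}{2j+1}=\binom{r}{2j+1}$, and the incidence identity $\binom{m-|R|}{2j+1}=\tfrac{1}{2j+1}\sum_{S\colon R\prec S}\binom{m-|S|}{2j}$ is the count $(m-t)\binom{m-t-1}{2j}=(2j+1)\binom{m-t}{2j+1}$ for $|R|=t$. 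Substituting into Proposition~\ref{prp:recursive} and exchanging orders of summation does indeed give the ``all cardinality levels'' version of~\eqref{eqn:A}, i.e.\ with $s$ running over $\{0,\dots,m-2j\}$.

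Where the argument has a genuine gap is the third step, which is the heart of the lemma. You correctly name the ingredients (Dehn--Sommerville applied to $\mr{T}_S$ when the index $k+m-1-2j$ crosses the midpoint $\tfrac{d+|S|-1}{2}$, followed by a recursive descent), but the proposed mechanism --- Pascal's relation plus ``absorption into neighboring cardinality levels'' --- is not carried out, and you yourself flag the sign-tracking and cancellation verification as an obstacle. That verification \emph{is} the proof; without it you have not established that the low-cardinality residue collapses exactly to the boundary correction~\eqref{eqn:B} with the modified coefficients $\binom{m-|S|-1}{2j}$ and $\tfrac{m-|S|}{(m-|S|+1)(2j+1)}$. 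In particular, your proposal never sets up the induction on $m$ that makes the recursion bottom out, and has no base case. The paper's proof avoids the difficulty by \emph{reversing your order of operations}: it first splits the unpaired sum $\sum_S\sum_i(-1)^i\binom{m-|S|}{i}\widetilde{h}_{k+m-1-i}(\mr{T}_S^\circ)$ into a ``high-$i$'' part $(\upalpha)$ and a ``low-$i$'' part $(\upbeta)$ according to whether $i\le\mr{c}-|S|$; it applies Dehn--Sommerville to $(\upbeta)$ (where $\widetilde{h}_{k'}(\mr{T}_S^\circ)$ is evaluated at $k'\ge\tfrac{d+|S|-1}{2}$) to land at $h_{d+|S|-k-m}(\mr{T}_S)$ at a small dual index; it then invokes the induction hypothesis on $\mr{T}_S$ for $|S|<m$; and only \emph{then} pairs consecutive terms. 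Splitting before pairing keeps the $(\upbeta)$ chunk coherent across cardinality levels so the inductive substitution is clean, whereas pairing first (as you do) entangles contributions from $|S|$ and $|S|-1$ and makes it substantially harder to isolate and collapse the low-$s$ residue.

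To repair your approach you would essentially have to undo the pairing before the truncation, at which point you would be following the paper's route. Either adopt the paper's ordering (split, Dehn--Sommerville, induct, pair), or, if you insist on pairing first, supply the full inductive bookkeeping --- with base case $m=1$, induction hypothesis on $\mr{T}_S$, and explicit verification that the contributions with $s<\mr{c}-2j$ cancel and the $s=\mr{c}-2j$ layer acquires the stated coefficients.
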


\begin{proof} 
    We use induction on $m$, the case $m=1$ being trivial as the only term in
    the sums~\eqref{eqn:A} and~\eqref{eqn:B} is 
    $
        \widetilde{h}_{k}(\mr{T}_{[1]}^\circ) \  =  \ h_{k}
        (\mr{T}_{[1]}^\circ)+(-1)^{k}\binom{d}{k} \ = \ {h}_{k}(\mr{T}_{[1]}).
    $
    For $m>1$, note that by  Proposition~\ref{prp:recursive} and the
    definition of~$\widetilde{g}$, we
    have 
\[
    h_{k+m-1}(\mr{T}_{[m]}) \  =\ 
    \sum_{S\subseteq [m]} \widetilde{g}_{k+m-1}^{\langle m-|S|\rangle}
        (\mr{T}_{S}^\circ)
    \ = \  \sum_{\substack{S\subseteq [m]}} \sum_{i=0}^{m-|S|} 
(-1)^{i} \binom{m-|S|}{i} \widetilde{h}_{k+m-1-i}(\mr{T}_{S}^{\circ}). 
\]
The coefficients of $\widetilde{h}_{\ast}(\mr{T}_{S}^{\circ})$ are summarized in Table~\ref{tab:DS1}.
We split this last sum into two subsums, cf.\ Table~\ref{tab:DS2}, and obtain
\begin{align*} \ &\ 
\sum_{\substack{S\subseteq [m]}}\ \
\sum_{i=\mr{c}-|S|+1}^{m-|S|} (-1)^{i} \binom{m-|S|}{i} 
\widetilde{h}_{k+m-1-i}(\mr{T}_{S}^{\circ}) \tag{$\upalpha$}\label{eqn:ddag}. 
\\ +\ &\  
\sum_{\substack{S\subseteq [m]}} \ \ 
\sum_{i=0}^{\mr{c}-|S|} (-1)^{i} \binom{m-|S|}{i}
\widetilde{h}_{k+m-1-i}(\mr{T}_{S}^{\circ}) \tag{$\upbeta$}\label{eqn:dag}
\end{align*}

\begin{table}[h!tb]
\small
 \centering
$\arraycolsep=10pt\def\arraystretch{1.1}
\begin{array}{c|cccccc}
  & |S| = m & m-1 & m-2 & m-3 & m-4 &  \cdots \\ \hline 
 k+m-1 & \ 1 & {\ 1} & \hlightb{\ 1} & \hlightb{\ 1} & \hlightb{\ 1}& \cdots \\ 
 k+m-2 & \ 0 & \mbox{-}1 & \mbox{-}2 & \hlightb{-3} & \hlightb{-4} &\cdots \\ 
 k+m-3 & \ 0 & {\ 0} & \ 1 & \ 3 & \hlightb{\ 6}& \cdots \\ 
 k+m-4 & \ 0 & {\ 0} & \ 0 & \mbox{-}1 &  \mbox{-}4 &\cdots \\ 
 \vdots & \vdots & \vdots & \vdots  & \vdots & \vdots &\ddots
\end{array}$
\vskip 2mm
\caption{Example of the splitting into two subsums for
$k+m-1=\frac{d+m-3}{2}$. Summands of~\eqref{eqn:dag} highlighted.}\label{tab:DS2}
\end{table}

Notice now that in the sum~\eqref{eqn:dag}, whenever $\widetilde{h}_{k'}(\mr{T}_{S}^{\circ})$ is 
evaluated, then $k'\ge \frac{d+|S|-1}{2}$. Therefore, we may use the 
Dehn--Sommerville equations to substitute~\eqref{eqn:dag} by 
\[\sum_{\substack{S\subseteq [m]}} 
\sum_{i=0}^{\mr{c}-|S|} (-1)^{i} \binom{m-|S|}{i} 
{h}_{d+|S|-k-m}(\mr{T}_{S})\] with $d+|S|-k-m\le \frac{d+|S|-1}{2}$. We may now evaluate ${h}_{|S|-m+i}(\mr{T}_{S})$ using the induction assumption, as illustrated in Table~\ref{tab:DS3}. 
We obtain, after also rewriting $\upalpha$ to (A'), a decomposition
\begin{align*} 
   &\ h_{k+m-1}(\mr{T}_{[m]}) \\
       \ =& \ \sum_{\substack{S\subseteq [m]}}\ \
\sum_{i=\mr{c}-|S|+1}^{m-|S|} (-1)^{i} \binom{m-|S|}{i} 
\widetilde{h}_{k+m-1-i}(\mr{T}_{S}^{\circ}) \\ 
    \ +&\ \sum_{j=0}^{{m}} \sum_{\substack{S\subseteq [m]\\ |S|=\mr{c}-j}}
    (-1)^j\binom{m-|S|-1}{j} 
    \widetilde{h}_{k+m-1-j}(\mr{T}_S^\circ). \\
    \ =& \ \sum_{j=0}^{{m}} \sum_{s=\mr{c}-j+1}^{m-j}
    \sum_{\substack{S\subseteq
        [m]\\ |S|=s}} (-1)^j\binom{m-s}{j}
        \widetilde{h}_{k+m-1-j}(\mr{T}_S^\circ) \tag{A'}\label{eqn:A'} \\ 
    \ +&\ \sum_{j=0}^{{m}} \sum_{\substack{S\subseteq [m]\\ |S|=\mr{c}-j}}
    (-1)^j\binom{m-|S|-1}{j} 
    \widetilde{h}_{k+m-1-j}(\mr{T}_S^\circ)\tag{B'}\label{eqn:B'}. 
\end{align*}
\begin{table}[h!tb]
\small
 \centering
$\arraycolsep=10pt\def\arraystretch{1.1}
\begin{array}{c|cccccc}
  & |S| = m & m-1 & m-2 & m-3 & m-4 &  \cdots \\ \hline 
 k+m-1 & \ 1 & {\ 1} & \hlightb{\ 1} & \hlightb{\ 0} & \hlightb{\ 0}& \cdots \\ 
 k+m-2 & \ 0 & \mbox{-}1 & \mbox{-}2 & \hlightb{-2} & \hlightb{\ 0} &\cdots \\ 
 k+m-3 & \ 0 & {\ 0} & \ 1 & \ 3 & \hlightb{\ 3}& \cdots \\ 
 k+m-4 & \ 0 & {\ 0} & \ 0 & \mbox{-}1 &  \mbox{-}4 &\cdots \\ 
 \vdots & \vdots & \vdots & \vdots  & \vdots & \vdots &\ddots
\end{array}$
\vskip 2mm
\caption{Applying the Dehn--Sommerville relations and induction simplifies the array.}\label{tab:DS3}
\end{table}

We now pair summands with positive coefficient (specifically  summands of $\widetilde{h}_{k+m-1-j}(\mr{T}_S^\circ)$, $j$ even) with summands $\widetilde{h}_{k+m-2-j}(\mr{T}_R^\circ)$, where $R\prec S$ (see also Table~\ref{tab:DS4}).

\begin{table}[h!tb]
\small
 \centering
$\arraycolsep=10pt\def\arraystretch{1.1}
\begin{array}{c|cccccc}
  & |S| = m & m-1 & m-2 & m-3 & m-4 &  \cdots \\ \hline 
 k+m-1 & \ 1 & \ {1} & {\ 1} & {\ 0} & {\ 0}& \cdots \\ 
 k+m-2 & \ 0 & \mbox{-}1 & {\mbox{-}2} & {-2} & {\ 0} &\cdots \\ 
 k+m-3 & \ 0 & {\ 0} & \ \hlightr{1} & \ {3} & {\ 3}& \cdots \\ 
 k+m-4 & \ 0 & {\ 0} & \ 0 & \hlightr{\mbox{-}1} &  {\mbox{-}4} &\cdots \\ 
 \vdots & \vdots & \vdots & \vdots  & \vdots & \vdots &\ddots
\end{array}$
\caption{Pairing positive and negative terms of the sums~\eqref{eqn:A'}
and~\eqref{eqn:B'}, corresponding to sets $S \subseteq [m]$ and $R\prec S$.}\label{tab:DS4}
\end{table}

We see 
that the sum~\eqref{eqn:A'} is equal to \[\sum_{j=0}^{\lfloor\nicefrac{m}{2}\rfloor} 
\sum_{s=\mr{c}-2j+1}^{m-2j} \sum_{\substack{S\subseteq
[m]\\ |S|=s}} 
\binom{m-s}{2j}\Bigg(\widetilde{h}_{k+m-1-2j}(\mr{T}_S^\circ)-\frac{1}{2j+1} 
\sum_{\substack {R\prec S}}\widetilde{h}_{k+m-2-2j}(\mr{T}_R^\circ)\Bigg)\]
and the sum~\eqref{eqn:B'} equals \[ 
\sum_{j=0}^{\lfloor\nicefrac{m}{2}\rfloor} \sum_{\substack{S\subseteq [m]\\ |S|=\mr{c}-2j}} 
\binom{m-|S|-1}{2j}\Bigg(\widetilde{h}_{k+m-1-2j}(\mr{T}_S^\circ)-\frac{m-|S|}{(m-|S|+1)(2j+1)} 
\sum_{\substack {R\prec S}}\widetilde{h}_{k+m-2-2j}(\mr{T}_R^\circ)\Bigg). \qedhere\] 

\end{proof}

We conclude in particular that it is sufficient to know the various initial
$h$-vector entries.

\begin{cor}\label{cor:lh_hh}
    The $h$-vector entries $h_{k+m-1}(\mr{T}_{[m]}^\circ)$ for  $k+m-1>
    \lfloor\frac{d+m-1}{2}\rfloor$ are determined by
    $\wt{h}_{k'+m-1}(\mr{T}_{S}^\circ)$ for $k'+m-1\le
    \lfloor\frac{d+m-1}{2}\rfloor$ and $S \subseteq [m]$.
\end{cor}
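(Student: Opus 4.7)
The plan is to combine Dehn--Sommerville duality (Lemma~\ref{lem:DSM}) with the explicit expansion of Lemma~\ref{lem:recursive}. Write $\ell := k+m-1$, so that the hypothesis is $\ell > \lfloor\tfrac{d+m-1}{2}\rfloor$. Lemma~\ref{lem:DSM} rewrites the quantity of interest as
\[
    h_{\ell}(\mr{T}_{[m]}^\circ) \ = \ \widetilde{h}_{\ell}(\mr{T}_{[m]}^\circ) - (-1)^{k}\binom{d+m-1}{d-k} \ = \ h_{d-k}(\mr{T}_{[m]}) - (-1)^{k}\binom{d+m-1}{d-k},
\]
so it is enough to prove that $h_{d-k}(\mr{T}_{[m]}) = h_{d-\ell+m-1}(\mr{T}_{[m]})$ is itself determined by the initial $\widetilde{h}$-data of the Cayley subcomplexes $\mr{T}_S^\circ$.

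The key numerical observation I would verify next is that the index has been pulled back into the range where Lemma~\ref{lem:recursive} applies: the hypothesis $\ell \ge \lfloor\tfrac{d+m-1}{2}\rfloor + 1 \ge \tfrac{d+m-1}{2}$ (checked in both parities of $d+m-1$) gives
\[
    d-\ell+m-1 \ \le \ \left\lfloor\tfrac{d+m-1}{2}\right\rfloor.
\]
Hence Lemma~\ref{lem:recursive}, applied with parameter $k$ replaced by $d-\ell$, expresses $h_{d-\ell+m-1}(\mr{T}_{[m]})$ as an explicit linear combination of terms $\widetilde{h}_{(d-\ell+m-1)-2j}(\mr{T}_S^\circ)$ and $\widetilde{h}_{(d-\ell+m-2)-2j}(\mr{T}_R^\circ)$ over subsets $S\subseteq [m]$ and coverings $R\prec S$. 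Every index appearing is at most $d-\ell+m-1\le \lfloor\tfrac{d+m-1}{2}\rfloor$, so these are precisely the initial-range $\widetilde{h}$-numbers allowed in the statement.

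Substituting back into the Dehn--Sommerville identity from the first display then exhibits $h_{\ell}(\mr{T}_{[m]}^\circ)$ as a linear combination (with computable coefficients, plus an explicit binomial correction term) of the $\widetilde{h}_{k'+m-1}(\mr{T}_S^\circ)$ for $k'+m-1 \le \lfloor\tfrac{d+m-1}{2}\rfloor$, which is the claim. There is no real obstacle here beyond careful index bookkeeping; the only point that requires attention is the parity check showing $d-\ell+m-1\le \lfloor\tfrac{d+m-1}{2}\rfloor$, which secures admissibility of Lemma~\ref{lem:recursive}.
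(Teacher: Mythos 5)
Your proof is correct and is exactly the argument intended by the paper: apply the Dehn--Sommerville duality of Lemma~\ref{lem:DSM} to replace $h_{k+m-1}(\mr{T}_{[m]}^\circ)$ by $h_{d-k}(\mr{T}_{[m]})$ (with a constant binomial correction), verify that the shifted index $d-(k+m-1)+m-1$ lands in the admissible range $\le \lfloor\tfrac{d+m-1}{2}\rfloor$, and then expand via Lemma~\ref{lem:recursive} into initial-range $\wt{h}$-numbers of the sub-Cayley complexes. Your parity check $d-\ell+m-1\le\lfloor\tfrac{d+m-1}{2}\rfloor$ is correct (it follows from $\lceil n/2\rceil-1\le\lfloor n/2\rfloor$), and the indices $k'+m-1-2j$, $k'+m-2-2j$ produced by the lemma are indeed all bounded by $d-\ell+m-1$.
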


Passing from $h$-vectors to $f$-vectors and noting that Minkowski neighborly
families maximize the `small' $h$-entries in the sense of
Corollary~\ref{cor:minkh}, we immediately conclude Proposition
\ref{prop:M-neigh-fvector}.

\subsection{The Upper Bound Theorem for Minkowski sums}\label{ssec:UBTM}

We can finally give sharp and explicit bounds for $h_k(\mr{T}_{[m]}^\circ)$.
Let us define the functions 
\[\widetilde{\MUBT}^{\langle\ell\rangle}:\Z^{[m]}\times\Z\times\Z\longrightarrow \Z,\ m\ge 0,\ \ell\ge 0\] 
(where we abbreviate $\widetilde{\MUBT}:=\widetilde{\MUBT}^{\langle0\rangle}$), and
\[{\MUBT}:\Z^{[m]}\times\Z\times\Z\longrightarrow \Z,\ m\ge 0\] 
by the following conditions:
\begin{compactenum}[\rm (a)]
\item \textbf{Basic relation:} For all $k,\,d\ge 0$ 
    \[
        \widetilde{\MUBT}^{\langle\ell\rangle}
        (\cdot,\cdot,k) \ = \ \widetilde{\MUBT}^{\langle \ell-1 \rangle}
        (\cdot,\cdot,k)-\widetilde{\MUBT}^{\langle \ell-1
        \rangle}(\cdot,\cdot,k-1);
    \]
\item \textbf{Linearity:} For all $-m+1\le k\le d$
    \[
        {\MUBT}(\alpha,d,k+m-1)\ = \ \sum_{S\subseteq [m]}
        \widetilde{\MUBT}^{\langle m-|S| \rangle} (\alpha_{S},d,k+m-1),
    \]
    where $\alpha_{S}\in \Z^{S}$ is the restriction of $\alpha\in \Z^{[m]}$
    to the index set $S\subseteq [m]$;
\item \textbf{Dehn--Sommerville relation:} For all $-m+1\le k\le d$
    \[
        {\MUBT}
        (\alpha,d,d-k) \ = \
        \widetilde{\MUBT}(\alpha,d,k+m-1);
    \]
\item \textbf{Initial terms:} For $k+m-1\le\frac{d+m-1}{2}$,
    \[
        \widetilde{\MUBT}(\alpha,d,k+m-1)\ = \
        \sum_{\emptyset\neq S \subseteq [m]} (-1)^{m-|S|}
        \binom{|\alpha_{S}|-d+k-1}{k+m-1}.
    \] 
\end{compactenum}
Using the fact that $\MUBT$ and $\widetilde{\MUBT}$ encode the $h$-vector for Minkowski neighborly families, whose $h$-vectors satisfy relations (a)-(d) and which exist by Theorem~\ref{thm:M-neighborly}, we conclude consistency of these relations. 
By linearity (a) and the definition of the initial terms (d), we see that:
\begin{lem}\label{lem:omega}
    For all $k+m\le\frac{d+m-1}{2}$
    \begin{align*}
&\  (k+m)\widetilde{\MUBT} (\alpha,d,k+m) \\
 =&\  (f_0(\mr{T}_{[m]})-d+k)\widetilde{\MUBT}
(\alpha,d,k+m) (\mr{T}_{[m]}^\circ)\\
 +& \ \sum_{i\in[m]} f_0(\mr{T}_{\{i\}})
\widetilde{\MUBT}^{\langle1\rangle} (\alpha,d,k+m-1).
\end{align*}

\vspace{-3.0em} \qed
\end{lem}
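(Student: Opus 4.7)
The plan is to unfold both sides using the initial-term formula~(d), which applies throughout the hypothesized range, and then verify the identity via a direct binomial manipulation. I first observe that the hypothesis $k+m \le \frac{d+m-1}{2}$ places each of the quantities $\widetilde{\MUBT}(\alpha,d,k+m)$, $\widetilde{\MUBT}(\alpha,d,k+m-1)$, and $\widetilde{\MUBT}(\alpha_{[m]\setminus\{i\}},d,k+m-j)$ for $j\in\{1,2\}$ safely into the initial-terms regime, so (d) supplies an explicit closed form for each.

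Next I multiply the initial-terms formula for $\widetilde{\MUBT}(\alpha,d,k+m)$ by $k+m$ and apply the elementary identity $(k+m)\binom{N}{k+m} = N\binom{N-1}{k+m-1}$ with $N := |\alpha_S|-d+k$. Splitting $N = (|\alpha_{[m]}|-d+k) - \sum_{i \notin S}\alpha_i$ separates the global from the corrective contribution: the $(|\alpha_{[m]}|-d+k)$-piece pulls out of the $S$-sum and, by another application of (d) at $k+m-1$, reassembles as $(f_0(\mr{T}_{[m]})-d+k)\,\widetilde{\MUBT}(\alpha,d,k+m-1)$.

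For the remaining correction, swapping the order of summation yields $-\sum_{i \in [m]}\alpha_i \sum_{\emptyset \neq S \subseteq [m]\setminus\{i\}}(-1)^{m-|S|}\binom{|\alpha_S|-d+k-1}{k+m-1}$. Writing $(-1)^{m-|S|} = -(-1)^{(m-1)-|S|}$ absorbs the minus sign. A single application of Pascal's rule, in the form $\binom{N-1}{k+m-1} = \binom{N}{k+m-1} - \binom{N-1}{k+m-2}$, combined with the initial-terms expressions for $\widetilde{\MUBT}$ on the $(m-1)$-tuple $\alpha_{[m]\setminus\{i\}}$ at indices $k+m-1$ and $k+m-2$, shows that the inner sum is precisely $\widetilde{\MUBT}^{\langle 1\rangle}(\alpha_{[m]\setminus\{i\}},d,k+m-1)$, via relation~(a). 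Substituting $\alpha_i = f_0(\mr{T}_{\{i\}})$ and $|\alpha_{[m]}| = f_0(\mr{T}_{[m]})$ then gives the claim.

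The computation is essentially a bookkeeping exercise; the only point requiring care is checking that the initial-terms regime of (d) accommodates simultaneously the $m$-indexed quantity at degree $k+m-1$ and the $(m-1)$-indexed quantities at degrees $k+m-1$ and $k+m-2$, which is precisely guaranteed by the hypothesis $k+m \le \frac{d+m-1}{2}$. No other step depends on the specific combinatorics of Cayley complexes; the assertion is a purely formal consequence of the four defining relations (a)--(d) of $\MUBT$ and $\widetilde{\MUBT}$.
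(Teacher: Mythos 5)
Your derivation is correct and is precisely the binomial calculation the paper waves at with the phrase ``By linearity (a) and the definition of the initial terms (d)'' before the immediate $\qed$, so you are filling in exactly the intended proof. Incidentally, your computation produces $(f_0(\mr{T}_{[m]})-d+k)\,\widetilde{\MUBT}(\alpha,d,k+m-1) + \sum_{i\in[m]} f_0(\mr{T}_{\{i\}})\,\widetilde{\MUBT}^{\langle 1\rangle}(\alpha_{[m]\setminus\{i\}},d,k+m-1)$, which exposes two small typos in the printed statement (the degree in the first summand should be $k+m-1$, not $k+m$ with the stray ``$(\mr{T}_{[m]}^\circ)$'', and the second summand should take the truncated tuple $\alpha_{[m]\setminus\{i\}}$); the corrected form is the one consistent with Theorem~\ref{thm:minkit} and with the use of the lemma in the proof of Lemma~\ref{lem:cenm}.
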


The next lemma holds all the missing ingredients necessary for resolving the
Upper Bound Conjecture for Minkowski sums.

\begin{lem}\label{lem:cenm}
    Let $P_{[m]} = (P_1,\dots,P_m)$ be a family of simplicial $d$-polytopes in
    relatively general position in $\R^d$ with Cayley complex $\mr{T}_{[m]} =
    \mr{T}(P_1,\dots,P_m)$. Let $-m+1 \le k\le d$ such that  $k+m-1\le \lfloor
    \frac{d+m-1}{2}\rfloor$ and let $0 \le \delta\le\frac{d+1}{d-1}$
    be any real parameter. Then
\begin{align}
\begin{split}
&\ \widetilde{g}_{k+m-1}(\mr{T}_{[m]}^\circ)-\delta  \sum_{S\prec [m]} \widetilde{g}_{k+m-2}(\mr{T}_S^\circ)\\
\le\ &\  \widetilde{\MUBT}^{\langle 1 \rangle}(f_0(P_{[m]}),d,k+m-1)-\delta  \sum_{S\prec [m]} \widetilde{\MUBT}^{\langle1\rangle}(f_0(P_{S}),d,k+m-2)
\end{split}
\end{align}
If $\delta<\frac{d+1}{d-1}$, then equality holds if and only if it holds for each summand separately.
\end{lem}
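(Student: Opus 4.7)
The proof combines the local-to-global $g$-vector inequality of Theorem~\ref{thm:minkit}, the sharp initial-term bounds from Corollary~\ref{cor:minkh}, and the recursive identity satisfied by $\widetilde{\MUBT}^{\langle 1\rangle}$ recorded in Lemma~\ref{lem:omega}. The parameter $\delta$ records how much of the codimension-one boundary contribution we absorb, and its admissible range terminates at the threshold where the residual coefficient on the ambient complex becomes zero.

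First, I would reduce to an inequality among ordinary $g$-vectors. The renormalization $\widetilde{h}_{j}(\mr{T}_{[m]}^\circ) = h_{j}(\mr{T}_{[m]}^\circ) + (-1)^{j-m+1}\binom{d+m-1}{j}$ involves only binomial corrections that depend on $d,m,j$ but not on the family $P_{[m]}$; the same normalization is built into $\widetilde{\MUBT}^{\langle 1 \rangle}$ by its defining axioms~(a) and~(c). Consequently these corrections cancel on both sides of the claimed inequality, and it suffices to prove the analog for ordinary $g$-vectors and the corresponding non-normalized $\MUBT^{\langle 1 \rangle}$.

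Next, apply Theorem~\ref{thm:minkit} (whose hypothesis is supplied by Proposition~\ref{prp:CP2}(iv), since $(\mr{T}_{[m]},\scr{T})$ is universally Cohen--Macaulay) to obtain
\[
    (k+m-1)\, g_{k+m-1}(\mr{T}_{[m]}^\circ) \ \le \ \bigl(f_0(\mr{T}_{[m]}) - d - m + 1\bigr)\, h_{k+m-2}(\mr{T}_{[m]}^\circ) \, + \, \sum_{S \prec [m]} f_0(\mr{T}_{[m] \setminus S})\, g_{k+m-2}(\mr{T}_S^\circ).
\]
By Lemma~\ref{lem:omega}, $\MUBT^{\langle 1 \rangle}$ satisfies the same identity with equality, so subtracting produces an estimate on $g_{k+m-1}(\mr{T}_{[m]}^\circ) - \MUBT^{\langle 1 \rangle}(f_0(P_{[m]}), d, k+m-1)$ in terms of (a)~the corresponding boundary differences $g_{k+m-2}(\mr{T}_{S}^\circ) - \MUBT^{\langle 1 \rangle}(f_0(P_S), d, k+m-2)$, and (b)~a residual cross-term proportional to $h_{k+m-2}(\mr{T}_{[m]}^\circ) - \widetilde{\MUBT}(f_0(P_{[m]}), d, k+m-2)$. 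Since $k+m-2$ lies in the sharp range of Corollary~\ref{cor:minkh}, the residual cross-term is non-positive, so the estimate can be closed after absorbing $\delta$ times the boundary differences.

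The key coefficient check is that subtracting $\delta \sum_{S \prec [m]} \widetilde{g}_{k+m-2}(\mr{T}_S^\circ)$ from the left-hand side leaves the effective coefficient on $h_{k+m-2}(\mr{T}_{[m]}^\circ)$ non-negative precisely when $\delta \le (d+1)/(d-1)$. When $\delta < (d+1)/(d-1)$ this coefficient is strictly positive, and equality in the combined inequality then forces equality both in Corollary~\ref{cor:minkh} (for the $h_{k+m-2}$ cross-term) and in each of the boundary $\widetilde{g}$-bounds individually; together with the characterization of equality in Theorem~\ref{thm:upb}, this yields the desired summand-by-summand tightness. The main obstacle will be the careful accounting in this last step: matching the weights $f_0(\mr{T}_{\{i\}})$ from Theorem~\ref{thm:minkit} against the uniform weighting $\delta$, and identifying $(d+1)/(d-1)$ as the exact threshold (closely tied to the reverse isoperimetric estimate of Section~\ref{ssec:rel_diff}, which governs the interplay between the ambient Cayley complex and its codimension-one strata $\mr{T}_S$).
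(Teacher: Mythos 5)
Your proposal follows the paper's proof in all essentials: apply Theorem~\ref{thm:minkit} with $k$ shifted down by one, exploit the fact that Lemma~\ref{lem:omega} gives the analogous recursion for $\widetilde{\MUBT}^{\langle 1\rangle}$ with equality, subtract the $\delta$-weighted boundary terms, check that the resulting coefficients are non-negative, and then bound the surviving terms via the initial-term estimates. However, there are a few imprecisions worth flagging. First, the coefficient on the $h_{k+m-2}$ term in Theorem~\ref{thm:minkit} is $f_0(\mr{T}_{[m]})-d-m$, not $f_0(\mr{T}_{[m]})-d-m+1$. Second, and more substantively, you attribute the role of $\delta$ to the ``effective coefficient on $h_{k+m-2}(\mr{T}_{[m]}^\circ)$'' becoming negative; in fact the coefficient on that term is $(f_0(\mr{T}_{[m]})-d-m)/(k+m-1)$ and is independent of $\delta$, while the threshold comes from the coefficients $f_0(\mr{T}_{[m]\setminus S})/(k+m-1)-\delta$ in front of the boundary terms $\widetilde{g}_{k+m-2}(\mr{T}_S^\circ)$. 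Third, the value $(d+1)/(d-1)$ is not tied to the reverse isoperimetric inequality of Section~\ref{ssec:rel_diff} (that is the content of the \emph{alternative} route sketched in Remark~\ref{rem:bb}); it comes from the elementary facts that purity gives $f_0(P_i)\ge d+1$ and that, for the inequality to be nontrivial, $k\ge 1$ forces $m\le d-1$ and hence $k+m-1\le d-1$, yielding $f_0(\mr{T}_{[m]\setminus S})/(k+m-1)\ge (d+1)/(d-1)\ge\delta$. Finally, you should state explicitly how you bound the boundary terms $\widetilde{g}_{k+m-2}(\mr{T}_S^\circ)$ and $\widetilde{g}_{k+m-2}(\mr{T}_{[m]}^\circ)$ by the corresponding $\widetilde{\MUBT}^{\langle 1\rangle}$ values --- this is the statement of the lemma itself with $\delta=0$ applied to subfamilies and to $k-1$, so the argument is really an induction, which your write-up leaves implicit.
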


\begin{rem}\label{rem:bb}
Alternatively, and as an application of Theorem~\ref{thm:rii}, one can prove a result that can be used just as well to prove the Upper Bound Theorem on Minkowski sums: For a family $P_{[m]}$ in $\R^d$ of polytopes in relative general position, and a monotone increasing family of nonnegative real parameters $(\delta_s),\ 0\le s\le m$, we have
\[\sum_{S \subseteq [m]} (-1)^{m-\# S} \delta_{|S|} g_{k+|S|-1}(\mr{T}_{S}^\circ)\ \le\ \sum_{S \subseteq [m]} (-1)^{m-\# S} \delta_{|S|} \widetilde{\MUBT}^{\langle1\rangle}(f_0(P_{S}),d,k+|S|-1),\]
with equality if and only if it holds for each summand. For the proof, notice that following Theorem~\ref{thm:rii}, we have for $v\in P_i$ an inequality
\[\sum_{i\in S \subseteq [m]} (-1)^{m-\# S} h_{k+|S|-1}(\Lk(v,\mr{T}_{S}^\circ))\ \le \ \sum_{i\in S \subseteq [m]} (-1)^{m-\# S} \Big(h_{k+|S|-1}(\mr{T}_{S}^\circ)-g_{k+|S|-1}(\mr{T}_{S\setminus \{i\}}^\circ) \Big),\]
sum over all $v$ in $\mr{T}_{[m]}$ and argue as in the proof of Lemma~\ref{lem:cenm} below.
\end{rem}

\begin{proof}[\textbf{Proof of Lemma~\ref{lem:cenm}}]
Using Theorem~\ref{thm:minkit}, we obtain
\begin{align*}
 &\ \  \widetilde{g}_{k+m-1}(\mr{T}_{[m]}^\circ)-\delta  \sum_{S\prec [m]}\widetilde{g}_{k+m-2}(\mr{T}_S^\circ)\\
\le &\ \left( \frac{f_0(\mr{T}_{[m]})-d-m}{k+m-1}\right)\widetilde{g}_{k+m-2}(\mr{T}_{[m]}^\circ)+ \sum_{S\prec [m]}\left(\frac{f_0(\mr{T}_{[m]{\setminus} S}) }{k+m-1}-\delta \right)\widetilde{g}_{k+m-2}(\mr{T}_S^\circ)
\end{align*}
As the equality is only nontrivial if $k\ge 1$ and therefore \[m\le 
    \frac{d+m-1}{2}\Longleftrightarrow m\le d-1,\]
    we may assume that $k+m-1\le d-1$. Hence, as $f_0(P_i)\ge d+1$ and $k\le d$, we have $\frac{f_0(\mr{T}_{[m]{\setminus} S}) }{k+m-1}-\delta\ge 0$, and the latter sum is bounded from above by $\MUBT$:
\begin{align*}
&\  \left( \frac{f_0(\mr{T}_{[m]})-d-m}{k+m-1}\right) \widetilde{\MUBT}^{\langle 1 \rangle}(f_0(P_{[m]}),d,k+m-2)\\ 
+ &\ \sum_{S\prec [m]}\left(\frac{f_0(\mr{T}_{[m]{\setminus} S}) }{k+m-1}-\delta \right) \widetilde{\MUBT}^{\langle1\rangle}(f_0(P_{S}),d,k+m-2)\\
=  &\ \widetilde{\MUBT}^{\langle 1 \rangle}(f_0(P_{[m]}),d,k+m-1)-\delta  \sum_{S\prec [m]} \widetilde{\MUBT}^{\langle1\rangle}(f_0(P_{S}),d,k+m-2)
\end{align*}
where the last equality follows from Lemma~\ref{lem:omega}. The equality case follows directly from Theorem~\ref{thm:minkit}.
\end{proof}
We summarize the upper bounds in the following theorem. We let $\mr{c}$ as in Lemma~\ref{lem:recursive}.
\enlargethispage{5mm}

\begin{thm}\label{mthm:minkm}
    Let $P_{[m]} = (P_1,\dots,P_m)$ be a family of simplicial $d$-polytopes in
    relatively general position in $\R^d$ with $\n = f_0(P_{[m]})$. For the
    corresponding Cayley complex
    $\mr{T}_{[m]}=\mr{T}(P_{[m]})$ the following holds
\begin{compactenum}[\rm (1)]
\item 
$\MUBT$ \textbf{is an upper bound:} 
\begin{compactenum}[\rm (a)]
\item for any $-m+1\le k\le d$ with $k+m-1\le\frac{d+m-1}{2}$
    \[
        \widetilde{h}_{k+m-1}(\mr{T}_{[m]}^\circ) \ \le \
        \widetilde{\MUBT} (\n,d,k+m-1),
    \]
\item for any $-m+1\le k\le d$ with $k+m-1 \le
    \frac{d+m-1}{2}$, we have
    \[
        h_{k+m-1}(\mr{T}_{[m]}) \ \le \   {\MUBT} (\n,d,k+m-1),
    \]
\end{compactenum}
\item \textbf{Equality cases:}
\begin{compactenum}[\rm (a)]
\item equality holds up to some $k_0+m-1$ in $\mr{(1a)}$ if and only if, for all $S\subseteq [m]$,
all non-faces of $\mr{T}_{[m]}$ of cardinality $ k_0+m-1$
are supported in some $\V(\mr{T}_R)$, $R\subsetneq S$.
\item equality holds up to some $k_0+m-1$ in $\mr{(1b)}$ if and only if, for a nonface of cardinality $k_0+m-1-q$ in $\mr{T}_S$, $\# S \ge \mr{c}(k_0,m,d)-q$ is a nonface
are supported in some $\V(\mr{T}_R)$, $R\subsetneq S$.
\end{compactenum}
\item \textbf{Tightness: } 
    there is a pure collection of $m$ polytopes ${Q_i}$ in $\R^d$ with
    $f_0(Q_{[m]})=f_0(P_{[m]})$ such that for all $-m+1\le k\le d$ with
    $k+m-1\le\frac{d+m-1}{2}$
\begin{align*}
         \widetilde{h}_{k+m-1}(\mr{T}_{[m]}^\circ(Q_i))\ =& \ \ \widetilde{\MUBT}
        (f_0(Q_{[m]}),d,k+m-1), \text{ and} \\
        h_{k+m-1}(\mr{T}_{[m]}(Q_i))\ =& \ \ {\MUBT} (f_0(Q_{[m]}),d,k+m-1).
\end{align*}
\end{compactenum}
\end{thm}

\begin{rem}
Note that to bound $\widetilde{h}_{k+m-1}(\mr{T}_{[m]}^\circ)$ for $k+m-1\ge\frac{d+m-1}{2}$ we use the Dehn-Sommerville inequalities, and can resolve to bounding 
\[{h}_{d-k}(\mr{T}_{[m]}).\]
For instance, if we want to bound the number of vertices in a Minkowski sum, then we are really asked to bound
 $\widetilde{h}_{m}(\mr{T}_{[m]}^\circ)$ or equivalently  ${h}_{d-1}(\mr{T}_{[m]})$. The tightness condition of (2b) says that this is attained if and only if any subface of cardinality $d-1-q$ in $\mr{T}_S$, $\# S \ge d-1-q$ is contained in a subsummand. 
\end{rem}

\begin{proof}
Notice first that claims  $\mr{(1a)}$ and  $\mr{(2a)}$ are verbatim special
cases of Theorem~\ref{cor:minkh}. Therefore, the proof of the stated claims
splits into two parts: We first prove $\mr{(1b)}$ and  $\mr{(2b)}$, and then
we address the question of tightness.

 \noindent \textbf{Claims $\mr{(1b)}$ and  $\mr{(2b)}$:}
By Lemma~\ref{lem:recursive}, we have
\begin{align*} 
 &\ {h}_{k+m-1}(\mr{T}_{[m]})\\
 =\ &\sum_{j=0}^{\lfloor\frac{m}{2}\rfloor} \sum_{s=\mr{c}-2j+1}^{m-2j}
 \sum_{\substack{S\subseteq [m]\\ |S|=s}} 
\binom{m-s}{2j}\Bigg(\widetilde{h}_{k+m-1-2j}(\mr{T}_S^\circ)-\frac{1}{2j+1}\sum_{\substack {R\prec S}}\widetilde{h}_{k+m-2-2j}(\mr{T}_R^\circ)\Bigg) \tag{A}\\ 
 +\ &\sum_{j=0}^{\lfloor\frac{m}{2}\rfloor} \sum_{\substack{S\subseteq [m]\\ |S|=\mr{c}-2j}} \tag{B}
\binom{m-|S|-1}{2j}\Bigg(\widetilde{h}_{k+m-1-2j}(\mr{T}_S^\circ)-\frac{m-|S|}{(m-|S|+1)(2j+1)}
\sum_{\substack {R\prec S}}\widetilde{h}_{k+m-2-2j}(\mr{T}_R^\circ)\Bigg).
\end{align*}

Now, clearly, $\frac{1}{2j+1}\le 1< \frac{d+1}{d-1}$, so that we
can estimate the sums~\eqref{eqn:A} and~\eqref{eqn:B} using Lemma~\ref{lem:cenm}, obtaining 
\begin{align*} 
 &\ \widetilde{h}_{k+m-1}(\mr{T}_{[m]})\\
 \le\ &\sum_{j=0}^{\lfloor\frac{m}{2}\rfloor} \sum_{s=\mr{c}-2j+1}^{m-2j}
 \sum_{\substack{S\subseteq [m]\\ |S|=s}} 
\binom{m-s}{2j}\Bigg(\widetilde{\MUBT}(f_0(P_{S}),d,k'_j)-\frac{1}{2j+1} \sum_{\substack {R\prec S}}\widetilde{\MUBT}(f_0(P_{[m]})_R,d,k'_j-1)\Bigg)\\ 
 +\ &\sum_{j=0}^{\lfloor\frac{m}{2}\rfloor} \sum_{\substack{S\subseteq [m]\\ |S|=\mr{c}-2j}}
\binom{m-|S|-1}{2j}\Bigg(\widetilde{\MUBT}(f_0(P_{S}),d,k'_j)-\frac{m-|S|}{(m-|S|+1)(2j+1)} \sum_{\substack {R\prec S}}\widetilde{\MUBT}(f_0(P_{[m]})_R,d,k'_j-1)\Bigg)
\end{align*}
where we abbreviate $k'_j:=k+m-1-2j$.
Since $\widetilde{\MUBT}$ satisfies linearity and the Dehn--Sommerville symmetries as well, we can reverse the logic of Lemma~\ref{lem:recursive}, the latter sums equals ${\MUBT}(f_0(P_{[m]}),d,k+m-1)$ of $\mr{(1b)}$. Equality only holds if it holds in the application of Lemma~\ref{lem:cenm}, therefore also concluding the proof of claim $\mr{(2b)}$.

\noindent \textbf{Claim $\mr{(3)}$:} By 
Theorem~\ref{mthm:minkm}$\mr{(2a)}$ and  $\mr{(2b)}$, it suffices to show that
there is a Minkowski neighborly family $Q_{[m]}$ of simplicial $d$-polytopes in
$\R^d$ with $f_0(Q_{[m]})=\n$. Such a family is provided by
Theorem~\ref{thm:M-neighborly}(ii).
\end{proof}

\section{Minkowski sums of nonpure collections} \label{sec:nonpure}

\newcommand{\dm}[1]{\xi\sm (P_{#1})}
\newcommand{\dmv}[1]{\xi({#1})}

In Section~\ref{sec:UBTMS}, a basic assumption on the collection
$P_{[m]}=(P_1,\dots,P_m)$ of polytopes in $\R^d$ was that $f_0(P_i) \ge d+1$
for all $i = 1,\dots,m$. Minkowski sums of \emph{nonpure} collections, i.e.,
collections $P_{[m]}$ such that $f_0(P_i) < d+1$ for some $i$, are however of
importance. The simplest case is when all summands have exactly two vertices.
In this case the resulting Minkowski sum is a zonotope and the corresponding
Upper Bound Theorem is well-known~\cite{buck} (and in essence goes back
to Steiner \cite{steiner}). In this section we will give an extension of the
Upper Bound Theorem for Minkowski sums to nonpure collections. This is a
nontrivial step and the reader will observe the increase in complexity of the
arguments and especially notation. For this reason we devote a separate section for the 
nonpure situation.  Nevertheless, the basic line of reasoning remains the same 
and we only sketch the main amendments.

Let us notice that if $|P_{[m]}|$ maximize the number of $k$-faces, then the
polytopes $P_i$ are simplicial and in relatively general position.
In particular, if $P_i$ has fewer than $d+1$ vertices then genericity implies
that $P_i$ is a $(f_0(P_i)-1)$-simplex.  For the nonpure UBPM, we need to
introduce an additional parameter: For a family $P_{[m]}$ of $m$ polytopes in
$\R^d$ with $\n = f_0(P_{[m]})$, let us abbreviate the dimension of the
Minkowski sum of the subfamily $P_S$ by
$\dm{S}$. Note that this quantity is determined purely in terms of the vector $\n$ of vertex numbers:
\[
    \dm{S} \ = \ \dmv{\n_S} \ := \ \min\bigl(d, |\n_S|-|S|\bigr).
\] 
We start with an analogue of Theorem~\ref{thm:M-neighborly} that applies to
nonpure collections.
\begin{thm}\label{thm:M-neighborly-np}
Let $m,d \ge 1$ be fixed.
\begin{compactenum}[\rm (i)] 
    \item There is no Minkowski $(k,\ell)$-neighborly family
        $P_{[m]}$ in $\R^d$ for $k +\ell -1> \lfloor\frac{\dm{[m]}+\ell-1}{2}\rfloor
        $.
        \item For all $\n \in \Z_{\ge 0}^m$ there is a family $P_{[m]}$ in $\R^d$ with
            $f_0(N_{[m]})=\n$  that is Minkowski $(k,\ell)$-neighborly for all
            $\ell\le m$ and $\ell-1 \le k+\ell-1 \le
            \lfloor\frac{\dm{[m]}+\ell-1}{2}\rfloor$.
\end{compactenum}
\end{thm}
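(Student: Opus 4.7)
The proof follows the template of Theorem~\ref{thm:M-neighborly} with the ambient dimension $d$ replaced throughout by the effective dimension $\dm{[m]}$ of the Minkowski sum.

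For part (i), I would first reduce to the case $\ell=m$: if $P_{[m]}$ is Minkowski $(k,\ell)$-neighborly, then so is every subfamily $P_J$ with $|J|=\ell$, and since $\dmv{\n_J}\le\dm{[m]}$, it suffices to obstruct $(k,m)$-neighborliness of any such subfamily. For $\ell=m$, I would directly adapt the Radon-theorem argument of Proposition~\ref{prp:radon_cayley}: the Cayley polytope $\Cay(P_{[m]})$ has exactly $|\n|$ vertices and dimension $\dm{[m]}+m-1$. Provided $|\n|>\dm{[m]}+m$ (equivalently, $\Cay(P_{[m]})$ is not itself a simplex), choosing any $\dm{[m]}+m+1$ of its vertices and applying Radon's theorem in $\R^{\dm{[m]}+m-1}$ yields a partition into two sets with intersecting convex hulls, the smaller one having cardinality at most $\lfloor(\dm{[m]}+m+1)/2\rfloor$; padding with a single vertex from any summand not already hit, this produces a non-face of $\Cay(P_{[m]})$ certifying that $P_{[m]}$ cannot be $(k,m)$-neighborly once $k+m-1\ge\lfloor(\dm{[m]}+m+1)/2\rfloor$, i.e., $k+m-1>\lfloor(\dm{[m]}+m-1)/2\rfloor$. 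Restriction to $|J|=\ell$-element subfamilies then yields the general statement.

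For part (ii), the construction follows the moment-curve paradigm of Matschke--Pfeifle--Pilaud~\cite[Theorem~2.6]{MPP}. Setting $d':=\dm{[m]}$, embed $\R^{d'}$ affinely into $\R^d$, place $|\n|$ points on the moment curve in $\R^{d'}$, and distribute them into vertex sets $V_1,\dots,V_m$ of sizes $n_1,\dots,n_m$ according to the interleaving prescription of~\cite{MPP}. The polytopes $P_i:=\conv(V_i)$ have a Cayley polytope that is (combinatorially) a cyclic polytope in $\R^{d'+m-1}$, and the combinatorial analysis of~\cite{MPP} --- which rests only on $d'$ and $m$ and not on the purity hypothesis $n_i\ge d+1$ --- yields Minkowski $(k,\ell)$-neighborliness throughout the stated range $\ell-1\le k+\ell-1\le\lfloor(d'+\ell-1)/2\rfloor$ for every $\ell\le m$.

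The main obstacle is the degenerate regime in which $\Cay(P_{[m]})$ is itself a simplex, equivalently $|\n|\le\dm{[m]}+m$ (which forces $\dm{[m]}=|\n|-m<d$). Here the Radon step in (i) does not produce a non-face, but Proposition~\ref{prp:CP} still supplies a transparent facial structure for the fibers $|P_J|$ of the Cayley projection, from which the permissible range of $(k,\ell)$-neighborliness can be read off directly and matched against the claimed bound. Dually, for (ii) one must verify that the moment-curve construction remains correct when some $V_i$ fails to span $\R^{d'}$; in that event $P_i$ is itself a simplex summand and the cyclic combinatorics degenerate compatibly with the sub-Cayley-complex supported on $P_{[m]\setminus i}$. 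These checks are routine but the bookkeeping across the simultaneous degeneracies of multiple summands is where the argument becomes delicate.
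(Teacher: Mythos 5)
Your overall strategy---run the Radon argument of Proposition~\ref{prp:radon_cayley} on the Cayley polytope with the ambient dimension $d$ replaced by the effective dimension $\dm{[m]}$, and invoke the Matschke--Pfeifle--Pilaud construction for existence---is exactly the paper's, and you correctly compute that $\Cay(P_{[m]})$ has dimension $\dm{[m]}+m-1$. The difficulty is in the padding step. Radon applied to $\dm{[m]}+m+1$ vertices yields a non-face $\conv(M_1)$ with $|M_1|\le\lfloor(\dm{[m]}+m+1)/2\rfloor$, but nothing controls which summands $M_1$ hits. If $M_1$ misses $t\ge 1$ of the summands, then after adjoining one vertex from each missed summand the set has $|M_1|+t$ vertices, and this padded set only witnesses failure of $(k,m)$-neighborliness for $k+m-1\ge |M_1|+t$, not for $k+m-1\ge\lfloor(\dm{[m]}+m+1)/2\rfloor$ as you assert. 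To close this gap one has to argue more carefully---e.g.\ observe that if $M_1$ is supported on a proper subfamily $J_0$ then $\conv(M_1)$ is already a non-face of the face $\Cay(P_{J_0})$, use that $(k,\ell)$-neighborliness is monotone in $\ell$, and chase the bounds through the subfamily; note that the same issue is already present in the statement of Proposition~\ref{prp:radon_cayley}, whose proof does not actually guarantee that all the sets $S_i$ are nonempty.

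Your handling of the degenerate simplex case is also not quite right. When $|\n|=\dm{[m]}+m$, the Cayley polytope is a simplex and \emph{every} vertex subset spans a face, so for example two segments in general position in $\R^2$ (so $\dm{[2]}=2$) are Minkowski $(1,2)$-neighborly, yet the stated bound $k+\ell-1>\lfloor(\dm{[m]}+\ell-1)/2\rfloor$ with $k=1,\ \ell=2$ would forbid this. So the ``permissible range'' does \emph{not} match the claimed bound in the degenerate regime; rather, just as Proposition~\ref{prp:radon_cayley} carries the explicit hypothesis $\sum f_0(P_i)>d+m$, part (i) must implicitly exclude the simplex case $|\n|=\dm{[m]}+m$. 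For part (ii) your sketch is consistent with what the paper does (the reference to \cite{MPP} is the whole proof there as well), and the degeneracy of simplex summands is taken care of by Observation~\ref{rem:simp}, so that portion is fine.
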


The first claim follows from an analogues statement to
Proposition~\ref{prp:radon_cayley}.  The second statement follows again from
the work of Matschke--Pfeifle--Pilaud \cite{MPP}. We continue to call the collections of
Theorem~\ref{thm:M-neighborly-np}(ii) \Defn{Minkowski neighborly}.
Similar to the case of the UBT for spheres, we can use Minkowski neighborly
polytopes to abbreviate the UBTM, because their $f$-vectors depend on $m$, $d$
and $f_0(P_{[m]})$ only.

\begin{prp}\label{prop:M-neigh-fvector-np}
    If $P_{[m]},P_{[m]}^\prime$ are two Minkowski neighborly families of $m$
    simplicial polytopes with $f_0(P_{[m]}) = f_0(P^\prime_{[m]})$, then
    $f_k(|P_{[m]}|) \ = \ f_k(|P^\prime_{[m]}|)$ for all $0 \le k \le d$.
\end{prp}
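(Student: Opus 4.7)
The plan is to mirror the strategy used in the pure setting (Corollary~\ref{cor:lh_hh} together with Proposition~\ref{prop:M-neigh-fvector}), but with the parameter $d$ replaced throughout by the dimension function $\dm{S} = \dmv{\n_S}$ for each subfamily. Since this quantity depends only on $\n = f_0(P_{[m]})$, it is a genuine invariant of the data in the proposition.

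First I would attach to $P_{[m]}$ the Cayley complex $\mr{T}_{[m]}$, noting that by relative general position $\mr{T}_S$ is a simplicial sphere of dimension $\dm{S}+|S|-1$ and that the relative Cayley complex $\mr{T}^\circ_{[m]}$ remains Buchsbaum with Betti numbers concentrated in dimensions $m-1$ and $\dm{[m]}+m-2$, exactly as in Proposition~\ref{prp:CP2}(v) after substituting $\dm{[m]}$ for $d$. I would then rerun the proof of Corollary~\ref{cor:minkh} with this substitution: the cover $\scr{T}$ is still a full arrangement, the nerve ideal is unchanged as a monomial ideal, and the change of presentation (Theorem~\ref{thm:upb}) together with Theorem~\ref{thm:RSI} and Proposition~\ref{prp:bi} delivers the bound
\[
\widetilde{h}_{k+m-1}(\mr{T}_S^\circ) \ \le \ \sum_{\emptyset\neq R\subseteq S} (-1)^{|S|-|R|}\binom{|\n_R|-\dm{R}+k-1}{k+|S|-1},
\]
with equality up to $k+|S|-1 \le \lfloor\frac{\dm{S}+|S|-1}{2}\rfloor$ precisely when every low-dimensional non-face of $\mr{T}_S$ is supported in some proper subfamily. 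By the definition of Minkowski $(k,\ell)$-neighborly families (Definition~\ref{dfn:M-neighborly}) and Theorem~\ref{thm:M-neighborly-np}, this equality holds throughout the ``first half'' whenever $P_{[m]}$ is Minkowski neighborly, and the resulting values depend only on $\n$.

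Next I would extend the linear-algebra machinery of Section~\ref{ssec:DSM} to the nonpure setting. The Dehn--Sommerville lemma (Lemma~\ref{lem:DSM}) applies to each $\mr{T}_S$ with $d$ replaced by $\dm{S}$, because $\mr{T}_S$ and $\mr{T}_S^\circ$ are still weakly Eulerian homology manifolds with the same concentrated Betti numbers. The decomposition formula (Proposition~\ref{prp:recursive}) is purely combinatorial and carries over verbatim, and the inductive argument of Lemma~\ref{lem:recursive} goes through once $\mr{c}(k,m,d)$ is replaced by $\mr{c}(k,m,\dm{[m]})$ at each stratum. The upshot is an analogue of Corollary~\ref{cor:lh_hh}: the entries $h_{k+m-1}(\mr{T}_{[m]}^\circ)$ for $k+m-1 > \lfloor\frac{\dm{[m]}+m-1}{2}\rfloor$ are determined by the entries $\widetilde{h}_{k'+|S|-1}(\mr{T}_S^\circ)$ with $k'+|S|-1\le \lfloor\frac{\dm{S}+|S|-1}{2}\rfloor$ for $S\subseteq[m]$. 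Combined with the first step, every $h$-vector entry of every $\mr{T}_S^\circ$ is a function of $\n$ alone, hence so is every $h$-vector entry of $\mr{T}_{[m]}$ by Proposition~\ref{prp:recursive}.

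Finally, Proposition~\ref{prp:CP2}(i) translates $h$-vector equalities into $f$-vector equalities: $f_k(|P_{[m]}|) = f_{k+m-1}(\mr{T}_{[m]}^\circ)$, and the right-hand side is determined by the $h$-vector of $\mr{T}_{[m]}^\circ$, which we have just shown is a function of $\n$. The same formula applied to $P'_{[m]}$ gives the same value, proving $f_k(|P_{[m]}|) = f_k(|P'_{[m]}|)$. The main obstacle I anticipate is bookkeeping: one must check that the stratified Dehn--Sommerville identity of Proposition~\ref{prp:recursive} is still consistent when different strata have genuinely different ambient dimensions $\dm{S}$, and in particular that the ``first half'' threshold in Lemma~\ref{lem:recursive} still separates into $\mr{c}$-controlled regions correctly. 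However, since the formula is derived solely from the Euler/Dehn--Sommerville relation within each $\mr{T}_S$ and these apply to each stratum independently, no genuinely new input beyond the careful substitution $d\rightsquigarrow\dm{S}$ is needed.
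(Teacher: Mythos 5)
Your overall strategy matches the paper's implicit argument: bound the initial $h$-entries of the relative Cayley complexes for all subfamilies, use the nonpure Dehn--Sommerville/decomposition machinery to determine the remaining entries, and translate back to $f$-vectors via Proposition~\ref{prp:CP2}(i). However, two of your intermediate assertions are not quite right as stated. First, $\mr{T}_S$ is a manifold \emph{with boundary} (essentially a solid torus in the pure case), not a sphere; the Buchsbaum property and the concentrated Betti numbers of $\mr{T}_S^\circ$ are what you actually use, and those do hold. Second, and more substantively, the claim that Proposition~\ref{prp:recursive} "carries over verbatim" and that the Dehn--Sommerville relation is just $d\rightsquigarrow\dm{S}$ underestimates the modification needed: compare Lemma~\ref{lem:DSM-np}, which acquires an extra correction term $\sum_{S:\,\dm{S}<d}(-1)^k\binom{d+m-1}{m+k+\dm{S}}$, and Proposition~\ref{prp:recursive-np}, where the exponent becomes $m-|S|+\dm{[m]}-\dm{S}$ rather than $m-|S|$, because deficient strata sit at a different relative dimension. (Lemma~\ref{lem:recursive-np} then absorbs all deficient strata into a $\gamma$-term.) The saving grace --- which you intuit correctly --- is that both of these modifications are functions of $(\n,m,d,k)$ alone, independent of the combinatorial type; this, together with Observation~\ref{rem:simp}(1) that deficient subsums are just products of simplices, is what makes Proposition~\ref{prop:M-neigh-fvector-np} go through. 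So the argument you outline does work, but only after you write down and verify these non-verbatim adjustments, which you flag as "bookkeeping" but do not actually carry out.
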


At this point, let us remark two curious properties that make our life
simpler.
\begin{obs}\label{rem:simp}
Let $P_{[m]}$ be nonpure collection of relative general position 
polytopes in $\R^d$.
\begin{compactenum}[\rm (1)]
    \item If the Minkowski sum of
        polytopes in relatively general position is not full-dimensional, 
        then $|P_{[m]}| \cong P_1 \times P_2 \times \cdots \times P_m$. In this
        case we say that $P_{[m]}$ is \Defn{deficient}.
    \item If $\dim P_i=0$ for some $i\in [m]$, 
    then $|P_{[m]}|$ is a translate of $|P_{[m]{\setminus} i}|$.
\end{compactenum}
\end{obs}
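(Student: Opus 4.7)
For~(2), the plan is immediate: if $\dim P_i = 0$, then $P_i = \{p_i\}$ is a singleton and by definition of Minkowski sum
\[
|P_{[m]}| \ = \ p_i \,+\, \sum_{j \neq i} P_j,
\]
which is a translate of $|P_{[m]\setminus i}|$, and hence has the same face lattice.

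For~(1), I would analyze the natural affine surjection
\[
\phi : P_1 \times \cdots \times P_m \longrightarrow |P_{[m]}|, \qquad (x_1, \ldots, x_m) \longmapsto x_1 + \cdots + x_m.
\]
The map $\phi$ induces a combinatorial isomorphism $|P_{[m]}| \cong \prod_i P_i$ precisely when the linear spans $\mathrm{lin}(P_1), \ldots, \mathrm{lin}(P_m)$ are in direct-sum position in $\R^d$, equivalently when $\dim|P_{[m]}| = \sum_i \dim P_i$. So the first step is to show that under the relatively general position assumption, $\dim|P_{[m]}| < d$ forces $\dim|P_{[m]}| = \sum_i \dim P_i$.

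The input here is that under relatively general position, every proper face $F = F_1 + \cdots + F_m$ of $|P_{[m]}|$ satisfies $\dim F = \sum_i \dim F_i$ (with unique decomposition). Applied to the facets of $|P_{[m]}|$, a nontrivial linear dependence $\sum_i v_i = 0$ with $v_i \in \mathrm{lin}(P_i)$ not all zero would permit shifting decompositions $F_i \mapsto F_i + t v_i$ in a way incompatible with the facet-level dimension identity; hence no such dependence exists, and the spans must be in direct sum. Once direct-sum position is established, $\phi$ becomes an affine bijection and the face correspondence $(F_1, \ldots, F_m) \mapsto \sum_i F_i$ is an isomorphism of face lattices, yielding $|P_{[m]}| \cong \prod_i P_i$.

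The hard part will be the deduction of the direct-sum property from the facet-level dimension formula: while this is ultimately linear-algebraic, it requires carefully analyzing the normal fan of $|P_{[m]}|$ (which, when $\dim |P_{[m]}| < d$, acquires a nontrivial lineality space) in order to pinpoint where a nontrivial linear dependence among the $\mathrm{lin}(P_i)$ would collide with the proper-face dimension count.
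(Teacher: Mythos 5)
Part~(2) is fine. For part~(1), your overall plan --- reduce to showing the linear spans $\mathrm{lin}(P_1),\dots,\mathrm{lin}(P_m)$ are in direct sum, then observe that the addition map $\phi$ becomes an affine bijection and hence a combinatorial isomorphism --- is the right one; the paper records the observation without an argument. The trouble is the step you flag as ``the hard part'': the direct-sum property does \emph{not} follow from the proper-face dimension identity $\dim F = \sum_i \dim F_i$, which is all the formal definition of relatively general position provides. A counterexample: take $P_1 = P_2 = [0,1]\times\{0\}$ in $\R^2$. Then $|P_{[2]}| = [0,2]\times\{0\}$, whose only proper faces are the two endpoints, each decomposing uniquely as vertex plus vertex with $0 = 0+0$, so the family satisfies the definition; yet $\mathrm{lin}(P_1)=\mathrm{lin}(P_2)$, the spans are dependent, and $|P_{[2]}|$ (a segment) is not combinatorially $P_1\times P_2$ (a square). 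Your sketched route --- perturbing a decomposition by $F_i\mapsto F_i+tv_i$ along a dependence $\sum_i v_i=0$ --- produces no contradiction with the facet-level identity, because $F_i+tv_i$ need not be a face of $P_i$; in the example $(0,0)+t\cdot(1,0)$ is an interior point of $P_1$ for $0<t<1$.

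What is actually used, and what the paper supplies only implicitly, is genuine genericity of the placement of the summands (the same genericity invoked a few lines above the observation to force a summand with fewer than $d+1$ vertices to be a simplex, which likewise does not follow from relative general position alone). Under generic placement, linear subspaces $L_1,\dots,L_m\subseteq\R^d$ with $\sum_i\dim L_i\le d$ are automatically independent, so $\dim|P_{[m]}|=\dim\bigl(\sum_i L_i\bigr)=\min\bigl(d,\sum_i\dim L_i\bigr)$; thus $\dim|P_{[m]}|<d$ forces $\sum_i\dim L_i<d$ and hence the direct sum. With that in hand the remainder of your argument closes. In short: part~(2) and your strategy for part~(1) are right, but the direct-sum step must come from outright genericity of placement rather than from the combinatorial condition on proper faces, which is strictly weaker.
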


We recover Buck's Theorem on zonotopes~\cite{buck}.

\begin{cor}\label{prop:M-neigh-fvector-zt}
    Any family $\Pm$ of $m$ segments in relatively general position in $\R^d$ is Minkowski neighborly.
    In particular, the $f$-vector of the zonotope $|P_{[m]}|$ only depend on $m$ and $d$.
\end{cor}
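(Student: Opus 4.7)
The plan is to verify Minkowski neighborliness directly from the elementary combinatorics of segments, then invoke Proposition~\ref{prop:M-neigh-fvector-np} to obtain the $f$-vector invariance. The decisive observation is that each $P_j$ has exactly two vertices, so every nonempty $S_j\subseteq\V(P_j)$ yields $\conv(S_j)$ equal either to a vertex of $P_j$ or to the segment $P_j$ itself; in either case $\conv(S_j)$ is automatically a face of $P_j$ (a $0$- or a $1$-simplex).

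Next I would fix $J\subseteq[m]$ with $|J|=\ell$ and any admissible $(S_j)_{j\in J}$ satisfying $\sum_{j\in J}|S_j|\le k+\ell-1$, and set $t:=|\{j\in J:|S_j|=2\}|$ so that $\sum_{j\in J}|S_j|=\ell+t$. Relative general position yields $\dim\sum_{j\in J}\conv(S_j)=t$, and by the face-lattice correspondence for Cayley polytopes (cf.\ Section~\ref{ssec:cayley}) the lift $\Cay(\conv(S_J))\subseteq\Cay(P_J)$ is a face with exactly $\ell+t$ vertices and dimension $t+\ell-1$. In particular, its vertex count is one more than its dimension, so it is affinely spanned by its vertices, i.e.\ a simplex face of $\Cay(P_J)$. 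This matches Definition~\ref{dfn:M-neighborly}'s requirement that $\sum\conv(S_j)$ be a simplex of $|P_J|$. Uniformly over admissible $(k,\ell)$, $\Pm$ is therefore Minkowski neighborly.

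For the second assertion, Proposition~\ref{prop:M-neigh-fvector-np} yields that any two Minkowski neighborly families in $\R^d$ with equal vertex vectors have identical $f$-vectors; since $f_0(\Pm)=(2,\dots,2)$ is determined by $m$ alone, the $f$-vector $f_k(|P_{[m]}|)$ depends only on $m$ and $d$.

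The subtle point in this plan is to confirm that $\Cay(\conv(S_J))$ is indeed a face of $\Cay(P_J)$ (rather than merely a simplex on some subset of vertices). By the face-lattice correspondence this reduces to the statement that $\sum\conv(S_j)$ is a face of the zonotope $|P_J|$, which by relative general position holds whenever $t\le\dm{J}$; the admissible Minkowski-neighborly range forces $t\le\min(\ell,\lfloor(\dm{[m]}-\ell-1)/2\rfloor)$, which in all cases satisfies $t\le\dm{J}$. The deficient regime, where $|P_{[m]}|$ is not full-dimensional, is handled separately by Observation~\ref{rem:simp}(1): $|P_{[m]}|$ becomes a product of segments, for which the claim is immediate.
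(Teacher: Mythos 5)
Your plan mirrors the paper's approach in outline (verify Minkowski neighborliness from the definition, then invoke Proposition~\ref{prop:M-neigh-fvector-np}), but the step that confirms $\sum_{j\in J}\conv(S_j)$ is a face of $|P_J|$ rests on an invalid claim. You assert that ``by relative general position,'' $\sum\conv(S_j)$ is a face of $|P_J|$ whenever $t\le\dm{J}$. Relative general position is only a \emph{necessary} condition for a subsum to be a proper face (it says that when $\sum F_j$ is a face, its dimension equals $\sum\dim F_j$), and says nothing in the converse direction. Concretely, take three segments in $\R^2$ and $t=0$: the zonotope is a hexagon with $6$ vertices, while there are $2^3=8$ choices of $(S_j)$, so two such choices fail to give a vertex even though $t=0\le\dm{J}=2$. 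Being a face is a sign-covector condition, not a dimension condition.

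What in fact rescues the conclusion is that the admissibility window is much narrower than $t\le\dm{J}$: since $\sum|S_j|\ge\ell$, the constraint $\sum|S_j|\le k+\ell-1\le\lfloor(\dm{[m]}+\ell-1)/2\rfloor$ is non-vacuous only when $\ell\le\dm{[m]}-1<d$. Thus every subfamily $P_J$ in the relevant range is itself deficient, $|P_J|$ is an $\ell$-parallelepiped, and equivalently $\Cay(P_J)$ has $2\ell$ affinely independent vertices and dimension $2\ell-1$, i.e.\ is a simplex; hence \emph{every} vertex subset is automatically a face, with no further analysis needed. This is precisely the paper's one-line observation that families of at most $d$ edges are deficient. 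In your write-up the deficiency observation (Observation~\ref{rem:simp}(1)) is applied only to $|P_{[m]}|$ as a ``separate regime,'' i.e.\ only when $m<d$; the decisive point is that in the admissible range \emph{every} relevant $P_J$ is deficient, regardless of whether $P_{[m]}$ is.
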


\begin{proof}
Use Proposition~\ref{prop:M-neigh-fvector-np} and the fact that all families of at most $d$ edges in $\R^d$ is deficient.
\end{proof}

For the UBT for Minkowski sums, we define for $m,d \ge 1$ and $\n \in \Z^m_{\ge 0}$
\[
    \mr{nb}_i(d,m,\n) \ := \ f_i(|P_{[m]}|)
\]
where $P_{[m]}$ is any Minkowski neighborly family of $m$ simplicial
polytopes in $\R^d$ with $\n=f_0(P_{[m]})$. 

\begin{thm}[Upper Bound Theorem for Minkowski sums of general families]\label{thm:UBTMS-np}
    Let $m,d \ge 1$ and $\n \in \Z^m_{\ge 0}$. If $P_{[m]} = (P_1,\dots,P_m)$ is a
    family of $m$ polytopes in $\R^d$ with $P_{[m]}=\n$, then
    \[
        f_k(|P_{[m]}|) \ \le \ \mr{nb}_k(d,m,\n).
    \]
    for all $k = 0, \dots, d-1$. Moreover, the family $P_{[m]}$ is Minkowski neighborly
    if and only if equality holds for the number of facets.
\end{thm}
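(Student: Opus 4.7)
The plan mirrors the proof of Theorem~\ref{thm:UBTMS} with an added case analysis to account for summands that are themselves simplices and for subfamilies whose Minkowski sum is not full-dimensional. As in the pure case, perturbing vertices only increases face numbers, so we may assume $P_{[m]}$ is a family of simplicial polytopes in relatively general position and induct on $m$. If some $n_i = 1$, Observation~\ref{rem:simp}(2) gives $|P_{[m]}| = |P_{[m]\setminus i}| + v$ and reduces the problem to $m-1$ summands. If the collection is deficient ($\dim|P_{[m]}| < d$), then every $P_i$ must be a simplex: any $P_i$ with $n_i \ge d+1$ would force $\dim P_i = d$ and hence $\dim|P_{[m]}| = d$. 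Observation~\ref{rem:simp}(1) then gives $|P_{[m]}| \cong P_1 \times \cdots \times P_m$, a product of simplices whose $f$-vector is determined by $\n$; here equality holds automatically.

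The essential case is when all $n_i \ge 2$ and the collection is nondeficient. The Cayley complex $\mr{T}_{[m]}$ is then $(d+m-2)$-dimensional, and the full analogue of Proposition~\ref{prp:CP2} goes through: the relative Cayley complex $\mr{T}^\circ_{[m]} = (\mr{T}_{[m]},\cfs{T})$ is Buchsbaum, the Cayley arrangement $\scr{T} = \{\mr{T}_{[m]\setminus i}\}$ is a full arrangement of subcomplexes, and $(\mr{T}_{[m]},\scr{T})$ is universally Cohen--Macaulay. One then runs the proof of Theorem~\ref{mthm:minkm}: the initial $h$-numbers are bounded by the analogues of Theorem~\ref{thm:minkit} and Corollary~\ref{cor:minkh}, with $d$ replaced by $\dmv{\n_S}$ on the summand indexed by each $S \subseteq [m]$; the Dehn--Sommerville symmetries (Lemma~\ref{lem:DSM}) and the inductive decomposition (Lemma~\ref{lem:recursive}) are recalibrated to account for the variable dimension of $\mr{T}_S$; and the upper bound is assembled as before. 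The main technical obstacle is precisely this recalibration: proper subfamilies $P_S$ with $|\n_S| - |S| < d$ are themselves deficient, so $\mr{T}_S$ is a join of simplex boundaries rather than a sphere, and the upper-bound function $\MUBT$ must be defined piecewise with the correct Dehn--Sommerville coefficients on each deficient stratum.

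Finally, for the equality statement, Minkowski neighborliness implies equality in every $f_k$ (and in particular for facets) by Proposition~\ref{prop:M-neigh-fvector-np}. For the converse, equality in $f_{d-1}(|P_{[m]}|)$ is equivalent to maximality of $h_{d+m-2}(\mr{T}^\circ_{[m]})$, and unwinding the stratification identity of Proposition~\ref{prp:recursive} together with the Dehn--Sommerville relations forces maximality of each initial $h$-number $h_{k+|S|-1}(\mr{T}^\circ_S)$ for every subfamily $S$; the equality clauses in the nonpure analogues of Theorem~\ref{thm:minkit} and Corollary~\ref{cor:minkh} then identify the non-faces of each $\mr{T}_S$ as supported in $\mr{T}_R$ for $R \subsetneq S$, which is precisely the defining property of Minkowski neighborliness.
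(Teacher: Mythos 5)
The high-level strategy you propose matches the paper's (Cayley complexes, variable dimension parameter $\xi$, recalibrated Dehn--Sommerville relations, and reduction to the pure argument on nondeficient strata), but there is a concrete false claim at the heart of your ``essential case.'' You assert that ``the full analogue of Proposition~\ref{prp:CP2} goes through,'' in particular that $(\mr{T}_{[m]},\scr{T})$ is universally Cohen--Macaulay. This fails in the nonpure setting. Take $m=2$, $d=3$, $P_1$ a tetrahedron and $P_2$ a generic segment, so $\dim\mr{T}_{[2]}=d+m-2=3$. For a vertex $v$ of $P_2$, the only member of the Cayley arrangement containing $v$ is $\mr{T}_{\{2\}}=\partial P_2$, which is just two points. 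Thus $\cfs{T}(v)=\partial P_2$ has codimension $3$ in $\mr{T}_{[2]}$, and the long exact sequence gives $\rHom_1(\mr{T}_{[2]},\partial P_2)\neq 0$ while $\dim(\mr{T}_{[2]},\partial P_2)=3$, so this relative complex is not Cohen--Macaulay. The codimension-one hypothesis of Corollary~\ref{cor:cmincm}, which is what makes universally CM work in the pure case, is simply violated.

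This is precisely why the paper does \emph{not} push Proposition~\ref{prp:CP2}(iv) into the nonpure setting, but instead proves Theorem~\ref{thm:minkitnonpure} by a direct geometric argument on the Cayley polytope: the complex $C(i)=\bigcup_{i\in S\subsetneq[m]}\mr{C}_S$ is a PL ball containing $\St(v,\partial\mr{C})$ as a PL subball of full dimension, so the pair is Cohen--Macaulay by Reisner's criterion. Linearity of the $h$-vector then delivers the desired local inequality without ever invoking the universally Cohen--Macaulay property. Your proposal needs to replace the paragraph asserting the Proposition~\ref{prp:CP2} analogue with this argument (or something equivalent); as written, the step that bounds the initial $h$-numbers is supported by a claim that is false.

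Two smaller issues: you cite Lemma~\ref{lem:DSM} and Proposition~\ref{prp:recursive} where you need their nonpure analogues, Lemma~\ref{lem:DSM-np} and Proposition~\ref{prp:recursive-np} --- the correction terms coming from deficient subfamilies (the sum over $S$ with $\xi(\n_S)<d$) are exactly what is missing from the pure versions. And the reduction to $n_i\ge 2$ via Observation~\ref{rem:simp}(2), while harmless, is not strictly necessary: the paper handles $n_i\le 1$ uniformly through $\xi$, and these cases contribute only combinatorially to the formulas rather than requiring a separate induction.
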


The remainder of this section will provide the proof of Theorem~\ref{thm:UBTMS-np}. We only sketch the line of reasoning for the main points.

\subsection{Initial terms of the $h$-vector.}
We start with a replacement for Theorem~\ref{thm:minkit} that applies to
nonpure collections.

\begin{thm}\label{thm:minkitnonpure}
    Let $P_{[m]} = (P_1,\dots,P_m)$ be a family of simplicial polytopes in
    relatively general position in $\R^d$. Let $\mr{T}_{[m]}$ be the
    corresponding Cayley complex and $e = \dm{[m]}$.  Then, for every
    $k$, 
\begin{compactenum}[\rm (a)]
\item and for every $i\in [m]$ and $v$ vertex of $P_i$
    \[
        h_k(\Lk(v,\mr{T}_{[m]}^\circ)) \ \le \  h_k(\mr{T}_{[m]}^\circ) +
        h^{\langle e+m-1 \rangle}_k (\mr{T}_{[m]{\setminus} \{i\}}^\circ),\ \text{and}
    \]
\item 
    \[
(k+m) g_{k+m} (\mr{T}_{[m]}^\circ)\le (f_0(\mr{T}_{[m]})-e-m) 
h_{k+m-1} (\mr{T}_{[m]}^\circ)+\sum_{i\in[m]} f_0(\mr{T}_{\{i\}}) 
h^{\langle e+m-1\rangle}_{k+m-1}(\mr{T}_{[m]{\setminus} \{i\}}^\circ).
\]
\item We have  
\[(i+|S|)  h_{i+|S|} (\mr{T}_{S}^\circ) \ = \ (f_0(\mr{T}_{S})-d+i)  h_{i+|S|-1} (\mr{T}_{S}^\circ)+\sum_{i\in S} f_0(\mr{T}_{\{i\}}) h^{\langle \dm{S}\rangle}_{i+|S|-1}(\mr{T}_{S{\setminus} \{i\}}^\circ)\] for all $i\le k_0$ and $S\subseteq[m]$ if and only if all non-faces of $\mr{T}_{S}$ of dimension $< k_0+|S|$ are supported in some $\V(\mr{T}_R)$, $R\subsetneq S$. 
\end{compactenum}
\end{thm}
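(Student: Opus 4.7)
The plan is to adapt the universal-Cohen--Macaulay argument that established Theorem~\ref{thm:minkit} to the nonpure setting, in which the Cayley complex $\mr{T}_{[m]}$ has dimension $e+m-2$ with $e=\dm{[m]}$ rather than $d+m-2$. The first step is to verify that the conclusions of Proposition~\ref{prp:CP2} carry over: the relative complex $(\mr{T}_{[m]},\cfs{T})$ remains Buchsbaum (in fact a manifold-with-boundary whose boundary is $\cfs{T}$), the Cayley arrangement $\scr{T}$ is still a full arrangement, and $(\mr{T}_{[m]},\scr{T})$ is universally Cohen--Macaulay with ambient dimension $e+m-1$. The excision argument used in Proposition~\ref{prp:CP2}(iv) applies verbatim once one observes that the shadow boundary $\pi^{-1}(\partial\Delta_{m-1})$ is still a manifold of the appropriate dimension (built from fibers of dimension equal to $\dim \pi^{-1}(\lambda)$, controlled by $e$).

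For part (a), fix $v \in \V(P_i)$. Corollary~\ref{cor:loc}(1) applied to the universally Cohen--Macaulay pair $(\mr{T}_{[m]},\scr{T})$ yields
\[
    h_k(\Lk(v,\mr{T}_{[m]}^\circ)) \ \le \ h_k(\mr{T}_{[m]},\cfs{T}(v)).
\]
Now $\cfs{T}(v)=\bigcup_{j\neq i}\mr{T}_{[m]\setminus j}$, because a vertex of $P_i$ lies in $\mr{T}_{[m]\setminus j}$ exactly when $j\neq i$. The faces of $(\mr{T}_{[m]},\cfs{T}(v))$ therefore partition into those of $\mr{T}_{[m]}^\circ$ and those of $\mr{T}_{[m]\setminus i}^\circ$, and $h$-vectors being linear in face counts when evaluated against a fixed ambient dimension yields
\[
    h_k(\mr{T}_{[m]},\cfs{T}(v)) \ = \ h_k(\mr{T}_{[m]}^\circ)+h_k^{\langle e+m-1\rangle}(\mr{T}_{[m]\setminus i}^\circ).
\]
The superscript $\langle e+m-1\rangle$ (rather than the intrinsic $\dm{[m]\setminus i}+m-2$) is forced because $\mr{T}_{[m]\setminus i}^\circ$ is being measured inside the $(e+m-1)$-dimensional ambient Cayley complex. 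Combining with Lemma~\ref{lem:lkst} gives (a).

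For part (b), sum the inequality of (a) over all vertices of $\mr{T}_{[m]}$. Each vertex in $P_i$ contributes identically, so the right-hand side becomes $f_0(\mr{T}_{[m]})\,h_k(\mr{T}_{[m]}^\circ)+\sum_i f_0(\mr{T}_{\{i\}})\,h_k^{\langle e+m-1\rangle}(\mr{T}_{[m]\setminus i}^\circ)$, while Lemma~\ref{lem:hlk} applied to the pure $(e+m-2)$-complex $\mr{T}_{[m]}^\circ$ (with $d$ there replaced by $e+m-1$) evaluates the left-hand side to $(k+1)h_{k+1}(\mr{T}_{[m]}^\circ)+(e+m-1-k)h_k(\mr{T}_{[m]}^\circ)$. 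Substituting $k\mapsto k+m-1$ and subtracting $(k+m)h_{k+m-1}(\mr{T}_{[m]}^\circ)$ from both sides produces (b). Part~(c) then follows by feeding the characterization-of-equality half of Corollary~\ref{cor:loc}(1) into this argument recursively over subsets $S\subseteq[m]$: equality for $\mr{T}_S^\circ$ in degree $k_0+|S|-1$ forces every non-face of $\mr{T}_S$ of dimension $<k_0+|S|$ containing a vertex outside any $\V(\mr{T}_{S\setminus\{i\}})$ to fail to exist, which is exactly the stated support condition. The main technical obstacle will be confirming the universal-Cohen--Macaulay property with the correct dimension conventions once some summands are low-dimensional simplices; the remainder is routine bookkeeping with $h^{\langle\ell\rangle}$-vectors.
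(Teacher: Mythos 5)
Your plan transports the universal-Cohen--Macaulay route of Theorem~\ref{thm:minkit} to the nonpure setting, and you correctly identify the face-count partition $h_k(\mr{T}_{[m]},\cfs{T}(v))=h_k(\mr{T}_{[m]}^\circ)+h_k^{\langle e+m-1\rangle}(\mr{T}_{[m]\setminus i}^\circ)$ and the reduction to $h_{k+m-1}(\St(v,\mr{T}_{[m]}^\circ))\le h_{k+m-1}(\mr{T}_{[m]},\cfs{T}(v))$. But the step you flag as the ``main technical obstacle'' --- verifying the universal Cohen--Macaulay property of $(\mr{T}_{[m]},\scr{T})$ --- is not a technicality to be cleaned up later: it is generically \emph{false} for nonpure collections, and the paper switches strategy precisely because of this. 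For a vertex $v\in\V(P_i)$ one has $\cfs{T}(v)=\bigcup_{j\neq i}\mr{T}_{[m]\setminus j}$, and $\dim\mr{T}_{[m]\setminus j}=\dm{[m]\setminus j}+m-3$. When some summand $P_j$ with $j\neq i$ is low-dimensional, $\dm{[m]\setminus j}$ can be much smaller than $e$; in the simplest instance $m=2$, $P_1$ a segment, $P_2$ a simplicial $d$-polytope, $d\ge 2$, and $v\in\V(P_2)$, one gets $\cfs{T}(v)=\mr{T}_{\{1\}}=\partial P_1$, a pair of points, of codimension $d$ in $\mr{T}_{[2]}$. Since $\mr{T}_{[2]}$ is Cohen--Macaulay, Corollary~\ref{cor:cmincm} forbids $(\mr{T}_{[2]},\cfs{T}(v))$ from being Cohen--Macaulay. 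Your assertion that the excision argument of Proposition~\ref{prp:CP2}(iv) ``applies verbatim'' likewise fails: the fibers $\pi^{-1}(\lambda)$ for $\lambda\in\partial\Delta_{m-1}$ drop in dimension below $e$ for deficient subfamilies, so the shadow boundary is a stratified set, not the product $B_e\times S^{m-1}$ underlying the pure-case argument.

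The paper's actual proof avoids the universal-CM detour entirely. After making the same reduction to showing $h_{k+m-1}(\St(v,\mr{T}_{[m]}^\circ))\le h_{k+m-1}(\mr{T}_{[m]}^\circ(i))$ with $\mr{T}_{[m]}^\circ(i):=(\mr{T}_{[m]},\bigcup_{i\in S\subsetneq[m]}\mr{T}_S)$, it argues that the \emph{complementary} relative complex $\bigl(\mr{T}_{[m]}^\circ(i),\St(v,\mr{T}_{[m]}^\circ)\bigr)$ is Cohen--Macaulay, by exhibiting it (up to relative isomorphism) as a pair $(C(i),\St(v,\partial\mr{C}))$ of PL balls of equal dimension inside the boundary of the Cayley polytope. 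Nonnegativity of the $h$-vector of a Cohen--Macaulay relative complex together with linearity of $h$-vectors then gives (a); parts (b) and (c) follow from (a) by summing over vertices and recursing over $S\subseteq[m]$, as both you and the paper note. The point to internalize is that your Corollary~\ref{cor:loc}(1) route needs $(\mr{T}_{[m]},\cfs{T}(v))$ itself to be CM, whereas the paper's route only needs the complement to be CM --- a strictly weaker hypothesis that survives the passage to nonpure families. You would need to either replace the Corollary~\ref{cor:loc} step by this PL-ball argument, or find another mechanism for nonnegativity of $h_{k+m-1}\bigl(\mr{T}_{[m]}^\circ(i),\St(v,\mr{T}_{[m]}^\circ)\bigr)$.
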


\begin{proof}
It suffices to prove (a) and characterize equality in this case; the other
inequalities are obtained by simply summing over all vertices. To prove the
inequality, notice that by Lemma~\ref{lem:lkst}, $h_{k+m-1}(\Lk(v,\mr{T}_{[m]}^\circ)) =
h_{k+m-1}(\St(v,\mr{T}_{[m]}^\circ))$ and 
\[ 
    h_{k+m-1}(\mr{T}_{[m]}^\circ) + h^{\langle e+m-1
    \rangle}_{k+m-1}(\mr{T}_{[m]{\setminus} \{i\}}^\circ)\ = \
    h_{k+m-1}(\mr{T}_{[m]}^\circ(i))
\]
where we define
\[
    \mr{T}_{[m]}^\circ(i) \ := \ \Big(\mr{T}_{[m]},\bigcup_{i\in S\subsetneq [m]}
    \mr{T}_{S}\Big).
\]
Hence it suffices to prove that
$
    h_{k+m-1}(\St(v,\mr{T}_{[m]}^\circ))  \le 
    h_{k+m-1}(\mr{T}_{[m]}^\circ(i)).
$
For this, let $\mr{C}=\Cay(P_1,\dots,P_m)$, together with the faces 
\[
\mr{C}_S \ := \ \conv\Bigl( \bigcup_{i\in S} P_i+e_i\Bigr) \ \text{ for}\
S\subsetneq [m].
\]
The complex $C(i)=\bigcup_{i\in S\subsetneq [m]}
    \mr{C}_{S}$ is a PL ball and
$\St(v,\partial\mr{C})$ is a PL ball of the same dimension contained in it, so
that $(\partial\mr{C},\St(v,\partial\mr{C}))$ is Cohen--Macaulay by
Theorem~\ref{thm:SR}. Hence
\[
 (C(i),\St(v,\partial\mr{C})) \ \cong\ \Big(C(i),\St(v,\mr{T}_{[m]})\cup
 \bigcup_{i\in S\subsetneq [m]} 
\mr{T}_{S} \Big)
\  \cong\ \big(\mr{T}_{[m]}^\circ(i),\St(v,\mr{T}_{[m]}^\circ)\big)
\]
where the last complex is the complement of $\St(v,\mr{T}_{[m]}^\circ)$ in 
$\mr{T}_{[m]}^\circ(i)$. Hence
\[h_{k+m-1}(\St(v,\mr{T}_{[m]}^\circ))+h_{k+m-1}(\mr{T}_{[m]}^\circ(i),\St(v,\mr{T}_{[m]}
^\circ)=h_{k+m-1}(\mr{T}_{[m]}^\circ(i))\]
by linearity of the $h$-vector and 
$h_{k+m-1}(\mr{T}_{[m]}^\circ(i),\St(v,\mr{T}_{[m]}^\circ)\ge 0$ by 
Cohen--Macaulayness. Equality, i.e.\ $h_{k+m-1}(\mr{T}_{[m]}^\circ(i),\St(v,\mr{T}_{[m]}^\circ)=0$, holds up to some $k_0+m-1$ if $\FM[\mr{T}_{[m]}^\circ(i),\St(v,\mr{T}_{[m]}^\circ)]$ is generated in
degree $> k_0+m-1$. We conclude by iteratively applying the same argument to all subsets $S\subseteq [m]$.
\end{proof}

We conclude as in Section~\ref{ssec:small_h}:

\begin{cor}\label{cor:minkh_P}
    Let $P_{[m]} = (P_1,\dots,P_m)$ be a family of $m$ simplicial
    in relatively general position and let $\mr{T}_{[m]}^\circ$ be
    the corresponding relative Cayley complex, and let $e = \dm{[m]}$.  Then for all $-m+1\le k\le d$
    \[
        h_{k+m-1} (\mr{T}_{[m]}^\circ) \ \le \  \sum_{S\subseteq [m]}
        (-1)^{m-|S|} \binom{f_0(\mr{T}_S)-e+k-1}{k+m-1}.
    \]
    Equality holds for some $k_0+m-1$ if and only if all non-faces of
    $\mr{T}_{[m]}$ of dimension $< k_0+m-1$ are supported in some $\V(\mr{T}_S)$, $S\subsetneq [m]$. 
\end{cor}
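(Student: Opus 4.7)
The plan is to run the proof of Corollary~\ref{cor:minkh} essentially verbatim, replacing the dimension parameter $d$ by $e = \dm{[m]}$ throughout and invoking the non-pure analogues of Proposition~\ref{prp:CP2}. The deficient case (when $|P_{[m]}|$ fails to be full-dimensional inside its affine span) is reduced via Observation~\ref{rem:simp}(1), so I may assume $\Cay(P_{[m]})$ has dimension $e+m-1$. In that case $\mr{T}_{[m]}$ is a manifold with boundary $\cfs{T}$, so $\mr{T}_{[m]}^\circ$ is relative Buchsbaum of dimension $e+m-1$ and the relative Schenzel formula (Theorem~\ref{thm:RS}) gives the decomposition $h_{k+m-1}(\mr{T}_{[m]}^\circ) = \LT_{k+m-1}(\mr{T}_{[m]}^\circ) + \GT_{k+m-1}(\mr{T}_{[m]}^\circ)$.

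For the topological contribution, the fibration-style argument of Proposition~\ref{prp:CP2}(v) goes through with $d$ replaced by $e$: the shadow-boundary $W = \pi^{-1}(\partial \Delta_{m-1})$ is homeomorphic to $B_e \times S^{m-1}$ by the same excision, so $\rBetti_{m-1}(\mr{T}_{[m]}^\circ) = \rBetti_{e+m-2}(\mr{T}_{[m]}^\circ) = 1$ and all other Betti numbers vanish. Substituting into the formula for $\GT$ produces a single correction term of the shape $(-1)^{k-1}\mathbf{1}_{k\ge 2}\binom{e+m-1}{e-k}$.

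For the algebraic contribution, the Cayley arrangement $\scr{T} = \{\mr{T}_{[m]\setminus i} : i \in [m]\}$ is still a full arrangement covering $\cfs{T}$ (Proposition~\ref{prp:CP2}(ii) is insensitive to dimensions), so by change of presentation (Theorem~\ref{thm:upb}) we bound $\LT_{k+m-1}(\mr{T}_{[m]}^\circ) \le \dim_\k(\ch/\Theta\ch)_{k+m-1}$ for a \lsop\ $\Theta$ of length $e+m-1$. The coarse nerve $\mathfrak{N} = \bigcup_{S\subsetneq[m]}\bigast_{i\in S}\Delta_{\V(P_i\times e_i)}$ and the intersection poset (a boolean lattice with M\"obius values $\mu(\mr{T}_{[m]},\mr{T}_S) = (-1)^{m-|S|}$) are combinatorially identical to the pure case. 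Applying Theorem~\ref{thm:RSI} (now with the role of $m$ played by $e+m-1$) and Proposition~\ref{prp:bi} yields
\[
\dim_\k(\ch/\Theta\ch)_{k+m-1} \ =\ \sum_{S\subseteq [m]}(-1)^{m-|S|}\binom{f_0(\mr{T}_S)-e+k-1}{k+m-1}\ -\ (-1)^{k-1}\mathbf{1}_{k\ge 2}\binom{e+m-1}{e-k},
\]
and the last term cancels the topological contribution computed above, producing exactly the stated bound. Tightness for some $k_0+m-1$ is the equality clause of Theorem~\ref{thm:upb} combined with the magnificence of the cover $\widehat{\scr{T}}$ (Theorem~\ref{thm:magnificent}), which translates verbatim from the pure case since the relevant acyclicity statements depend only on the combinatorial structure of $\scr{T}$, not on dimensions.

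The main obstacle is the topological bookkeeping: one must verify that $\mr{T}_{[m]}^\circ$ really is a relative manifold of dimension $e+m-1$ with the predicted Betti numbers even when some $P_i$ are simplices of small dimension. The key point is that the projection $\pi\colon \Cay(P_{[m]})\to \Delta_{m-1}$ has convex $e$-dimensional fibres over $\relint \Delta_{m-1}$ regardless of whether individual summands are simplices, so the excision/collapse argument of Proposition~\ref{prp:CP2} produces the same homotopy type with $d$ uniformly replaced by $e$; once this is in hand the remainder of the proof is a transcription of the argument for Corollary~\ref{cor:minkh}.
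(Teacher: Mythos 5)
Your proposal matches the paper's intent exactly: the paper offers no independent proof of this corollary, just the sentence ``We conclude as in Section~\ref{ssec:small_h}'', so a transcription of the proof of Corollary~\ref{cor:minkh} with $d$ replaced by $e = \dm{[m]}$ is the intended argument, and you correctly identify the three ingredients that must be checked (Buchsbaumness of $\mr{T}^\circ_{[m]}$, the Betti numbers in degrees $m-1$ and $e+m-2$, and the fullness of the Cayley arrangement so that Theorems~\ref{thm:upb}, \ref{thm:RSI} and Proposition~\ref{prp:bi} apply with a \lsop\ of length $e+m-1$).

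That said, two points deserve more care than your proposal (or the paper) gives them. First, the remark that ``Proposition~\ref{prp:CP2}(ii) is insensitive to dimensions'' is not literally true. When some $\Cay(P_S)$ is itself a simplex (which happens precisely for deficient $S$, for instance $S=\{i\}$ when $\dim P_i < d$), that simplex is a face of $\mr{T}_{[m]}$, and then the \emph{full} subcomplex of $\mr{T}_{[m]}$ induced on $\V(P_S)$ strictly contains the intrinsic Cayley complex $\mr{T}(P_S)$. For the bijection $f_{k+m-1}(\mr{T}^\circ_{[m]}) = f_k(|P_{[m]}|)$ and the fullness of the arrangement $\scr{T}$ (needed for Theorem~\ref{thm:upb}) to hold simultaneously, $\cfs{T}$ must be taken as the union of these induced subcomplexes, which is a genuine modification of the Section~\ref{ssec:cayley} setup that one should state. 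Second, the framing ``The deficient case \dots is reduced via Observation~\ref{rem:simp}(1), so I may assume $\Cay(P_{[m]})$ has dimension $e+m-1$'' is confused: $\Cay(P_{[m]})$ has dimension $e+m-1$ unconditionally, so nothing is being reduced, and Observation~\ref{rem:simp}(1) does not by itself establish the stated bound for deficient families (it only identifies $|P_{[m]}|$ as a product). The bound does hold there, but one should either check it directly (the Cayley polytope is a simplex, so the computation is explicit) or observe that the Schenzel/nerve-ideal argument still applies once $\cfs{T}$ is corrected as above; the phrase as written does not do either. Finally, two small slips: $\partial\Delta_{m-1}\cong S^{m-2}$ (not $S^{m-1}$), and $\dim \mr{T}^\circ_{[m]} = e+m-2$, so the face module has Krull dimension $e+m-1$, which is the correct length for the \lsop.
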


\subsection{The Dehn--Sommerville formula and other linear relations.} 
The most challenging part of the Upper Bound Problem for Minkowski sums are
the Dehn--Sommerville relations. Recall that we can assume that $\dim
|P_{[m]}| = d$. The following is a simple corollary of Proposition~\ref{prp:Hilbert_reciproc}.
\begin{lem}\label{lem:DSM-np} 
    Let $\mr{T} = \mr{T}(P_{[m]})$ be the Cayley complex for $P_{[m]}$. 
     Then, for all $-m+1\le k\le d$ 
    \[
        h_{d-k}(\mr{T}_{[m]})\ = \ \widetilde{h}_{k+m-1}
        (\mr{T}_{[m]}^\circ) + \sum_{\substack{S \subseteq [m]\\ \dm{S}<d}}  (-1)^{k}\binom{d+m-1}{m+k+\dm{S}}.\] 
\end{lem}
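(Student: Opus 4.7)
The plan is to apply the relative reciprocity formula (Proposition~\ref{prp:Hilbert_reciproc}) to the face module $\FM[\mr{T}_{[m]}^\circ]$, mirroring the pure-case argument of Lemma~\ref{lem:DSM}. In the pure case, $\mr{T}_{[m]}^\circ$ is weakly Eulerian by Proposition~\ref{prp:CP2}(iii), every nonempty link has Euler characteristic $(-1)^{\dim \Lk}$, and the reciprocity formula collapses via Lemma~\ref{lem:DS} (using the Betti numbers supplied by Proposition~\ref{prp:CP2}(v)) to the clean identity $h_{d-k}(\mr{T}_{[m]}) = \widetilde{h}_{k+m-1}(\mr{T}_{[m]}^\circ)$. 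In the nonpure case the weakly Eulerian property fails precisely over the strata $\mr{T}_S$ with $\dm{S}<d$, and the correction sum in the statement encodes exactly this failure.

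I would first compute $\chi(\Lk(\sigma,\mr{T}_{[m]}^\circ))$ for each nonempty $\sigma \in \mr{T}_{[m]}$. Writing $\sigma$ as a Cayley join $\Cay(F_i : i\in S(\sigma))$ where $S(\sigma)\subseteq[m]$ indexes the summands contributing nonempty faces, the link $\Lk(\sigma,\mr{T}_{[m]}^\circ)$ identifies, after excision, with a relative Cayley complex of the face-figure family $(P_i/F_i)_i$, whose reduced Betti numbers are given by the analogue of Proposition~\ref{prp:CP2}(v). From this one reads off that $\chi(\Lk(\sigma,\mr{T}_{[m]}^\circ)) - (-1)^{\dim\Lk}$ is nonzero only when the face-figure family has a dimensionally deficient subfamily, and the defect depends only on the Minkowski dimension $\dm{S'}$ of that deficient subfamily $S'\subseteq [m]$.

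Substituting these Euler characteristics into Proposition~\ref{prp:Hilbert_reciproc}, specializing to the coarse grading, multiplying by $(1-t)^{d+m-1}$, and extracting the coefficient of $t^{d-k}$, the Eulerian contributions reassemble into $\widetilde{h}_{k+m-1}(\mr{T}_{[m]}^\circ)$ exactly as in the pure case. The defect contributions, grouped by originating deficient subfamily $S\subseteq[m]$, evaluate via the face generating function $\sum_\sigma t^{|\sigma|}/(1-t)^{|\sigma|}$ for faces of $\mr{T}_S$, combined with a short binomial computation, to exactly $(-1)^k\binom{d+m-1}{m+k+\dm{S}}$. Summing over all $S$ with $\dm{S}<d$ produces the stated correction term.

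The main obstacle is the combinatorial bookkeeping of the defect contributions: one must verify that the Euler-characteristic defects, when summed over all faces of $\mr{T}_{[m]}$ and grouped by the originating deficient subfamily, assemble into precisely the binomial $(-1)^k\binom{d+m-1}{m+k+\dm{S}}$. This is the nonpure analogue of how the two nonzero Betti numbers $\rBetti_{m-1}(\mr{T}_{[m]}^\circ)$ and $\rBetti_{d+m-2}(\mr{T}_{[m]}^\circ)$ contributed a single binomial term in the pure-case derivation of Lemma~\ref{lem:DSM}, and is handled by a careful stratum-by-stratum computation using the dimension formula $\dim\mr{T}_S = \dm{S}+|S|-2$.
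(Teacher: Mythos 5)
Your strategy---applying the relative reciprocity of Proposition~\ref{prp:Hilbert_reciproc}, computing $\chi(\Lk(\sigma,\mr{T}_{[m]}^\circ))$ stratum by stratum, and isolating where the weakly Eulerian condition of Lemma~\ref{lem:DS} fails---is exactly the route the paper indicates, since it offers the lemma as ``a simple corollary of Proposition~\ref{prp:Hilbert_reciproc}'' and supplies no further detail. Your identification of the locus of failure (the deficient strata $\mr{T}_S$ with $\dm{S}<d$) is correct, and the dimension formula $\dim\mr{T}_S=\dm{S}+|S|-2$ is the right bookkeeping tool.

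Two cautions about the deferred ``careful stratum-by-stratum computation.'' First, the correction term in the statement must absorb not only the Eulerian defects at nonempty $\sigma$ but also the contribution of $\sigma=\emptyset$: the term $(-1)^D\chi(\Lk(\emptyset,\mr{T}_{[m]}^\circ))=(-1)^D\chi(\mr{T}_{[m]}^\circ)$ is what drives Lemma~\ref{lem:DS}, and $\chi(\mr{T}_{[m]}^\circ)$ is \emph{not} the pure-case value $(-1)^{m-1}+(-1)^{d+m-2}$ when some subfamily is deficient. Your sketch folds this change into the $\widetilde{h}$ normalization and the ``reassembling'' step, but in the actual computation the global $\chi$ shift and the local defects must be tracked separately before they can be recombined. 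Second, the assertion that $\Lk(\sigma,\mr{T}_{[m]}^\circ)$ ``identifies, after excision, with a relative Cayley complex of the face-figure family $(P_i/F_i)_i$'' holds cleanly only for interior $\sigma$ (those touching all $m$ summands). For $\sigma\in\cfs{T}$, and also for faces such as $P_i$ itself --- which lies in $\mr{T}_{[m]}\setminus\cfs{T}$ whenever $\dim P_i<d$ and is exactly the kind of unexpected face that breaks Proposition~\ref{prp:CP2}(i) in the nonpure setting --- the link is a relative pair whose second component must be described with more care. Neither issue changes your scheme; they are precisely where the binomial bookkeeping that you flag as the main obstacle actually lives.
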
 
The important point to note here is that the correction term
$
    \sum_{\substack{S \subseteq [m]\\ \dm{S}<d}}  (-1)^{k}\binom{d+m-1}{m+k+\dm{S}}
$
only depends on $k, d, m$ and $f_0(P_{[m]})$ but \emph{not} on the
combinatorial type of the Cayley polytope. Now, we note that by linearity of
the $g$-vector:

\begin{prp}\label{prp:recursive-np} 
    For $\mr{T}_{[m]}$ as above, any $\ell\ge 0$, and any $1\le s\le m$, we have 
    \[ 
        h_{k+m-1}(\mr{T}_{[m]}) \ =\ \sum_{S\subseteq [m]}
        \widetilde{g}_{k+m-1}^{\langle m-|S|+\dm{[m]}-\dm{S}\rangle} (\mr{T}_{S}^\circ). 
    \] 
\end{prp}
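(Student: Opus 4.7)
The plan is to imitate the proof of Proposition~\ref{prp:recursive}, with the only subtlety being that the open Cayley strata now have varying ambient dimensions. Setting $D := \dm{[m]} + m - 1$ and $D_S := \dm{S} + |S| - 1$ (the lengths of the $h$-vectors of $\mr{T}_{[m]}$ and $\mr{T}_S$), the exponent in the claim may be rewritten as $\ell_S := m - |S| + \dm{[m]} - \dm{S} = D - D_S$, so that $\ell_S + D_S = D$ is independent of $S$. I will exploit this uniform sum---a property that was implicit in the pure case where $\ell_S = m-|S|$ and $D_S = d+|S|-1$ already had constant sum $d+m-1$.

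The first step is to note that, since $\mr{T}_{[m]}$ is partitioned as a set of faces into the relatively open Cayley strata $\mr{T}_S^\circ$, the Hilbert series decomposes as $\Hilb(\FM[\mr{T}_{[m]}],t) = \sum_{S\subseteq[m]} \Hilb(\FM[\mr{T}_S^\circ],t)$. Multiplying by $(1-t)^D$ and regrouping on the right as $(1-t)^{D-D_S}\cdot(1-t)^{D_S}$, and then comparing coefficients of $t^{k+m-1}$, yields the unadorned identity
\[
    h_{k+m-1}(\mr{T}_{[m]}) \ = \ \sum_{S\subseteq[m]} g^{\langle \ell_S\rangle}_{k+m-1}(\mr{T}_S^\circ).
\]

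The second step is to promote this from $g$ to $\widetilde{g}$. Applying the natural nonpure extension of the convention $\widetilde{f}_{-1}(\mr{T}_S^\circ) := (-1)^{|S|-1}$ to every subfamily, the discrepancy $\widetilde{h}_j(\mr{T}_S^\circ) - h_j(\mr{T}_S^\circ)$ equals $(-1)^{j+|S|-1}\binom{D_S}{j}$. Plugging this into $\sum_S (\widetilde{g} - g)^{\langle\ell_S\rangle}_{k+m-1}(\mr{T}_S^\circ)$, applying Vandermonde's identity to the inner sum in $i$ (permitted precisely because $\ell_S + D_S = D$ is the same constant for every $S$) collapses it to $\binom{D}{k+m-1}$, and the outer sum $\sum_{S\subseteq[m]}(-1)^{|S|}$ vanishes for $m \ge 1$. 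Hence the total correction is zero and the proposition follows.

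The only genuine point of care is to verify that the definition of $\widetilde{h}$ in the nonpure setting produces the correction $(-1)^{j+|S|-1}\binom{D_S}{j}$, where $D_S = \dm{S}+|S|-1$ is the length of the $h$-vector of $\mr{T}_S$ itself (and \emph{not} that of the ambient $\mr{T}_{[m]}$). Once this bookkeeping is confirmed, Vandermonde applies verbatim as in the pure case and no additional combinatorial identities are needed; in particular, the proof does not require knowing the full form of the Dehn--Sommerville correction of Lemma~\ref{lem:DSM-np}.
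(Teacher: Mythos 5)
Your proof is correct and takes essentially the same route the paper implicitly uses (the paper offers no proof beyond the remark "by linearity of the $g$-vector," relying on the reader to transplant the argument from the pure Proposition~\ref{prp:recursive}). You correctly isolate the single new point that makes the nonpure extension go through: the shift $\ell_S = m-|S|+\dm{[m]}-\dm{S}$ and the $h$-vector length $D_S = \dm{S}+|S|-1$ of each open stratum sum to the fixed constant $D = \dm{[m]}+m-1$, so that after the Hilbert-series stratification the Vandermonde collapse of the $\widetilde{g}-g$ corrections is uniform in $S$ and they cancel exactly as in the pure case.
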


Recall that by Observation~\ref{rem:simp}(i), the combinatorial type of 
$\mr{T}_{S}^\circ$ is determined by $f_0(P_S)$ (and $d$) if $\dm{S}<d$. 
Note furthermore that if $\dm{S}<d$, then $\dm{R}<d$ for all $R\subseteq S$. 
We obtain, with arguments analogous to Lemma~\ref{lem:recursive}.

\begin{lem}\label{lem:recursive-np} 
    For $\mr{T}_{[m]}$, $\dm{[m]}=d$ as above, any $1\le s\le m$, any $k+m-1\le \frac{d+m-1}{2}$, and with $\mr{c}=\mr{c}(k,m,d)$ we have
    \begin{align*} 
      &\ \ {h}_{k+m-1}(\mr{T}_{[m]})\\
      =& \ \sum_{j=0}^{\lfloor\nicefrac{m}{2}\rfloor}
     \sum_{s=\mr{c}-2j+1}^{m-2j} \sum_{\substack{S\subseteq [m]\\ |S|=s \\ \dm{S}=d}}
     \binom{m-s}{2j}\Bigg(\widetilde{h}_{k+m-1-2j}(\mr{T}_S^\circ)-\frac{1}{2j+1}
     \sum_{\substack {R\prec S\\ \dm{R}=d}}\widetilde{h}_{k+m-2-2j}(\mr{T}_R^\circ)\Bigg)
    \\ 
      +&\ \sum_{j=0}^{\lfloor\nicefrac{m}{2}\rfloor} \sum_{\substack{S\subseteq [m]\\ |S|=\mr{c}-2j\\ \dm{S}=d}} 
\binom{m-|S|-1}{2j}\Bigg(\widetilde{h}_{k+m-1-2j}(\mr{T}_S^\circ)-\frac{m-|S|}{(m-|S|+1)(2j+1)} 
\sum_{\substack {R\prec S\\ \dm{R}=d}}\widetilde{h}_{k+m-2-2j}(\mr{T}_R^\circ)\Bigg)\\
 +&\ \gamma(f_0(P_{[m]}),m,d,k). 
\end{align*}
where the correction term $\gamma$ only depends on $f_0(P_{[m]})$, $m$, $d$ and $k$, but not on the combinatorial type of
$\Cay(P_1,\cdots,P_m)$.
\end{lem}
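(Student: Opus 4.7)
The plan is to mirror the proof of Lemma~\ref{lem:recursive} by induction on $m$, while carefully isolating everything whose value is forced by the tuple $(f_0(P_{[m]}), m, d, k)$ into the combinatorial correction term $\gamma$. The starting point is Proposition~\ref{prp:recursive-np}, which expresses
\[
h_{k+m-1}(\mr{T}_{[m]})\ =\ \sum_{S\subseteq [m]} \widetilde{g}_{k+m-1}^{\langle m-|S|+d-\dm{S}\rangle}(\mr{T}_{S}^{\circ})
\]
as a single sum of shifted $\widetilde{g}$-numbers.

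First I would split the sum according to $\dm{S}=d$ versus $\dm{S}<d$. For the deficient subsets $S$ with $\dm{S}<d$, Observation~\ref{rem:simp}(1) tells us that $|P_S|$ is (a translate of) $\prod_{i\in S}P_i$, each $P_i$ being a simplex on $f_0(P_i)$ vertices by Theorem~\ref{thm:M-neighborly-np}(ii) combined with relatively general position; consequently the combinatorial type of $\mr{T}_S$ (and of every $\mr{T}_R$ with $R\subseteq S$) is determined by $f_0(P_S)$, $|S|$, and $d$. Hence the contribution of every such $S$ to the displayed sum, expanded out into $\widetilde{h}$-entries, depends only on $(f_0(P_{[m]}),m,d,k)$ and may be dumped into $\gamma$.

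Next, for the full-dimensional subsets with $\dm{S}=d$, I would proceed exactly as in the proof of Lemma~\ref{lem:recursive}: expand $\widetilde{g}^{\langle m-|S|\rangle}$ as an alternating sum of $\widetilde{h}_{k+m-1-i}(\mr{T}_S^\circ)$, and split each inner sum along the threshold $i=\mr{c}-|S|$, which separates the low-index entries (bounded directly) from the high-index entries (flipped by Dehn--Sommerville). The crucial substitution step uses Lemma~\ref{lem:DSM-np} instead of Lemma~\ref{lem:DSM}. This introduces an additional binomial correction
\[
\sum_{\substack{R\subseteq S\\ \dm{R}<d}}(-1)^{k+m-1-i}\binom{d+|S|-1}{|S|+k+m-1-i+\dm{R}},
\]
which again depends only on $(f_0(P_S),|S|,d,k,i)$ and is absorbed into $\gamma$. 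After Dehn--Sommerville, one invokes the inductive hypothesis on each subfamily $P_S$ with $\dm{S}=d$ and $|S|<m$; the inductive $\gamma$-terms also only depend on combinatorial data, so they again get absorbed. Finally, pairing positive and negative contributions exactly as in Tables~\ref{tab:DS3}--\ref{tab:DS4} reorganizes what remains into the asserted sums~(A) and~(B) restricted to $\dm{S}=d$ (and $\dm{R}=d$ in the $R\prec S$ subsums, because any $R$ with $\dm{R}<d$ forces $\dm{S}<d$, so the restriction costs nothing when $S$ is full-dimensional).

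The main obstacle will be the bookkeeping of these three distinct streams of correction terms: those arising from the \emph{deficient} subsets, those arising from the nonpure Dehn--Sommerville formula, and the $\gamma$-terms inherited from the inductive hypothesis. One has to verify that the three streams combine into a single quantity that is a function of $(f_0(P_{[m]}),m,d,k)$ alone, which is ultimately guaranteed by the fact that each individual piece is determined by $\n$, $|S|$, and $d$, and that the combinatorial type of $\mr{T}_S$ is rigid as soon as $\dm{S}<d$.
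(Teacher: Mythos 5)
Your overall plan mirrors the paper's own (terse) proof and is the right one: start from Proposition~\ref{prp:recursive-np}, split the sum according to whether $\dm{S}=d$ or $\dm{S}<d$, absorb the deficient contributions (together with the binomial corrections coming from Lemma~\ref{lem:DSM-np} and the inductive $\gamma$'s) into the correction term $\gamma$, and for the full-dimensional subsets run the pure argument of Lemma~\ref{lem:recursive} verbatim. The fact that a deficient subfamily $P_S$ consists of simplices and has combinatorially rigid Cayley complex $\mr{T}_S$ is exactly the point the paper invokes via Observation~\ref{rem:simp}.

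There is, however, one claim in your write-up that is false as stated and needs to be repaired before the argument closes. You assert that \emph{``any $R$ with $\dm{R}<d$ forces $\dm{S}<d$, so the restriction costs nothing when $S$ is full-dimensional.''} This has the monotonicity of $\xi$ backwards. Since $\dmv{\n_R}=\min(d,\sum_{i\in R}(f_0(P_i)-1))$ and each $f_0(P_i)\ge 1$, the function $\xi$ is \emph{nondecreasing} under inclusion of index sets; the correct implication is $\dm{S}<d\Rightarrow\dm{R}<d$ for $R\subseteq S$ (exactly what the paper records), and not its converse. It is entirely possible to have $\dm{S}=d$ while some $R\prec S$ has $\dm{R}<d$: take $P_1$ a segment and $P_2$ a $d$-simplex, $S=\{1,2\}$, $R=\{1\}$. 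Thus the restriction to $\dm{R}=d$ in the $R\prec S$ subsums genuinely excludes terms. The stated formula is nevertheless still correct, but for a different reason: for any such excluded $R$, the Cayley complex $\mr{T}_R$ is a simplex on $|\n_R|$ vertices (a Cayley polytope of simplices in general position) and hence combinatorially rigid, so the excluded summands $\widetilde{h}_{\ast}(\mr{T}_R^\circ)$ are functions of $f_0(P_R)$, $|R|$, and $d$ alone and must likewise be charged to~$\gamma$ rather than being dropped for free. Once you amend the justification at this point, the proof goes through.
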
 
\subsection{The Upper Bound Theorem for nonpure Minkowski sums} 
\newcommand{\NMUBT}{{\nu}}
We finally conclude the Upper Bound Theorem for pure Minkowski sums:
We define 
\[
    \widetilde{\NMUBT}^{\langle\ell\rangle}:\Z^{[m]}\times\Z\times\Z
    \longrightarrow \Z,\ m\ge 0,\ \ell\ge 0
\] 
and
\[{\NMUBT}:\Z^{[m]}\times\Z\times\Z\longrightarrow \Z,\ m\ge 0\] 
by the following conditions:
\begin{compactenum}[\rm (a)]
\item \textbf{Basic relation:} For all $k,\,\ell\ge 0$ 
    \[
        \widetilde{\NMUBT}^{\langle\ell\rangle}
        (\cdot,\cdot,k) \ = \ \widetilde{\NMUBT}^{\langle \ell-1 \rangle}
        (\cdot,\cdot,k)-\widetilde{\NMUBT}^{\langle \ell-1
        \rangle}(\cdot,\cdot,k-1);
    \]
\item \textbf{Linearity:} For all $-m+1\le k\le d$
    \[
        {\NMUBT}(\alpha,d,k+m-1)\ = \ \sum_{S\subseteq [m]}
        \widetilde{\NMUBT}^{\langle m-|S|+\dmv{\alpha}-\dmv{\alpha_{S}} \rangle} (\alpha_{S},d,k+m-1).
    \]
\item \textbf{Dehn--Sommerville relation:} For all $-m+1\le k\le d$
    \[
        {\NMUBT}
        (\alpha,d,d-k) \ = \
        \widetilde{\NMUBT}(\alpha,d,k+m-1)+\sum_{\substack{S \subseteq [m]\\ \dmv{\alpha_S}<d}}  (-1)^{k}\binom{d+m-1}{m+k+\dmv{\alpha_S}};
    \]
\item \textbf{Initial terms:} For $k+m-1\le\frac{\dmv{\alpha_{[m]}}+m-1}{2}$, we have
    \[
        \widetilde{\NMUBT}(\alpha,d,k+m-1)\ = \
        \sum_{\emptyset\neq S\subseteq [m]} (-1)^{m-|S|}
        \binom{|\alpha_{S}|-\dmv{\alpha_S}+k-1}{k+m-1}.
    \] 
\end{compactenum}

With this we obtain the desired UBT for Minkowski sums of nonpure collections.
\begin{thm}\label{mthm:minkm-np}
    Let $P_{[m]} = (P_1,\dots,P_m)$ be a family of simplicial polytopes in
    relatively general position in $\R^d$ with $\n = f_0(P_{[m]})$ and $\dim
    |P_{[m]}|=d$. For the
    corresponding Cayley complex
    $\mr{T}_{[m]}=\mr{T}(P_{[m]})$ the following holds
\begin{compactenum}[\rm (1)]
\item 
$\NMUBT$ \textbf{is an upper bound:} 
\begin{compactenum}[\rm (a)]
\item for any $-m+1\le k\le d$ with $k+m-1\le\frac{d+m-1}{2}$
    \[
        \widetilde{h}_{k+m-1}(\mr{T}_{[m]}^\circ) \ \le \
        \widetilde{\NMUBT} (\n,d,k+m-1),
    \]
\item for any $-m+1\le k\le d$ with $k+m-1 \le
    \frac{d+m-1}{2}$
    \[
        h_{k+m-1}(\mr{T}_{[m]}) \ \le \   {\NMUBT} (\n,d,k+m-1),
    \]
\end{compactenum}
\item \textbf{Equality cases:}
\begin{compactenum}[\rm (a)]
\item equality holds up to some $k_0+m-1$ in $\mr{(1a)}$ if and only if, for all $S\subseteq [m]$, all non-faces of $\mr{T}_S$ of cardinality $\le k_0+|S|-1$
are supported in some $\V(\mr{T}_R)$, $R\subsetneq S$.
\item equality holds up to some $k_0+m-1$ in $\mr{(1b)}$ if and only if, for a nonface of cardinality $k_0+m-1-q$ in $\mr{T}_S$, $\# S \ge \mr{c}(k_0,m,d)-q$ is a nonface
are supported in some $\V(\mr{T}_R)$, $R\subsetneq S$.
\end{compactenum}
\item \textbf{Tightness: }\\ there is a collection of $m$ polytopes ${Q_i}$ in $\R^d$ with $f_0(Q_{[m]})=f_0(P_{[m]})$ for which, 
 for any $-m+1\le k\le d$ with $k+m-1\le\frac{d+m-1}{2}$, we have 
\begin{align*}
\widetilde{h}_{k+m-1}(\mr{T}_{[m]}^\circ(Q_i))\ =& \ \  \widetilde{\NMUBT} (f_0(Q_{[m]}),d,k+m-1),
\text{ and} \\
h_{k+m-1}(\mr{T}_{[m]}(Q_i))\ =& \ \  {\NMUBT} (f_0(Q_{[m]}),d,k+m-1).
 \end{align*}
\end{compactenum}
\end{thm}

\begin{proof}
The proof is analogous to Theorem~\ref{mthm:minkm}: The crucial cases to verify are (1b) and (2b). For this, one can
disregard deficient subfamilies of $P_{[m]}$ (those with $\dm{S}<d$), as their contribution is purely combinatorial. For the remaining subfamilies, one can use
Lemma~\ref{lem:recursive-np} as in
Theorem~\ref{mthm:minkm}. 
\end{proof}
\enlargethispage{7mm}
\section{Mixed faces of Minkowski Sums}\label{sec:t_mxf}
\newcommand{\htm}{h^{\mr{mix}}}
\newcommand{\ftm}{f^{\mr{mix}}}
\newcommand\mix{\mathrm{mix}}

Let $\Pm = (P_1,\dots,P_m)$ be a pure collection of $m$ polytopes in $\R^d$ in
relatively general position. Every proper face $F \subsetneq |\Pm|$ has a unique
decomposition $F = F_1 + \cdots + F_m$ where $F_i \subseteq P_i$ is a face. A
face $F$ is called \Defn{mixed} if $\dim F_i > 0$ for all $i=1,\dots,m$. In
this section, we will study the \Defn{mixed $f$-vector} $f^\mix(\Pm)$ giving
the number of mixed faces of $|\Pm|$.   Mixed faces and in particular mixed
facets are related to the better known mixed cells in mixed subdivisions via
liftings; see~\cite{dLRS}. In this section, we prove an upper bound
theorem for the number of mixed faces. 

Notice, that by definition $f^\mix_{-1}(\Pm) = f^\mix_0(\Pm) = 0$. Moreover,
the `relatively general position' assumption forces $f^\mix_k(\Pm) = 0$ for
all $k<m$, which also limits the number of summands to $m<d$. One can drop the assumption
on general position but this is less natural.  Let us start with a simple
observation. A face $F \subsetneq |\Pm|$ is mixed if and only if it is not a
face of a subsum in the following sense: For a linear function $\ell$ let us
denote by $P_S^\ell$ the face of $P_S$ maximizing $\ell$. Then $F$ is mixed if for all
$\ell$ such that $\Pm^\ell = F$, $P_S^\ell \neq F$ for all $S \subsetneq [m]$.
We may now evaluate the mixed faces as the difference of all faces of the relative Cayley complex, minus the non-mixed faces. Following this basic equation gives an inclusion-exclusion, and we obtain that 
\[
    f^\mix_k(\Pm) \ \le \ \sum_{S \subseteq [m]} (-1)^{m-|S|}f_k(|P_S|),
\]
with equality for the number of mixed facets.
Let us define the \Defn{mixed $h$-vector} of $\Pm$ by
\begin{equation}\label{eq:hmix}
h^\mix_{i+m-1}(\Pm) \ := \ \sum_{S \subseteq [m]} (-1)^{m - |S|}
g^{\langle m-|S|\rangle}_{i+|S|-1}(\mr{T}_S^\circ) 
\end{equation}
for all $-m+1 \le i \le d$. Using~\eqref{eqn:f-h-polynomial} and
Proposition~\ref{prp:CP2} proves the following.
\begin{lem}\label{lem:mixed_h}
    Let $\Pm$ be a collection of polytopes in relatively general position.
    Then
    \[
    f_k^\mix(\Pm) \ \le \ \sum_{i=-m+1}^d \binom{d-i}{k-i} h^\mix_{i+m-1}(\Pm)
    \]
    for all $k \ge 0$.
\end{lem}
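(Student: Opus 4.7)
The plan is to combine the inclusion-exclusion estimate
\[
f_k^\mix(\Pm) \ \le \ \sum_{S \subseteq [m]} (-1)^{m-|S|} f_k(|P_S|)
\]
(established just before the lemma from the observation that a face of $|\Pm|$ fails to be mixed precisely when it is a face of some proper subsum $|P_S|$) with the $f$-to-$h$ transform \eqref{eqn:f-h-polynomial} applied to each stratum of the Cayley complex. By Proposition~\ref{prp:CP2}(i), $f_k(|P_S|) = f_{k+|S|-1}(\mr{T}_S^\circ)$ for every non-empty $S$, and since $\dim \mr{T}_S^\circ = d+|S|-2$, equation \eqref{eqn:f-h-polynomial} expresses each such face number as an explicit $\mathbb{Z}$-linear combination of the $h_\ell(\mr{T}_S^\circ)$. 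Substituting these expansions into the alternating sum over $S$ and collecting by $\ell$ puts the entire inclusion-exclusion bound in the form of a polynomial identity involving the $h$-vectors of the $\mr{T}_S^\circ$, which I then identify with the right-hand side of the lemma.

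The identification is organised by the generating-function identity
\[
H^\mix(t) \ := \ \sum_j h_j^\mix(\Pm)\,t^j \ = \ \sum_{S \subseteq [m]} [t(t-1)]^{m-|S|}\,H_{\mr{T}_S^\circ}(t),
\]
which follows immediately from the fact that $g^{\langle m-|S|\rangle}_j(\mr{T}_S^\circ) = [t^j](1-t)^{m-|S|}H_{\mr{T}_S^\circ}(t)$ and the definition of $h^\mix$. Extracting the coefficient of $z^{d-k}$ from $(1+z)^{d+m-1} H^\mix(1/(1+z))$ via the standard transform that recovers an $f$-entry from an $h$-polynomial reproduces, up to the range of summation, the sum $\sum_{i} \binom{d-i}{k-i} h^\mix_{i+m-1}(\Pm)$; moreover, the substitution $u = 1/(1+z)$ together with $H_{\mr{T}_S^\circ}(u) = (1-u)^{d+|S|-1}F_{\mr{T}_S^\circ}(u/(1-u))$ cleanly turns this back into the inclusion-exclusion expression $\sum_S(-1)^{m-|S|}f_k(|P_S|)$ when summed over \emph{all} $j$.

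The main obstacle will be the last step: the lemma's right-hand side is the \emph{truncation} of this sum to $-m+1 \le i \le d$, equivalently $0 \le j \le d+m-1$, whereas the generating-function calculation produces a potentially longer sum. The discarded tail terms involve binomials with negative upper index, which by $\binom{d+m-1-j}{d-k} = (-1)^{d-k}\binom{j-m-k}{d-k}$ have a definite sign, and pairing this parity against the sign/non-negativity of $h^\mix_j$ for $j > d+m-1$ (which arise from higher-stratum contributions on the Cayley complex via the identity for $H^\mix$ above) shows that truncation only enlarges the sum. Combined with the inclusion-exclusion bound, this yields the desired inequality; the bulk of the work is in the coefficient bookkeeping and the sign check on the tail.
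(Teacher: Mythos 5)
The generating-function identity $H^{\mathrm{mix}}(t)=\sum_{S}[t(t-1)]^{m-|S|}H_{\mr{T}_S^\circ}(t)$ is correct, and your high-level plan (inclusion--exclusion followed by the $f$--$h$ transform) matches the paper's one-line proof. However, the crucial identification step is not verified and is in fact false: carrying out the substitution gives
\[
(1+z)^{d+m-1}H^{\mathrm{mix}}\!\left(\tfrac{1}{1+z}\right)\;=\;\sum_{S\subseteq[m]}(-1)^{m-|S|}\,\frac{z^{d+m-1}}{(1+z)^{m-|S|}}\,F_{\mr{T}_S^\circ}\!\left(\tfrac{1}{z}\right),
\]
and extracting $[z^{d-k}]$ for $|S|=m$ picks off $f_{k+m-2}(\mr{T}_{[m]}^\circ)=f_{k-1}(|P_{[m]}|)$, not $f_k(|P_{[m]}|)$, while for $|S|<m$ the factor $(1+z)^{|S|-m}$ produces the alternating convolution $\sum_a(-1)^a\binom{m-|S|+a-1}{a}f_{k-1-a}(|P_S|)$ rather than a single face number. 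So the untruncated coefficient extraction does \emph{not} reproduce $\sum_S(-1)^{m-|S|}f_k(|P_S|)$, and the ``coefficient bookkeeping'' you defer to the end is exactly where the argument fails. Your truncation worry is also misplaced: with the binomial as written, $\binom{d+m-1-p}{k+m-1-p}$ already vanishes for $p>k+m-1$ because its lower index is negative, so the tail $p>d+m-1$ contributes nothing when $k\le d$; the nonzero tail you describe only appears because you silently replaced $\binom{d+m-1-p}{k+m-1-p}$ by $\binom{d+m-1-p}{d-k}$, which agree only when the top is nonnegative.

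More seriously, the mismatch above is not a bookkeeping slip on your part but appears to be an off-by-one in the statement itself. For $m=1$ with $P_1$ a tetrahedron in $\R^3$, one has $\mr{T}_{\{1\}}^\circ=(\partial P_1,\K_0)$ with $h$-vector $(0,4,-2,2)$, hence $h^{\mathrm{mix}}=(0,4,-2,2)$. Taking $k=1$, the stated right-hand side is $\binom{3}{1}\cdot 0+\binom{2}{0}\cdot 4=4$, while $f_1^{\mathrm{mix}}=f_1(P_1)=6$. So the inequality as written is violated; replacing $\binom{d-i}{k-i}$ by $\binom{d-i}{k+1-i}$ fixes this example (it then reads $6\le 6$ and $4\le 4$). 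Your proposal reproduces the statement verbatim and asserts that the calculation ``cleanly'' returns the inclusion--exclusion bound; since that is precisely where the index shift hides, the argument does not close even for the intended corrected statement, and the extra $|S|<m$ convolution terms would still need to be handled (they do not cancel against anything). In short: the skeleton is right, but the step you defer is the whole proof, and when carried out it reveals both that the untruncated sum is not the inclusion--exclusion bound and that the displayed binomial must be shifted.
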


Thus, in analogy to the UBT for Minkowski sums, it suffices to prove upper
bounds on the mixed $h$-vector of~$\Pm$.

\begin{thm}\label{thm:mixed_faces}
    Let $\Pm$ be a pure collection of $m$ simplicial polytopes in relatively
    general position in $\R^d$ with $\n = f_0(\Pm)$.  Let $\mr{T}_{[m]} =
    \mr{T}(\Pm)$ be the corresponding Cayley complex. Then for $-m+1 \le
    k\le{d-m+1}{}$
\begin{align*}
     &\ \ \htm_{k+m-1}(P_{[m]}) \\
    =& \ \sum_{S\subseteq [m]} (-1)^{m-|S|}\, g^{\langle m-|S|\rangle}_{i+|S|-1}(\mr{T}_S^\circ)\\
    \le& \ \sum_{S\subseteq [m]} (-1)^{m-|S|}\, \widetilde{\MUBT}^{\langle m-|S|\rangle}(\n_{S},d,k+|S|-1),
        \end{align*}
    with equality for some $k_0 + m-1$ if and only if it holds for all summands. 
\end{thm}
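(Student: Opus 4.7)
The first equality is the defining relation~\eqref{eq:hmix} of the mixed $h$-vector. For the inequality, the plan is to establish the pointwise bound $g^{\langle m-|S|\rangle}_{k+|S|-1}(\mr{T}_S^\circ) \le \widetilde{\MUBT}^{\langle m-|S|\rangle}(\n_S, d, k+|S|-1)$ for each $S \subseteq [m]$ individually, so that after multiplication by $(-1)^{m-|S|}$ and summation the claimed alternating-sum inequality drops out; the equality case will then follow by the characterization ``equality iff it holds for all summands''.

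To prove this pointwise bound, I would mimic the strategy of Corollary~\ref{cor:minkh}: apply the relative Schenzel formula (Theorem~\ref{thm:RS}) to $\mr{T}_S^\circ$, bound the algebraic contribution $\LT_{k+|S|-1}(\mr{T}_S^\circ)$ via the nerve ideal $\ch[\mr{T}_S, \cfs{T}_S]$ using Theorem~\ref{thm:upb}, evaluate it explicitly via Theorem~\ref{thm:RSI} and Proposition~\ref{prp:bi}, and compute the topological contribution $\GT_{k+|S|-1}(\mr{T}_S^\circ)$ using Proposition~\ref{prp:CP2}(v). Then take the $(m-|S|)$-th finite difference of the resulting bound on $h_{k+|S|-1}(\mr{T}_S^\circ)$: on the algebraic side the nerve-ideal estimate directly matches the finite-difference recursion defining $\widetilde{\MUBT}^{\langle m-|S|\rangle}$ (property~(a)), while the shifts between $h$ and $\widetilde{h}$ combine via a Vandermonde identity $\sum_i \binom{m-|S|}{i}\binom{d+|S|-1}{k+|S|-1-i} = \binom{d+m-1}{k+|S|-1}$, absorbing cleanly into the topological correction.

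For the equality case, equality in the stated bound for some $k_0 + m-1$ holds iff the pointwise bound is sharp for every $S \subseteq [m]$, which by the equality characterization in Corollary~\ref{cor:minkh} is equivalent to the condition that all non-faces of $\mr{T}_S$ of dimension $< k_0 + |S|-1$ are supported in some $\V(\mr{T}_R)$, $R \subsetneq S$---precisely the Minkowski $(k_0,|S|)$-neighborliness condition for each subfamily.

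The main obstacle is to ensure that the finite-difference version of the upper bound (bounding $g^{\langle m-|S|\rangle}$ rather than $h$) is valid, since the $(m-|S|)$-th difference is not itself the $h$-vector of a Cohen--Macaulay complex and so the standard $M$-sequence arguments do not apply directly. This is where the local-to-global estimates of Theorem~\ref{thm:minkit} come in: their recursive form, relating $g$-vector entries of the total Cayley complex to $h$- and $g$-numbers of subcomplexes, is precisely what matches the inductive shape of $\widetilde{\MUBT}^{\langle \ell \rangle}$ defined through properties~(a)--(d), and provides the needed bookkeeping for the alternating signs.
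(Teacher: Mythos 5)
Your plan has a sign problem at its very first step that makes the argument fail as stated. You propose to prove the pointwise bound $g^{\langle m-|S|\rangle}_{k+|S|-1}(\mr{T}_S^\circ) \le \widetilde{\MUBT}^{\langle m-|S|\rangle}(\n_S, d, k+|S|-1)$ for each $S$ and then multiply by $(-1)^{m-|S|}$ and sum. But for every $S$ with $m-|S|$ odd, the coefficient $(-1)^{m-|S|}$ is negative, and multiplying an upper bound by a negative number reverses the direction of the inequality. Term-by-term upper bounds therefore do not sum to an upper bound for an alternating sum; what you would actually need are upper bounds for half the terms and \emph{lower} bounds for the other half, and your proposed machinery (Theorem~\ref{thm:RS}, Theorem~\ref{thm:upb}, Theorem~\ref{thm:RSI}) only produces upper bounds.

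The paper's proof recognizes this and instead regroups the alternating sum into packets with all-positive coefficients, writing
\[
\htm_{k+m-1}(\Pm) = \sum_{S\subseteq [m]} \sum_{\substack{j\in [0,m-|S|]\\ j+m-|S| \text{ even}}} \binom{m-|S|}{j}\Bigl( h_{k+|S| -1-j}(\mr{T}_{S}^\circ) - \tfrac{1}{m-|S|-j+1}\sum_{R\prec S}  h_{k+|R| -1-j}(\mr{T}_{R}^\circ) \Bigr),
\]
where each parenthesized packet pairs a positive-sign term with its immediate subfamilies and has an overall positive coefficient. Lemma~\ref{lem:cenm} is precisely the tool that bounds such a packet from above (it controls $\widetilde{g}_{k+m-1}(\mr{T}_{[m]}^\circ) - \delta \sum_{S\prec [m]} \widetilde{g}_{k+m-2}(\mr{T}_S^\circ)$ for an appropriate range of $\delta$), and the same packet-level identity holds for $\widetilde{\MUBT}$ because of Lemma~\ref{lem:omega}. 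A similar pairing, now interleaved with Dehn--Sommerville flips and Lemma~\ref{lem:recursive}, handles the $k$ outside the range where the initial-term bound applies. This packet-and-pair step is the genuine idea missing from your proposal; your invocation of Theorem~\ref{thm:minkit} gestures toward the right recursion but does not resolve the sign obstruction, since Theorem~\ref{thm:minkit} itself mixes $h$- and $g$-numbers of different complexes rather than yielding the standalone pointwise bound your argument relies on.

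A smaller issue: your treatment of the equality case is circular. You say equality follows ``by the characterization `equality iff it holds for all summands'\,'' --- but that characterization is exactly what the theorem asserts and what you are supposed to be proving. The paper instead traces equality through the equality conditions of Lemma~\ref{lem:cenm} (and ultimately Theorem~\ref{thm:minkit}), which gives a non-circular derivation.
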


For mixed facets, this results in the following tight upper bound.

\begin{thm}\label{thm:mixed_facets}
    Let $0 < m < d$ and $\Pm$ a collection of $m$ simplicial $d$-polytopes in
    relatively general position in $\R^d$. Then for any Minkowski neighborly
    family $\mr{Nb}_{[m]}$ of $d$-polytopes with $f_0(\Pm) = f_0(\mr{Nb}_{[m]})$ we have
    \[
        f^\mix_{d-1}(\Pm) \ \le\ f^\mix_{d-1}(\mr{Nb}_{[m]})
    \]
    with equality if and only if $\Pm$ is Minkowski neighborly.
\end{thm}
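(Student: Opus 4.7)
The plan is to deduce the bound on mixed facets directly from the bound on the mixed $h$-vector in Theorem~\ref{thm:mixed_faces}, exploiting that for facets the inclusion-exclusion inequality of Lemma~\ref{lem:mixed_h} is in fact an equality. First, since a facet $F \subsetneq |\Pm|$ is mixed precisely when no proper subsum $P_S$ admits $F$ as a face, inclusion-exclusion yields
\[
\ftm_{d-1}(\Pm) \ = \ \sum_{S \subseteq [m]} (-1)^{m-|S|}\, f_{d-1}(|P_S|).
\]
Using $f_{d-1}(|P_S|) = f_{d+|S|-2}(\mr{T}_S^\circ)$ from Proposition~\ref{prp:CP2}(i), converting $f$ to $h$ via the standard formula, and reorganizing against the defining identity~\eqref{eq:hmix} of $\htm$, a straightforward bookkeeping produces the clean identity
\[
\ftm_{d-1}(\Pm) \ = \ \sum_{i=-m+1}^{d-1} (d-i)\, \htm_{i+m-1}(\Pm),
\]
i.e.\ the inequality of Lemma~\ref{lem:mixed_h} becomes an equality when $k = d-1$.

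Next I would apply the upper bound of Theorem~\ref{thm:mixed_faces} to each $\htm_{i+m-1}(\Pm)$; for indices beyond the initial range, the Dehn--Sommerville symmetry assembled in Section~\ref{ssec:DSM} (in particular Lemma~\ref{lem:DSM}) reduces to the initial terms. Since the coefficients $(d-i)$ are strictly positive for $-m+1 \le i \le d-1$, summing produces an upper bound on $\ftm_{d-1}(\Pm)$. For a Minkowski neighborly family $\mr{Nb}_{[m]}$, Theorem~\ref{mthm:minkm}(3) asserts that $h_\ast(\mr{T}_S^\circ)$ attains $\widetilde{\MUBT}(\n_S,d,\ast)$ for every $S \subseteq [m]$, and hence so do all the signed differences $g^{\langle m-|S|\rangle}$; substituting into the identity above shows that the upper bound is realized exactly by $\mr{Nb}_{[m]}$. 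This establishes $\ftm_{d-1}(\Pm) \le \ftm_{d-1}(\mr{Nb}_{[m]})$.

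For the equality case, equality in the displayed formula together with positivity of the coefficients $(d-i)$ forces $\htm_{i+m-1}(\Pm)$ to match its upper bound for every $i$. By the ``equality holds if and only if it holds for all summands'' clause in Theorem~\ref{thm:mixed_faces}, this gives for every $S\subseteq[m]$ and every relevant $i$ the equality $g^{\langle m-|S|\rangle}_{i+|S|-1}(\mr{T}_S^\circ) = \widetilde{\MUBT}^{\langle m-|S|\rangle}(\n_S,d,i+|S|-1)$. Inducting on $|S|$, starting from $|S|=1$ where $g^{\langle 0\rangle}=h$ and the claim reduces to Theorem~\ref{mthm:minkm}(2a), the forward-difference identities combined with the one-sided bounds of Theorem~\ref{mthm:minkm}(1) force equality $h_\ast(\mr{T}_S^\circ) = \widetilde{\MUBT}(\n_S,d,\ast)$ for every $S$. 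The equality clause of Theorem~\ref{mthm:minkm}(2) then identifies $\Pm$ as Minkowski neighborly.

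The main obstacle will be this last step: because $\htm$ is itself a signed combination of $g^{\langle \ell\rangle}$ numbers, attainment of the upper bound on $\htm$ yields only a signed identity among the $g^{\langle\ell\rangle}$'s. Promoting this to an equality of individual $h$-entries requires the inductive scheme just described, in which the one-sided entrywise bounds from Theorem~\ref{mthm:minkm}(1) are used to pin down each higher-difference equality. Alternatively, one can bypass this by translating directly into the non-face language of Theorem~\ref{mthm:minkm}(2), which may be the cleaner route.
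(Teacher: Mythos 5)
Your proof is correct and takes the same route the paper does: the paper's entire proof of this theorem is the sentence ``This is an immediate consequence of Theorem~\ref{thm:mixed_faces},'' and your write-up is a faithful unpacking of what that sentence suppresses — namely (i) the observation that the inclusion-exclusion of Lemma~\ref{lem:mixed_h} becomes an equality for $k=d-1$, so that $\ftm_{d-1}(\Pm)$ is a nonnegative linear combination of the $\htm$-entries with strictly positive weights $d-i$ on every index that contributes; (ii) applying the entrywise bound of Theorem~\ref{thm:mixed_faces}; (iii) tightness via Theorem~\ref{thm:M-neighborly}(ii)/Theorem~\ref{mthm:minkm}(3); and (iv) the equality characterization pushed through the ``equality iff for all summands'' clause of Theorem~\ref{thm:mixed_faces}. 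You correctly identify the one delicate point, namely that $\htm$ is a signed combination of $g^{\langle\ell\rangle}$-numbers and so attaining the bound on $\htm$ does not immediately yield entrywise equalities; resolving this does go back through the equality clauses of Lemma~\ref{lem:cenm} and ultimately Corollary~\ref{cor:minkh}/Theorem~\ref{mthm:minkm}(2), exactly as you indicate, and your suggested alternative of translating directly into the non-face language of Theorem~\ref{mthm:minkm}(2) is indeed the cleanest way to close that loop.
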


This is an immediate consequence of Theorem~\ref{thm:mixed_faces}.

\begin{proof}[\textbf{Proof of Theorem~\ref{thm:mixed_faces}}]
We first show the bounds on the initial terms of the mixed $h$-vector, i.e.\ bounds on $\htm_{k+m-1}(P_{[m]})$ for $k+m-1\le\frac{d+m-1}{2}$.

We have
\[\htm_{k+m-1}(P_{[m]}) 
 \ =\  \sum_{S\subseteq [m]} (-1)^{m-|S|} g^{\langle m-|S|\rangle}_{k+|S| -1}(\mr{T}_{S}^\circ).
\]
The second sum may be written as
\begin{equation}\label{eq:rec}
\sum_{S\subseteq [m]} \sum_{\substack{j\in [0,m-|S|]\\ j+m-|S| \text{ even}}} \binom{m-|S|}{j}\bigg( h_{k+|S| -1-j}(\mr{T}_{S}^\circ) - \frac{1}{m-|S|-j+1}\sum_{R\prec S}  h_{k+|R| -1-j}(\mr{T}_{R}^\circ) \bigg) \end{equation}
where we recall that $\prec$ denotes the covering relation.
Using Lemma~\ref{lem:cenm} we can therefore estimate 
\[\sum_{S\subseteq [m]} (-1)^{m-|S|} g^{\langle m-|S|\rangle}_{k+|S| -1}(\mr{T}_{S}^\circ)
\ \le\  \sum_{S\subseteq [m]} (-1)^{m-|S|} \widetilde{\MUBT}^{\langle m-|S|\rangle}
(f_0(P_{S}),d,k+|S|-1).\]
The second bound can be derived in a similar manner as the first: Combining the Dehn--Sommerville relations and Lemma~\ref{lem:recursive}, we rewrite 
$\htm_{k+m-1}(P_{[m]}) =  \sum_{S\subseteq [m]} (-1)^{m-|S|}\, g^{\langle m-|S|\rangle}_{i+|S|-1}(\mr{T}_S^\circ)$ as a sum of $h_{j}(\mr{T}_S^\circ)$, $j\le \frac{d+|S|-1}{2}$. We can now pair $h$- and $g$-numbers of $\mr{T}_S^\circ$ and $\mr{T}_R^\circ,\ R\prec S$ and use Lemma~\ref{lem:cenm} to bound each term by the corresponding term of $\wt{\MUBT}$ and $\wt{\MUBT}^{\langle 1 \rangle}$. In details: 

Recall that as in Lemma~\ref{lem:recursive}, we may think of the coefficients of $h_{i+m+S}(\mr{T}_S^\circ)$ Equation~\eqref{eq:hmix} as elements in an array with sides recording $i$ and $|S|$. 

The interplay with the Dehn--Sommerville relations for Cayley complexes now becomes relevant if, in a summand $h_{k+|S| -1-j}(\mr{T}_{S}^\circ) - \frac{1}{m-|S|-j+1}\sum_{R\prec S}  h_{k+|R| -1-j}(\mr{T}_{R}^\circ)$ in the Sum~\eqref{eq:rec}, $k+|S| -1-j > \frac{d+|S|-1}{2}$. To understand this, we rewrite Sum~\eqref{eq:rec} as
\begin{equation*}\label{eq:rec2}
\sum_{j=0}^m \sum_{\substack{S\subseteq[m]\\ |S|=m-j}} \sum_{R\subseteq S} (-1)^{|S\setminus R|} \frac{1}{|S\setminus R|!} h_{k+|R|-1-j}(\mr{T}_{R}^\circ)
\end{equation*}
For any $S\subsetneq m$, we can now apply the Dehn--Sommerville relations and Lemma~\ref{lem:recursive} to rewrite, for $S\subsetneq [m]$ and $j=m-|S|$,
\begin{align*}
&\ \ \sum_{R\subseteq S} (-1)^{|S\setminus R|} \frac{1}{|S\setminus R|!} h_{k+|R|-1-j}(\mr{T}_{R}^\circ)\\
=&\  \sum_{\substack{R\subseteq S \\ |S\setminus R|\ \text{even}\\ |R|\ge d-2k+2m-2|S|+1}} \frac{1}{|S\setminus R|!} \sum_{T\subseteq R} (-1)^{|T|-|R|} h_{d-k+m-|S|
+|T|-|R|}(\mr{T}_{T}^\circ)
\end{align*}
and secondly rewrite, for $R\subseteq S$, and $|S\setminus R|$ {even}, the summands of the  identity as
\begin{align*}
&\ \  \sum_{T\subseteq R} (-1)^{|T|-|R|} h_{d-k+m-|S|
+|T|-|R|}(\mr{T}_{T}^\circ)\\
=&\  \sum_{\substack{T\subseteq R\\ |R\setminus T|\ \text{even}}} \Bigg(h_{d-k+m-|S|
+|T|-|R|}(\mr{T}_{T}^\circ) -\frac{1}{|S|-|T|+1}\sum_{U\prec T} h_{d-k+m-|S|
+|U|-|R|}(\mr{T}_{U}^\circ)\Bigg).
\end{align*} 
The claim now follows by application of Lemma~\ref{lem:cenm}.
\end{proof} 

%
%

\newcommand{\etalchar}[1]{$^{#1}$}
\def\cprime{$'$}
\providecommand{\bysame}{\leavevmode\hbox to3em{\hrulefill}\thinspace}
\providecommand{\MR}{\relax\ifhmode\unskip\space\fi MR }
\providecommand{\MRhref}[2]{%
  \href{http://www.ams.org/mathscinet-getitem?mr=#1}{#2}
}
\providecommand{\href}[2]{#2}


\begin{thebibliography}{MRTT53}

\bibitem[Adi15]{AdipII}
K.~A. Adiprasito, \emph{{Toric chordality}}, preprint,
  \href{http://arxiv.org/abs/1503.06640}{arXiv:1503.06640}.

\bibitem[AB12]{AB-SSZ}
K.~A. Adiprasito and B.~Benedetti, \emph{Subdivisions, shellability, and
  collapsibility of products}, February 2012, to appear in Combinatorica,
  available at \href{http://arxiv.org/abs/1202.6606}{arXiv:1202.6606}.

\bibitem[ABG83]{ABG}
K.~A. Adiprasito, A.~Bj\"orner, and A.~Goodarzi, \emph{{Face numbers of
  Sequentially Cohen-Macaulay complexes and Betti numbers of componentwise
  linear ideals}}, preprint,
  \href{http://arxiv.org/abs/1502.01183}{arXiv:1502.01183}.

\bibitem[ABPS15]{ABPS}
K.~A. Adiprasito, P.~Brinkmann, A.~Padrol, and R.~Sanyal, \emph{Mixed faces and
  colorful depth}, in preparation, 2015.

\bibitem[BKL86]{BKL86}
D.~Barnette, P.~Kleinschmidt, and C.~W. Lee, \emph{An upper bound theorem for
  polytope pairs}, Math.\ Oper.\ Res. \textbf{11} (1986), no.~3, 451--464.

\bibitem[BL80]{BL}
L.~J. {Billera} and C.~W. {Lee}, \emph{{Sufficiency of McMullen's conditions
  for f-vectors of simplicial polytopes.}}, {Bull. Am. Math. Soc., New Ser.}
  \textbf{2} (1980), 181--185.

\bibitem[BL81]{bl81}
\bysame, \emph{The numbers of faces of polytope pairs and unbounded polyhedra},
  European J. of Combinatorics \textbf{2} (1981), 307--322.

\bibitem[Bj{\"o}80]{Bj1}
A.~Bj{\"o}rner, \emph{Shellable and {C}ohen-{M}acaulay partially ordered sets},
  Trans. Amer. Math. Soc. \textbf{260} (1980), 159--183.

\bibitem[Bj{\"o}03]{NFH}
\bysame, \emph{Nerves, fibers and homotopy groups}, Journal of Combinatorial
  Theory, Series A \textbf{102} (2003), no.~1, 88 -- 93.

\bibitem[Bj{\"o}07]{Bj07}
\bysame, \emph{A comparison theorem for $f$-vectors of simplicial polytopes},
  Pure Appl. Math. Q. \textbf{3} (2007), no.~1, 347--356.

\bibitem[BWW05]{BWW}
A.~Bj{\"o}rner, M.~L. Wachs, and V.~Welker, \emph{Poset fiber theorems}, Trans.
  Amer. Math. Soc. \textbf{357} (2005), 1877--1899.

\bibitem[{Bor}48]{Borsuk}
K.~{Borsuk}, \emph{{On the imbedding of systems of compacta in simplicial
  complexes}}, {Fundam. Math.} \textbf{35} (1948), 217--234.

\bibitem[BM71]{BM}
H.~Bruggesser and P.~Mani, \emph{Shellable decompositions of cells and
  spheres}, Math. Scand. \textbf{29} (1971), 197--205 (1972).

\bibitem[BH93]{Bruns-Herzog}
W.~Bruns and J.~Herzog, \emph{Cohen-{M}acaulay rings}, Cambridge Studies in
  Advanced Mathematics, vol.~39, Cambridge University Press, Cambridge, 1993.

\bibitem[Buc43]{buck}
R.~C. Buck, \emph{Partition of space}, Amer. Math. Monthly \textbf{50} (1943),
  541--544.

\bibitem[CD95]{CD}
R.~M. Charney and M.~W. Davis, \emph{Strict hyperbolization}, Topology
  \textbf{34} (1995), no.~2, 329--350.

\bibitem[CLS11]{CoxLittleSchenck}
D.~A. Cox, J.~B. Little, and H.~K. Schenck, \emph{Toric varieties}, Graduate
  Studies in Mathematics, vol. 124, American Mathematical Society, Providence,
  RI, 2011.

\bibitem[Dav08]{Davis}
M.~W. Davis, \emph{The geometry and topology of {C}oxeter groups}, London
  Mathematical Society Monographs Series, vol.~32, Princeton University Press,
  Princeton, NJ, 2008.

\bibitem[dLRS10]{dLRS}
J.~A. de~Loera, J.~Rambau, and F.~Santos, \emph{Triangulations}, Algorithms and
  Computation in Mathematics, vol.~25, Springer-Verlag, Berlin, 2010,
  Structures for algorithms and applications.

\bibitem[{Duv}96]{Duval}
A.~M. {Duval}, \emph{Algebraic shifting and sequentially cohen-macaulay
  simplicial complexes}, {Electron. J. Comb.} \textbf{3} (1996), no.~1,
  research paper r21, 14.

\bibitem[Eis95]{eis}
D.~Eisenbud, \emph{Commutative algebra with a view toward algebraic geometry},
  Graduate Texts in Mathematics, vol. 150, Springer-Verlag, New York, 1995.

\bibitem[EC95]{EC}
I.~Z. Emiris and J.~F. Canny, \emph{Efficient incremental algorithms for the
  sparse resultant and the mixed volume}, J. Symbolic Comput. \textbf{20}
  (1995), no.~2, 117--149.

\bibitem[FW07]{FukudaWeibel07}
K.~Fukuda and C.~Weibel, \emph{{$f$}-vectors of {M}inkowski additions of convex
  polytopes}, Discrete Comput. Geom. \textbf{37} (2007), no.~4, 503--516.

\bibitem[FW10]{FukudaWeibel10}
\bysame, \emph{A linear equation for {M}inkowski sums of polytopes relatively
  in general position}, European J. Combin. \textbf{31} (2010), no.~2,
  565--573.

\bibitem[FS97]{FS}
W.~Fulton and B.~Sturmfels, \emph{Intersection theory on toric varieties},
  Topology \textbf{36} (1997), no.~2, 335--353.

\bibitem[Geo08]{GR}
R.~Geoghegan, \emph{Topological methods in group theory}, Graduate Texts in
  Mathematics, vol. 243, Springer, New York, 2008.

\bibitem[Gr{\"a}87]{HGG}
H.-G. Gr{\"a}be, \emph{Generalized {D}ehn-{S}ommerville equations and an upper
  bound theorem}, Beitr\"age Algebra Geom. (1987), no.~25, 47--60.

\bibitem[GS93]{GS}
P.~Gritzmann and B.~Sturmfels, \emph{Minkowski addition of polytopes:
  computational complexity and applications to {G}r\"obner bases}, SIAM J.
  Discrete Math. \textbf{6} (1993), no.~2, 246--269.

\bibitem[Hib91]{Hibi}
T.~Hibi, \emph{Quotient algebras of {S}tanley-{R}eisner rings and local
  cohomology}, Journal of Algebra \textbf{140} (1991), no.~2, 336 -- 343.

\bibitem[Hoc77]{Hochster77}
M.~Hochster, \emph{Cohen-{M}acaulay rings, combinatorics, and simplicial
  complexes}, Ring theory, {II} ({P}roc. {S}econd {C}onf., {U}niv. {O}klahoma,
  {N}orman, {O}kla., 1975), Dekker, New York, 1977, pp.~171--223. Lecture Notes
  in Pure and Appl. Math., Vol. 26.

\bibitem[HR74]{H-R-Purity}
M.~Hochster and J.~L. Roberts, \emph{Rings of invariants of reductive groups
  acting on regular rings are {C}ohen-{M}acaulay}, Advances in Math.
  \textbf{13} (1974), 115--175.

\bibitem[Hov78]{KV}
A.~G. Hovanski{\u\i}, \emph{Newton polyhedra, and the genus of complete
  intersections}, Funktsional. Anal. i Prilozhen. \textbf{12} (1978), no.~1,
  51--61.

\bibitem[ILL{\etalchar{+}}07]{24loCo}
S.~B. Iyengar, G.~J. Leuschke, A.~Leykin, C.~Miller, E.~Miller, A.~K. Singh,
  and U.~Walther, \emph{Twenty-four hours of local cohomology}, Graduate
  Studies in Mathematics, vol.~87, American Mathematical Society, Providence,
  RI, 2007.

\bibitem[JMR83]{JMR}
W.~Julian, R.~Mines, and F.~Richman, \emph{Alexander duality}, Pacific J. Math.
  \textbf{106} (1983), no.~1, 115--127.

\bibitem[Kal91]{Kalaishifting}
G.~Kalai, \emph{The diameter of graphs of convex polytopes and {$f$}-vector
  theory}, Applied geometry and discrete mathematics, DIMACS Ser. Discrete
  Math. Theoret. Comput. Sci., vol.~4, Amer. Math. Soc., Providence, RI, 1991,
  pp.~387--411.

\bibitem[KKT15]{Karavelas12}
M.~I. Karavelas, C.~Konaxis, and E.~Tzanaki, \emph{{The maximum number of faces
  of the Minkowski sum of three convex polytopes}}, J. Comput. Geom. \textbf{6}
  (2015), no.~1, 21--74.

\bibitem[KT11]{Karavelas11}
M.~I. Karavelas and E.~Tzanaki, \emph{{The maximum number of faces of the
  Minkowski sum of two convex polytopes}}, October 2011, preprint, to appear in
  Discrete Comput. Geom., available at
  \url{dx.doi.org/10.1007/s00454-015-9726-6}.

\bibitem[KT15]{MR3392772}
\bysame, \emph{A geometric approach for the upper bound theorem for {M}inkowski
  sums of convex polytopes}, 31st {I}nternational {S}ymposium on
  {C}omputational {G}eometry, LIPIcs. Leibniz Int. Proc. Inform., vol.~34,
  Schloss Dagstuhl. Leibniz-Zent. Inform., Wadern, 2015, pp.~81--95.

\bibitem[Kat12]{katz}
E.~Katz, \emph{Tropical intersection theory from toric varieties}, Collect.
  Math. \textbf{63} (2012), no.~1, 29--44.

\bibitem[KK79]{KK79}
B.~Kind and P.~Kleinschmidt, \emph{Sch\"albare {C}ohen-{M}acauley-{K}omplexe
  und ihre {P}arametrisierung}, Math. Z. \textbf{167} (1979), no.~2, 173--179.

\bibitem[Kle64]{Klee64}
V.~Klee, \emph{On the number of vertices of a convex polytope}, Canad. J. Math.
  \textbf{16} (1964), 701--720.

\bibitem[Lat91]{latombe}
J.-C. Latombe, \emph{Robot motion planning}, vol.~25, Kluwer Academic
  Publishers, Boston, MA, 1991.

\bibitem[MPP11]{MPP}
B.~Matschke, J.~Pfeifle, and V.~Pilaud, \emph{Prodsimplicial-neighborly
  polytopes}, Discrete Comput. Geom. \textbf{46} (2011), no.~1, 100--131.

\bibitem[McM70]{mcmullen1970}
P.~McMullen, \emph{The maximum numbers of faces of a convex polytope},
  Mathematika \textbf{17} (1970), no.~02, 179--184.

\bibitem[McM04]{mcmullen04}
\bysame, \emph{Triangulations of simplicial polytopes}, Beitr\"age Algebra
  Geom. \textbf{45} (2004), no.~1, 37--46.

\bibitem[MW71]{McMullenWalkup}
P.~McMullen and D.~W. Walkup, \emph{A generalized lower-bound conjecture for
  simplicial polytopes}, Mathematika \textbf{18} (1971), 264--273.

\bibitem[MNS11]{MillerNovikSwartz11}
E.~Miller, I.~Novik, and E.~Swartz, \emph{Face rings of simplicial complexes
  with singularities}, Math. Ann. \textbf{351} (2011), no.~4, 857--875.

\bibitem[MS05]{MillerSturmfels05}
E.~Miller and B.~Sturmfels, \emph{Combinatorial commutative algebra}, Graduate
  Texts in Mathematics, vol. 227, Springer-Verlag, New York, 2005.

\bibitem[Min11]{minkowski}
H.~Minkowski, \emph{Theorie der konvexen k{\"o}rper, insbesondere
  {B}egr{\"u}ndung ihres {O}berfl{\"a}chenbegriffs}, Gesammelte Abhandlungen
  von Hermann Minkowski \textbf{2} (1911), 131--229.

\bibitem[Miy89]{miyazaki}
M.~Miyazaki, \emph{Characterizations of {B}uchsbaum complexes}, manuscripta
  mathematica \textbf{63} (1989), no.~2, 245--254.

\bibitem[Mot57]{Motzkin57}
T.~S. Motzkin, \emph{{Comonotone curves and polyhedra}}, Bull. Amer. Math. Soc.
  \textbf{63} (1957), 35.

\bibitem[MRTT53]{games}
T.~S. Motzkin, H.~Raiffa, G.~L. Thompson, and R.~M. Thrall, \emph{The double
  description method}, Contributions to the theory of games, vol. 2, Annals of
  Mathematics Studies, no. 28, Princeton University Press, Princeton, N. J.,
  1953, pp.~51--73.

\bibitem[Nov03]{Novik03}
I.~Novik, \emph{Remarks on the upper bound theorem}, J. Combin. Theory Ser. A
  \textbf{104} (2003), 201--206.

\bibitem[Nov05]{Novik05}
\bysame, \emph{On face numbers of manifolds with symmetry}, Adv. Math.
  \textbf{192} (2005), no.~1, 183--208.

\bibitem[NS09]{NovikSwartz09}
I.~{Novik} and E.~{Swartz}, \emph{Applications of {K}lee's
  {D}ehn--{S}ommerville relations}, {Discrete Comput. Geom.} \textbf{42}
  (2009), no.~2, 261--276.

\bibitem[NS12]{NovikSwartz12}
\bysame, \emph{Face numbers of pseudomanifolds with isolated singularities},
  Math. Scand. \textbf{110} (2012), no.~2, 198--222.

\bibitem[PS93]{PS}
P.~Pedersen and B.~Sturmfels, \emph{Product formulas for resultants and {C}how
  forms}, Math. Z. \textbf{214} (1993), no.~3, 377--396.

\bibitem[Rei76]{Reisner}
G.~A. Reisner, \emph{Cohen-{M}acaulay quotients of polynomial rings}, Advances
  in Math. \textbf{21} (1976), no.~1, 30--49.

\bibitem[RS72]{RourkeSanders}
C.~P. Rourke and B.~J. Sanderson, \emph{Introduction to {P}iecewise-{L}inear
  {T}opology}, Springer, New York, 1972, Ergebnisse Series vol.\ 69.

\bibitem[San09]{Sanyal09}
R.~Sanyal, \emph{Topological obstructions for vertex numbers of {M}inkowski
  sums}, J. Combin. Theory Ser. A \textbf{116} (2009), no.~1, 168--179.

\bibitem[Sch81]{Schenzel81}
P.~Schenzel, \emph{On the number of faces of simplicial complexes and the
  purity of {F}robenius}, Math. Z. \textbf{178} (1981), no.~1, 125--142.

\bibitem[Sch82]{Schenzel82}
\bysame, \emph{Dualisierende {K}omplexe in der lokalen {A}lgebra und
  {B}uchsbaum-{R}inge}, Lecture Notes in Mathematics, vol. 907,
  Springer-Verlag, Berlin, 1982, With an English summary.

\bibitem[Sch93]{Schneider93}
R.~Schneider, \emph{Convex bodies: the {B}runn-{M}inkowski theory},
  Encyclopedia of Mathematics and its Applications, vol.~44, Cambridge
  University Press, Cambridge, 1993.

\bibitem[Sta75]{Stanley75}
R.~P. Stanley, \emph{The upper bound conjecture and {C}ohen-{M}acaulay rings},
  Studies in Appl. Math. \textbf{54} (1975), no.~2, 135--142.

\bibitem[Sta87]{Stanley87}
\bysame, \emph{Generalized {$H$}-vectors, intersection cohomology of toric
  varieties, and related results}, Commutative algebra and combinatorics
  ({K}yoto, 1985), Adv. Stud. Pure Math., vol.~11, North-Holland, Amsterdam,
  1987, pp.~187--213.

\bibitem[Sta93]{Stanleymono}
\bysame, \emph{A monotonicity property of {$h$}-vectors and {$h^*$}-vectors},
  European J. Combin. \textbf{14} (1993), no.~3, 251--258.

\bibitem[Sta96]{Stanley96}
\bysame, \emph{Combinatorics and commutative algebra}, second ed., Progress in
  Mathematics, vol.~41, Birkh\"auser Boston Inc., Boston, MA, 1996.

\bibitem[ST10]{TS}
R.~Steffens and T.~Theobald, \emph{Combinatorics and genus of tropical
  intersections and {E}hrhart theory}, SIAM J. Discrete Math. \textbf{24}
  (2010), no.~1, 17--32.

\bibitem[Ste26]{steiner}
J.~Steiner, \emph{Einige {G}esetze {\"u}ber die {T}heilung der {E}bene und des
  {R}aumes}, Journal f{\"u}r die reine und angewandte Mathematik \textbf{1}
  (1826), 349--364.

\bibitem[Stu02]{Sturmfels02}
B.~Sturmfels, \emph{Solving systems of polynomial equations}, CBMS Regional
  Conference Series in Mathematics, vol.~97, Published for the Conference Board
  of the Mathematical Sciences, Washington, DC, 2002.

\bibitem[{Swa}05]{Swartz}
E.~{Swartz}, \emph{{Lower bounds for $h$-vectors of $k$-CM, independence, and
  broken circuit complexes.}}, {SIAM J. Discrete Math.} \textbf{18} (2005),
  no.~3, 647--661.

\bibitem[Wei12]{Weibel12}
C.~Weibel, \emph{Maximal f-vectors of {M}inkowski sums of large numbers of
  polytopes}, Discrete Comput. Geom. \textbf{47} (2012), no.~3, 519--537.

\bibitem[Zee66]{ZeemanBK}
E.~C. Zeeman, \emph{Seminar on combinatorial topology}, Institut des Hautes
  Etudes Scientifiques, Paris, 1966.

\bibitem[Zie95]{Z}
G.~M. Ziegler, \emph{Lectures on {P}olytopes}, Graduate Texts in Mathematics,
  vol. 152, Springer, New York, 1995, Revised edition, 1998; seventh updated
  printing 2007.

\end{thebibliography}
\end{document}